\numberwithin{equation}{section}
\newcommand{\qed}{\hfill \ensuremath{\Box}}
\def\XXint#1#2#3{{\setbox0=\hbox{$#1{#2#3}{\int}$}
\vcenter{\hbox{$#2#3$}}\kern-.5\wd0}}
\newcommand{\kod}{\textnormal{kod}}
\newcommand{\dbar}{\overline{\partial}}
\newcommand{\ddt}[1]{\frac{\partial #1}{\partial t}}
\newcommand{\ddbar}{\sqrt{-1}\partial\dbar}
\begin{document}
\newcounter{remark}
\newcounter{theor}
\setcounter{remark}{0} \setcounter{theor}{1}
\newtheorem{claim}{Claim}
\newtheorem{theorem}{Theorem}[section]
\newtheorem{proposition}{Proposition}[section]
\newtheorem{lemma}{Lemma}[section]
\newtheorem{definition}{Definition}[section]
\newtheorem{conjecture}{Conjecture}[section]
\newtheorem{corollary}{Corollary}[section]
\newenvironment{proof}[1][Proof]{\begin{trivlist}
\item[\hskip \labelsep {\bfseries #1}]}{\end{trivlist}}
\newenvironment{remark}[1][Remark]{\addtocounter{remark}{1} \begin{trivlist}
\item[\hskip \labelsep {\bfseries #1
\thesection.\theremark}]}{\end{trivlist}}
~

\bigskip

\centerline{\bf\Large THE K\"AHLER-RICCI FLOW THROUGH SINGULARITIES
 \footnote{Research supported in
part by National Science Foundation grants DMS-0847524 and DMS-0804095. The first named author is also supported in part by a Sloan Foundation Fellowship.}}

\bigskip
\begin{center}{\large Jian Song$^{*}$   ~  and  ~ Gang Tian$^{\dagger}$}

\end{center}
\bigskip
\bigskip
\noindent
{\bf Abstract} \  We prove the existence and uniqueness of the weak K\"ahler-Ricci flow on projective varieties with log terminal singularities. It is also shown that the weak K\"ahler-Ricci flow can be uniquely continued through divisorial contractions and flips if they exist. We then propose an analytic version of the Minimal Model Program with  Ricci flow.

\bigskip
\bigskip

{\footnotesize  \tableofcontents}

\bigskip
\bigskip


\section{Introduction}\label{intro}

It has been the subject of intensive study over the last few
decades to understand the existence of canonical K\"ahler metrics of Einstein type on a compact K\"ahler
manifold, following Yau's solution to the Calabi conjecture (cf.
\cite{Y2}, \cite{Au}, \cite{Y3}, \cite{T1}, \cite{T2}). The Ricci flow (cf. \cite{Ha}, \cite{Ch}) provides a canonical deformation of K\"ahler metrics toward such canonical metrics. Cao \cite{C} gives an alternative proof
of the existence of K\"ahler-Einstein metrics on a compact
K\"ahler manifold with numerically trivial or ample canonical bundle by the K\"ahler-Ricci flow.
However, most projective manifolds do not have a numerically definite or
trivial canonical bundle.
It is a natural question to ask if
there exist any well-defined canonical metrics on these manifolds
or on varieties canonically associated to them. A projective variety is minimal if its canonical bundle is nef (numerically effective) and many results have been obtained on the K\"ahler-Ricci flow on minimal varieties.
Tsuji \cite{Ts} applies the K\"ahler-Ricci flow and proves the existence of a canonical singular K\"ahler-Einstein metric on a minimal projective manifold of general type. It is the first attempt to relate the K\"ahler-Ricci flow and canonical metrics to the Minimal Model Program. Since then, many interesting results have been achieved in this direction. The long time existence of the K\"ahler-Ricci flow on a minimal projective manifold with any initial K\"ahler metric is established in \cite{TiZha}. The regularity problem of the canonical singular K\"ahler-Einstein metrics on minimal projective manifolds of general type is intensively studied in \cite{EGZ1} and \cite{Z1} independently.
If the minimal projective manifold has positive Kodaira dimension and it is not of general type, it admits an Iitaka fibration over its canonical model.  The authors define on the canonical model a new family of generalized K\"ahler-Einstein metrics twisted by a canonical form of Weil-Petersson type from the fibration structure (\cite{SoT1}, \cite{SoT2}). It is also proved by the authors that the normalized K\"ahler-Ricci flow converges to such a canonical metric if the canonical bundle is semi-ample (\cite{SoT1}, \cite{SoT2}).

Let $X$ be an $n$-dimensional projective manifold. We consider the following unnormalized K\"ahler-Ricci flow starting with a K\"ahler metric $\omega_0 \in H^{1,1}(X, \mathbf{R}) \cap H^2(X, \mathbf{Z})$.
\begin{equation}\label{ricciflow}
\left\{
\begin{array}{l}
{ \displaystyle \ddt{}\omega =-Ric(\omega) }\\
\\
 \omega|_{t=0}= \omega_0 .
\end{array} \right.
\end{equation}

The unnormalized K\"ahler-Ricci flow (\ref{ricciflow}) has long time existence if $X$ is a minimal model, i.e., the canonical bundle $K_X$ is nef (\cite{TiZha}).

%
If $K_X$ is not nef, the unnormalized K\"ahler-Ricci flow (\ref{ricciflow}) must become singular at certain time $T_0>0$. At time $T_0$,  either the flow develops singularities on a subvariety of $X$ or $X$ admits a Fano fibration and the flow is expected to collapse along the fibres. For the first case, the subvariety where the singularities appear is exactly where $K_X$ is negative. The flow then might perform an analytic/geometric surgery equivalent to an algebraic surgery such as a divisorial contraction or a flip, and replace $X$ by a new projective variety $X'$. Hopefully, the flow can be continued on $X'$, which usually has mild singularities. The main goal of the paper is to define the K\"ahler-Ricci flow on singular varieties such as $X'$ and to construct analytic surgeries for the K\"ahler-Ricci flow.
If the second case occurs and the flow (\ref{ricciflow}) collapses onto a new projective variety $X''$, we expect the flow can be continued on the base $X''$.  Heuristically, we can repeat the above procedures until either the flow exists for all time or it collapses to a point. If the flow exists for all time, it should converge to a generalized K\"ahler-Einstein metric on its canonical model or a Ricci-flat metric on its minimal model after normalization if we assume the abundance conjecture. Eventually, we arrive at the final case when $X$ is Fano and it is conjectured by the second named author that the unnormalized K\"ahler-Ricci flow becomes extinct in finite time after surgery if and only if it is birationally equivalent to a Fano variety \cite{T3}. The conjecture is proved by the first named author for smooth solutions of the flow \cite{So}.

In general, the varieties obtained from divisorial contractions and flips have mild singularities. Also we expect that the analytic surgeries performed by the K\"ahler-Ricci flow coincide with the algebraic surgeries as divisorial contractions and flips.  Therefore we can not avoid singularities if the K\"ahler-Ricci flow can  be indeed continued through surgeries.  We then must define the K\"ahler-Ricci flow on projective varieties with singularities. 

We confine ourselves in the category of singularities considered in the Minimal Model Program because such singularities are rather mild and they do not get worse after divisorial contractions or flips are performed. The precise definition is given in Section \ref{2.3} for a $\mathbf{Q}$-factorial projective variety with log terminal singularities. Roughly speaking, let $X$ be such a projective normal variety and $\pi: \tilde{X} \rightarrow X$ be a resolution of singularity, then the pullback of any smooth volume form on $X$ is integrable on the nonsingular model $\tilde{X}$.

Our first theorem proves the existence and uniqueness for the K\"ahler-Ricci flow on projective varieties with log terminal singularities. Furthermore, it establishes a smoothing property for the K\"ahler-Ricci flow if the initial data is not smooth. 

\bigskip

\noindent{\bf Theorem A.1~} {\it Let $X$ be a $\mathbf{Q}$-factorial projective variety  with log terminal singularities and $H$ be an ample $\mathbf{Q}$-divisor on $X$. Let $$T_0 = \sup\{ t>0 ~|~ H + t K_X ~is~ nef~\}.$$
If $\omega_0 \in \mathcal{K}_{H,p}(X)$ for some $p>1$, then there exists a unique solution $\omega$ of the unnormalized weak K\"ahler-Ricci flow (\ref{ricciflow}) starting with $\omega_0$ for $t\in [0, T_0)$.

Furthermore, if $\Omega$ is a smooth volume form on $X$, then for any $T\in (0, T_0)$, there exists $C>0$ such that on $[0, T]\times X$,

\begin{equation}
e^{-\frac{C}{t} } \Omega \leq \omega^n \leq e^{\frac{C}{t} }\Omega.
\end{equation}

}
\bigskip

The definitions are given in Section \ref{4.1} for $\mathcal{K}_{H,p}(X)$ (Definition \ref{khp}) and the weak K\"ahler-Ricci flow (Definition \ref{weakdef}). Theorem A.1 shows that the K\"ahler-Ricci flow can start with a K\"ahler current which admits bounded local potential and an $L^p$ Monge-Amp\`ere mass for some $p>1$. It gives the short time existence for the weak K\"ahler-Ricci flow. 
%
%
%
Furthermore, it smoothes out the initial current in the sense that the flow becomes smooth on the nonsingular part of $X$ once $t>0$ and the evolving metrics always admit bounded local potentials for any $t\in (0, T_0)$. In particular, if $X$ is nonsingular, the flow becomes the usual K\"ahler-Ricci flow with smooth solutions on $(0, T_0)\times X$. In particular, $T_0$ is exactly the first singular time for the unnormalized weak K\"ahler-Ricci flow.

It is not clear how to define metrics on a singular variety $X$ with reasonable regularity and curvature conditions in general. One natural choice is the restriction of the Fubini-Study metric $\omega_{FS}$ for some projective embedding of $X$, if $X$ is normal and projective. It is indeed a smooth metric on $X$, however, even the scalar curvature of $\omega_{FS}$ might blow up near the singularities of $X$. More seriously, $(\omega_{FS})^n$ might not be a smooth volume form on $X$ in general, although it is a smooth non-negative $(n,n)$-form on $X$ (see Section \ref{4.3} for more detailed discussions). Theorem A.1 shows that the volume form of the corresponding solutions of the weak K\"ahler-Ricci flow becomes equivalent to a smooth volume form immediately for $t>0$. We speculate that the weak K\"ahler-Ricci flow produces metrics on $X$ with reasonably good geometric conditions. For example, given a normal projective orbifold $X$ embedded in some projective space $\mathbf{CP}^N$, the Fubini-Study metric $\omega_{FS}$ is in general not a smooth orbifold K\"ahler metric on $X$. If we start the K\"ahler-Ricci flow with $\omega_{FS}$, the evolving metrics immediately become smooth orbifold K\"ahler metrics on $X$.

We also make a remark that the assumption of $\mathbf{Q}$-factoriality can be weakened. In fact, the theorems still hold with slight modification if both the initial divisor $H$ and $K_X$ are $\mathbf{Q}$-Cartier.

The following theorem shows that the K\"ahler-Ricci flow can be defined on smooth projective varieties if the initial class is not K\"ahler.

\bigskip
\noindent{\bf Theorem A.2~} {\it
Let $X$ be a non-singular projective variety and $H$ be a big and semi-ample $\mathbf{Q}$-divisor on $X$. Suppose that $$T_0 = \sup\{ t>0 ~|~ H + t K_X ~{is~ nef~}\}>0.$$
If $\omega_0 \in \mathcal{K}_{H,p}(X)$ for some $p>1$, then there exists a unique solution $\omega$ of the unnormalized weak K\"ahler-Ricci flow (\ref{ricciflow}) for $t\in [0, T_0)$.

Furthermore, for any $t\in(0, T_0)$, there exists $C(t)>0$ such that
\begin{equation}
||S(\omega(t, \cdot))||_{L^\infty(X)}\leq C(t),
\end{equation}
where $S(\omega(t, \cdot))$ is the scalar curvature of $\omega(t, \cdot)$.
}
\bigskip

The scalar curvature $S$ is defined on a Zariski open set of $X$ away from the exceptional locus of $H$. Theorem A.2 shows that for each $t\in (0,T_0)$, the scalar curvature is uniformly bounded.

Theorem A.2 immediately implies the following corollary. It turns out that at each $t\in(0, T_0)$, the evolving metric has  bounded scalar curvature on a projective variety which admits a crepant resolution.

\bigskip

\noindent{\bf Corollary A.3~} {\it
Let $X$ be a $\mathbf{Q}$-factorial projective variety  with crepant singularities, $H$ be an ample $\mathbf{Q}$-divisor on $X$ and $$T_0 = \sup\{ t>0 ~|~ H + t K_X ~is~ nef~\}.$$
If $\omega_0 \in \mathcal{K}_{H,p}(X)$ for some $p>1$, then there exists a unique solution $\omega$ of the unnormalized weak K\"ahler-Ricci flow (\ref{ricciflow}) for $t\in [0, T_0)$.

Furthermore, for any $t\in(0, T_0)$, there exists $C(t)>0$ such that
\begin{equation}
||S(\omega(t, \cdot))||_{L^\infty(X)}\leq C(t),
\end{equation}
where $S(\omega(t, \cdot))$ is the scalar curvature of $\omega(t, \cdot)$.
}
\bigskip

From now on, we always assume $X$ is a $\mathbf{Q}$-factorial projective variety  with log terminal singularities and $H$ be an ample $\mathbf{Q}$-divisor on $X$. Let $$T_0 = \sup\{ t>0 ~|~ H + t K_X ~is~ nef~\}$$ be the first singular time the unnormalized weak K\"ahler-Ricci flow (\ref{ricciflow}) for $t\in [0, T_0)$ starting with $\omega_0 \in \mathcal{K}_{H, p}(X)$ for some $p>1$.

Theorem A.1 gives the short time existence of the weak unnormalized K\"ahler-Ricci flow and the first singular time $T_0$ is exactly when the K\"ahler class of the evolving metrics stops being nef. If $T_0<\infty$ and the limiting K\"ahler class is big, there is a contraction morphism $\pi: X \rightarrow Y$ uniquely associated to the limiting divisor $H+T_0K_X$. Let $\overline{NE}(X)$ be the closure of the convex cone that consists of the classes of effective curves on $X$. If the morphism $\pi$ contracts exactly one extremal ray of $\overline{NE}(X)$, the recent result of \cite{BCHM} and \cite{HM} shows that either $\pi$ contracts a divisor or there exists a unique flip associated to $\pi$ (see Definition \ref{flip} for a flip).

Since the weak unnormalized K\"ahler-Ricci flow cannot be continued on $X$ at the singular time $T_0$,  we have to replace $X$ by another variety $X'$ and continue the flow on $X'$. Our next main result is to relate the finite time singularities of the unnormalized K\"ahler-Ricci flow (\ref{ricciflow}) to divisorial contractions and flips in the Minimal Model Program.

\bigskip

\noindent{\bf Theorem B.1~} {\it Let $\omega$ be the unique solution  of the unnormalized weak K\"ahler-Ricci flow (\ref{ricciflow}) for $t\in [0, T_0)$ starting with $\omega_0 \in \mathcal{K}_{H, p}(X)$ for some $p>1$.
 Suppose that $H+T_0K_X$ is big and the morphism $\pi: X \rightarrow Y$ induced by the semi-ample divisor $H_{T_0}= H+T_0K_X$ contracts exactly one extremal ray of  $\overline{NE}(X)$.

\begin{enumerate}

\item If $\pi$ is a divisorial contraction, then there exists $\omega_{Y,0} \in \mathcal{K}_{H_Y, p'}(Y) \cap C^\infty(Y_{reg} \setminus \pi(Exc(\pi)) )$ for some $p'>1$ such that,  $\omega(t, \cdot)$ converges to $\pi^* \omega_{Y,0}$ in $C^\infty(X_{reg}\setminus Exc(\pi))$-topology as $t\rightarrow T_0$, where $H_Y= \pi_* H_{T_0}$.

Furthermore, the unnormalized weak K\"ahler-Ricci flow (\ref{ricciflow})  can be continued on $Y$ with the initial K\"ahler current $\omega_{Y,0}$.

\item  If $\pi$ is a small contraction and there exists a flip \begin{equation}
\begin{diagram}\label{diag1}
\node{X} \arrow{se,b,}{\pi}  \arrow[2]{e,t,..}{\check{\pi}^{-1} }     \node[2]{X^+} \arrow{sw,r}{\pi^+} \\
\node[2]{Y}
\end{diagram},
\end{equation}

then there exists $\omega_{X^+,0} \in \mathcal{K}_{H_{X^+}, p'}(X^+)$ for some $p'>1$, such that $\omega(t, \cdot)$ converges to $(\check{\pi}^{-1})^* \omega_{X^+,0}$ in $C^\infty(X_{reg}\setminus Exc(\pi))$-topology, where $H_{X^+}$ is the strict transformation of $H_{T_0}$ by $\check{\pi}$.

Furthermore, $\omega_{X^+, 0}$ is smooth outside the singularities of $X^+$ and where the flip is performed, and the unnormalized weak K\"ahler-Ricci flow (\ref{ricciflow}) can be continued on $X^+$ with the initial K\"ahler current $\omega_{X^+,0}$.

\end{enumerate}

}

\bigskip
Here $Exc(\pi)$ denotes  the exceptional locus of the morphism $\pi$ and $X_{reg}$ denotes the nonsingular part of $X$. In summary, we have the following corollary. $(\check{\pi}^{-1} )^* \omega_{X^+, 0}$ is defined by pulling back the local potentials of $\omega_{X^+, 0}$. It is well-defined because the local potential of $\omega_{X^+, 0}$ can be chosen to be constant along each connected fibre of $\pi^+$ and in particular, $(\check{\pi}^{-1} )^* \omega_{X^+, 0} \in \mathcal{K}_{H_{T_0}, p}(X)$.

\bigskip

\noindent{\bf Corollary B.2~} {\it The unnormalized K\"ahler-Ricci flow can be continued through divisorial contractions and flips.

}

\bigskip

The Minimal Model Program is successful in dimension three by Mori's work and the recent works have (c.f. \cite{BCHM}, \cite{Si}) led to proving  the finite generation of canonical rings. The deformation of the K\"ahler classes along the unnormalized K\"ahler-Ricci flow is in line with the Minimal Model Program with Scaling (MMP with scaling) proposed in \cite{BCHM}. It is also proved in \cite{BCHM} that MMP with scaling terminates after finitely many divisorial contractions and flips if the variety $X$ is of general type. 

A good initial divisor $H$ means that there are finitely many singular times and the contraction morphism at each singular time only contracts one extremal ray if the unnormalized K\"ahler-Ricci flow (\ref{ricciflow}) starts with $H$. We refer the readers to Section \ref{5.1} for the precise definition (Definition \ref{goodinitial}). In particular, good initial divisors always exist if $\dim X =2$ and $\kod(X)\geq 0$,  then the normalized K\"ahler-Ricci flow with a good initial divisor converges to the canonical model or the minimal model of $X$ coupled with a generalized K\"ahler-Einstein metric. It is possible that a general ample $\mathbf{Q}$-divisor $H$ on $X$ is a good initial divisor since MMP with scaling terminates for $X$.



\bigskip
\noindent{\bf Theorem C.1~} {\it Let $X$ be a projective $\mathbf{Q}$-factorial variety of general type with log terminal singularities. If  $H$ is a good initial divisor on $X$, then the normalized weak K\"ahler-Ricci flow \begin{equation}\label{nlflow}
\ddt{\omega} = -Ric(\omega) - \omega
\end{equation}
 starting with any initial K\"ahler current in $\mathcal{K}_{H, p}(X)$ for some $p>1$ exists for $t\in [0,\infty)$ and  it replaces $X$ by its minimal model $X_{min}$ after finitely many surgeries. Furthermore, the normalized K\"ahler-Ricci  flow 
converges in distribution to the unique K\"ahler-Eintein metric $\omega_{KE}$ on its canonical model $X_{can}$.

}
\bigskip

Theorem C.1 gives the general philosophy of the analytic Minimal Model Program with Ricci Flow. The K\"ahler-Ricci flow deforms a given projective variety $X$ to its minimal model $X_{min}$ in finite time after finitely many metric surgeries. Then $X_{min}$ is deformed to the canonical model $X_{can}$ coupled with a generalized K\"ahler-Einstein metric by the flow after normalization. We also remark that the flow converges in the sense of distribution globally and in the $C^\infty$-topology away from the singularities of $X_{min}$ and the exceptional locus of the pluricanonical system. Certainly, it is desired that the convergence should be in the sense of Gromov-Hausdorff. We also remark that when $X$ is a nonsingular minimal model of general type, the convergence of the normalized K\"ahler-Ricci flow is proved in \cite{Ts} and \cite{TiZha}.
\bigskip

The organization of the paper is the following. In Section \ref{2}, we set up the basic notations for degenerate complex Monge-Amp\`ere equations and algebraic singularities in the minimal model theory. In Section \ref{3}, we solve a special family of degenerate parabolic Monge-Amp\`ere equations on projective manifolds. In Section \ref{4}, we apply the results in Section \ref{3} to prove Theorem A.1, Theorem A.2 and Corollary A.3 for the short time existence of the weak K\"ahler-Ricci flow. In Section \ref{5}, Theorem B.1 and Corollary B.2 are proved for the weak K\"ahler-Ricci flow through singularities. We also prove Theorem C.1 for long time existence and convergence. Finally in Section \ref{6}, we propose an analytic Minimal Model Program with Ricci Flow.


\section{Preliminaries } \label{2}


\subsection{Kodaira dimension and canonical measures} \label{2.1}

Let $X$ be an $n$-dimensional compact complex projective manifold and $ L \rightarrow X $ a holomorphic line bundle over $X$. Let $N( L) $ be the semi-group defined by $$N(L) = \{ m\in \mathbf{N} ~ | ~ H^0(X , L^m) \neq  0 \}.$$

Given any $m\in N(L)$,  the linear system $|L^m|= \mathbf{P} H^0(X, L^m)$ induces a rational map $\Phi_m$
$$\Phi_m ~ : ~ X  \dashrightarrow \mathbf{CP}^{d_m}$$
by any basis $\{ \sigma_{m,0},~ \sigma_{m,1}, ~... ~, \sigma_{m, d_m} \}$ of  $H^0(X, L^m )$, where 

$$\Phi_m (z) = \left[ \sigma_{m,0},~ \sigma_{m,1}, ~... ~, \sigma_{m, d_m} (z) \right], $$
and $d_m + 1 = \dim H^0(X, L^m) $.
Let $Y_m = \overline{\Phi_m (X)} \subset \mathbf{CP}^{d_m}$ be  the closure of the image of $\Phi_m$.

\begin{definition} The Iitaka dimension of $L$ is defined to be

$$\kappa(X, L) = \max _{ m\in N(L)} \{ \dim Y_m \}$$ if $N(L) \neq \phi$,  and
$\kappa(X, L) = -\infty $ if $N(L)= \phi$.

\end{definition}

\begin{definition}
Let $X$ be a projective manifold and $K_X$ be the canonical line bundle over $X$. Then the Kodaira dimension $\kod(X)$ of $X$ is defined to be

$$\kod(X) = \kappa (X, K_X).$$

\end{definition}

The Kodaira dimension  is a birational invariant of a projective variety and the Kodaira dimension of a singular variety is  equal to that of its smooth model.

\begin{definition}

Let $L \rightarrow X$ be a holomorphic line bundle  over a compact projective manifold $X$. $L$ is called nef if $L\cdot C\geq 0$ for any curve $C$ on $X$ and $L$ is called semi-ample if $L^m$ is globally generated for some $m>0$.

\end{definition}

For any $m\in \mathbf{N}$ such that $L^m$ is globally generated,  the linear system $|L^m|$ induces a holomorphic map $\Phi_m$
$$\Phi_m ~ : ~ X  \rightarrow \mathbf{CP}^{d_m}$$
by any basis  of  $H^0(X, L^m )$.
Let $Y_m = \Phi_m (X)$ and so $\Phi_m$ can be  considered as

$$ \Phi_m ~ : ~ X  \rightarrow Y_m .$$

The following theorem is well-known (cf. \cite{La, U}).

\begin{theorem}\label{safibration}

Let $L \rightarrow X$ be a semi-ample line bundle over an algebraic manifold $X$. Then there is an algebraic fibre space
$$\Phi_{\infty} : X \rightarrow Y $$
such that for any sufficiently large integer $ m $ with $L^m$ being globally generated,
$$Y_m = Y ~~~~ and ~~~~ \Phi_m = \Phi_{\infty}, $$
where $Y$ is a normal projective variety.
Furthermore, there exists an ample line bundle $A$ on $Y$ such that $ L^m = (\Phi_{\infty})^* A$.

\end{theorem}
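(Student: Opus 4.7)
The plan is to follow the standard Iitaka/semi-ample fibration construction, which proceeds by taking Stein factorizations of the pluricanonical-type maps $\Phi_m$ and showing these stabilize once $m$ is sufficiently divisible.

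First, I would observe that whenever $L^m$ is globally generated, the induced morphism $\Phi_m:X\to Y_m\subset\mathbf{CP}^{d_m}$ is genuinely a morphism (no base locus), and moreover $\dim Y_m\le \kappa(X,L)$ with equality for all sufficiently large $m$ in $N(L)$, by the definition of Iitaka dimension. I would fix an $m_0$ realizing this maximal dimension and, for each such $m$, take the Stein factorization
\[
\Phi_m\colon X\xrightarrow{\ g_m\ } Z_m\xrightarrow{\ h_m\ } Y_m,
\]
where $g_m$ has connected fibres onto a normal projective variety $Z_m$ and $h_m$ is finite. The target $Y$ of the semi-ample fibration will be one of the $Z_m$'s for $m$ large and divisible.

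Next, I would carry out the comparison argument: for $m,k\in N(L)$ with $L^m,L^k$ globally generated, multiplication of sections embeds $H^0(X,L^m)\otimes H^0(X,L^k)$ into $H^0(X,L^{m+k})$, so $\Phi_{m+k}$ dominates both $\Phi_m$ and $\Phi_k$ in the sense that there are rational (in fact regular, by base-point freeness) maps $Y_{m+k}\to Y_m$ and $Y_{m+k}\to Y_k$ through which $\Phi_m,\Phi_k$ factor. Passing to Stein factorizations and using that all $Z_m$ have the same dimension $\kappa(X,L)$, the induced maps $Z_{m+k}\to Z_m$ are birational and finite between normal varieties, hence isomorphisms. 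This yields a cofinal subset of indices $m$ in $N(L)$ for which the $Z_m$'s are all canonically identified with a single normal projective variety $Y$, and the maps $g_m$ with a single morphism $\Phi_\infty\colon X\to Y$.

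It then remains to produce the ample line bundle $A$ on $Y$ and to show $Y_m=Y$, $\Phi_m=\Phi_\infty$ for all sufficiently large $m$. For $m$ in the cofinal set above, $\Phi_m=h_m\circ\Phi_\infty$ where $h_m\colon Y\to Y_m$ is finite; I would show $h_m$ is an isomorphism for $m$ sufficiently large by proving that global sections of $L^m$ separate fibres of $\Phi_\infty$ (this uses that $h_m$ is birational and finite onto a normal target, provided $Y_m$ is also normal for $m\gg 0$, which follows from further taking powers so that $\Phi_m$ factors through the normalization). Then $A:=\mathcal{O}_{\mathbf{CP}^{d_m}}(1)|_{Y_m}$ is ample on $Y=Y_m$ and $L^m=\Phi_\infty^*A$ by construction, since the sections of $L^m$ used to define $\Phi_m$ are pullbacks of $H^0(Y_m,\mathcal{O}(1))$.

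The main obstacle I expect is the stabilization step: verifying carefully that the Stein factorizations $g_m$ identify with one another canonically, rather than merely abstractly coinciding, so that one obtains a single morphism $\Phi_\infty$ and a well-defined target $Y$. This requires tracking the comparison morphisms $Z_{m+k}\to Z_m$ coherently through the directed system $N(L)$ and invoking normality to upgrade birational finite morphisms to isomorphisms; once that is in place, the rest reduces to unwinding the Proj description $Y=\mathrm{Proj}\bigoplus_m H^0(X,L^m)$ of the section ring, which is finitely generated because $L$ is semi-ample.
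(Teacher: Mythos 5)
The paper itself gives no proof of this statement (it cites Lazarsfeld and Ueno), so the benchmark is the standard semi-ample fibration argument, and your skeleton is exactly that: comparison morphisms $Y_{m+k}\to Y_m$ obtained from products of sections, Stein factorizations $X\to Z_m\to Y_m$, and the fact that a finite morphism with connected fibres onto a normal variety is an isomorphism, giving a single $\Phi_\infty\colon X\to Y$ with $Z_m\cong Y$ for all $m\in M(L):=\{m:\,L^m \text{ globally generated}\}$. That stabilization step, which you flag as the main obstacle, is in fact the routine part. The genuine gap is in your final step, upgrading ``the Stein factorizations stabilize'' to ``$Y_m=Y$ and $\Phi_m=\Phi_\infty$ for all large $m$.'' You propose to use that $h_m\colon Y\to Y_m$ is birational and that $Y_m$ is normal, but neither is an available ingredient: birationality of $h_m$ (equivalently, connectedness of the fibres of $\Phi_m$) is essentially the thing to be proved, and it genuinely requires $m$ large --- e.g.\ for $X$ an elliptic curve and $L$ of degree $2$, $L$ is free, $\Phi_\infty=\mathrm{id}$, yet $h_1\colon X\to Y_1=\mathbf{CP}^1$ is $2{:}1$ --- while your sketch supplies no mechanism by which largeness of $m$ forces sections of $L^m$ to separate fibres of $\Phi_\infty$. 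Likewise, normality of $Y_m$ is a consequence of the conclusion ($Y_m\cong Y$), not something obtained by ``further taking powers.'' The closing appeal to finite generation of $R(X,L)$ ``because $L$ is semi-ample'' is also circular in the usual development, since finite generation is deduced from this very theorem (as the paper does immediately after stating it).

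The standard way to close the gap is to descend $L^m$ to $Y$ and quantify ampleness there. Since $\Phi_\infty$ has connected fibres onto the normal variety $Y$, one has $(\Phi_\infty)_*\mathcal{O}_X=\mathcal{O}_Y$; hence $A_m:=h_m^*\mathcal{O}_{Y_m}(1)$ is ample on $Y$ (pullback of $\mathcal{O}(1)$ by the finite map $h_m$), satisfies $\Phi_\infty^*A_m=L^m$, and the projection formula gives $H^0(Y,A_m)\cong H^0(X,L^m)$, so $h_m$ is exactly the morphism defined by the complete linear series $|A_m|$. Injectivity of $\Phi_\infty^*$ on line bundles (again from $(\Phi_\infty)_*\mathcal{O}_X=\mathcal{O}_Y$) yields $A_{m+k}=A_m\otimes A_k$ for $m,k\in M(L)$. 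Since $M(L)$ is a numerical semigroup, every sufficiently large $m\in M(L)$ can be written as $m'+km_0$ with $m'$ ranging over a fixed finite set and $k$ large, so $A_m=A_{m'}\otimes A_{m_0}^{\otimes k}$ is very ample for all sufficiently large $m\in M(L)$, uniformly in $m'$. Then $h_m$ is a closed embedding, which simultaneously gives $Y_m\cong Y$ (in particular $Y_m$ is normal), $\Phi_m=\Phi_\infty$, and the final assertion $L^m=\Phi_\infty^*A$ with $A=A_m$ ample. With this replacement for your last paragraph, the argument is the standard one the paper refers to.
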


If $L$ is semi-ample, the graded ring $R(X, L) = \oplus_{m\geq 0} H^0( X, L^m)$ is finitely generated and so it is the coordinate ring of $Y$.

Let $X$ be an $n$-dimensional projective manifold. It is recently proved in \cite{BCHM} and \cite{Si}  independently that the canonical ring $R(X, K_X)$ is finitely generated if $X$ is of general type. 
Then the canonical ring induces a rational map from $X$ to its unique canonical model $X_{can}$.  The following theorem is proved in  \cite{EGZ1} when $X$ is of general type (also see \cite{Ts}, \cite{TiZha} for minimal models of general type) and in \cite{SoT2} when $X$ admits an Iitaka fibration over $X_{can}$.

\begin{theorem} \label{canmetric} Let $X$ be an $n$-dimensional projective manifold with $R(X, K_X)$ being finitely generated.

\begin{enumerate}

\item $\kod(X)=n$, then there there exists a unique K\"ahler current $\omega_{KE} \in [K_{X_{can}}]$ with bounded local potential  satisfying the K\"ahler-Einstein equation

\begin{equation}
Ric(\omega_{KE}) = - \omega_{KE}.
\end{equation}

\item $0<\kod(X)<n$, $X$ admits a rational fibration over $X_{can}$ whose general fibre has Kodaira dimension $0$. There exists a unique K\"ahler current $\omega_{can} \in [ K_{X_{can}} +L_{X/X_{can}}]$ such that

\begin{equation}
Ric(\omega_{can}) = -\omega_{can} + \omega_{WP},
\end{equation}
where $L_{X/X_{can}}$ is relative dualizing sheaf and $\omega_{WP}$ is a canonical current of Weil-Peterson type induced from the Calabi-Yau fibration.

\end{enumerate}

\end{theorem}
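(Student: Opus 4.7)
The plan is to reduce both cases to solving a degenerate complex Monge--Amp\`ere equation on a smooth resolution of the canonical model, then passing to the quotient on $X_{can}$. Finite generation of $R(X,K_X)$ guarantees that $X_{can} = \mathrm{Proj}\,R(X,K_X)$ exists as a $\mathbf{Q}$-factorial projective variety with log terminal singularities; moreover $K_{X_{can}}$ is an ample $\mathbf{Q}$-Cartier divisor in case (1), while in case (2) it is $K_{X_{can}} + L_{X/X_{can}}$ that plays this role, by the Fujino--Mori canonical bundle formula applied to the Iitaka fibration.

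For case (1) ($\kod(X) = n$), fix a log resolution $\pi : \tilde{X} \to X_{can}$ and write $K_{\tilde X} = \pi^* K_{X_{can}} + \sum a_E E$ with $a_E > -1$ by log terminality. Choose a smooth semi-positive big representative $\chi \in c_1(\pi^* K_{X_{can}})$. The K\"ahler--Einstein equation $\ric(\omega_{KE}) = -\omega_{KE}$ pulls back to the Monge--Amp\`ere equation
\begin{equation*}
(\chi + \ddbar \varphi)^n = e^{\varphi}\,\Omega, \qquad \Omega = \Big(\prod_E |s_E|^{2 a_E}\Big)\,\Omega_0,
\end{equation*}
for a smooth positive volume form $\Omega_0$ on $\tilde X$. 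Log terminality ($a_E > -1$) is equivalent to $\Omega \in L^{1+\delta}(\tilde X)$ for some $\delta>0$. I would then solve the perturbed equation $(\chi + \epsilon \omega + \ddbar \varphi_\epsilon)^n = e^{\varphi_\epsilon} \Omega$ with $\omega$ an auxiliary K\"ahler metric on $\tilde X$ (Yau's theorem gives smooth $\varphi_\epsilon$), extract a uniform $L^\infty$-bound by Ko{\l}odziej's estimate applied to the $L^{1+\delta}$ right-hand side (EGZ), and pass to the limit to get a bounded $\chi$-plurisubharmonic solution $\varphi$. The current $\chi + \ddbar \varphi$ is $\pi$-trivial along fibres (the potential is constant on connected fibres of $\pi$ by the bounded-potential extension of the maximum principle), so it descends to a K\"ahler current $\omega_{KE}$ on $X_{can}$ with bounded local potentials, and $-\ddbar \log \omega_{KE}^n = -\ddbar \varphi - \ddbar \log \Omega = \omega_{KE}$ gives the K\"ahler--Einstein identity on $(X_{can})_{reg}$.

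For case (2), the Iitaka map $\Phi : X \dashrightarrow X_{can}$ has general fibres of Kodaira dimension zero, which by Yau's theorem carry a unique Ricci-flat metric in their K\"ahler class restricted from a fixed polarization on $X$. Variation of these fibrewise Calabi--Yau metrics defines the Weil--Petersson form $\omega_{WP}$ on a Zariski open subset of $X_{can}$; this is the standard construction used in \cite{SoT2}. After a simultaneous log resolution, the twisted Monge--Amp\`ere equation on a smooth model of $X_{can}$ takes the form
\begin{equation*}
(\chi + \ddbar \varphi)^{\kod(X)} = e^{\varphi}\,F\,\Omega_0,
\end{equation*}
where $\chi$ represents $K_{X_{can}} + L_{X/X_{can}}$ and $F$ is obtained by fibre-integrating the relative canonical volume; log terminality of $X$ together with the Fujino--Mori formula ensures $F \in L^{1+\delta}$. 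The same Ko{\l}odziej/EGZ machinery then produces a unique bounded solution, and the $-\ddbar \log F$ contribution in the Ricci calculation is exactly $\omega_{WP}$ modulo $\chi$, yielding the twisted equation $\ric(\omega_{can}) = -\omega_{can} + \omega_{WP}$.

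The main technical obstacle is the uniform $L^\infty$ estimate on $\varphi_\epsilon$ in the degenerate limit where $\chi$ fails to be K\"ahler along the exceptional locus of $\pi$ (and, in case (2), along the discriminant of the Iitaka fibration). This is where the $L^{1+\delta}$ integrability of the right-hand side -- which in turn encodes the log terminal hypothesis -- is essential, and it is the heart of the Eyssidieux--Guedj--Zeriahi generalization of Ko{\l}odziej's estimate. Uniqueness in both cases follows from a comparison argument: two bounded solutions $\varphi, \varphi'$ of the same Monge--Amp\`ere equation with the exponential right-hand side $e^{\varphi}\Omega$ satisfy $\varphi \equiv \varphi'$ by integrating the inequality $(e^{\varphi} - e^{\varphi'})\Omega = (\chi + \ddbar \varphi)^n - (\chi + \ddbar \varphi')^n$ against $\mathrm{sgn}(\varphi - \varphi')$ and invoking the Bedford--Taylor comparison principle for bounded plurisubharmonic functions.
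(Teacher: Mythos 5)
Your proposal is essentially correct and follows the same route as the proofs this paper relies on: the paper does not prove Theorem \ref{canmetric} itself but quotes it from \cite{EGZ1} (with \cite{Ts}, \cite{TiZha}) for the general type case and from \cite{SoT2} for the fibred case, and your sketch -- resolution of $X_{can}$, reduction to a degenerate Monge--Amp\`ere equation with $L^{1+\delta}$ density encoding log terminality, uniform Ko{\l}odziej/EGZ $L^\infty$ estimates for the perturbed equations, descent of the bounded potential to $X_{can}$, the Fujino--Mori formula and fibrewise Calabi--Yau/Weil--Petersson construction in the intermediate Kodaira dimension case, and uniqueness by the comparison principle for the exponential right-hand side -- is exactly that argument.
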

The closed current $\omega_{WP}$ is exactly the pullback of the Weil-Peterson metric on the moduli space of Calabi-Yau varieties associated to the fibres if $X$ is a smooth minimal model and $K_X$ is semi-ample. When $X$ is not minimal, the general fibre is not necessary a Calabi-Yau variety. One can still define $\omega_{WP}$ as a $L^2$-metric on the deformation space for varieties of $0$ Kodaira dimension. We refer the readers to the precise definition in \cite{SoT2}.

If $\kod(X)=0$, then so there exists a holomorphic volume form $\Omega= (\eta \otimes \overline{\eta})^{1/m}$ for some holomorphic section $\eta\in H^0(X, mK_X)$. It is proved in \cite{SoT2} that   for any ample divisor $H$ on $X$, there exists a K\"ahler current $\omega_{CY}\in H$ with bounded local potential such that
\begin{equation}
(\omega_{CY} )^n = c \Omega
\end{equation}
for some positive constant $c>0$.
Therefore $Ric(\omega_{CY})=0$ outside the stable base locus of the pluricanonical system of $X$. The existence of singular Ricci-flat K\"ahler metrics is proved in \cite{EGZ1} on singular Calabi-Yau varieties.

Such metrics are the unique canonical metrics on projective varieties of non-negative Kodaira dimension and the generalized K\"ahler-Einstein equations can be viewed as an analytic version of the adjunction formula. They are candidates for the limiting metrics of the K\"ahler-Ricci flow.


\subsection{Complex Monge-Amp\`ere equations}  \label{2.2}

In this section, we review some of the important results in degenerate complex Monge-Amp\`ere equations developed by Kolodziej \cite{Ko1} and many others (\cite{Z1}, \cite{EGZ1}, \cite{DP}, \cite{EGZ2}). We start with some basic notations.

\begin{definition} Let $X$ be an $n$-dimensional K\"ahler manifold and $\omega$ be a closed semi-positive $(1,1)$-current on $X$.

\begin{enumerate}

\item $\omega$ is K\"ahler if it is positive.

\item $\omega$ is called big if $[\omega]^n = \int_X \omega^n >0$.

\item $\omega$ is called a K\"ahler current if it is big.

\end{enumerate}

\end{definition}

If $\omega$ is a K\"ahler current with bounded local potential on $X$, the corresponding volume current $\omega^n$ is uniquely well-defined by the standard pluripotential theory. 

\begin{definition} Let $\omega$ be a K\"ahler current with bounded local potential on $X$. A quasi-plurisubharmonic function associated to $\omega$ is an upper semi-continous function $\varphi: X \rightarrow [-\infty, \infty)$ such that $\omega+ \ddbar\varphi \geq 0$.  We denote by $PSH(X, \omega)$ the set of all quasi-plurisubharmonic functions associated to $\omega$ on $X$.

\end{definition}

In \cite{Ko1}, Kolodziej proves the fundamental theorem on the existence of continuous solutions to the Monge-Amp\`ere equation $ (\omega + \ddbar \varphi)^n = F \omega^n $, where $\omega$ is a K\"ahler form  and $F\in L^p(X, \omega^n)$ for some $p>1$ is non-negative.
Its generalization was independently carried out in [Zh] and [EyGuZe1].
They prove  that there is a bounded solution when $\omega$ is
semi-positive and big. A detailed proof for the
continuity of the solution
was given in [DZ] (also see [Zh] for an earlier sketch of proof).
These generalizations are summarized in the
following.
\begin{theorem}\label{zhang} Let $X$ be an $n$-dimensional K\"ahler manifold and let $\omega$ be a K\"ahler current with bounded local potential. Then there exists a unique solution $\varphi\in PSH(X, \omega)\cap L^\infty(X)$ solving the following Monge-Amp\`ere equation

$$ (\omega + \ddbar \varphi)^n =  F\Omega, $$
where $\Omega>0$ is a smooth volume form on $X$, $F\in L^{p}(X, \Omega)$ for some $p>1$ and $\int_X F\Omega = \int_X \omega^n$.
\end{theorem}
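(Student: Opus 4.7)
The plan is to reduce the degenerate equation to the genuinely K\"ahler case by a double approximation argument and then extract uniform pluripotential estimates. Fix an auxiliary K\"ahler form $\omega_0$ on $X$ and set $\omega_\epsilon = \omega + \epsilon \omega_0$, which is a K\"ahler form for every $\epsilon>0$. Choose smooth strictly positive approximations $F_\epsilon \to F$ in $L^p(X,\Omega)$ with $\int_X F_\epsilon \Omega = \int_X \omega_\epsilon^n$; this is possible because $\omega_\epsilon^n \to \omega^n$ in mass and $F\in L^p$. By Yau's theorem applied to the smooth equation $(\omega_\epsilon + \ddbar \varphi_\epsilon)^n = F_\epsilon \Omega$ with normalization $\sup_X \varphi_\epsilon = 0$, we obtain a smooth solution $\varphi_\epsilon \in PSH(X,\omega_\epsilon)$ for each $\epsilon>0$.

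The key step is to obtain a uniform $L^\infty$ bound $\|\varphi_\epsilon\|_\infty \leq C$ independent of $\epsilon$. This is the Kolodziej pluripotential argument in the big/semi-positive setting of Zhang and Eyssidieux-Guedj-Zeriahi. Since $\omega$ is a K\"ahler current, one can write $\omega \geq \delta \omega_0 + \ddbar \rho$ for some $\delta>0$ and bounded quasi-plurisubharmonic $\rho$ (this is Demailly's regularization applied to the big class, and the hypothesis that $\omega$ itself has bounded local potential lets us keep $\rho$ bounded). Using the comparison principle applied to sublevel sets $\{\varphi_\epsilon < -s\}$ and the $L^p$ integrability of $F_\epsilon$ (uniform in $\epsilon$), one estimates the Monge-Amp\`ere capacity associated to $\omega_0$ of these sublevel sets by a function $f(s)$ satisfying Kolodziej's De Giorgi iteration, yielding $f(s)=0$ for $s$ larger than a universal constant. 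This gives the uniform bound $\varphi_\epsilon \geq -C$.

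With this bound in hand, pass to the limit: after extracting a subsequence, $\varphi_\epsilon \to \varphi$ in $L^1(X,\Omega)$ and almost everywhere, with $\varphi \in PSH(X,\omega)\cap L^\infty(X)$. By Bedford-Taylor continuity of the Monge-Amp\`ere operator along monotone (or uniformly bounded, decreasing upper envelope) sequences of bounded plurisubharmonic functions on the regular locus, combined with the convergence $\omega_\epsilon \to \omega$ as currents, one checks $(\omega_\epsilon + \ddbar \varphi_\epsilon)^n \to (\omega + \ddbar \varphi)^n$ weakly; since $F_\epsilon \Omega \to F\Omega$ in $L^1$, the limit equation $(\omega+\ddbar\varphi)^n = F\Omega$ holds in the sense of currents. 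Uniqueness follows from Dinew's (or Kolodziej's) comparison principle for bounded $\omega$-plurisubharmonic functions when $\omega$ is big and semi-positive: if $\varphi_1,\varphi_2$ are two solutions, applying the comparison principle to $\{\varphi_1 < \varphi_2 - \eta\}$ forces this set to have zero Monge-Amp\`ere measure for every $\eta>0$, whence $\varphi_1=\varphi_2$ up to an additive constant, pinned down by the total mass condition.

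The main obstacle is the uniform $L^\infty$ estimate in Step~2: the usual Kolodziej argument is written for a fixed K\"ahler form, whereas here the reference form $\omega_\epsilon$ degenerates as $\epsilon \to 0$ and $\omega$ need not dominate a K\"ahler form globally. The crucial technical point is to run the capacity iteration with respect to the fixed K\"ahler form $\omega_0$ while exploiting the big decomposition $\omega \geq \delta \omega_0 + \ddbar \rho$ to control $\omega_\epsilon$-capacities by $\omega_0$-capacities uniformly in $\epsilon$. Once this is set up correctly, the remainder of the proof is standard pluripotential machinery.
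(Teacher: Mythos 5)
The paper itself does not prove this statement: it is quoted from Kolodziej \cite{Ko1} and its degenerate extensions by Zhang \cite{Z1}, Eyssidieux--Guedj--Zeriahi \cite{EGZ1} and Dinew--Zhang \cite{DZ}, and your overall scheme (Yau solutions for $\omega_\epsilon=\omega+\epsilon\omega_0$ and smooth $F_\epsilon$, a uniform $L^\infty$ bound, weak limits, comparison principle for uniqueness) is the scheme of those papers. The difficulty is in the step you yourself identify as the crucial one, and as written it fails.

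You claim that since $\omega$ is a K\"ahler current with bounded local potential one may write $\omega\ge \delta\omega_0+\ddbar\rho$ with $\rho$ \emph{bounded}. In this paper a ``K\"ahler current'' is only a closed semi-positive $(1,1)$-current with $\int_X\omega^n>0$; the class $[\omega]$ is nef and big but typically not K\"ahler -- the case the theorem is used for is $\omega=\pi^*\omega_Y$ with $\pi$ a birational contraction. If a bounded $\rho$ with $\omega+\ddbar\rho\ge\delta\omega_0$ existed, then for any curve $C$ contracted by $\pi$ (more generally, any subvariety in the null locus of the nef class) one would get $0=[\omega]\cdot C=\int_C(\omega+\ddbar\rho)\ge\delta\int_C\omega_0>0$, a contradiction; it is precisely the boundedness of $\rho$ that makes this cohomological computation legitimate. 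Demailly regularization of a big class produces a strictly positive current only at the cost of potentials with poles along the non-K\"ahler locus, and the bounded local potential of $\omega$ does not remove them. So the capacity iteration as you set it up has no starting point exactly in the degenerate situation that distinguishes this theorem from Kolodziej's K\"ahler case. The actual arguments of \cite{Z1}, \cite{EGZ1} go the other way: they use the upper bound $\omega_\epsilon\le C\omega_0$ (so $PSH(X,\omega_\epsilon)\subset PSH(X,C\omega_0)$ and the relevant capacities are dominated by $\mathrm{Cap}_{C\omega_0}$ uniformly in $\epsilon$), a uniform Skoda/H\"ormander--Tian exponential integrability for normalized quasi-psh functions to get the volume--capacity inequality, and bigness enters only through $\int_X\omega_\epsilon^n\ge\int_X\omega^n>0$, which controls the normalization of $F_\epsilon$. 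Two smaller points: the comparison principle on $\{\varphi_1<\varphi_2-\eta\}$ yields equality of the $F\Omega$-masses of these sets, not that they are null, so when $F$ is allowed to vanish on sets of positive measure the uniqueness claim needs the finer arguments of \cite{EGZ1}, \cite{DZ}; and the condition $\int_X F\Omega=\int_X\omega^n$ does not pin down the additive constant -- uniqueness can only hold modulo constants (or after a normalization such as $\sup_X\varphi=0$).
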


In \cite{Ko2}, Kolodziej proves the stability result for solutions of the complex Monge-Ampere equations for K\"ahler classes. It is later improved by Dinew and Zhang \cite{DZ} (also see \cite{DP} more general cases) for big and semi-ample classes. The following is a version of their result.

\begin{theorem}\label{stability} Let $X$ be an $n$-dimensional compact K\"ahler manifold. Suppose $L\rightarrow X$ is a semi-ample line bundle and $\omega \in c_1(L)$ is a smooth K\"ahler current. Let $\Omega$ be a smooth volume form on $X$. For any non-negative functions $f$ and $g \in L^p(X, \Omega)$ for some $p>1$ with $\int_X f \Omega = \int_X g \Omega$, there exist $\varphi$ and $\psi \in PSH(X, \omega) \cap L^\infty(X)$ solving $$ (\omega+ \ddbar\varphi)^n =f \Omega, ~~~~~ (\omega + \ddbar \psi)^n = g \Omega$$
with $$\max_X (\varphi - \psi) = \max_X (\psi - \varphi).$$

Then for any $\epsilon>0$, there exists $C>0$ depending on $\epsilon$ and $p$, $|| f||_{L^p(X, \Omega)}$ and $||g||_{L^p(X, \Omega)}$ such that

\begin{equation}\label{zhangstable}
||\varphi - \psi||_{L^\infty(X)} \leq C || f - g||_{L^1(X, \Omega)}^{\frac{1}{n+3+\epsilon}}.
\end{equation}

\end{theorem}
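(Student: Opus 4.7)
The plan is to follow the capacity-and-comparison method of Kolodziej, with the refinements of Dinew-Zhang needed to obtain the sharp exponent $1/(n+3+\epsilon)$, adapted to the big and semi-ample reference class $c_1(L)$. Existence of bounded solutions $\varphi,\psi \in PSH(X,\omega) \cap L^\infty(X)$ is already granted by Theorem \ref{zhang}, so all the work is in the quantitative bound. Fix an auxiliary smooth K\"ahler form $\omega_X$ on $X$ (available since $X$ is projective) and define the Monge-Amp\`ere capacity
\[
\mathrm{Cap}(E) \;=\; \sup\Big\{ \int_E (\omega_X + \ddbar u)^n \,:\, u \in PSH(X,\omega_X),\ -1 \le u \le 0 \Big\}.
\]
The first ingredient is Kolodziej's volume-capacity estimate: for every Borel set $E\subset X$ and every non-negative $h \in L^p(X,\Omega)$,
\[
\int_E h\,\Omega \;\le\; C\,\|h\|_{L^p(X,\Omega)}\,F\big(\mathrm{Cap}(E)\big),
\]
with $F(r) \to 0$ faster than any power of $r$ as $r \to 0^+$. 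This estimate is a purely potential-theoretic consequence of the K\"ahlerness of $\omega_X$ and the $L^p$ integrability, and is insensitive to the degeneracy of $\omega$.

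Next I would plug the comparison principle into the sub-level sets of $\varphi - \psi$. By the normalization $\max_X(\varphi - \psi) = \max_X(\psi - \varphi) =: m$, both sets $E^\pm(s) = \{\pm(\varphi - \psi) > s\}$ are non-empty for $0 < s < m$. The Bedford-Taylor comparison principle for bounded quasi-plurisubharmonic functions relative to a big and semi-positive class (which holds in this generality, as used in Eyssidieux-Guedj-Zeriahi) applied on $E^-(s) = \{\varphi < \psi - s\}$ gives
\[
\int_{E^-(s)} g\,\Omega \;=\; \int_{E^-(s)} (\omega + \ddbar\psi)^n \;\le\; \int_{E^-(s)} (\omega + \ddbar(\varphi+s))^n \;=\; \int_{E^-(s)} f\,\Omega,
\]
so $\int_{E^-(s)} (f-g)\,\Omega \ge 0$; the analogous inequality on $E^+(s)$ is obtained by interchanging the roles. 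Combined with the global constraint $\int_X f\,\Omega = \int_X g\,\Omega$, these two inequalities force the $L^1(X,\Omega)$-mass of the difference to concentrate precisely on $E^+(s) \cup E^-(s)$ and give the quantitative control $\int_{E^\pm(s)} |f - g|\,\Omega \le \|f - g\|_{L^1(X,\Omega)}$ that feeds into the next step.

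With these two inputs I would then run the Dinew-Zhang iteration. Set $\tau^\pm(s) = \mathrm{Cap}(E^\pm(s))$. The volume-capacity estimate applied to $h = f + g$, combined with the comparison inequality above, yields a Kolodziej-type functional inequality of the form $s\,\tau^\pm(t+s)^{1/n} \le C\,\tau^\pm(t)\cdot(\text{log correction})$ together with an $L^1$-defect controlled by $\|f-g\|_{L^1(X,\Omega)}$. A standard iteration on decreasing sequences then forces $\tau^\pm(s_0) = 0$ for some explicit $s_0 \le C\,\|f - g\|_{L^1(X,\Omega)}^{1/(n+3+\epsilon)}$. Since a quasi-plurisubharmonic function cannot take values on a sub-level set of zero capacity except on a pluripolar set, upper semi-continuity of $\varphi - \psi$ upgrades this to the pointwise bound $\|\varphi - \psi\|_{L^\infty(X)} \le s_0$, which is the claim.

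The main obstacle is the degeneracy of $\omega$. In Kolodziej's original setting $\omega$ is K\"ahler and its volume form $\omega^n$ serves as the reference measure; here $\omega^n$ vanishes along the exceptional locus of the semi-ample fibration $\Phi_\infty : X \to Y$ of Theorem \ref{safibration}, so one must cleanly separate the reference form $\omega$ (entering the Monge-Amp\`ere operator) from the smooth volume form $\Omega$ (controlling $L^p$ integrability), and the capacity from an auxiliary K\"ahler $\omega_X \ge \omega$. Checking that the comparison principle and capacity estimates survive in the big and semi-ample generality, with constants uniform in the degeneracy, is precisely where the argument becomes delicate; the extra $\epsilon$ in the exponent reflects the borderline step in the iteration where an $L^p$ bound is used only through its $L^1$ consequence.
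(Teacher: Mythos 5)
The paper does not actually prove this statement: it is imported verbatim from the literature (Kolodziej's stability theorem \cite{Ko2}, as extended to big and semi-ample classes by Dinew--Zhang \cite{DZ}, see also \cite{DP}), so there is no internal proof for your argument to be compared with. Your outline is precisely the method of those references: existence from Theorem \ref{zhang}, the Bedford--Taylor comparison principle for bounded $\omega$-psh functions with $\omega$ semi-positive and big (your application on $\{\varphi<\psi-s\}$ is in the correct direction), Kolodziej's volume--capacity domination coming from the $L^p$ bound on the densities, a capacity-decay iteration on sublevel sets, and the final passage from vanishing capacity of $\{\varphi<\psi-s_0\}$ to the pointwise bound. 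As a blueprint it identifies all the right ingredients and correctly isolates the real difficulty, namely that $\omega$ is degenerate while the capacity must be taken with respect to an auxiliary K\"ahler form.

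However, as a proof it has a genuine gap at its center: the step where the exponent $\frac{1}{n+3+\epsilon}$ is produced is asserted, not derived. The ``Kolodziej-type functional inequality'' $s\,\tau^{\pm}(t+s)^{1/n}\le C\,\tau^{\pm}(t)\cdot(\text{log correction})$ with an $L^1$ defect is exactly the content of the Dinew--Zhang argument, and you invoke it as a black box; the only quantitative statement you actually establish in its place, $\int_{E^{\pm}(s)}|f-g|\,\Omega\le \|f-g\|_{L^1(X,\Omega)}$, holds trivially for every Borel set and cannot drive the iteration. Likewise, the bridge between the comparison principle (relative to the degenerate $\omega$) and the capacity (relative to $\omega_X$) requires Kolodziej's interpolation trick of comparing $\varphi+s$ with a convex combination such as $(1-\theta)\psi+\theta\rho$ for a controlled $\omega$-psh $\rho$, and it is exactly this step, together with the uniform $L^\infty$ bounds, that accounts for the loss from the K\"ahler-case exponent to $n+3+\epsilon$; you flag this as ``delicate'' but do not supply it. So the route is the right one and matches the cited sources, but in its present form the proposal is a plan whose decisive quantitative lemma is missing; to make it self-contained you would need to reproduce (or cite, as the paper does) the Dinew--Zhang capacity iteration in the big semi-ample setting.
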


Theorem \ref{stability} can be generalized for the case where the right hand side of the Monge-Ampere equations contains terms such $e^{\varphi}$ by Kolodziej's argument in \cite{Ko2}. Theorem \ref{stability} also holds uniformly for certain family of $\omega$, such as $\omega + \epsilon \chi$ with a fixed K\"ahler metric $\chi$ and $\epsilon \in [0,1]$. Also the sharper exponents are obtained in \cite{DZ} and \cite{DP}.


\subsection{Singularities}  \label{2.3}

We will have to study the behavior of the K\"ahler-Ricci flow on normal projective varieties with singularities because the original smooth manifold might be replaced by varieties with mild singularities through surgery along the flow.

The pluripotential theory on normal varieties has been extensively studied (cf \cite{FN}) . Let $X$ be a normal  variety. A function $f$ on $X$ is continuous (or smooth) if  $f$ can be extended to a continuous (or smooth) function in a local embedding from $X$ to $\mathbf{C}^N$. A plurisubharmonic function  is an upper semi-continuous function $\varphi: V \rightarrow [-\infty, \infty)$ which locally extends to a plurisubharmonic function in a local embedding from $X$ to $\mathbf{C}^N$.  By the work of \cite
, any bounded plurisubharmonic function on $X_{reg}$, the nonsingular part of $X$,  can be uniquely extended to a plurisubharmonic function on $X$.
Let $X$ be a normal projective variety and $\omega$ be a semi-positive closed $(1,1)$-current  on $X$. We let $PSH(X, \omega) $ be the set of all upper semi-continuous functions $\varphi: X \rightarrow [-\infty, \infty)$ such that $\omega+ \ddbar \varphi\geq 0$.

In this paper, we confine our discussions to  projective varieties with mild singularities which are allowed in the Minimal Model Program in algebraic geometry.

\begin{definition} \cite{KMM} Let $X$ be a normal projective variety such that $K_X$ is a $\mathbf{Q}$-Cartier divisor. Let $ \pi : \tilde{X} \rightarrow X$ be a resolution and $\{E_i\}_{i=1}^p$ the irreducible components of the exceptional locus $Exc(\pi)$ of $\pi$. There there exists a unique collection $a_i\in \mathbf{Q}$ such that
$$K_{\tilde{X}} = \pi^* K_X + \sum_{i=1}^{ p } a_i E_i .$$ Then $X$ is said to have

\begin{enumerate}

\item[$\bullet$] terminal singularities if  $a_i >0$, for all $i$.

\item[$\bullet$] canonical singularities if $a_i \geq 0$, for all $i $.

\item[$\bullet$]  log terminal singularities if $a_i > -1$, for all $i $.

\item[$\bullet$]  log canonical singularities if $a_i \geq -1$, for all $i$.

\end{enumerate}

\end{definition}

Terminal, canonical and log terminal singularities are always rational, while log canonical singularities are not necessarily rational. We can always assume that the resolution $\pi$ is good enough such that the exceptional locus is a simple normal crossing divisor.

\begin{definition}  A variety $X$ is $\mathbf{Q}$-factorial if any $\mathbf{Q}$-Weil divisor on $X$ is $\mathbf{Q}$-Cartier.

\end{definition}

Kodaira's lemma states that for any big and nef line divisor $H$ on $X$, there always exists an effective divisor $E$ such that $H-\epsilon E$ is ample for any sufficiently small $\epsilon>0$. Let $\pi: \tilde{X} \rightarrow X$ be a birational morphism between two projective varieties and $Exc(\pi)$ be the exceptional locus of $\pi$, where $\pi$ is not isomorphic.
The following proposition is a special case of Kodaira's lemma and the support of $E$ exactly coincides with $Exc(\pi)$ (see \cite{D}).
\begin{proposition}\label{qfac}
If $X$ is normal and $\mathbf{Q}$-factorial, then for any ample $\mathbf{Q}$-divisor $H$ on $X$, there exists an effective divisor $E$ on $\tilde{X}$ whose support is $Exc(\pi)$ and $\pi^*H - \epsilon E$ is ample for sufficiently small  $\epsilon>0$.
\end{proposition}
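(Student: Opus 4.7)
The plan is to reduce the proposition to finding an effective $\pi$-exceptional $\mathbf{Q}$-divisor $E$ on $\tilde{X}$ whose support is exactly $Exc(\pi)$ and for which $-E$ is $\pi$-ample (i.e., ample relative to $X$). Once such an $E$ is produced, the standard characterization of relative ampleness gives that $\pi^*(mH)-E$ is ample on $\tilde{X}$ for all sufficiently large integers $m$, since $H$ itself is ample on $X$. Taking $\epsilon=1/m$ then yields ampleness of $\pi^*H-\epsilon E$ for all sufficiently small $\epsilon>0$, which is precisely the conclusion.

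The construction of $E$ is where the $\mathbf{Q}$-factoriality of $X$ enters. Since $\pi$ is projective, there is a $\pi$-ample $\mathbf{Q}$-Cartier divisor $A$ on $\tilde{X}$. By $\mathbf{Q}$-factoriality the Weil divisor $\pi_{*}A$ is automatically $\mathbf{Q}$-Cartier on $X$, so we may form $E:=\pi^{*}(\pi_{*}A)-A$ on $\tilde{X}$. Its pushforward is zero, so $E$ is $\pi$-exceptional; and $-E=A-\pi^{*}(\pi_{*}A)$ is still $\pi$-ample because $\pi^{*}(\pi_{*}A)$ is $\pi$-numerically trivial. Since $-E$ is $\pi$-nef and $\pi$-exceptional, the Negativity Lemma for birational morphisms forces $E$ to be effective.

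To check that $\mathrm{supp}(E)=Exc(\pi)$, write $E=\sum a_{i}E_{i}$ and suppose for contradiction that some $a_{i_{0}}=0$. I would pick a complete curve $C\subset E_{i_{0}}$ contracted by $\pi$ and chosen generically so that it is not contained in $E_{i}\cap E_{i_{0}}$ for any $i\neq i_{0}$; then each $E_{i}\cdot C\geq 0$ for $i\neq i_{0}$, so $E\cdot C\geq 0$. This contradicts $(-E)\cdot C>0$, which holds because $-E$ is $\pi$-ample and $C$ lies in a fiber of $\pi$. Hence every component of $Exc(\pi)$ appears in $E$ with strictly positive coefficient.

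The main obstacle is the construction-plus-support step for $E$: it is here that the $\mathbf{Q}$-factoriality hypothesis is genuinely used (to make $\pi^{*}(\pi_{*}A)$ meaningful as a $\mathbf{Q}$-Cartier divisor) and here that the Negativity Lemma provides effectivity. The passage from the relative ampleness of $-E$ together with the ampleness of $H$ to the global ampleness of $\pi^{*}H-\epsilon E$ is then essentially formal.
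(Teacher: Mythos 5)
The paper never proves this proposition in-house: it is quoted as a special case of Kodaira's lemma, with the statement about the support of $E$ attributed to the reference [D]. Your argument is correct and is, in substance, the standard proof from that literature (push forward a relatively ample divisor $A$, use $\mathbf{Q}$-factoriality to make $\pi_*A$ $\mathbf{Q}$-Cartier, set $E=\pi^*\pi_*A-A$, get effectivity from the negativity lemma, and trade the $\pi$-ampleness of $-E$ plus the ampleness of $H$ for ampleness of $\pi^*H-\epsilon E$). It is in fact more informative than the route suggested by the paper's phrasing: Kodaira's lemma applied to the big and nef class $\pi^*H$ produces some effective $E$, but gives no control on its support, whereas your construction produces an exceptional $E$ with $-E$ relatively ample, which is exactly what pins the support down.

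Two points deserve tightening, though neither is a wrong turn. First, the conclusion $\mathrm{supp}\,E=Exc(\pi)$ also requires that $Exc(\pi)$ have no components of codimension at least two; your contradiction argument as written only treats a divisorial component whose coefficient vanishes. The identical intersection computation handles the general case: if some component $Z$ of $Exc(\pi)$ were not contained in $\mathrm{supp}\,E$, pick $z\in Z\setminus\mathrm{supp}\,E$ and a complete $\pi$-contracted curve $C$ through $z$ (such a curve exists because $X$ is normal, so by Zariski's main theorem every fiber over $\pi(Exc(\pi))$ is positive dimensional and lies in $Exc(\pi)$); then $C$ is contained in no component of $E$, so $E\cdot C\geq 0$, contradicting $(-E)\cdot C>0$. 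This simultaneously shows that $Exc(\pi)$ is purely divisorial and that every exceptional prime divisor occurs in $E$ with strictly positive coefficient — and it also supplies the existence of the contracted curve you "would pick", which you did not justify. Second, taking $\epsilon=1/m$ only gives a sequence of admissible $\epsilon$; to get all sufficiently small $\epsilon>0$, write $\pi^*H-\epsilon E$ as a positive combination of the ample $\pi^*H-\epsilon_0E$ and the nef $\pi^*H$ for $0<\epsilon\leq\epsilon_0$.
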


It is also well-known that $\mathbf{Q}$-factoriality is preserved after divisorial contractions and flips in the Minimal Model Program. $\mathbf{Q}$-factoriality is a necessary condition in our discussion because we need the canonical divisor to be a Cartier $\mathbf{Q}$-divisor in order to define a volume form appropriately.



\section{Monge-Amp\`ere flows} \label{3}


\subsection{Monge-Amp\`ere flows with rough initial data} \label{3.1}

In this section, we will prove the smoothing property of the K\"ahler-Ricci flow with rough initial data. We will assume that $X$ is an $n$-dimensional K\"ahler manifold.

\begin{definition}\label{lpmass}
 Suppose $\omega$ is a K\"ahler form and $\Omega$ is a smooth volume form on $X$. Then we define for $p\in (0, \infty]$,

$$PSH_p(X, \omega, \Omega)=\{\varphi\in PSH(X, \omega)\cap L^\infty(X)~|~ \frac{(\omega+\ddbar\varphi)^n }{\Omega} \in L^p(X)\}.$$



\end{definition}

Note that $(\omega+\ddbar\varphi)^n$ is a well-defined Monge-Amp\`ere mass for bounded $\omega$-psh function $\varphi$. In Definition \ref{lpmass}, the Monge-Amp\`ere mass $(\omega+\ddbar\varphi)^n$ must be absolutely continuous with respect to $\Omega$ in order to define $\frac{(\omega+\ddbar\varphi)^n }{\Omega} \in L^p(X)$. Suppose $\varphi_0\in PSH_p(X, \omega_0, \Omega)$ for some $p>1$. Let $$F= \frac{(\omega_0 + \ddbar\varphi_0)^n}{\Omega} \in L^p(X).$$ By Kolodziej's result \cite{Ko1}, $$\varphi_0\in C^0(X).$$

The following proposition shows that any element in $PSH_p(X, \omega, \Omega)$ for $p>1$ can be uniformly approximated by smooth quasi-plurisubharmonic functions.

\begin{proposition} \label{app2} There exist a sequence  $\{\varphi_{0,j}\}_{j=1}^\infty \subset PSH(X, \omega_0) \cap C^\infty(X)$ such that

\begin{equation}
\lim_{j\rightarrow \infty} ||\varphi_{0, j} - \varphi_0 ||_{L^\infty(X)} = 0.
\end{equation}

\end{proposition}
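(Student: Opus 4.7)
The plan is to combine Yau's theorem with the stability theorem (Theorem \ref{stability}) stated earlier. The idea is that $\varphi_0$ is essentially determined up to an additive constant by its Monge-Amp\`ere measure, so one should regularize that measure rather than the potential directly.

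Set $F = (\omega_0 + \ddbar\varphi_0)^n / \Omega \in L^p(X)$, with $p > 1$. Cohomological invariance of the total Monge-Amp\`ere mass gives $\int_X F \, \Omega = \int_X \omega_0^n$. First, I would produce a sequence of smooth strictly positive densities $F_j$ on $X$ with $\int_X F_j \, \Omega = \int_X \omega_0^n$ and $F_j \to F$ in $L^p(X,\Omega)$. Concretely, apply a standard mollifier (via a finite cover by coordinate charts and a partition of unity) to $\max(F, 1/j)$ to obtain a smooth positive function $G_j$, and then rescale by the constant $c_j = \int_X \omega_0^n / \int_X G_j \, \Omega$, which tends to $1$; the resulting $F_j = c_j G_j$ has the correct total mass, is smooth and positive, and still converges to $F$ in $L^p$.

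Next, apply Yau's theorem to each $F_j$: there is a smooth $\omega_0$-psh function $\varphi_{0,j}\in C^\infty(X)$ solving
\begin{equation*}
(\omega_0 + \ddbar \varphi_{0,j})^n = F_j \, \Omega,
\end{equation*}
and this solution is unique up to an additive constant. Fix the constant by the normalization $\max_X (\varphi_{0,j} - \varphi_0) = \max_X (\varphi_0 - \varphi_{0,j})$, which is precisely the normalization required by Theorem \ref{stability}.

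Finally, invoke Theorem \ref{stability} with $f = F_j$, $g = F$, $\omega = \omega_0$: choosing any $\epsilon > 0$, we obtain
\begin{equation*}
\|\varphi_{0,j} - \varphi_0\|_{L^\infty(X)} \leq C \, \|F_j - F\|_{L^1(X,\Omega)}^{\frac{1}{n+3+\epsilon}},
\end{equation*}
with $C$ depending on $p$, $\epsilon$, and a uniform $L^p$ bound for $F_j$ (which holds since $F_j \to F$ in $L^p$). Since $\|F_j - F\|_{L^1} \to 0$, uniform convergence $\varphi_{0,j} \to \varphi_0$ follows. The only non-routine point is that Theorem \ref{stability} requires a uniform $L^p$ control on the right-hand sides, which is automatic here from the $L^p$-convergence $F_j \to F$; aside from that, all steps are standard given the tools already cited in Section 2.2.
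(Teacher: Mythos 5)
Your proposal is correct and follows essentially the same route as the paper: approximate the Monge-Amp\`ere density $F$ by smooth positive densities $F_j$ of equal total mass, solve $(\omega_0+\ddbar\varphi_{0,j})^n=F_j\Omega$ by Yau's theorem, normalize by $\sup_X(\varphi_{0,j}-\varphi_0)=\sup_X(\varphi_0-\varphi_{0,j})$, and conclude by Kolodziej's stability theorem (Theorem \ref{stability}). Your mollifier construction of $F_j$ merely spells out the density of $C^\infty(X)$ in $L^p(X)$ that the paper invokes directly, and the slight difference in the stability exponent is immaterial.
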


\begin{proof}

Recall that $C^\infty(X)$ is dense in $L^p(X)$. Therefore there exists a sequence of positive functions $\{F_j\}\in C^\infty(X)$ such that $\int_X F_j \Omega = \int_X F \Omega$ and

$$\lim_{j\rightarrow \infty} ||F_j -F||_{L^p(X)} = 0.$$

We then consider the solutions of the following Monge-Amp\`ere equations

\begin{equation}
(\omega_0 + \ddbar\varphi_{0, j})^n = F_j \Omega.
\end{equation}

Since $F_j \in C^\infty(X)$ and $F_j>0$, $\varphi_{0, j} \in PSH(X, \omega_0)\cap C^\infty(X)$. Without loss of generality, we can assume $$ \sup_X (\varphi_0 - \varphi_{0,j}) = \sup_X (\varphi_{0,j}- \varphi_0).$$

By the stability theorem of Kolodziej \cite{Ko2} (Theorem \ref{stability}), we have

$$||\varphi_{0, j} - \varphi_0||_{L^\infty(X)}\leq C ||F_j - F||_{L^1(X)}^{\frac{1}{n+4}}$$
where $C$ only depends on $||F_j||_{L^p(X)}$ and $ ||F||_{L^p(X)}$. The proposition follows easily.

\qed\end{proof}







Now we have a sequence of smooth K\"ahler forms $$\omega_{0, j} = \omega_0 + \ddbar \varphi_{0,j}.$$

Let $\chi= \ddbar\log \Omega \in [K_X]$ and $\omega_t = \omega_0 + t\chi$. By simple calculation,  one can show that the unnormalized K\"ahler-Ricci flow with the initial K\"ahler metric $\omega_{0, j}$ is equivalent to the following Monge-Amp\`ere flow

\begin{equation}\label{maflow_deg1}
\left\{
\begin{array}{rcl}
&&{ \displaystyle \ddt{\varphi_j} = \log
 \frac{ ( \omega_t + \ddbar \varphi_j ) ^n } {\Omega} }\\
&&\\
&& \varphi_j(0,\cdot)=\varphi_{0,j} .
\end{array} \right.
\end{equation}

We define $$T_0= \sup\{ t\geq 0~|~ [\omega_0] + t [K_X] ~is ~nef~ \}$$ to be the first time when the K\"ahler class stops being positive along the K\"ahler-Ricci flow.  It is well-known that $T_0>0$ and by the result of \cite{TiZha}, the Monge-Amp\`ere flow exists for $[0, T_0)$.

The following lemma shows that the Monge-Amp\'ere flows starting with $\varphi_{0, j}$ approximate the same flow starting with $\varphi_0$.

\begin{lemma} \label{cauchy1}For any $0<T < T_0$, there exists  $C>0$ such that for $t\in [0, T]$,

\begin{equation}
||\varphi_{j}||_{L^\infty(X)} \leq C.
\end{equation}

Furthermore, $\{\varphi_j\}$ is a Cauchy sequence in $L^\infty([0, T]\times X)$, i.e.,

\begin{equation}
\lim_{j, k\rightarrow \infty} ||\varphi_j - \varphi_k||_{L^\infty([0,T] \times X )} = 0.
\end{equation}

\end{lemma}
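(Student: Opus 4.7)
The proof splits into the uniform $L^\infty$ bound and the Cauchy property, both obtained from the parabolic maximum principle applied to the Monge-Amp\`ere flow (\ref{maflow_deg1}).

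\textbf{Uniform $L^\infty$ bound.} Proposition 3.1 gives $\|\varphi_{0,j}\|_{L^\infty(X)} \leq M$ uniformly in $j$. Because each $\varphi_{0,j}$ is smooth and $\omega_{0,j}$ is Kähler, the standard short-time theory produces a smooth flow $\varphi_j$ on $[0, T_0)$ with $\omega_t + \ddbar \varphi_j$ strictly positive throughout. Since $[\omega_t] = [\omega_0] + t[K_X]$ is Kähler on any $[0, T]$ with $T < T_0$, one can fix a smooth family $\rho_t$ so that $\hat\omega_t := \omega_t + \ddbar \rho_t$ is an actual Kähler metric, with $\|\rho_t\|_{C^0([0,T]\times X)}$ and $\|\partial_t \rho_t\|_{C^0([0,T]\times X)}$ bounded. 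For the upper bound, apply the maximum principle to $\varphi_j - \rho_t - At$ with $A$ large: at a spatial maximum one has $\omega_t + \ddbar \varphi_j \leq \hat\omega_t$, whence $\partial_t(\varphi_j - \rho_t - At) \leq \log(\hat\omega_t^n/\Omega) - \partial_t \rho_t - A \leq 0$. The lower bound is obtained symmetrically, comparing $\varphi_j + \rho_t + Bt$ with an auxiliary potential $\psi_t$ solving $(\hat\omega_t + \ddbar \psi_t)^n = c_t \Omega$ time-slice by time-slice. All constants depend only on $M$, $T$, and the fixed background data $(X,\omega_0,\chi,\Omega)$, giving the asserted uniform bound on $\|\varphi_j\|_{L^\infty([0,T]\times X)}$. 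This is the Tian-Zhang a priori estimate applied in our setting.

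\textbf{Cauchy property.} Set $u_{jk} = \varphi_j - \varphi_k$. Subtracting the two flow equations,
\[
\partial_t u_{jk} \,=\, \log \frac{(\omega_t + \ddbar \varphi_j)^n}{(\omega_t + \ddbar \varphi_k)^n}.
\]
At a spatial maximum of $u_{jk}(t,\cdot)$ for fixed $t>0$, $\ddbar u_{jk} \leq 0$ as a $(1,1)$-form, hence $\omega_t + \ddbar \varphi_j \leq \omega_t + \ddbar \varphi_k$ as positive Hermitian forms; monotonicity of the determinant on the positive cone then forces $\partial_t u_{jk} \leq 0$ at that point. The parabolic maximum principle yields
\[
\max_X u_{jk}(t,\cdot) \,\leq\, \max_X(\varphi_{0,j} - \varphi_{0,k}),
\]
and the analogous argument at a spatial minimum gives the matching lower bound. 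Combining,
\[
\|\varphi_j - \varphi_k\|_{L^\infty([0,T]\times X)} \,\leq\, \|\varphi_{0,j} - \varphi_{0,k}\|_{L^\infty(X)},
\]
which tends to $0$ as $j, k \to \infty$ by Proposition 3.1.

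\textbf{Main obstacle.} The Cauchy part is essentially a textbook parabolic comparison; the real work sits in the uniform $L^\infty$ bound, which is delicate because $[\omega_t]$ degenerates as $t \to T_0^-$ and the bare form $\omega_t$ is not pointwise positive in general. Handling this requires passing to the Kähler representative $\hat\omega_t$ on each strict subinterval $[0,T]\subset[0,T_0)$ and running two barrier arguments — one for the upper bound, one for the lower bound — with constants depending on $T$ but independent of $j$.
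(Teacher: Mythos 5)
Your proof is correct and follows essentially the same route as the paper: a direct maximum-principle bound for $\varphi_j$ on $[0,T]$, and the comparison of $u_{jk}=\varphi_j-\varphi_k$ via the quotient Monge-Amp\`ere flow with the maximum principle, which is exactly the paper's Cauchy argument reducing everything to $\|\varphi_{0,j}-\varphi_{0,k}\|_{L^\infty(X)}\to 0$. The only difference is technical polish: the paper bounds $\sup_{[0,T]\times X}|\varphi_j|$ by $T\sup_X|\log(\omega_t^n/\Omega)|+\sup_X|\varphi_{0,j}|$, implicitly treating $\omega_t$ as a positive reference, whereas you pass to a K\"ahler representative $\hat\omega_t=\omega_t+\ddbar\rho_t$ of the class on $[0,T]$, which cleanly covers the case where the fixed form $\omega_t=\omega_0+t\chi$ is not pointwise positive (and your auxiliary slice-wise potential $\psi_t$ for the lower bound is admissible but not needed, since the minimum principle against $\hat\omega_t$ already suffices).
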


\begin{proof}

Applying  the maximum principle to $\varphi_j$, we can show that there exists $C>0$ such that $$    \sup_{[0, T]\times X} |\varphi_j| \leq T \sup_X | \log \frac{  \omega_t^n}{\Omega}|   + \sup_X | \varphi_{0, j} |\leq C.$$

Let $\psi_{j,k} = \varphi_j -\varphi_k$. Then $\psi_{j, k}$ satisfies the following equation

\begin{equation}\label{maflow_deg2}
\left\{
\begin{array}{rcl}
&&{ \displaystyle \ddt{\psi_{j,k} } = \log
 \frac{ ( \omega_t + \ddbar \varphi_{k} + \ddbar\psi_{j,k} ) ^n } {(\omega_t + \ddbar \varphi_k)^n } }\\
&&\\
&& \psi_{j,k}(0,\cdot)= \varphi_{0,j}-\varphi_{0,k} .
\end{array} \right.
\end{equation}

By the maximum principle, $$ \sup_{[0. T]\times X} |\varphi_j - \varphi_k| = \sup_{[0. T]\times X} |\psi_{j,k} |\leq \sup_X |\varphi_{0, j} - \varphi_{0,k}|.$$

Then

$$\lim_{j, k\rightarrow \infty} ||\varphi_j - \varphi_k||_{L^\infty([0,T] \times X )} \leq \lim_{j, k\rightarrow \infty} ||\varphi_{0,j} - \varphi_{0,k} ||_{L^\infty( X )}= 0.$$

\qed\end{proof}

We also can bound the volume form along the Monge-Amp\`ere flow, even though the initial volume form is only in $L^p(X)$.

\begin{lemma}\label{volsm} For any $0<T< T_0$, there exists $C>0$, such that for $t \in [0, T]$.

\begin{equation}
\frac{t^n}{C}  \leq  \frac{ (\omega_t + \ddbar\varphi_j)^n}{\Omega}  \leq   e^{ \frac{ C } { t } }.
\end{equation}

\end{lemma}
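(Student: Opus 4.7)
The plan is to prove the two-sided volume bound by the parabolic maximum principle applied to barrier quantities built from $\varphi_j$ and $\dot\varphi_j$. Writing $\omega_{\varphi_j} = \omega_t + \ddbar\varphi_j$, equation (\ref{maflow_deg1}) reads $\dot\varphi_j = \log(\omega_{\varphi_j}^n/\Omega)$, and differentiating in $t$ gives the two linearized identities
\begin{equation*}
(\partial_t - \Delta_{\omega_{\varphi_j}})\dot\varphi_j = \mathrm{tr}_{\omega_{\varphi_j}}\chi, \qquad (\partial_t - \Delta_{\omega_{\varphi_j}})\varphi_j = \dot\varphi_j - n + \mathrm{tr}_{\omega_{\varphi_j}}\omega_t,
\end{equation*}
which, together with the relation $\omega_t - t\chi = \omega_0$, are the only tools I will need.

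For the upper bound $\omega_{\varphi_j}^n \leq e^{C/t}\Omega$, I would test the quantity $H_1 = -t\dot\varphi_j + \varphi_j + nt$. A direct computation using the two identities yields $(\partial_t - \Delta_{\omega_{\varphi_j}})H_1 = \mathrm{tr}_{\omega_{\varphi_j}}\omega_0 \geq 0$, so by the parabolic minimum principle $H_1$ attains its infimum at $t = 0$, where $H_1(0,\cdot) = \varphi_{0,j}$ is uniformly bounded by Proposition \ref{app2}. Combined with the uniform $L^\infty$-bound on $\varphi_j$ from Lemma \ref{cauchy1}, this gives $t\dot\varphi_j \leq \varphi_j - \varphi_{0,j} + nt \leq C$, and hence $\omega_{\varphi_j}^n = e^{\dot\varphi_j}\Omega \leq e^{C/t}\Omega$.

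For the lower bound $\omega_{\varphi_j}^n \geq (t^n/C)\Omega$, I would run an Aubin--Yau / parabolic Schwarz argument against the fixed smooth reference $\omega_0$. The standard bisectional curvature computation gives $(\partial_t - \Delta_{\omega_{\varphi_j}})\log \mathrm{tr}_{\omega_{\varphi_j}}\omega_0 \leq B\,\mathrm{tr}_{\omega_{\varphi_j}}\omega_0$ for some $B$ depending only on $\omega_0$. Since $\mathrm{tr}_{\omega_{\varphi_j}}\omega_0$ may blow up at $t = 0$ as $j\to\infty$, I insert a $\log t$ factor and consider
\begin{equation*}
G = \log t + \log \mathrm{tr}_{\omega_{\varphi_j}}\omega_0 - A\varphi_j,
\end{equation*}
where $A$ is chosen large enough, relative to $B$ and $\sup|\chi|_{\omega_0}$, so that the $\mathrm{tr}_{\omega_{\varphi_j}}\omega_0$ contribution from the Schwarz term is dominated by the negative contribution $-A\,\mathrm{tr}_{\omega_{\varphi_j}}\omega_t$ coming from $-A\Delta\varphi_j$ on $[0,T]$. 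At an interior maximum of $G$ (the only possibility, since $G\to-\infty$ at $t=0$) one reads off $\mathrm{tr}_{\omega_{\varphi_j}}\omega_0 \leq 1/t - A\dot\varphi_j + An$; combining this with the arithmetic-geometric mean inequality $\mathrm{tr}_{\omega_{\varphi_j}}\omega_0 \geq n(\omega_0^n/\omega_{\varphi_j}^n)^{1/n}$ forces $-\dot\varphi_j \leq C - n\log t$ at that point, which bootstraps to $G \leq C$ globally. Hence $\mathrm{tr}_{\omega_{\varphi_j}}\omega_0 \leq C/t$ everywhere, and a final application of AM--GM produces $\omega_{\varphi_j}^n \geq (nt/C)^n \omega_0^n \geq (t^n/C')\Omega$.

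The principal obstacle is keeping every constant independent of $j$. Because $\omega_{0,j}^n$ converges only in $L^p$, no estimate may involve $\sup_X|\dot\varphi_j(0,\cdot)|$ or $\inf_X \omega_{0,j}^n$; this is precisely why the upper bound is phrased through $\varphi_j - \varphi_{0,j}$ and the lower bound incorporates the $\log t$ shift that excludes $t = 0$ from the competition for the maximum of $G$. With these choices the final constants depend only on $T$, the uniform $L^\infty$-norm of $\{\varphi_{0,j}\}$, $\sup|\chi|_{\omega_0}$, and the bisectional curvature of $\omega_0$, all of which are $j$-independent, so the estimate passes cleanly to the limit $j\to\infty$ in the forthcoming argument.
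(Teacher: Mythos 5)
Your proposal is correct. The upper bound is essentially the paper's own argument: the paper applies the maximum principle to $H^+ = t\dot\varphi_j - \varphi_j$, for which $(\partial_t-\Delta_j)H^+ = -\mathrm{tr}_{\omega_j}\omega_0 + n \le n$; your $H_1 = -t\dot\varphi_j+\varphi_j+nt$ is the same quantity up to sign and a linear-in-$t$ correction, and both reduce to the uniform bound on $\varphi_j$ from Lemma \ref{cauchy1}. For the lower bound, however, you take a genuinely different route. The paper works directly with $H^- = \dot\varphi_j + A\varphi_j - n\log t$: the good term $\mathrm{tr}_{\omega_j}(A\omega_t+\chi)\ge \mathrm{tr}_{\omega_j}\omega_0 \ge n\bigl(\omega_0^n/\omega_j^n\bigr)^{1/n}$ appears with the right sign from the evolution of $\dot\varphi_j + A\varphi_j$, so no curvature input and no Schwarz lemma are needed, and the volume lower bound drops out at the spatial minimum of $H^-$. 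You instead run the parabolic Schwarz lemma against the fixed target $\omega_0$ on $G=\log t+\log\mathrm{tr}_{\omega_j}\omega_0-A\varphi_j$, obtain $\mathrm{tr}_{\omega_j}\omega_0\le C/t$ everywhere, and convert this to the volume bound by AM--GM; I checked the dichotomy at the maximum of $G$ (either $\omega_j^n/\omega_0^n$ is bounded below by a constant, or $n(\omega_0^n/\omega_j^n)^{1/n}\le 2/t$) and it closes correctly. Your route costs an extra hypothesis-free but nontrivial ingredient (the bisectional curvature upper bound of $\omega_0$ entering the Schwarz inequality), but it buys a strictly stronger intermediate estimate, namely the metric lower bound $\omega_j \ge (t/C)\,\omega_0$, which complements the paper's Lemma \ref{c2smoothing} bound on $\mathrm{tr}_{\omega_0}\omega_j$, whereas the paper's argument yields only the volume form bound. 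One shared caveat: both your choice of $A$ (to dominate $B\,\mathrm{tr}_{\omega_j}\omega_0$ by $A\,\mathrm{tr}_{\omega_j}\omega_t$) and the paper's requirement $A\omega_t+\chi\ge\omega_0$ implicitly use that $\omega_t$ is comparable to $\omega_0$ pointwise on $[0,T]$, which for $T$ close to $T_0$ requires adjusting the reference family as in Remark 3.1 of Section \ref{3.2}; since the paper's own proof glosses over this in the same way, it is not a gap attributable to you.
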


\begin{proof} Let $\Delta_j$ be the Laplacian operator associated to the K\"ahler form $\omega_j= \omega_t + \ddbar\varphi_j$. Straightforward calculations show that $( \ddt{} - \Delta_j ) \dot{\varphi}_j = tr_{\omega_j} (\chi)$.

Let $H^+ =  t \dot{\varphi_j} -  \varphi_j$. Then $H^+(0, \cdot) = - \varphi_j$ is uniformly bounded and

$$(\ddt{} - \Delta_j ) H^+ = - tr_{\omega_j} (\omega_t - t \chi) + n = - tr_{\omega_j} (\omega_0) + n\leq n.$$
By the maximum principle, $H^+$ is uniformly bounded from above for $t\in[0, T]$.

Let $H^- = \dot{\varphi}_j+ A \varphi_j - n \log t$. Then $H^-(t, \cdot) $ tends to $\infty$ uniformly as $t\rightarrow 0^+$ and there exist constants $C_1$, $C_2$ and $C_3>0$ such that

\begin{eqnarray*}
(\ddt{} - \Delta_j ) H^- &=&  tr_{\omega_j} (A\omega_t + \chi) + A\dot{\varphi_j} - \frac{n}{t}- An\\
%
%
&\geq&C_1 (\frac{\omega_0^n}{\omega_j^n})^{\frac{1}{n}}+ A \log \frac{\omega_j^n}{\Omega} - \frac{n}{t}- An\\
&\geq& C_2  (\frac{\omega_0^n}{\omega_j^n})^{\frac{1}{n}} -\frac{C_3}{t},
\end{eqnarray*}
if $A$ is chosen sufficiently large such that $A\omega_t +  \chi \geq \omega_0$ for $t\in [0, T]$.  Then at the minimal point of $H^-$, the maximum principle gives $$\omega_j^n \geq C_4 t^n \Omega.$$
It easily follows that $H^-$ is uniformly bounded from below for $t\in [0,T]$.

Since $\varphi_j$ is uniformly bounded for $t\in [0,T]$, the lemma is proved.

\qed
\end{proof}

The following smoothing lemma shows that the approximating metrics become uniformly bounded immediately along the Monge-Amp\`ere flow.

\begin{lemma} \label{c2smoothing} For any $0<T< T_0$, there exists  $C>0$ such that for $t\in (0, T]$,

\begin{equation}
 tr_{\omega_0} (\omega_j) \leq e^{\frac{C}{t}}.
\end{equation}

\end{lemma}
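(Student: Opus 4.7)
The plan is to apply the parabolic maximum principle to the test function
$$H \;:=\; t\,\log\, tr_{\omega_0}(\omega_j)\;-\;A\,\varphi_j,$$
for a constant $A>0$ to be chosen large. The prefactor $t$ is essential: although $tr_{\omega_0}(\omega_{0,j})$ need not be bounded uniformly in $j$ (the smoothings $\varphi_{0,j}$ have no $j$-independent $C^2$ bound), the quantity $H(0,\cdot) = -A\varphi_{0,j}$ is uniformly bounded in $j$ by Lemma \ref{cauchy1}. A bound of the form $\sup_{[0,T]\times X} H \leq C$ therefore translates directly into the desired inequality $\log tr_{\omega_0}(\omega_j) \leq (C + A\sup_j||\varphi_j||_{L^\infty})/t$.

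Two computational inputs drive the maximum-principle calculation. The first is the parabolic Aubin--Yau / Chern--Lu second-order inequality along the K\"ahler-Ricci flow,
$$(\partial_t - \Delta_j)\,\log\,tr_{\omega_0}(\omega_j) \;\leq\; C_0\,tr_{\omega_j}(\omega_0),$$
with $C_0$ depending only on an upper bound for the bisectional curvature of $\omega_0$. The second is the evolution identity derived from (\ref{maflow_deg1}),
$$(\partial_t - \Delta_j)\,\varphi_j \;=\; \log\frac{\omega_j^n}{\Omega}\;-\;n\;+\;tr_{\omega_j}(\omega_t).$$
At an interior maximum $(t_0, x_0) \in (0, T] \times X$ of $H$, $(\partial_t - \Delta_j) H \geq 0$, and (writing $u := tr_{\omega_0}(\omega_j)$) the two displays combine to give
$$0 \;\leq\; \log u \;+\; t_0 C_0\,tr_{\omega_j}(\omega_0) \;-\; A\log\frac{\omega_j^n}{\Omega} \;+\; An \;-\; A\,tr_{\omega_j}(\omega_t).$$
Since $[\omega_t]$ is a K\"ahler class for $t \in [0,T] \Subset [0,T_0)$, one may replace $\omega_t$ by a smooth family of genuine K\"ahler representatives of the same class and absorb the difference $\ddbar \rho_t$ (with $\rho_t$ uniformly bounded) into $\varphi_j$; after this cosmetic reduction $\omega_t \geq c_0 \omega_0$ uniformly. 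Choosing $A$ so large that $A c_0 - T C_0 \geq \epsilon_0 > 0$, the inequality rearranges to
$$\epsilon_0\,tr_{\omega_j}(\omega_0) \;\leq\; \log u \;-\; A\log(\omega_j^n/\Omega) \;+\; An.$$

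To close the estimate one combines the above with the two-sided volume bound $t^n/C \leq \omega_j^n/\Omega \leq e^{C/t}$ from Lemma \ref{volsm} and the elementary inequality
$$tr_{\omega_0}(\omega_j) \;\leq\; C_n \,(tr_{\omega_j}(\omega_0))^{n-1}\,\frac{\omega_j^n}{\omega_0^n},$$
a direct consequence of the arithmetic--geometric mean inequality applied to the eigenvalues of $g_0^{-1}g_j$. Substituting yields a self-referential estimate of the form $\log u \leq (n-1)\log(\log u + C/t) + C/t + \mathrm{const}$; since $(n-1)\log(\log u) = o(\log u)$ as $\log u \to \infty$, this forces $\log u(t_0, x_0) \leq C_3/t_0$ and hence $t_0 \log u(t_0, x_0) \leq C_3$. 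Combined with the $L^\infty$ bound on $\varphi_j$ from Lemma \ref{cauchy1}, this gives $\sup_{[0,T]\times X} H \leq C_4$, which is precisely $tr_{\omega_0}(\omega_j) \leq e^{C_4/t}$.

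The main technical obstacle is the possible pointwise indefiniteness of $\chi = \ddbar\log\Omega$: the reference form $\omega_t = \omega_0 + t\chi$ need not be positive as a $(1,1)$-form, so one cannot naively use $A\,tr_{\omega_j}(\omega_t) \geq A c_0\,tr_{\omega_j}(\omega_0)$ to absorb the Chern--Lu error term. The remedy exploits the cohomological positivity of $[\omega_t]$ for $t \in [0,T] \Subset [0,T_0)$: one picks a smooth family $\tilde\omega_t \in [\omega_t]$ of genuine K\"ahler representatives uniformly bounded below by $c_0\omega_0$, and shifts the potential by the bounded function $\rho_t$ satisfying $\omega_t - \tilde\omega_t = \ddbar\rho_t$, with $||\rho_t||_{L^\infty([0,T]\times X)}$ finite. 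Once this reduction is in place, the maximum-principle computation described above is essentially mechanical.
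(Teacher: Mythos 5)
Your proof is correct and is essentially the paper's own argument: the same test function $H = t\,\log tr_{\omega_0}(\omega_j) - A\varphi_j$, the same parabolic Schwarz/second-order inequality $(\ddt{} - \Delta_j)\log tr_{\omega_0}(\omega_j) \leq C\, tr_{\omega_j}(\omega_0)+C$, the same inputs from Lemma \ref{volsm} and Lemma \ref{cauchy1}, and the same maximum-principle absorption via the eigenvalue inequality relating $tr_{\omega_0}(\omega_j)$, $tr_{\omega_j}(\omega_0)$ and $\omega_j^n/\omega_0^n$. The only difference is that you make explicit the replacement of $\omega_t$ by a uniformly positive K\"ahler representative of $[\omega_t]$ (absorbing a bounded potential), a point the paper leaves implicit.
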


\begin{proof} This is a parabolic Schwarz lemma similar to \cite{Y1}, \cite{LiYa}. Straightforward computation from \cite{SoT1}  shows that for any $t\in [0, T]$, there exist uniform constants $C_1$ and $C_2>0$ such that

$$ (\ddt{} - \Delta_j) \log tr_{\omega_0} (\omega_j) \leq C_1 tr_{\omega_j } (\omega_0)+ C_2.$$

Let $H=  t \log tr_{\omega_0}(\omega_j) - A  \varphi_j$. Then if $A$ is sufficiently large, there exist uniform constants $C_3$, $C_4$, ..., $C_{10}>0$ such that

\begin{eqnarray*}
(\ddt{} - \Delta_j) H &\leq& - tr_{\omega_j } (A \omega_t - C_1 t \omega_0) - A  \dot{\varphi_j} + \log tr_{\omega_0}(\omega_j) + C_3\\
&\leq&-  C_4  ~ tr_{\omega_j}(\omega_0) +  C_5 \log tr_{\omega_j}( \omega_0 ) - C_6   \log \frac{\omega_j^n}{\omega_0^n} + C_7\\
&\leq& - C_8 \left(  \frac{\omega_0^n}{\omega_j^n} \right)^{\frac{1}{n-1}}   \left( tr_{\omega_0} (\omega_j)  \right)^{\frac{1}{n-1}} -  C_9 \log t + C_{10}.
\end{eqnarray*}

Suppose $\max_{[0, T]\times X} H = H(t_0, z_0)$. Since $H(0, \cdot) = - \infty$, $t_0 >0$. Then by the maximum principle, at $(t_0, z_0)$,

$$\log tr_{\omega_0}(\omega_j) \leq \log \left( (\log\frac{1}{t})^{n-1}  \left( \frac{\omega_j^n}{\omega_0^n} \right) \right)  +C_{11}\leq   C_{12} \frac{1}{t} + C_{13}$$
and so $H$ is uniformly bounded from above at $(t_0, z_0)$.
Since $H(0, \cdot) = 0 $ and $\varphi_{0,j}$ is both uniformly bounded, $H$ is uniformly bounded for $t\in [0, T]$ and so we prove the lemma.

\qed
\end{proof}

  Let $g^{(j)}$ be the K\"ahler metric associated to $\omega_j$ and $ \nabla^{(j)}$ be the gradient operator associated to the K\"ahler form $\omega_j$. As in \cite{Y2}, set $$(\varphi_j)_ {p\bar{k}m} = \nabla^{(j)}_p \partial_{\bar{k}} \partial_m \varphi_j$$and $$S_j= (g^{(j)})^{p\bar{r}} (g^{(j)})^{s\bar{k}} (g^{(j)})^{m\bar{t}} (\varphi_j)_{p\bar{k}m} (\varphi_j)_{\bar{r}s\bar{t}}. $$

\begin{lemma} For any $T< T_0$, there exist  constants $\lambda>0$ and  $C >0$ such that for $t\in [0, T]$,

\begin{equation}
||  \varphi_j (t, \cdot) ||_{C^3(X)} \leq C e^{\frac{\lambda}{t}}.
\end{equation}

\end{lemma}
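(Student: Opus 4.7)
The estimate is a parabolic Calabi--Yau $C^3$ bound. The key subtlety is that since $\varphi_{0,j}$ is only uniformly bounded in $L^\infty$ (Proposition \ref{app2}), the quantity $S_j(0,\cdot)$ is not uniformly bounded in $j$, and so the sought $j$-independent bound of the form $Ce^{\lambda/t}$ for $t\in(0,T]$ must be produced entirely by an interior maximum-principle argument with a time weight that degenerates at $t=0$.

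The first step is Yau's third-order computation. Differentiating (\ref{maflow_deg1}) and commuting covariant derivatives with respect to $g^{(j)}$, one obtains, as in Yau's classical $C^3$ argument,
\begin{equation*}
(\partial_t - \Delta_j) S_j \;\leq\; -\, c_0 \frac{|\nabla^{(j)} S_j|^2_{g^{(j)}}}{S_j + 1} + C_1\, S_j\, tr_{\omega_j}(\omega_0) + C_2 \bigl(1 + tr_{\omega_j}(\omega_0)\bigr)^2,
\end{equation*}
with $c_0, C_1, C_2 > 0$ depending only on the fixed smooth geometry of $\omega_0$ and $\chi$. Combining Lemmas \ref{volsm} and \ref{c2smoothing} with the arithmetic-geometric inequality $tr_{\omega_j}(\omega_0) \leq \tfrac{1}{(n-1)!} (tr_{\omega_0}(\omega_j))^{n-1} \cdot \omega_0^n / \omega_j^n$ yields $tr_{\omega_j}(\omega_0) \leq e^{C/t}$ on $(0,T] \times X$, uniformly in $j$.

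Next, I would apply the maximum principle to a test function of the form
\begin{equation*}
H(t,z) \;=\; \eta(t)\, S_j \;-\; B\, \varphi_j,
\end{equation*}
with $\eta(t) = e^{-\mu/t}$ and $\mu, B > 0$ chosen large. At $t = 0$, $\eta(0) = 0$ forces $H(0,\cdot) = -B\varphi_{0,j}$, which is bounded above by $B\|\varphi_{0,j}\|_{L^\infty} \leq BC$ uniformly in $j$. Choosing $\mu$ large enough that $\eta(t)(1 + tr_{\omega_j}(\omega_0))^2$ is uniformly bounded on $(0,T]$, the dangerous terms on the right-hand side of the $S_j$ evolution reduce after multiplication by $\eta$ to at most $C_1'\, S_j + C_2'$ with $C_1', C_2'$ uniform. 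At an interior maximum $(t_0, z_0)$, the identity $\nabla H = 0$ gives $\eta\, \nabla^{(j)} S_j = B\, \nabla^{(j)} \varphi_j$, so that the negative gradient term $-c_0\,\eta\, |\nabla^{(j)} S_j|^2_{g^{(j)}} / (S_j+1)$ absorbs $C_1' S_j$ via Cauchy--Schwarz provided $B$ is sufficiently large; the contribution $-B(\partial_t - \Delta_j)\varphi_j = -B\log(\omega_j^n/\Omega) + Bn - B\, tr_{\omega_j}(\omega_t)$ adds a further negative term of size $-Bc\, tr_{\omega_j}(\omega_0)$ (using $\omega_t \geq c\,\omega_0$ for $t \in [0,T]$ with $T < T_0$), which absorbs the remaining contributions from $C_2'$ and from the bounded $\log$ term by Lemma \ref{volsm}. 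Putting these estimates together at the interior maximum forces $\eta(t_0)\, S_j(t_0, z_0) \leq C\, e^{C/t_0}$, whence $S_j(t, \cdot) \leq C\, e^{\lambda/t}$ on $(0,T] \times X$. Together with Lemmas \ref{volsm} and \ref{c2smoothing}, this produces the claimed $C^3$ bound on $\varphi_j$ (one converts from the intrinsic norm with respect to $g^{(j)}$ to the background norm at the cost of powers of $tr_{\omega_0}(\omega_j)$ and $\omega_0^n/\omega_j^n$, both bounded by $e^{C/t}$, so the form $C e^{\lambda/t}$ is preserved with a larger $\lambda$).

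The main obstacle is the delicate bookkeeping of the weight $\eta$, the constants $\mu, B$, and the Cauchy--Schwarz absorption at the interior maximum, so as to simultaneously (i) kill the singular $e^{C/t}$ growth of $tr_{\omega_j}(\omega_0)$ via $\eta$ and (ii) close the absorption of $C_1' S_j$ against the gradient term. Once these choices are made consistently, the argument is routine in the spirit of Yau's original third-order estimate.
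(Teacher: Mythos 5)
Your reduction of the second-order quantities to $e^{C/t}$ bounds (via Lemmas \ref{volsm} and \ref{c2smoothing}) matches the paper's starting point, but the heart of your argument --- the absorption of the bad third-order term at an interior maximum of $H=\eta(t)S_j - B\varphi_j$ --- does not close. After you use $\mu$ to tame the $e^{C/t}$ factors, the surviving bad term is of size $C_1'S_j$, a genuinely third-order quantity, while neither of the negative terms you propose can dominate it. The critical-point identity $\eta\nabla^{(j)}S_j = B\nabla^{(j)}\varphi_j$ turns your good gradient term into $-c_0 B^2 |\nabla^{(j)}\varphi_j|^2_{g^{(j)}}/\bigl(\eta (S_j+1)\bigr)$, and $\nabla^{(j)}\varphi_j$ may simply vanish at the maximum point, so this term gives no lower bound proportional to $S_j$; and $-B\,tr_{\omega_j}(\omega_t)\leq -Bc\,tr_{\omega_j}(\omega_0)$ is a second-order quantity carrying no information about $S_j$ at all. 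Note also that the maximum principle only controls $\eta S_j$ at the maximum, while the uncancelled term is $C_1'S_j$ without the weight (and the extra term $\eta'(t)S_j$ from differentiating the weight is of the same nature), so the inequality at $(t_0,z_0)$ cannot be closed; moreover the differential inequality you quote, with the surviving term $-c_0|\nabla^{(j)}S_j|^2/(S_j+1)$, is itself nonstandard and would need its own proof.

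What is missing is the Calabi--Yau coupling with the second-order quantity: the evolution of $tr_{\omega_0}(\omega_j)$ contains a negative term comparable to $-S_j$ (up to metric-equivalence factors of size $e^{C/t}$), and this is the only available source of a negative term proportional to $S_j$. That is exactly how the paper argues: following the computation in \cite{PSS}, for $\alpha,\beta>0$ sufficiently large one has $(\ddt{}-\Delta_j)\, e^{-2\alpha/t}tr_{\omega_0}(\omega_j)\leq -C_1 e^{-\alpha/t}S_j + C_2$ and $(\ddt{}-\Delta_j)\, e^{-2\beta/t}S_j \leq C_3 e^{-\beta/t}S_j + C_4$; choosing $\beta>\alpha$ and $A$ large, the combination $e^{-2\beta/t}S_j + A e^{-2\alpha/t}tr_{\omega_0}(\omega_j)$ satisfies a clean inequality, vanishes as $t\to 0^+$ because of the weights (so the uniform $L^\infty$ bound on $\varphi_{0,j}$, not its higher derivatives, is all that is needed), and the maximum principle together with Lemma \ref{c2smoothing} gives that $e^{-\beta/t}S_j$ is uniformly bounded, i.e. the stated bound $\|\varphi_j(t,\cdot)\|_{C^3(X)}\leq Ce^{\lambda/t}$. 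Your time-weight idea is in the same spirit as the paper's, but without adding the weighted trace term to the test function the absorption step is a genuine gap.
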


\begin{proof} Since all the second order terms are bounded by certain power of $e^{\frac{1}{t}}$. By the computation in \cite{PSS}, there exist sufficiently large $\alpha$ and $\beta >0$ such that

$$(\ddt{} - \Delta_j) e^{- \frac{2\alpha}{t}} tr_{\omega_0} (\omega_j)  \leq  - C_1  ~e^{- \frac{ \alpha}{t}}  S_j + C_2  $$
and
$$(\ddt{} - \Delta_j) e^{- \frac{2\beta}{t}}  S_j \leq  C_3 ~  e^{- \frac{\beta}{t}}  S_j + C_4. $$   By choosing sufficiently large $A$ and $\beta> \alpha$ , we have
 $$(\ddt{} - \Delta_j) ( e^{- \frac{2\beta}{t}}  S_j + A e^{- \frac{2\alpha}{t}}  tr_{\omega_0} (\omega_j) ) \leq  - C_5  ~e^{- \frac{ \alpha}{t}}  S_j - C_6$$
for sufficiently large $A>0$.

By the maximum principle and Lemma \ref{c2smoothing}, $ e^{- \frac{\beta}{t}}  S_j$ is uniformly bounded for $t\in[0, T]$.

\qed
\end{proof}

\begin{proposition} For any $0< \epsilon < T< T_0$ and $k \geq 0$, there exists   $C_{\epsilon, T, k} >0$ such that,

\begin{equation}
|| \varphi_j||_{C^k([\epsilon, T]\times X)}  \leq C_{\epsilon, T, k}.
\end{equation}

\end{proposition}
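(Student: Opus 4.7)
The plan is to combine the estimates already obtained with standard parabolic bootstrapping to derive uniform $C^k$ bounds on any time interval bounded away from $t=0$. Fix $0 < \epsilon < T < T_0$. On $[\epsilon,T]\times X$, Lemma \ref{cauchy1} provides a uniform $L^\infty$ bound on $\varphi_j$, and Lemma \ref{volsm} gives
\begin{equation*}
\frac{\epsilon^n}{C} \leq \frac{(\omega_t+\ddbar\varphi_j)^n}{\Omega} \leq e^{C/\epsilon},
\end{equation*}
so both the volume form and $\dot{\varphi}_j = \log((\omega_t+\ddbar\varphi_j)^n/\Omega)$ are uniformly bounded. Lemma \ref{c2smoothing} gives $tr_{\omega_0}(\omega_j) \leq e^{C/\epsilon}$, and combined with the lower volume bound this implies that the evolving metric $\omega_j$ is uniformly equivalent to $\omega_0$ on $[\epsilon,T]\times X$. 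Consequently the Monge--Amp\`ere flow (\ref{maflow_deg1}) becomes \emph{uniformly parabolic} on this slab with constants independent of $j$.

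Next, I would use the third-order estimate just proved: on $[\epsilon/2, T]\times X$ the quantity $\|\varphi_j\|_{C^3}$ is bounded by $C e^{\lambda/\epsilon}$, which in particular gives uniform $C^\alpha$ control on the coefficients $g^{(j)\,i\bar k}$ of the linearized operator $\Delta_j$. Differentiating (\ref{maflow_deg1}) once, the function $\dot{\varphi}_j$ satisfies the linear parabolic equation $(\partial_t - \Delta_j)\dot{\varphi}_j = tr_{\omega_j}(\chi)$ with $C^\alpha$ coefficients and bounded right-hand side on $[\epsilon/2, T]\times X$. Standard parabolic Schauder theory then yields $\|\dot{\varphi}_j\|_{C^{2,\alpha}([\epsilon,T]\times X)} \leq C_{\epsilon,T}$, hence a uniform $C^{2,\alpha}$ bound on $\varphi_j$ itself by integrating in time.

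Once $\varphi_j$ is controlled in $C^{2,\alpha}$, the coefficients of $\Delta_j$ acquire $C^\alpha$ regularity with uniform bounds, and one can differentiate the equation once more and reapply linear parabolic Schauder estimates on a slightly shrunken interval. Iterating this bootstrap gives uniform $C^{k,\alpha}$ bounds on $[\epsilon,T]\times X$ for every $k\geq 0$, which proves the proposition.

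There is really no serious obstacle at this stage: the non-trivial analytic work has already been done in the preceding lemmas (the $L^\infty$, volume-form, $C^2$ and $C^3$ estimates with explicit dependence on $1/t$), and what remains is a standard, essentially mechanical bootstrap. The only care needed is to shrink the time interval by a fixed amount at each step of the bootstrap so that all constants remain uniform in $j$, which is the reason the estimate is stated on $[\epsilon,T]$ with $\epsilon>0$ rather than on $[0,T]$.
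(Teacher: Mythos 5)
Your proposal is correct and is essentially the argument the paper intends: the paper states this proposition without proof, treating it as the standard parabolic Schauder/bootstrap consequence of the preceding $L^\infty$, volume, second-order and third-order estimates, which become uniform constants once $t\geq\epsilon$. The only point you gloss over is the H\"older continuity in time of the coefficients $g^{(j)\,i\bar k}$ needed for Schauder (spatial $C^3$ control alone is not enough), but this follows from interpolating the uniform spatial $C^3$ bound against the bound on $\dot\varphi_j$, so the bootstrap goes through as you describe.
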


By Lemma \ref{cauchy1}, $\varphi_j$ is a Cauchy sequence in $L^{\infty}([0, T]\times X)$ and so $\varphi_j$ converges to  $\varphi \in L^\infty([0, T]\times X)$ uniformly in $L^\infty( [0, T]\times X)$. For any $0<\delta< T$,  $\varphi_j$ is uniformly bounded in $C^{\infty}( [\delta, T]\times X)$. Therefore $\varphi_j$ converges to $\varphi$ in $C^\infty ( (0, T]\times X)$. Hence $\varphi \in C^\infty( (0, T]\times X)$.

\begin{lemma}  $\varphi\in C^0([0, T]\times X)$ and

\begin{equation}
\lim_{t\rightarrow 0^+} ||\varphi(t, \cdot)- \varphi_0(\cdot) ||_{L^\infty (X)} = 0.
\end{equation}

\end{lemma}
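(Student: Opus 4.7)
The plan is to exploit the uniform convergence $\varphi_j \to \varphi$ on $[0,T] \times X$ already established in Lemma \ref{cauchy1}, together with the fact that each approximating solution $\varphi_j$ is continuous on the closed interval $[0,T] \times X$. Once these two facts are in hand, the statement follows from the elementary observation that a uniform limit of continuous functions on a compact set is continuous, and continuity in time at $t=0$ of a map valued in $C^0(X)$ is exactly an $L^\infty(X)$ limit.

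For the first ingredient, recall from Proposition \ref{app2} that the approximate initial data $\varphi_{0,j} \in C^\infty(X)$ and $\omega_{0,j} = \omega_0 + \ddbar \varphi_{0,j}$ is a genuine K\"ahler metric (since $F_j > 0$ and smooth). Hence (\ref{maflow_deg1}) is a strictly parabolic complex Monge-Amp\`ere equation with smooth initial data, so classical short-time theory produces a smooth solution on some $[0,\tau_j]$; combining with the long-time existence on $[0,T_0)$ from \cite{TiZha} yields $\varphi_j \in C^\infty([0,T] \times X)$ for every $T < T_0$. In particular $\varphi_j \in C^0([0,T] \times X)$ with $\varphi_j(0,\cdot) = \varphi_{0,j}$.

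For the second ingredient, Lemma \ref{cauchy1} gives $\|\varphi_j - \varphi\|_{L^\infty([0,T] \times X)} \to 0$ as $j \to \infty$. A uniform limit of continuous functions on the compact space $[0,T] \times X$ is continuous, so $\varphi \in C^0([0,T] \times X)$. Restricting to $\{t=0\}$ and invoking Proposition \ref{app2},
$$\varphi(0,\cdot) \;=\; \lim_{j\to\infty} \varphi_j(0,\cdot) \;=\; \lim_{j\to\infty} \varphi_{0,j} \;=\; \varphi_0 \quad \text{in } L^\infty(X).$$
Continuity of $\varphi$ on the compact set $[0,T] \times X$ then forces the map $t \mapsto \varphi(t,\cdot) \in C^0(X)$ to be continuous in the sup norm, so $\lim_{t\to 0^+}\|\varphi(t,\cdot) - \varphi_0\|_{L^\infty(X)} = 0$.

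The only subtlety is the smoothness of $\varphi_j$ up to and including $t=0$, which is precisely why the regularized initial data $\varphi_{0,j}$ was introduced in the first place. Should one prefer to avoid that regularity, an equivalent three-epsilon argument works: for $\epsilon > 0$ fix $j$ so large that $\|\varphi_j - \varphi\|_{L^\infty([0,T]\times X)} < \epsilon/3$ and $\|\varphi_{0,j} - \varphi_0\|_{L^\infty(X)} < \epsilon/3$, then use continuity of $\varphi_j(\cdot,\cdot)$ at $t=0$ for that single fixed $j$ to make $\|\varphi_j(t,\cdot) - \varphi_{0,j}\|_{L^\infty(X)} < \epsilon/3$ for $t$ small. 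No further maximum-principle estimates are needed; all the real work was carried out in Lemma \ref{cauchy1} (uniform $L^\infty$ bounds) and Proposition \ref{app2} (uniform approximation of $\varphi_0$).
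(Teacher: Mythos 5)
Your proof is correct and follows essentially the paper's approach: the paper's own argument is precisely the three-epsilon estimate you give as the fallback, splitting $|\varphi(t,z)-\varphi_0(z)|$ via $\varphi_j$ and $\varphi_{0,j}$ and using Lemma \ref{cauchy1}, Proposition \ref{app2}, and the continuity of each fixed $\varphi_j$ at $t=0$. Your primary packaging (uniform limit of continuous functions on the compact set $[0,T]\times X$) is an equivalent, slightly cleaner way of organizing the same ingredients, and the smoothness of $\varphi_j$ up to $t=0$ that it uses is the same fact the paper relies on implicitly.
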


\begin{proof}

For any $(t, z) \in [0, T]\times X$,
$$| \varphi(t, z) - \varphi_0(z) | \leq  |\varphi (t, z) - \varphi_{ j} (t,z)| + |\varphi_{ j} (t,z) - \varphi_{0,j}(z)|+  |\varphi_{0, j} (z) - \varphi_0(z)|.$$
Since $\{\varphi_{j}\}$ is a Cauchy sequence in $L^{\infty}([0,T]\times X)$, $$\lim_{j\rightarrow \infty} ||\varphi - \varphi_j||_{L^\infty([0,T]\times X)} = 0.$$
Also  $$\lim_{j\rightarrow \infty} ||\varphi_{0, j}  - \varphi_0 ||_{L^\infty (X)}=0$$ and for any $j$,
$$\lim_{t\rightarrow 0^+}  \sup_{z\in X} |\varphi_j(t, z) -\varphi_{0,j}(z)|=0.$$
For any $\epsilon >0$, there exists $J>0$ such for any $j>J$,
$$\sup_{(t,z)\in [0, T]\times X} |\varphi (t, z) - \varphi_{ j} (t,z)|< \frac{\epsilon}{3}$$ and $$ \sup_{z\in X} |\varphi_{0, j} (z) - \varphi_0(z)|< \frac{\epsilon}{3}.$$
Fix such $j$, there exists $0<\delta<T$ such that $$\sup_{t\in [0,\delta], z\in X} | \varphi_j (t, z) - \varphi_{0,j} (z)| < \frac{\epsilon}{3}.$$
Combining the above estimates together, for any $t\in [0, \delta]$ and  $z\in X$
$$| \varphi(t, z) - \varphi_0(z) | < \epsilon.$$
This completes the proof of the lemma.

\qed
\end{proof}

Now we are ready to show the existence and uniqueness for the Monge-Amp\`ere flow starting with $\varphi_0\in PSH_p(X, \omega_0, \Omega)$.
\begin{proposition}\label{propexistence}  $\varphi$  is the unique solution  of the following Monge-Amp\`ere equation
\begin{equation}\label{maflow_deglim}
\left\{
\begin{array}{rcl}
&&{ \displaystyle \ddt{\varphi} = \log
 \frac{ ( \omega_t + \ddbar \varphi ) ^n } {\Omega} },  ~~~~~(0, T_0)\times X\\
&&\\
&& \varphi(0,\cdot)=\varphi_{0}
\end{array} \right.
\end{equation}
in the space of $C^0([0, T_0)\times X) \cap C^{\infty}((0, T_0)\times X) $.

\end{proposition}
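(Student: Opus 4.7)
The plan is to establish existence by verifying the limit $\varphi$ of the approximating sequence $\varphi_j$ really solves (\ref{maflow_deglim}), and then prove uniqueness via a modified parabolic maximum principle.

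For existence, the preceding lemmas already provide the hard work. By Lemma \ref{cauchy1}, $\varphi_j \to \varphi$ uniformly on $[0,T]\times X$, giving $\varphi \in C^0([0,T_0)\times X)$ together with the continuity of $\varphi$ at $t = 0$ proved in the previous lemma. The higher-order estimates yield $\|\varphi_j\|_{C^k([\epsilon,T]\times X)} \leq C_{\epsilon,T,k}$, so by Arzel\`a–Ascoli a subsequence converges in $C^\infty_{\text{loc}}((0,T_0)\times X)$; since the $L^\infty$-limit is already $\varphi$, the full sequence converges smoothly on compact subsets of $(0,T_0)\times X$. Moreover, Lemma \ref{volsm} combined with Lemma \ref{c2smoothing} shows $\omega_t + \ddbar\varphi_j$ is uniformly bounded above and below as a K\"ahler metric on each $[\epsilon,T]\times X$, so the limiting form $\omega_t + \ddbar\varphi$ is genuinely K\"ahler on $(0,T_0)\times X$. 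Passing to the limit in each approximating equation (\ref{maflow_deg1}) on $[\epsilon, T]\times X$ and using arbitrariness of $\epsilon$ shows $\varphi$ satisfies the PDE on $(0, T_0)\times X$. Thus $\varphi \in C^0([0,T_0)\times X)\cap C^\infty((0,T_0)\times X)$ is a solution.

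For uniqueness, suppose $\tilde\varphi$ is another solution in the same class. Fix $T < T_0$ and $\epsilon > 0$, and consider $\psi := \varphi - \tilde\varphi - \epsilon t$ on $[0,T]\times X$. Then $\psi$ is continuous on $[0,T]\times X$ and smooth on $(0,T]\times X$, with $\psi(0,\cdot) = 0$. Suppose for contradiction that $\max \psi > 0$, attained at some $(t_0, z_0)$ with $t_0 > 0$. Then $\partial_t \psi(t_0,z_0) \geq 0$ and $\ddbar \psi(z_0) \leq 0$; the latter gives $\omega_{t_0} + \ddbar\varphi \leq \omega_{t_0} + \ddbar\tilde\varphi$ as positive $(1,1)$-forms at $z_0$, hence
\begin{equation*}
\partial_t \varphi - \partial_t \tilde\varphi \;=\; \log\frac{(\omega_{t_0} + \ddbar\varphi)^n}{(\omega_{t_0} + \ddbar\tilde\varphi)^n} \;\leq\; 0
\end{equation*}
at $(t_0,z_0)$, contradicting $\partial_t\psi(t_0,z_0) \geq 0$, i.e. $\partial_t\varphi - \partial_t\tilde\varphi \geq \epsilon > 0$. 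Therefore $\varphi \leq \tilde\varphi + \epsilon t$ on $[0,T]\times X$; letting $\epsilon \to 0$ yields $\varphi \leq \tilde\varphi$, and the reverse inequality follows by symmetry.

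The main subtlety is that the usual parabolic comparison principle does not directly apply since the initial data is only continuous and since one must ensure the competitor $\tilde\varphi$ actually produces a positive $(1,1)$-form, but the $\epsilon t$ perturbation sidesteps the degeneracy at $t = 0$ cleanly, reducing the question to a standard interior application of the maximum principle in the class $C^0 \cap C^\infty((0,T]\times X)$.
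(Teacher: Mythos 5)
Your proposal is correct and follows essentially the same route as the paper: existence is exactly the limit of the approximating flows via the uniform $C^0$ and local $C^k$ estimates already established, and uniqueness is the same maximum-principle comparison exploiting continuity of the difference down to $t=0$. Your $\epsilon t$ perturbation is just a careful implementation of the paper's statement that $\max_X(\varphi'-\varphi)(t,\cdot)$ is nonincreasing and $\min_X(\varphi'-\varphi)(t,\cdot)$ is nondecreasing, so the two arguments coincide in substance.
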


\begin{proof} It suffices to prove the uniqueness. Suppose there exists another solution $\varphi'\in C^0([0, T_0)\times X) \times C^{\infty}((0, T_0)\times X) $ of the Monge-Amp\`ere flow (\ref{maflow_deglim}).

Let $\psi = \varphi'- \varphi$. Then
\begin{equation}
\left\{
\begin{array}{rcl}
&&{ \displaystyle \ddt{\psi} = \log
 \frac{ ( \omega_t + \ddbar \varphi + \ddbar \psi ) ^n } { ( \omega_t + \ddbar \varphi)^n} },  ~~~~~(0, T_0)\times X\\
&&\\
&& \psi(0,\cdot)=0.
\end{array} \right.
\end{equation}

By the maximum principle,  $\max_X \psi(t, \cdot)$ is decreasing and $\min_X \psi (t, \cdot)$ is increasing on $(0, T_0)$. Since both of $\max_X  \psi(t, \cdot)$ and $\min_X \psi(t, \cdot)$ are continuous on $[0, T_0)$ with $\max_X  \psi(0, \cdot)=\min_X  \psi( 0, \cdot)=0$ , $$\psi (t, \cdot) =0$$ for $t \in [0, T_0)$.
The proposition follows easily.

\qed
\end{proof}

With the above preparations, we can show the smoothing property for the K\"ahler-Ricci flow with rough initial data.

\begin{theorem} \label{smoothing1} Let $X$ be an $n$-dimensional K\"ahler manifold. Let $\omega_0$ be a K\"ahler form and $\Omega$ be a smooth volume form on $X$.   Suppose that $\omega_0'= \omega_0 + \ddbar\varphi_0$ ~for some $\varphi_0 \in PSH_p(\omega_0, \Omega)$ for some $p>1$.  Then  there exists a unique family of smooth K\"ahler metrics $\omega(t, \cdot) \in C^\infty ((0,T_0) \times X)$ satisfying the following conditions.

\begin{enumerate}

\item $\displaystyle{
 \ddt{\omega} = -Ric(\omega),  ~~~~~(0, T_0)\times X}.$

\item There exists $\varphi\in C^0([0, T_0)\times X)\cap C^\infty((0, T_0) \times X)$ such that  $\omega = \omega_0 + \ddbar \varphi$ and $$\lim_{t\rightarrow 0^+} || \varphi(t, \cdot)- \varphi_0(\cdot)||_{L^{\infty}(X)} = 0. $$ In particular, $\omega(t, \cdot)$ converges in the sense of distribution to $\omega_0'$ as $t\rightarrow 0$.

\end{enumerate}

\end{theorem}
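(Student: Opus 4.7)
The plan is to deduce the theorem directly from Proposition \ref{propexistence}, which already carries out all of the substantive analysis at the level of potentials. The remaining work is to reinterpret the Monge-Amp\`ere flow solution as a family of K\"ahler metrics evolving by Ricci curvature, and to upgrade uniqueness.

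First I would invoke Proposition \ref{propexistence} with the given initial potential $\varphi_0 \in PSH_p(X, \omega_0, \Omega)$ to obtain a unique solution $\varphi \in C^0([0, T_0)\times X) \cap C^\infty((0, T_0)\times X)$ of the Monge-Amp\`ere flow (\ref{maflow_deglim}), together with $\|\varphi(t,\cdot)-\varphi_0\|_{L^\infty(X)}\to 0$ as $t\to 0^+$. Setting $\omega(t,\cdot) := \omega_t + \ddbar\varphi(t,\cdot)$ with $\omega_t = \omega_0 + t\chi$ and $\chi=\ddbar\log\Omega$, the smoothing estimates already established (Lemma \ref{volsm} for the volume form, Lemma \ref{c2smoothing} for the trace, and the $C^3$ and higher bounds) show that $\omega(t,\cdot)$ is a smooth positive-definite K\"ahler form for each $t\in(0,T_0)$, and smooth in $(t,z)$ on $(0,T_0)\times X$.

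Next I would verify that $\omega$ solves the K\"ahler-Ricci flow. Differentiating the Monge-Amp\`ere equation in $t$ gives
\begin{equation*}
\ddt{\omega} = \chi + \ddbar\,\ddt{\varphi} = \chi + \ddbar\log\frac{(\omega_t+\ddbar\varphi)^n}{\Omega} = \ddbar \log \omega^n = -Ric(\omega),
\end{equation*}
on $(0,T_0)\times X$, where I used $\chi = \ddbar\log\Omega$. For the initial condition (condition 2 of the theorem), the $L^\infty$-convergence $\varphi(t,\cdot)\to\varphi_0$ together with the smooth convergence $\omega_t\to\omega_0$ yields $\omega(t,\cdot)\to \omega_0 + \ddbar\varphi_0 = \omega_0'$ in the sense of distributions as $t\to 0^+$, and writing $\omega = \omega_0 + \ddbar(\varphi + t\cdot\log\Omega_{\text{loc}})$ gives the global potential $\varphi$ asserted in condition 2 (modulo the reference change by $t\chi$, which is smooth in $t$).

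Finally, for uniqueness, suppose $\tilde\omega(t,\cdot)$ is another family of smooth K\"ahler metrics with the same properties. Writing $\tilde\omega = \omega_t + \ddbar\tilde\varphi$ for a suitable potential $\tilde\varphi \in C^0([0,T_0)\times X)\cap C^\infty((0,T_0)\times X)$ with $\tilde\varphi(0,\cdot) = \varphi_0$, the K\"ahler-Ricci equation becomes the Monge-Amp\`ere flow (\ref{maflow_deglim}) for $\tilde\varphi$, up to an additive time-dependent constant which can be normalized by the flow equation itself. The uniqueness assertion in Proposition \ref{propexistence} then forces $\tilde\varphi = \varphi$, and hence $\tilde\omega = \omega$. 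The main conceptual point that requires care is that all estimates degenerate like $e^{C/t}$ as $t\to 0^+$, so the $C^\infty$ convergence of the approximating flows $\varphi_j$ to $\varphi$ only takes place on compact subsets of $(0, T_0)\times X$; continuity across $t=0$ must be separately argued via the $L^\infty$ stability of Kolodziej, as done in Lemma \ref{cauchy1}. Since all of this is already in place, the theorem follows essentially as a reformulation.
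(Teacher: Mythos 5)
Your existence half is exactly the paper's route: take the solution $\varphi$ of the Monge--Amp\`ere flow from Proposition \ref{propexistence}, set $\omega=\omega_t+\ddbar\varphi$, use the smoothing estimates for positivity and smoothness, and differentiate to get $\ddt{\omega}=\chi+\ddbar\log\frac{\omega^n}{\Omega}=-Ric(\omega)$. That part is fine (the parenthetical ``$\varphi+t\log\Omega_{\mathrm{loc}}$'' is not a global function, but this only concerns the choice of reference form in condition 2 and is harmless).

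The gap is in uniqueness, precisely at the phrase ``up to an additive time-dependent constant which can be normalized by the flow equation itself.'' If $\tilde\omega$ is another family satisfying (1) and (2), then a potential $\phi$ with $\tilde\omega=\omega_t+\ddbar\phi$, $\phi(0,\cdot)=\varphi_0$, satisfies only $\ddbar\bigl(\ddt{\phi}-\log\frac{(\omega_t+\ddbar\phi)^n}{\Omega}\bigr)=0$, i.e. $\ddt{\phi}=\log\frac{(\omega_t+\ddbar\phi)^n}{\Omega}+f(t)$ with $f$ smooth only on the open interval $(0,T_0)$. You cannot simply subtract $\int_0^t f(s)\,ds$ and then quote the uniqueness of Proposition \ref{propexistence}: a priori $f$ need not be integrable or even bounded near $t=0$ (note $\log\frac{\tilde\omega^n}{\Omega}$ is allowed to blow up like $C/t$), so the renormalized potential may fail the continuity at $t=0$ and the initial condition $\varphi_0$ that Proposition \ref{propexistence} requires; conversely, keeping the initial condition you do not have the exact flow equation. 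The paper closes exactly this loophole by a separate argument: applying the maximum principle to $\psi=\phi-\varphi$ (which satisfies the flow with the extra $f(t)$ term) it shows $\max_X\psi(t_2)-\min_X\psi(t_2)\leq\max_X\psi(t_1)-\min_X\psi(t_1)$ for $0<t_1<t_2$, and letting $t_1\to0^+$, where the oscillation vanishes since $\psi(0,\cdot)\equiv 0$, concludes that $\psi$ depends on $t$ alone (indeed $\psi(t)=\int_0^t f$, which in particular proves integrability of $f$ a posteriori). Hence the metrics coincide, $\tilde\omega=\omega$, even though the potentials need not. Your proof needs this oscillation argument (or an equivalent control of $f$ near $t=0$); as written, the appeal to the uniqueness statement of Proposition \ref{propexistence} does not apply.
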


\begin{proof} The unnormalized K\"ahler-Ricci flow $\ddt{\omega} = -Ric(\omega)$ is equivalent to the Monge-Amp\`ere flow \begin{equation}\label{equiv1} \ddbar ( \ddt{\varphi} -  \log \frac{ (\omega_t + \ddbar\varphi)^n}{\Omega})=0.\end{equation} Then $ \ddt{\varphi} =  \log \frac{ (\omega_t + \ddbar\varphi)^n} {\Omega}+f(t)$ with $\lim_{t\rightarrow 0^+} \varphi(t, 0) = \varphi_0$ for a smooth function $f(t)$ on $(0, T_0)$. Proposition \ref{propexistence} gives the existence of such a $\varphi$ with $f(t)=0$.

Suppose there is another solution $\phi \in C^\infty((0, T_0)\times X)\cap C^0([0, T_0)\times X)$ to the equation (\ref{equiv1}). Then  $$\ddt{\phi} = \log \frac{(\omega_t + \ddbar \phi)^n}{\Omega} + f(t)$$ for some smooth function $f(t)$ on $(0, T_0)$. We can assume that $\phi(0, \cdot)=\varphi_0$ by subtracting a constant because $\phi(t, \cdot)$ converges to a continuous $\phi_0(\cdot)$ in $C^0(X)$ as $t\rightarrow 0$, and $\phi_0$ differs from $\varphi_0$ by a constant. Then we consider the function $\psi = \phi - \varphi$,   $$\ddt{\psi } = \log \frac{ (\omega_t+\ddbar\varphi + \ddbar \psi )^n}{(\omega_t + \ddbar \varphi)^n} + f(t)$$ with $\psi(0, \cdot)=0$. By the same argument as that in the proof of Proposition \ref{propexistence} we can show that  for $0<t_1< t_2<T_0$,
\begin{eqnarray*}
&&\max_X\psi(t_2, \cdot) \leq \max_X \psi(t_1, \cdot) + \int_{t_1}^{t_2} f(t) dt\\
&&\min_X\psi(t_2, \cdot) \geq \min_X \psi(t_1, \cdot) + \int_{t_1}^{t_2} f(t) dt\
\end{eqnarray*}
Therefore,
$$ \min_X \psi(t_2, \cdot) \geq  \max_X \psi (t_2, \cdot) - (\max_X \psi(t_1, \cdot) - \min_X \psi(t_1, \cdot)).$$
and by letting $t_1 \rightarrow 0^+$, we have $$ \min_X \psi(t_2, \cdot) \geq  \max_X \psi (t_2, \cdot).$$
So $\psi(t, \cdot)= \psi(t) = \int_0^t f(s) ds$.

\qed
\end{proof}

Theorem \ref{smoothing1} shows that the K\"ahler-Ricci flow smooths out the initial semi-positive closed $(1,1)$-current $\omega$ with bounded local potential and $\omega^n \in L^p(X)$ for some $p>1$. It improves a result of Chen-Tian-Zhang \cite{CTZ} (also see \cite{CT} and \cite{CD}), where $p>3$. We remark that the condition that $p>1$ is essential for later estimates and geometric applications.



\subsection{Monge-Amp\`ere flows with degenerate  initial data} \label{3.2}

In this section, we will investigate a family of Monge-Amp\`ere flows with singular data on a smooth projective variety. The existence and uniqueness for the solutions will be proved.

We start with two conditions prescribing the singularity and degeneracy of the data that will be considered along certain Monge-Amp\`ere flows. These conditions arise naturally in the geometric setting in later discussions.

\medskip

\noindent {\bf Condition A.}  Let $X$ be an $n$-dimensional projective  manifold.
 Let $L_1\rightarrow X$ be a big and semi-ample line bundle over $X$ and  $L_2\rightarrow X$ be a line bundle  such that $[ L_1 +  \epsilon L_2]$ is still semi-ample for $\epsilon>0$ sufficiently small. Let $\omega_0 \in c_1(L_1)$ be a smooth semi-positive closed $(1,1)$-form on $X$ and $\chi \in c_1(L_2)$  a smooth closed $(1,1)$-form. Let $\omega_t = \omega_0 + t \chi$. We assume that $\omega_0$ at worst vanishes along a projective subvariety of $X$ to a finite order, that is, there exists an effective divisor $E_0$ on $X$ such that for any fixed K\"ahler metric $\vartheta$, $$\omega_0 \geq C_{\vartheta} |S_{E_0}|^2_{h_{E_0}} \vartheta,$$
 where $C_{\vartheta}>0$ is a constant, $S_{E_0}$ is a defining section of $E_0$ and $h_{E_0}$ is a smooth hermitian metric on the line bundle associated to $E_0$.

\bigskip

Such an $\omega_0$ always exists. For example, let $m$ be sufficiently large such that $(L_1)^m$ is globally generated and let $\{ S_j^{(m)} \}_{j=0}^{d_m} $ be a basis of $H^0(X, (L_1)^m)$. We can then let $$\omega_0 = \frac{1}{m} \ddbar \log  \sum_{j=0}^{d_m} |S^{(m)}_j|^2.$$

\medskip
\noindent {\bf Condition B.} Let $\Theta$ be a smooth volume form on $X$. Let $ E = \sum_{i=1}^p a_i E_i$ and $F= \sum_{j=1}^q b_j F_j$ be  effective divisors on $X$, where $E_i$ and $F_j$ are irreducible components of $E$ with simple normal crossings. In addition,  we assume $a_i \geq 0$ and $0 < b_j <1$.  Let $\Omega$ be a semi-positive $(n,n)$-form on $X$ such that $\int_X \Omega >0$   and

 \begin{equation}
 \Omega =  |S_{E}|^2_{h_{E}} |S_F|^{-2}_{h_F} \Theta,
 \end{equation}
 where $S_{E}$ and $S_F$ are the multi-valued holomorphic defining sections of $E$ and $F$, $h_{E}$ and $h_F$ are smooth hermitian metrics on the line bundles associated to $E$ and $F$.

\bigskip

Note that the condition $b_j\in (0, 1)$ makes $\Omega$ an integrable $(n,n)$-form on $X$. Furthermore, $\displaystyle{\frac{\Omega}{\Theta}  } $ is in $L^p(X, \Theta)$ for some $p>1$.

Since $L_1$ is big and semi-ample, by Kodaira's lamma, there exists an effective $\mathbf{Q}$-divisor $\tilde{E}$ such that  $[L_1] -\epsilon [\tilde{E}] $ is ample for any sufficiently small rational $\epsilon>0$. Without loss of generality, we can always assume that the support of $\tilde{E}$ contains $E_0$, $E$ and $F$, i.e.,
$$ supp E \cup supp F \cup supp E_0 \subset supp \tilde{E} . $$
Let $S_{\tilde{E}}$ be the defining section of $\tilde{E}$ and $h_{\tilde{E}}$ a smooth hermitian metric on the line bundle associated to $[\tilde{E}]$ such that for sufficiently small $\epsilon>0$,

$$\omega_0 -  \epsilon Ric(h_{\tilde{E}}) > 0.$$
We can also scale $h_{\tilde{E}}$ and assume $ |S_{\tilde{E}}|_{h_{\tilde{E}}}^2 \leq 1$ on $X$.

Let $\omega_t = \omega_0 + t \chi$. We consider the following Monge-Amp\`ere flow with the initial data $\varphi_0 \in PSH(X, \omega_0)\cap C^\infty(X)$.

\begin{equation}\label{degmaflow1}
\left\{
\begin{array}{l}
{ \displaystyle \ddt{\varphi} = \log
 \frac{ ( \omega_0 + t\chi + \ddbar \varphi ) ^n } {\Omega} },\\
\\
 \varphi(0,\cdot)=\varphi_0.
 \end{array} \right.
\end{equation}

The equation (\ref{degmaflow1}) is not only degenerate in the sense that $[\omega_t]$ is not neccesarily K\"ahler but also that $\Omega$ has zeros and poles along $E$ and $F$. The goal of the following discussion is to prove the existence and uniqueness of the solution for the Monge-Amp\`ere flow (\ref{degmaflow1}) with appropriate assumptions.
Let $$T_0 = \sup \{ t\geq 0 ~|~ [L_1 + t L_2] ~is~semi{\textnormal -}ample~ \}. $$ If {\bf Condition A} is satisfied,  $T_0>0$ or $T_0= \infty$.   Furthermore, $L_1+ tL_2$ is big  for any $t\in[0, T_0)$.

The following theorem is the main result of this section.

\begin{theorem} \label{firstthm} Let $X$ be an $n$-dimensional projective manifold. Suppose {\bf Condition A} and {\bf Condition B} are satisfied. Then for any $\varphi_0 \in PSH(X, \omega_0)\cap C^\infty(X)$, there exists a unique $\varphi\in    C^{\infty}([0, T_0))\times (X\setminus \tilde{E}) )$ with $\varphi(t, \cdot) \in  PSH(X, \omega_{t})\cap L^{\infty}( X )$ for each $t\in [0, T_0)$, satisfying the following Monge-Amp\`ere flow. %
\begin{equation}\label{degmaflowlim}
\left\{
\begin{array}{cl}
&{ \displaystyle \ddt{\varphi} = \log
 \frac{ ( \omega_t  + \ddbar \varphi ) ^n } {  \Omega} },   ~~~~ {\textnormal on ~[0, T_0) \times X \setminus \tilde{E} }, \\
&\\
& \varphi(0,\cdot)=\varphi_0 ,~~~~~~~~~~~~~~~~~~~~~~~~~~~~~~~{\textnormal on }~X  .
\end{array} \right.
\end{equation}

\end{theorem}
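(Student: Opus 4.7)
The plan is to approximate (\ref{degmaflowlim}) by a family of non-degenerate parabolic Monge--Amp\`ere equations indexed by $\epsilon \in (0, \epsilon_0]$, apply the smoothing theory of Section \ref{3.1} to each regularized flow, derive a priori estimates that are uniform in $\epsilon$ on compact subsets of $[0,T_0)\times (X\setminus \tilde E)$, and pass to the limit. Since $L_1$ is semi-ample but possibly not K\"ahler, and $\Omega$ has zeros along $E$ and poles along $F$, both the reference form and the volume form must be perturbed simultaneously. Concretely, fix a K\"ahler form $\theta$ on $X$ and set
\begin{equation*}
\omega_{0,\epsilon} = \omega_0 + \epsilon \theta, \qquad \Omega_{\epsilon} = \frac{(|S_E|_{h_E}^2 + \epsilon)}{(|S_F|_{h_F}^2 + \epsilon)} \Theta,
\end{equation*}
which is smooth and strictly positive with $\Omega_\epsilon \to \Omega$ in $L^p(X,\Theta)$ for some $p>1$ by \textbf{Condition B}. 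The class $[\omega_{0,\epsilon}]$ is K\"ahler for $\epsilon>0$ and remains K\"ahler after adding $t\chi$ for $t\in [0,T_0^\epsilon)$ with $T_0^\epsilon\nearrow T_0$. Thus Section \ref{3.1} produces a smooth solution $\varphi_\epsilon \in C^\infty([0,T_0^\epsilon)\times X)$ of the corresponding Monge--Amp\`ere flow with initial data $\varphi_0$.

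The heart of the argument is the uniform estimate in $\epsilon$ on compact subsets of $X\setminus \tilde E$. First, an $L^\infty$-bound $\|\varphi_\epsilon\|_{L^\infty([0,T]\times X)} \leq C(T)$ for every $T<T_0$ follows by combining a maximum-principle argument for $\dot\varphi_\epsilon$ with Kolodziej's $L^\infty$-estimate (Theorem \ref{zhang}) applied fiberwise to an auxiliary elliptic equation: the uniform $L^p$-bound on $\Omega_\epsilon/\Theta$ is exactly what makes this work. Second, for the second-order estimate, I will run the parabolic Schwarz-lemma computation as in Lemma \ref{c2smoothing} applied to the quantity
\begin{equation*}
H_\epsilon = t\log\bigl(|S_{\tilde E}|_{h_{\tilde E}}^{2\alpha}\,\mathrm{tr}_{\theta}(\omega_t + \ddbar\varphi_\epsilon)\bigr) - A \varphi_\epsilon,
\end{equation*}
where the weight $|S_{\tilde E}|^{2\alpha}$ for $\alpha>0$ sufficiently large absorbs the degeneration of $\omega_0$ along $E_0$ (given by \textbf{Condition A}) and the zeros/poles of $\Omega/\Theta$ along $E\cup F$, all of which are contained in $\tilde E$. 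A maximum-principle argument then yields $\mathrm{tr}_{\theta}(\omega_\epsilon) \leq C_K e^{C/t}$ on every compact $K\Subset X\setminus \tilde E$. Once $\omega_\epsilon$ is bounded from above on $K$ and bounded below via the volume-form estimate (Lemma \ref{volsm}), standard parabolic bootstrapping delivers uniform $C^k_{\mathrm{loc}}((0,T_0)\times (X\setminus\tilde E))$ bounds for every $k$.

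Combined with the $L^\infty$-bound, Arzel\`a--Ascoli yields a subsequential limit $\varphi\in C^\infty([0,T_0)\times (X\setminus \tilde E))$, with $\varphi(t,\cdot)\in PSH(X,\omega_t)\cap L^\infty(X)$ for each $t$, solving (\ref{degmaflowlim}) on the open part. For uniqueness, if $\varphi$ and $\varphi'$ are two solutions of the stated class, then $\psi = \varphi - \varphi'$ is bounded, vanishes at $t=0$, and satisfies
\begin{equation*}
\ddt{\psi} = \log\frac{(\omega_t + \ddbar\varphi + \ddbar\psi)^n}{(\omega_t+\ddbar\varphi)^n}
\end{equation*}
on $[0,T_0)\times (X\setminus\tilde E)$. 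Considering $\psi_\delta^{\pm} = \psi \pm \delta\log|S_{\tilde E}|_{h_{\tilde E}}^2$ shifts extrema off $\tilde E$ into the open set where the equation holds smoothly, so the usual maximum principle gives $\|\psi(t,\cdot)\|_{L^\infty}\leq 2\delta\sup_X |\log|S_{\tilde E}|_{h_{\tilde E}}^2|$; letting $\delta\to 0$ forces $\psi\equiv 0$.

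The main obstacle I anticipate is obtaining the second-order estimate with both the weight $|S_{\tilde E}|^{2\alpha}$ and the smoothing factor $t$ simultaneously, uniform in $\epsilon$: the weighted trace generates several error terms from derivatives of $|S_{\tilde E}|^{2\alpha}$ that must be dominated by the good negative term $-C\,\mathrm{tr}_{\omega_\epsilon}(\theta)$ from the Schwarz computation, and the constants must not blow up as $\epsilon\to 0$. Choosing $\alpha$, $A$ and the auxiliary ample perturbation $-\epsilon_0\,\mathrm{Ric}(h_{\tilde E})$ in the right order, so that the class $[\omega_0] -\epsilon_0[\tilde E]$ stays positive and compensates these errors, is the technical core of the proof.
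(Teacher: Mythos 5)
Your existence argument follows the paper's route (perturb both the reference form and the volume form, get uniform $L^\infty$ and weighted second-order bounds, pass to the limit away from $\tilde E$), but your uniqueness step has a genuine gap, and it is exactly the point the paper spends most of Section \ref{3.2} on. Setting $\psi=\varphi-\varphi'$ and shifting extrema off $\tilde E$ with $\psi\pm\delta\log|S_{\tilde E}|^2_{h_{\tilde E}}$ does not close the argument: at the shifted extremum the perturbation contributes $\mp\delta\,Ric(h_{\tilde E})$ inside the Monge--Amp\`ere quotient, and since $\omega_t+\ddbar\varphi'$ is only a semi-positive current with no strictly positive lower bound (the class is merely big and semi-ample, degenerate along $\tilde E$), this curvature term cannot be absorbed no matter how small $\delta$ is; so you cannot conclude the monotonicity of $\max_X$ or $\min_X$ of the shifted difference. (Your final bound $\|\psi\|_{L^\infty}\le 2\delta\sup_X|\log|S_{\tilde E}|^2_{h_{\tilde E}}|$ is also vacuous, as that supremum is infinite.) The paper circumvents this asymmetrically: the inequality $\varphi'\le\varphi$ is obtained by comparing $\varphi'$ with the $s$-perturbed solutions $\varphi_s$, where the extra term $s(\vartheta-\epsilon Ric(h_{\tilde E}))\ge 0$ supplies the missing positivity; the reverse inequality requires introducing the auxiliary family $\varphi^{(\delta)}$ solving the flow with shrunken reference form $(1-\delta)\omega_0+t\chi$ and initial data $(1-\delta)\varphi_0$, for which $\delta\omega_0-\delta^2 Ric(h_{\tilde E})\ge 0$ makes the comparison $\varphi'\ge\varphi^{(\delta)}+O(\delta)$ work, and then the Lipschitz-in-$\delta$ estimates of Lemma \ref{osc1} and Lemma \ref{lip1} (proved by differentiating the approximate flows in $\delta$ and a weighted maximum principle) to show $\varphi^{(\delta)}\to\varphi$ locally uniformly away from $\tilde E$. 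Without some substitute for this one-sided comparison machinery, your uniqueness proof does not go through.

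A secondary issue: your second-order quantity carries the smoothing factor $t$ and yields bounds of the form $e^{C/t}$, which are the rough-initial-data estimates of Section \ref{3.1} (Lemmas \ref{volsm}, \ref{c2smoothing}). Here the initial data $\varphi_0$ is smooth, and the statement of Theorem \ref{firstthm} asserts regularity on $[0,T_0)\times(X\setminus\tilde E)$, i.e.\ up to $t=0$; for that you need estimates uniform down to $t=0$ with only the spatial weight $|S_{\tilde E}|^{-2\alpha}_{h_{\tilde E}}$, as in Lemmas \ref{c0estimate1}, \ref{timebound1} and \ref{c2estimate1}, rather than $e^{C/t}$-type bounds, which would also leave the attainment of the initial condition unjustified.
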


\medskip

\begin{remark} For any fixed $T\in (0, T_0)$, we can assume that $\omega_t\geq \epsilon \omega_0$ for all $t\in [0, T]$, where $\epsilon>0$ is sufficiently small and it depends on $T$.   Recall that  $L_1 + t L_2$ is semi-ample and big for all $t\in [0, T_0)$.  We fix $T'\in (T, T_0)$. Then
$$ [\omega_t] = \frac{t}{T_0} [ \omega_0 + T_0 \chi] + \frac{ T_0-t }{T_0} [\omega_0]$$
and $[ \omega_0 + T_0 \chi ] $ is semi-positive and big. Then there exists  $\phi \in PSH( X, \omega_0 + T_0 \chi) \cap C^\infty(X)$, i.e., $ \omega_0 + T_0  \chi + \ddbar\phi \geq 0$. Then for all $t\in [0, T']$,
$$\omega_t + \frac{t}{T_0} \ddbar\phi = \frac{t}{T_0}( \omega_0 + T_0 \chi) + \frac{t}{T_0}\ddbar \phi +  \frac{ T_0-t }{T_0}  \omega_0 \geq \frac{ T_0-t }{T_0}  \omega_0 \geq 0 .$$
Then $$ \ddt{} ( \varphi - \frac{t}{T_0}\phi) = \log \frac{ ( \omega_0 + t (\chi + \frac{1}{T_0}\ddbar \phi) + \ddbar(\varphi - \frac{t}{T_0} \phi) )^n } {\Omega} - \frac{1}{T_0} \phi.$$
Let $\omega_0 '= \omega_0$, $ \chi' = \chi + \frac{1}{T_0} \ddbar \phi$, $\omega_t'= \omega_0'  + t \chi'$, $ \Omega' = \Omega e^{ \frac{\phi} {T_0} }  $ and $\varphi' = \varphi - \frac{t}{T_0} \phi$. We have
$$ \ddt{} \varphi' = \log \frac{ ( \omega_t' + \ddbar \varphi' )^n } {\Omega'}.$$
It is easy to check that $\omega'_t \geq \frac{T_0-t}{T_0} \omega_0$ and {\bf Condition A} and {\bf Condition B} are still satisfied for $\omega_0'$ and $\Omega'$.
\end{remark}

\bigskip

From now on, we fix $T\in (0, T_0)$ and assume without loss of generality from the previous remark that $ \omega_t $ is bounded from below by $\epsilon \omega_0$ for sufficiently small $\epsilon>0$. In order to prove Theorem \ref{firstthm}, we have to perturb equation (\ref{degmaflowlim}) in order to obtain smooth approximating solutions.  Let $$\omega_{t,s} = \omega_0 + t \chi + s \vartheta$$ and

$$\Omega_{w, r} = \frac  {r+ |S_{E}|^2_{h_{E}} } { w+ |S_F|^{2}_{h_F}} ~ \Theta$$ be the perturbations of $\omega_t$ and $\Omega$
for $s, w, r \in[0, 1]$. In particular, $\omega_{t,0} =\omega_t$ and $\Omega_{0, 0} = \Omega$. Since $\vartheta$ is K\"ahler, $\omega_{t,s}$ is K\"ahler for $s>0$ and $t\in [0, T_0)$.

Then we consider the following well-defined family of Monge-Amp\`ere flows with the fixed initial data $\varphi_0$.

\begin{equation}\label{degmaflow2}
\left\{
\begin{array}{cl}
&{ \displaystyle \ddt{\varphi_{s,w,r} } = \log
 \frac{ ( \omega_{s,t} + \ddbar \varphi_{s,w, r}  ) ^n } {\Omega_{w,r}} },\\
&\\
& \varphi_{s,w, r} (0,\cdot)=\varphi_0 .
\end{array} \right.
\end{equation}
The standard argument gives the following lemma as $[\omega_{s,t}]$ stays positive on $[0, T_0)$.

\begin{lemma} For any $s, w ,r \in (0, 1]$, there is a unique smooth solution $\varphi_{ s, w, r}$ of the Monge-Amp\`ere flow (\ref{degmaflow2}) on $[0, T_0) \times X$.
\end{lemma}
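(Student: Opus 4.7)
The proof proposal is essentially standard once one observes that for $s,w,r>0$ the problem is a genuinely non-degenerate parabolic Monge-Amp\`ere equation. The plan is to reduce to Cao's existence theorem together with a priori estimates that depend only on $T<T_0$ (but may depend on $s,w,r$).

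First, I would note that since $\vartheta$ is K\"ahler and $[L_1 + tL_2]$ is semi-ample for $t\in[0,T_0)$, the class $[\omega_{s,t}] = [\omega_0] + t[\chi] + s[\vartheta]$ is a K\"ahler class for every $s>0$ and $t\in[0,T_0)$, and in fact $\omega_{s,t} \geq s\vartheta > 0$ as a form. Moreover $\Omega_{w,r}$ is a smooth, strictly positive volume form on $X$ whenever $w,r>0$, because the numerator $r + |S_E|^2_{h_E}$ is bounded below by $r$ and the denominator $w+|S_F|^2_{h_F}$ is bounded below by $w$. Hence equation (\ref{degmaflow2}) is a uniformly parabolic Monge-Amp\`ere equation with smooth coefficients and smooth initial data $\varphi_0\in C^\infty(X)$.

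Short-time existence then follows from the standard implicit function theorem in parabolic H\"older spaces: the linearization $\partial_t - \Delta_{g}$ at a smooth K\"ahler background is invertible on appropriate function spaces, and a fixed-point / continuity argument produces a smooth solution on some short interval $[0,\tau)$. To extend the solution up to any $T\in(0,T_0)$, I would derive the classical a priori estimates of Cao \cite{C}:
\begin{enumerate}
\item[(i)] A $C^0$ bound $\|\varphi_{s,w,r}\|_{L^\infty([0,T]\times X)}\leq C$ by applying the maximum principle to $\varphi_{s,w,r}$ and to $\varphi_{s,w,r}-At$ for suitable $A$, exactly as in Lemma~\ref{cauchy1}.
\item[(ii)] A uniform bound on $\dot\varphi_{s,w,r}$ via differentiation of the equation and the maximum principle applied to quantities like $t\dot\varphi - \varphi - nt$, as in the proof of Lemma~\ref{volsm}; this bounds $\omega_{s,t}+\ddbar\varphi_{s,w,r}$ in volume from above and below away from zero.
\item[(iii)] A $C^2$ estimate via the parabolic Schwarz-type computation of Yau / Aubin applied to $\log\mathrm{tr}_{\omega_0 + s\vartheta}(\omega_{s,t}+\ddbar\varphi_{s,w,r})-A\varphi_{s,w,r}$, paralleling Lemma~\ref{c2smoothing} but with the smooth K\"ahler background $\omega_0+s\vartheta$ so that no $1/t$ factor appears.
\item[(iv)] Higher-order estimates from Evans--Krylov (or Calabi's $C^3$ computation), since the equation is now uniformly parabolic with uniform $C^0$--$C^2$ bounds on $\varphi_{s,w,r}$.
\end{enumerate}
All constants are allowed to depend on $s,w,r$ and $T$; the key point is that they do not blow up before $T_0$, since the K\"ahler class $[\omega_{s,t}]$ stays inside the K\"ahler cone for all $t<T_0$ and the right-hand side stays smooth and strictly positive.

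These estimates combined with the standard parabolic bootstrap give uniform $C^k$ bounds on $[0,T]\times X$ for every $k$ and every $T<T_0$, so the short-time solution extends to all of $[0,T_0)\times X$ and is smooth there. Uniqueness is the easiest part: if $\varphi$ and $\varphi'$ are two smooth solutions with the same initial data, then $\psi=\varphi'-\varphi$ satisfies a parabolic equation with zero initial data and a concave-in-Hessian nonlinearity, so the maximum principle applied to $\max_X\psi(t,\cdot)$ and $\min_X\psi(t,\cdot)$ forces $\psi\equiv 0$, exactly as in the proof of Proposition~\ref{propexistence}.

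The only step requiring any care is (iii), the second-order estimate: one must be sure that the bad terms produced by differentiating $\log\Omega_{w,r}$ are harmless. Since $\Omega_{w,r}$ is smooth and strictly positive for $w,r>0$, its bisectional curvature contribution is bounded and can be absorbed exactly as in the smooth K\"ahler-Ricci flow setting, so no new difficulty arises. Hence the lemma reduces to Cao's theorem applied to the smoothed data.
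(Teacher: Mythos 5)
Your argument is correct and takes essentially the same route as the paper, which simply invokes ``the standard argument'' because $[\omega_{t,s}]$ stays K\"ahler for $t<T_0$, $s>0$ and $\Omega_{w,r}$ is smooth and strictly positive for $w,r>0$, i.e.\ the non-degenerate parabolic Monge-Amp\`ere existence theory of Cao and Tian--Zhang plus the maximum principle for uniqueness. One small caveat: the pointwise inequality $\omega_{t,s}\geq s\vartheta$ is not automatic from Condition A alone (since $\chi$ need not be semi-positive) but holds after the paper's standing reduction in the preceding Remark, which arranges $\omega_t\geq \epsilon\omega_0\geq 0$ on $[0,T]$; in any case what your proof actually needs, and uses, is only that the class $[\omega_{t,s}]$ remains K\"ahler on each $[0,T]\subset[0,T_0)$.
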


 \begin{lemma}\label{gugong} Let $\displaystyle{F_{w, r} = \frac{ \Omega_{w, r}} {\vartheta^n}}.$ Then there exist constants $p>1$ and $C>0$ such that for $w, r\in [0,1]$ and
\begin{equation}
||F_{w, r} ||_{L^p(X, \vartheta^n)} \leq C.
\end{equation}

\end{lemma}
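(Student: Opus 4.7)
The plan is to reduce the uniform $L^p$ estimate for $F_{w,r}$ to the pointwise bound coming from the worst case $w=0$, $r=1$, and then to invoke the classical integrability of $|S_F|_{h_F}^{-2p}$ for simple normal crossings divisors with multiplicities strictly less than $1$.

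First I would exploit two simple monotonicity facts. Since $|S_E|^2_{h_E}$ is a continuous function on the compact $X$ and $r\in[0,1]$, the numerator satisfies $r + |S_E|^2_{h_E} \leq C_1$ for a constant $C_1$ independent of $r$. For the denominator, the inequality $\frac{1}{w+|S_F|^2_{h_F}} \leq \frac{1}{|S_F|^2_{h_F}}$ holds pointwise for $w\geq 0$. Combining these with the fact that $\Theta/\vartheta^n$ is a smooth positive function on $X$, we get
\begin{equation*}
F_{w,r} = \frac{r+|S_E|^2_{h_E}}{w+|S_F|^2_{h_F}}\cdot\frac{\Theta}{\vartheta^n} \;\leq\; C_2\, |S_F|_{h_F}^{-2}
\end{equation*}
for some $C_2>0$ independent of $(w,r)\in[0,1]^2$. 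So it suffices to exhibit a single $p>1$ for which $|S_F|_{h_F}^{-2p}$ is integrable against $\vartheta^n$.

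Next I would argue the integrability of $|S_F|_{h_F}^{-2p}$ by a local computation using the simple normal crossings hypothesis. Cover $X$ by finitely many coordinate charts $(U_\alpha, z_1,\dots,z_n)$ such that, on each chart, the components of $F$ meeting $U_\alpha$ are given by $\{z_j=0\}$ for $j$ in some subset $J_\alpha\subset\{1,\dots,q\}$. On $U_\alpha$, $|S_F|^2_{h_F} = e^{2\psi_\alpha}\prod_{j\in J_\alpha}|z_j|^{2b_j}$ for a smooth function $\psi_\alpha$, so
\begin{equation*}
|S_F|_{h_F}^{-2p}\,\vartheta^n \;\leq\; C_\alpha\prod_{j\in J_\alpha}|z_j|^{-2pb_j}\,dV,
\end{equation*}
and the right hand side is integrable on $U_\alpha$ provided $2pb_j<2$ for each $j\in J_\alpha$, i.e.\ $p<1/b_j$. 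Setting $b_{\max}=\max_j b_j$ and recalling Condition B gives $b_{\max}<1$, so any choice $1<p<1/b_{\max}$ makes the integrability hold on every chart. Summing over the finite cover produces a uniform bound $\|F_{w,r}\|_{L^p(X,\vartheta^n)}\leq C$.

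This is essentially all there is to it; I do not expect a genuine obstacle, since everything is set up so that the perturbation parameters $w,r$ only help (monotonicity) and the singular behavior is entirely captured by $|S_F|_{h_F}^{-2}$, whose integrability is standard for SNC divisors with coefficients in $(0,1)$.
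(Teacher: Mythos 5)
Your proposal is correct and follows essentially the same route as the paper: the paper likewise reduces to the uniform pointwise domination of the perturbed density by a constant times $|S_F|_{h_F}^{-2}$ (after writing $\int_X F_{w,r}^p\,\vartheta^n=\int_X F_{w,r}^{p-1}\,\Omega_{w,r}$) and then uses that the vanishing order of $|S_F|^2_{h_F}$ is strictly less than $2$ to get integrability for $p-1$ small. Your chart-by-chart SNC computation just makes explicit the integrability step that the paper states without detail, with the explicit admissible range $1<p<1/b_{\max}$.
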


\begin{proof}
There exists a constant $C>0$ such that
\begin{eqnarray*}
\int_X (F_{w,r})^p \vartheta^n &=& \int_X (F_{w, r})^{p-1} \Omega_{w, r}\\
&=& \int_X (F_{w,r})^{p-1}(r+ |S_{E}|)^2_{h_{E}} (w+ |S_F|)^{-2}_{h_F} \Theta\\
&\leq&
C \int_X (F_{w,r})^{p-1}  |S_F|^{-2}_{h_F} \Theta.
\end{eqnarray*}

Since $F_{w, r}$ has at worst poles along $\tilde{E}$ and the vanishing order of $|S_F|^2_{h_F}$ is strictly less than $2$, by choosing $p-1>0$ sufficiently small, $\int_X (F_{w,r})^p \vartheta^n $ is uniformly bounded from above.

\qed
\end{proof}

We can apply the results for degenerate complex Monge-Amp\`ere equations as $F_{w,r}$ is uniformly bounded in $L^p(X)$ for some $p>1$.

 \begin{lemma}\label{c0estimate1} For any $ 0<T<T_0$, there exists $C>0 $ such that  for all $ s, w, r \in (0, 1] $,
$$ || \varphi_{s, r, w } ||_{L^\infty( [0, T] \times X )} \leq C $$

 \end{lemma}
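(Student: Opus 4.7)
The target is a bound on $\|\varphi_{s,w,r}\|_{L^\infty([0,T]\times X)}$ that does not degenerate as $s,w,r\to 0^+$. Two kinds of degeneracy compete: the reference form $\omega_{s,t}$ becomes merely semi-positive when $s=0$, and the volume factor $\Omega_{w,r}$ acquires zeros along $E$ and poles along $F$ when $r,w=0$, so $|\log\Omega_{w,r}|$ is not uniformly bounded. The saving feature, used throughout, is the uniform $L^p$-control of $\Omega_{w,r}/\vartheta^n$ from Lemma \ref{gugong} together with the fact that $[\omega_0+T'\chi]$ remains big and semi-ample for any $T'\in (T,T_0)$.

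\textbf{Step 1 (Kolodziej reference function).} I would apply Theorem \ref{zhang} to produce, for each pair $(w,r)\in(0,1]^2$, a function $\rho_{w,r}\in PSH(X,\omega_0+T'\chi)\cap L^\infty(X)$ satisfying
\[
(\omega_0+T'\chi+\ddbar\rho_{w,r})^n \;=\; c_{w,r}\,\Omega_{w,r}, \qquad \sup_X \rho_{w,r}=0,
\]
with $c_{w,r}$ chosen so that the integrals balance. Because the right-hand sides are uniformly bounded in $L^p(X,\vartheta^n)$, the conclusion of Theorem \ref{zhang} gives $\|\rho_{w,r}\|_{L^\infty(X)}\le C$ and $C^{-1}\le c_{w,r}\le C$ uniformly in $w,r$. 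The function $\rho_{w,r}$ will serve as a barrier into which the singular part of $\Omega_{w,r}$ has been absorbed.

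\textbf{Step 2 (Maximum principle).} For the upper bound I would consider $u=\varphi_{s,w,r}-(t/T')\rho_{w,r}-Bt$ with $B$ to be chosen large. At any interior space-time maximum of $u$ on $[0,T]\times X$, the constraint $\ddbar u\le 0$ rewrites as
\[
\omega_{s,t}+\ddbar\varphi_{s,w,r} \;\le\; \bigl(1-\tfrac{t}{T'}\bigr)\omega_0 + s\vartheta + \tfrac{t}{T'}\bigl(\omega_0+T'\chi+\ddbar\rho_{w,r}\bigr),
\]
a sum of three semi-positive currents. Raising to the $n$-th power and dominating the mixed wedge products by the top power via the arithmetic-geometric mean inequality produces a pointwise bound of the form $(\omega_{s,t}+\ddbar\varphi)^n\le \tilde{C}\,\Omega_{w,r}$ with $\tilde C$ uniform. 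Taking logarithms and comparing with $\partial_t u=\partial_t\varphi-\rho_{w,r}/T'-B\ge 0$ at the maximum yields a contradiction once $B$ is chosen large enough depending only on $\|\rho_{w,r}\|_{L^\infty}$, $c_{w,r}$, and fixed geometric data. Hence the maximum of $u$ is attained at $t=0$, giving a uniform upper bound on $\varphi_{s,w,r}$. The lower bound follows by a symmetric argument with $v=\varphi_{s,w,r}+(t/T')\rho_{w,r}+B't$, where in place of the pointwise AM-GM step one uses Jensen's inequality applied to $\int_X\log\bigl((\omega_{s,t}+\ddbar\varphi)^n/\Omega_{w,r}\bigr)\omega_0^n$ together with the uniform lower bound on $c_{w,r}$ to rule out $\varphi$ going to $-\infty$ at the minimum.

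\textbf{Main obstacle.} The delicate point is the treatment of the mixed wedge-product terms in the expansion of $(\omega_{s,t}+\ddbar\varphi_{s,w,r})^n$ in Step 2, since one of the factors, $\omega_0+T'\chi+\ddbar\rho_{w,r}$, is only a closed positive current with bounded local potential rather than a smooth form. The arithmetic-geometric mean inequality, which controls these mixed terms by the explicit top power $\omega_{\rho_{w,r}}^n=c_{w,r}\Omega_{w,r}$, is the mechanism that makes the estimate quantitative and uniform in the regularisation parameters $s,w,r$; without it the argument would still produce pointwise bounds for each fixed $s,w,r>0$ (by the smooth theory) but the constants would blow up as the parameters tend to zero.
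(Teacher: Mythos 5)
Your upper bound does not go through. It hinges on producing, at a space--time maximum of $u=\varphi_{s,w,r}-(t/T')\rho_{w,r}-Bt$, a pointwise inequality $(\omega_{s,t}+\ddbar\varphi_{s,w,r})^n\le\tilde C\,\Omega_{w,r}$ with $\tilde C$ uniform in $s,w,r$, by expanding the $n$-th power of the dominating form and ``controlling the mixed terms by the top power $c_{w,r}\Omega_{w,r}$ via AM--GM''. This fails twice over. First, the AM--GM/Alexandrov-type inequality for positive $(1,1)$-forms goes the other way: it bounds mixed wedge products from \emph{below} by geometric means of the top powers ($\alpha^k\wedge\beta^{n-k}\ge(\alpha^n)^{k/n}(\beta^n)^{(n-k)/n}$ pointwise), and mixed terms such as $\omega_0^{n-1}\wedge(\omega_0+T'\chi+\ddbar\rho_{w,r})$ can be arbitrarily large even when both top powers are small. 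Second, the expansion contains the pure term $\bigl((1-t/T')\omega_0+s\vartheta\bigr)^n$, a smooth strictly positive volume form, which cannot be dominated by $C\,\Omega_{w,r}$ uniformly as $r\to0$, since $\Omega_{0,0}$ vanishes along $E$. Hence no uniform $\tilde C$ exists and the intended contradiction with $\partial_t u\ge 0$ never materializes. The paper's upper bound uses an entirely different, integral mechanism: concavity of $\log$ gives $\frac{d}{dt}\int_X\varphi_{s,w,r}\,\Omega_{w,r}\le \alpha_{s,w,r}(t)\int_X\Omega_{w,r}$, and the uniform H\"ormander--Tian exponential estimate for $PSH(X,\omega_{t,s})$ together with Jensen's inequality converts the resulting bound on $\int_X\varphi_{s,w,r}\Omega_{w,r}$ into a uniform bound on $\sup_X\varphi_{s,w,r}$.

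There is also a regularity gap common to both of your bounds: $\rho_{w,r}$ produced by Theorem \ref{zhang} is only bounded (at best continuous), so $u$ and $v$ are not $C^2$ and the statements ``at the maximum $\ddbar u\le 0$'', ``at the minimum $\ddbar v\ge 0$'' are not legitimate pointwise assertions; you would need either a pluripotential comparison argument adapted to the flow or a barrier that is smooth where the extremum sits. The paper's lower bound is close in spirit to your $v$-argument but repairs exactly this point: it uses the barrier $\phi_{w,r}$ solving $(\theta+\ddbar\phi_{w,r})^n=C_{w,r}\Omega_{w,r}$ with $\theta=\delta\omega_0$ chosen so that $2\theta\le\omega_t$ on $[0,T]$, invokes the regularity theory of \cite{EGZ1} to get $\phi_{w,r}\in C^\infty(X\setminus\tilde E)$ with a uniform $L^\infty$ bound, and perturbs by $-\epsilon\log|S_{\tilde E}|^2_{h_{\tilde E}}$ so that the minimum of $H=\varphi_{s,w,r}-\phi_{w,r}-\epsilon\log|S_{\tilde E}|^2_{h_{\tilde E}}$ is forced into $X\setminus\tilde E$, where the pointwise maximum principle applies; the slack $\omega_t-\theta\ge\theta\ge\epsilon\,Ric(h_{\tilde E})$ absorbs the curvature term, and $\epsilon\to0$ at the end. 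Your alternative suggestion for the lower bound (Jensen applied to $\int_X\log\bigl((\omega_{s,t}+\ddbar\varphi)^n/\Omega_{w,r}\bigr)\omega_0^n$) neither addresses this regularity issue nor controls $\min_X\varphi_{s,w,r}$. So the barrier idea behind your lower bound is salvageable along the paper's lines, but as written the proof has genuine gaps in both directions.
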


 \begin{proof} We first prove the uniform upper bound for $\varphi_{s,w,r}$. We define for $t\in [0, T]$

\begin{equation}
   \alpha_{s,w, r} (t)  = \frac{ \int_X \Omega_{w,r}} {[\omega_{t,s}]^n}.
\end{equation}
It is easy to see that $\alpha_{s,w,r}(t)$ is uniformly bounded for $ t\in [0, T]$.
 Notice that
\begin{eqnarray*}
\ddt{} \int_X \varphi_{s,w, r} \Omega_{w,r}  &=&  \int_X \log \frac{ (\omega_{t,s} + \ddbar \varphi_{s,w,r})^n }{  \Omega_{w,r}}  \Omega_{w,r}  \\
&\leq &  (\int_X \Omega_{w,r})  \left( \log \frac{ \int_X (\omega_{t,s} + \ddbar \varphi_{s,w,r})^n}{ \alpha_{s,w,r}(t)\int_X \Omega_{w,r}  } \right)   + \alpha_{s,w,r}(t) \int_X \Omega_{w,r} \\
&\leq& ( \int_X \Omega_{w,r} )  \left( \log \frac{ [\omega_{t,s}]^n}{ \alpha_{s,r}(t)\int_X \Omega_{w,r} } \right) + \alpha_{s,w,r}(t) \int_X \Omega_{w,r} \\
&=&\alpha_{s,w,r}(t) \int_X \Omega_{w,r}.
\end{eqnarray*}
By the maximum prinicple,  $\int_X  \varphi_{s,w,r} \Omega_{w,r} \leq C$ for a uniform constant $C$ that depends on $T$.

On the other hand, since $ \varphi_{s,w,r} \in PSH ( X, \omega_{s,w,r} )$, by H\"omander-Tian's estimate,  there exist $\alpha>0$ and $C_{\alpha} >0 $ such that for all $s, r\in (0,1]$ and $t\in [0, T]$,

$$ \int_X e^{- \alpha   (\varphi_{s,w,r} - \sup_X \varphi_{s,w,r})} \Omega_{w,r} \leq C_\alpha.$$

By Jensen's inequality, there exists a constant $C>0$ such that  for  $s, r\in (0,1]$ and $t\in [0, T]$,
$$ \sup_X \varphi_{s,w,r} - \int_{\tilde{X}} \varphi_{s,w,r} \Omega_{w,r} \leq C .$$
It follows that  $\sup_X \varphi_{s,w,r}$ is uniformly bounded above.

 Now it suffices to obtain a uniform lower bound for $\varphi_{s, w, r}$. Let $\theta = \delta \omega_0$, where $\delta>0$ is sufficiently small such that $$ 2 \theta \leq \omega_t$$ for all $t\in [0, T]$.  By the choice of $h_{\tilde{E}}$,  $$\theta - \epsilon Ric(h_{\tilde{E}})>0$$ for any sufficiently small $\epsilon>0$.

  We consider the following family of Monge-Amp\`ere equations for $w, r\in [0,1]$.

\begin{equation}\label{app1}
 ( \theta + \ddbar \phi_{w,r} ) ^n =  C_{w,r} \Omega_{w,r},
\end{equation}
with the normalization conditions $[\theta]^n = C_{w,r} \int_X \Omega_{w,r}$ and $ \sup_X \phi  = 0.$

Note that by Lemma \ref{gugong}, $\displaystyle{\frac{C_{w,r} \Omega_{w,r}}{\theta^n} }$ is uniformly bounded in $L^p(X, \theta^n)$ for $w, r\in [0,1]$.   By the resluts in \cite{EGZ1}, $\phi_{w,r}  \in C^{\infty} ( X \setminus \tilde{E}) $ and there exists a uniform constant $C>0$ such that for all $w, r\in [0, 1]$,

\begin{equation}
||\phi_{w,r}  ||_{L^\infty(X) }\leq C.
\end{equation}

Let $\psi_{s,w,r} (t, \cdot) = \varphi_{s,w,r} (t, \cdot)-   \phi_{w, r} $. The evolution equation for $\psi_{s,w.r}$ is given by the following formula for $t\in [0,T]$.

 $$ \ddt{} \psi_{s,w,r} = \log \frac{ ( \omega_{t,s} +  \ddbar \varphi_{s, r} )^n} { ( \theta + \ddbar \phi_{w,r} )^n} + \log C_{w,r} .$$

 Let $H= \psi_{s,w,r} - \epsilon \log |S_{\tilde{E}} |^2_{h_{\tilde{E}}} .$ Then the minimum of $H$ is always achieved in $X\setminus \tilde{E}$ as $H$ tends to $\infty$ near $\tilde{E}$. Suppose for fixed $t\in [0, T]$, $\min_X H(t, \cdot)= H(t, z_0)$. By choosing sufficiently small $\epsilon>0$, we have at $(t, z_0)$,
 \begin{eqnarray*}
 \ddt{}   H
&=&  \log \frac{ ( \theta + \ddbar \phi_{w,r} + ( \omega_t - \theta  + s \vartheta -\epsilon Ric(h_{\tilde{E}}) +  \ddbar H )^n} { ( \theta + \ddbar \phi_{w,r})^n} + \log C_{w,r}\\
&\geq &  \log \frac{ ( \theta + \ddbar \phi_{w,r} + (  \theta   -\epsilon Ric(h_{\tilde{E}}))  +  \ddbar H )^n} { ( \theta + \ddbar \phi_{w, r})^n} + \log C_{w,r}\\
 & \geq &  \log \frac{ ( \theta + \ddbar \phi_{w,r} +  \ddbar H )^n} { ( \theta + \ddbar \phi_{w, r} )^n} + \log C_{w,r}  \\
 &\geq &  \log C_{w,r} .\\
 \end{eqnarray*}
 By the maximum principle,  there exist a sufficiently small $\epsilon>0$ and  a uniform constant $C>0$ independent of $\epsilon$ such that for all $s, w, r \in (0, 1]$, $t\in [0, T]$, $$ H \geq -C.$$
It follows then that  for $s, w, r \in (0,1]$ and $t\in [0, T]$,  $$\varphi_{s,w, r} (t, \cdot) \geq   \phi_{w,r}(\cdot ) +  \epsilon \log |S_{\tilde{E}}|^2_{h_{\tilde{E}}}  - C.$$

 By letting $\epsilon \rightarrow 0$, we have

 $$\varphi_{s, w, r} (t, \cdot) \geq   \phi_{w,r}(\cdot ) - C .$$
The uniform bound for $\phi_{w,r}$ gives the uniform lower bound for $\varphi_{s,w,r}$. Combined with the upper bound for $\varphi_{s,w,r}$, we have completed the proof.

 \qed
 \end{proof}

 \begin{lemma}\label{timebound1} For any $T\in (0, T_0)$,
there exist   $C$, $\alpha>0$  such that for all $ t \in [0, T] $ and $ s,w, r \in (0, 1] $

 $$ \left|\ddt{}\varphi_{s,w,r}\right| \leq  C + \log  |S_{\tilde{E}}|_{h_{\tilde{E}}}^{-2\alpha}  .$$

\end{lemma}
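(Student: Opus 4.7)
The plan is to apply the parabolic maximum principle to two barrier-type test functions,
$$Q^+ = \ddt{\varphi_{s,w,r}} - A\varphi_{s,w,r} + D\log|S_{\tilde{E}}|^2_{h_{\tilde{E}}}, \qquad Q^- = \ddt{\varphi_{s,w,r}} + A\varphi_{s,w,r} - D\log|S_{\tilde{E}}|^2_{h_{\tilde{E}}},$$
with large positive constants $A$ and $D$ to be specified. By construction $Q^+\to -\infty$ and $Q^-\to +\infty$ along $\tilde{E}$, so the maximum of $Q^+$ and the minimum of $Q^-$ on $[0,T]\times X$ are attained at points $(t_0,z_0)$ with $z_0\notin \tilde{E}$, where all quantities involved are smooth.

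For the upper bound, writing $\omega'=\omega_{t,s}+\ddbar\varphi_{s,w,r}$ and using $(\partial_t - \Delta_{\omega'})\ddt{\varphi_{s,w,r}} = tr_{\omega'}(\chi)$, $\Delta_{\omega'}\varphi_{s,w,r}=n - tr_{\omega'}(\omega_{t,s})$, and $\ddbar\log|S_{\tilde{E}}|^2_{h_{\tilde{E}}}=-Ric(h_{\tilde{E}})$, a direct computation yields
$$(\partial_t - \Delta_{\omega'})Q^+ = - tr_{\omega'}\bigl(A\omega_{t,s} - \chi - D\,Ric(h_{\tilde{E}})\bigr) - A\ddt{\varphi_{s,w,r}} + An.$$
Since $\omega_0 - \epsilon\,Ric(h_{\tilde{E}})>0$ by the choice of $h_{\tilde{E}}$ and $\omega_{t,s}\ge \epsilon_0 \omega_0$ by the preceding remark, one can choose $A$ sufficiently large (depending on $D$) so that $A\omega_{t,s} - \chi - D\,Ric(h_{\tilde{E}})\ge 0$. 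At an interior max with $t_0>0$ the maximum principle then forces $\ddt{\varphi_{s,w,r}}(t_0,z_0)\le n$, hence $Q^+(t_0,z_0)\le C$ thanks to the uniform $C^0$-bound of Lemma \ref{c0estimate1}. At $t_0=0$, the explicit expression $\ddt{\varphi_{s,w,r}}(0,\cdot)=\log\frac{(\omega_0+\ddbar\varphi_0)^n}{\Omega_{w,r}}$ is bounded above by $C+\alpha_0 \log|S_{\tilde{E}}|^{-2}_{h_{\tilde{E}}}$ uniformly in $w,r\in(0,1]$, because the numerator is fixed smooth and $\Omega_{w,r}\ge c\,|S_E|^2_{h_E}\,\Theta/(1+|S_F|^2_{h_F})$ with $E,F\subset\tilde{E}$; for $D\ge\alpha_0$ this forces $Q^+(0,\cdot)\le C$. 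Combining, $\ddt{\varphi_{s,w,r}} \le C + D\log|S_{\tilde{E}}|^{-2}_{h_{\tilde{E}}}$.

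For the lower bound, the analogous computation gives
$$(\partial_t - \Delta_{\omega'})Q^- = tr_{\omega'}\bigl(A\omega_{t,s} + \chi - D\,Ric(h_{\tilde{E}})\bigr) + A\ddt{\varphi_{s,w,r}} - An.$$
For $A$ large enough, the $(1,1)$-form $\eta:=A\omega_{t,s}+\chi - D\,Ric(h_{\tilde{E}})$ is K\"ahler, bounded below by a uniform positive multiple of $\vartheta$. The AM-GM inequality then yields
$$tr_{\omega'}(\eta) \ge n\bigl(\eta^n/\omega'^n\bigr)^{1/n} = n\,e^{-\ddt{\varphi_{s,w,r}}/n}\bigl(\eta^n/\Omega_{w,r}\bigr)^{1/n},$$
and the quotient $\eta^n/\Omega_{w,r}$ is bounded below by $c\,|S_{\tilde{E}}|^{2m'}_{h_{\tilde{E}}}$ uniformly in $s,w,r$, where $m'>0$ is controlled by the vanishing order of $\omega_0^n$ on $E_0$ and the pole of $\Omega$ along $F$. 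At the interior min the inequality $(\partial_t - \Delta_{\omega'})Q^-\le 0$ thus becomes a transcendental inequality of the form $ne^{-x/n}G + Ax \le An$ with $x = \ddt{\varphi_{s,w,r}}(t_0,z_0)$ and $G\ge c\,|S_{\tilde{E}}(z_0)|^{2m'/n}_{h_{\tilde{E}}}$; since the left-hand side grows exponentially as $x\to -\infty$, solving forces $x\ge -C - m'\log|S_{\tilde{E}}|^{-2}_{h_{\tilde{E}}}(z_0)$. For $D\ge m'$ this gives $Q^-(t_0,z_0)\ge -C$, and an analogous estimate at $t=0$ (using a matching lower bound $\ddt{\varphi_{s,w,r}}(0,\cdot)\ge -C - \alpha_0'\log|S_{\tilde{E}}|^{-2}_{h_{\tilde{E}}}$) completes the min-principle argument, yielding $\ddt{\varphi_{s,w,r}}\ge -C - D\log|S_{\tilde{E}}|^{-2}_{h_{\tilde{E}}}$.

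Putting the two bounds together gives the lemma with $\alpha = D$. The main technical point is the AM-GM step in the lower-bound argument: it requires a coordinated choice of $A$ and $D$ so that $\eta$ is simultaneously K\"ahler (which needs $D/A$ sufficiently small relative to $\epsilon$) and so that the barrier coefficient $D$ dominates the vanishing exponent $m'$ produced by the divisors $E_0$ and $F$. This is always possible by first choosing $D$ large depending on $m'$ and $\alpha_0,\alpha_0'$, and then choosing $A$ much larger depending on $D$.
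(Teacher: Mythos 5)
Your argument is correct and is essentially the paper's own proof: your $Q^{\pm}$ are the paper's barriers $H^{\pm}=\dot{\varphi}_{s,w,r}\mp A^{2}\varphi_{s,w,r}\pm A\log|S_{\tilde{E}}|^{2}_{h_{\tilde{E}}}$, with the same evolution identities, the same choice of large constants making the relevant $(1,1)$-form nonnegative, and the same AM--GM trace estimate producing the exponential term $e^{-\dot{\varphi}/n}$ that defeats the linear term $A\dot{\varphi}$ in the lower bound; you merely close the maximum principle pointwise at the extremum, where the paper instead rewrites things as a linear differential inequality for $H^{\pm}$. The one step you assert rather than prove --- the uniform (in $s,w,r$) lower bound for $\dot{\varphi}_{s,w,r}(0,\cdot)$ against $-\log|S_{\tilde{E}}|^{-2}_{h_{\tilde{E}}}$, which needs $(\omega_{0,s}+\ddbar\varphi_{0})^{n}$ to be bounded below by a power of $|S_{\tilde{E}}|^{2}_{h_{\tilde{E}}}$ --- is treated no more explicitly in the paper, which simply invokes ``a similar argument by the maximum principle'' for $H^{-}$.
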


\begin{proof}  Let $\Delta_{s,w,r}$ be the Laplace operator with respect to the K\"ahler metric $\omega_{s,w,r}$. Notice that $$ \ddt{} \dot{\varphi}_{s,w, r} = \Delta_{s,w, r} \dot{\varphi}_{s, w,r} + tr_{\omega_{s,w,r}}(\chi).$$

Let $H^+ =  \left( \dot{\varphi}_{s,w,r}  - A^2 \varphi_{s,w,r} + A \log|S_{\tilde{E}}|_{h_{\tilde{E}}}^2\right).$ $H^+(0, \cdot)$ is uniformly bounded from above for $A>0$ sufficiently large. Then for $A$ sufficiently large, we have
\begin{eqnarray*}
&&\ddt{} H^+  \\
&=& \Delta_{s,w,r} H^+
 - tr_{ \omega_{s,w,r} }( A^2 \omega_{t,s} - \chi - A ~Ric(h_{\tilde{E}})) - A^2 \dot{ \varphi}_{s,w,r} + nA^2 \\
&\leq& \Delta_{s,w,r} H^+
- A^2 \dot{\varphi}_{s,w,r}  + nA^2 \\
&=& \Delta_{s,w,r} H ^+
- A^2 H^+  + A^2 (  - A^2 \varphi_{s,w,r} + A \log|S_{\tilde{E}}|_{h_{\tilde{E}}}^2) + nA^2\\
&\leq&  \Delta_{s,w,r} H ^+
- A^2 H^+ + C.
\end{eqnarray*}
By the maximum principle, $H^+$ is uniformly bounded above and so there exist $C_1$ and $C_2$ such that $$\dot{\varphi}_{s,w,r} \leq C_1 + C_2 \log |S_{\tilde{E}}|^2_{h_{\tilde{E}}}.$$

To estimate the lower bound of $\dot{\varphi}_{s,w,r}$, we define $$H^- = \dot{\varphi}_{s,w,r} + A^2 \varphi_{s,w,r} - A \log |S_{\tilde{E}}|^2_{h_{\tilde{E}}}$$ for sufficiently large $A$. Then straightforward calculation shows that there exist constants $C_3$,  $C_4$, ..., $C_7$  such that
\begin{eqnarray*}
 \ddt{} H^- &\geq& \Delta_{s,w,r} H^- +  C_3 (\frac{\omega_{t,s}^n}{\omega_{s,w,r}^n})^{\frac{1}{n}}  + A^2 \dot{\varphi}_{s,w,r} - C_4 \\
 &=& \Delta_{s,w,r} H^- +  C_3 (\frac{\omega_{t,s}^n}{\omega_{s,w,r}^n})^{\frac{1}{n}}  + A^2 \log \frac{\omega_{s,w,r}^n}{\omega_{t,s}^n} + A^2\log \frac{\omega_{t,s}^n}{\Omega_{w,r}} - C_4\\
&\geq& \Delta_{s,w,r} H^-  -  A^2 \log \frac{\omega_{s,w,r}^n}{\omega_{t,s}^n} + A^2\log \frac{\omega_{t,s}^n}{\Omega_{w,r}} - C_5\\
 &=& \Delta_{s,w,r} H^-  -  A^2 H^- - A^3 \log |S_{\tilde{E}}|^2_{h_{\tilde{E}}} + 2 A^2\log \frac{\omega_{t,s}^n}{\Omega_{w,r}} - C_6\\
 &\geq& \Delta_{s,w,r} H^-  -  A^2 H^- - C_7.
 \end{eqnarray*}
Then a similar argument by the maximum principle gives the lower bound for $H^-$ and $\dot{\varphi}_{s,w,r}$.

\qed
\end{proof}

\begin{lemma} \label{c2estimate1}

For any $T\in (0, T_0)$, there exist  $C$, $\alpha>0$  such that for all $ t \in [0, T] $ and $ s, r, w \in (0, 1] $
 $$| tr_{\vartheta} (\omega_{s,w,r} )| \leq  C  |S_{\tilde{E}}|_{h_{\tilde{E}}}^{-2\alpha}  .$$

\end{lemma}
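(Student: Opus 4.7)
The plan is to adapt the parabolic Schwarz-type computation of Lemma \ref{c2smoothing} to the degenerate/singular setting of (\ref{degmaflow2}), inserting a $\log|S_{\tilde E}|^2_{h_{\tilde E}}$ correction to kill the singularities of $\Omega_{w,r}$ along $E\cup F$ and the degeneration of $\omega_0$ along $E_0$. Under (\ref{degmaflow2}) we have $\ddt{}\omega_{s,w,r} = \chi - \textnormal{Ric}(\omega_{s,w,r}) - \ddbar\log\Omega_{w,r}$, so the standard Aubin--Yau/Chern--Lu computation with the bisectional curvature of $\vartheta$ bounded yields
\[
(\ddt{} - \Delta_{s,w,r})\log tr_\vartheta(\omega_{s,w,r}) \leq C_1 \, tr_{\omega_{s,w,r}}(\vartheta) + \frac{tr_\vartheta(\chi - \ddbar\log\Omega_{w,r})}{tr_\vartheta(\omega_{s,w,r})}.
\]
Decomposing $\ddbar\log\Omega_{w,r}$ into the smooth piece $\ddbar\log\Theta$ together with the contributions from $\ddbar\log(r+|S_E|^2_{h_E})$ and $\ddbar\log(w+|S_F|^2_{h_F})$, a direct pointwise computation shows that, uniformly in $r,w\in[0,1]$, each singular piece is bounded from below by $-C|S_{\tilde E}|^{-2}_{h_{\tilde E}}\,\vartheta$ (since the supports of $E$ and $F$ are contained in $\tilde E$). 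Hence the second term above is controlled by $C(1+|S_{\tilde E}|^{-2}_{h_{\tilde E}})/tr_\vartheta(\omega_{s,w,r})$.

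The estimate then follows from the maximum principle applied to
\[
H(t,z) = \log tr_\vartheta(\omega_{s,w,r}) - A\,\varphi_{s,w,r} + \alpha \log|S_{\tilde E}|^2_{h_{\tilde E}}
\]
for $A,\alpha>0$ to be chosen sufficiently large. Because $H\to -\infty$ along $\tilde E$ while the other quantities are smooth on $[0,T]\times (X\setminus\tilde E)$ by the short-time theory for (\ref{degmaflow2}), $H$ attains its maximum at some $(t_0,z_0)$ with $z_0\in X\setminus \tilde E$; at $t=0$ it is bounded above uniformly in $s,w,r$ by smoothness of $\varphi_0$ and $\omega_0+s\vartheta+\ddbar\varphi_0$ together with $|S_{\tilde E}|^2_{h_{\tilde E}}\leq 1$. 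Assuming $t_0>0$, combine the above inequality with
\[
(\ddt{} - \Delta_{s,w,r})(-A\varphi_{s,w,r}) = -A\log\frac{(\omega_{s,w,r})^n}{\Omega_{w,r}} + An - A\, tr_{\omega_{s,w,r}}(\omega_{t,s}),
\]
\[
(\ddt{} - \Delta_{s,w,r})(\alpha\log|S_{\tilde E}|^2_{h_{\tilde E}}) = \alpha\, tr_{\omega_{s,w,r}}(\textnormal{Ric}(h_{\tilde E})).
\]
Using the hypothesis $\omega_0 - \epsilon\,\textnormal{Ric}(h_{\tilde E})>0$ to bound $\omega_{t,s}$ below by a fixed K\"ahler form modulo contributions that are absorbed into the $\alpha\log|S_{\tilde E}|^2$ term of $H$, and choosing $A$ large enough that $A\,tr_{\omega_{s,w,r}}(\omega_{t,s})$ dominates $(C_1+C\alpha)\,tr_{\omega_{s,w,r}}(\vartheta)$ together with $\alpha\,tr_{\omega_{s,w,r}}(\textnormal{Ric}(h_{\tilde E}))$, one arrives at
\[
0 \leq -A\dot\varphi_{s,w,r} + C + \frac{C(1+|S_{\tilde E}|^{-2}_{h_{\tilde E}})}{tr_\vartheta(\omega_{s,w,r})}
\]
at $(t_0,z_0)$. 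Inserting the time-derivative bound from Lemma \ref{timebound1} and using the Monge--Amp\`ere equation $\omega_{s,w,r}^n = e^{\dot\varphi_{s,w,r}}\Omega_{w,r}$ to trade $1/tr_\vartheta(\omega_{s,w,r})$ for a power of $|S_{\tilde E}|^{-2}_{h_{\tilde E}}$, one obtains $\log tr_\vartheta(\omega_{s,w,r}) \leq C + \alpha'\log|S_{\tilde E}|^{-2}_{h_{\tilde E}}$ at $(t_0,z_0)$ for some $\alpha'>0$; since this is the maximum of $H$, the same bound propagates to all of $[0,T]\times X$.

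The principal obstacle is choosing $\alpha$ and $A$ so that a single pair of constants simultaneously absorbs (i) the $|S_{\tilde E}|^{-2}_{h_{\tilde E}}$-singularity from $\ddbar\log\Omega_{w,r}$, uniformly in $r,w\in[0,1]$; (ii) the degeneracy $\omega_0 \geq c|S_{E_0}|^2_{h_{E_0}}\vartheta \geq c|S_{\tilde E}|^{2N}_{h_{\tilde E}}\vartheta$, which makes $tr_{\omega_{s,w,r}}(\omega_{t,s})$ small near $\tilde E$; and (iii) the singular logarithmic upper bound on $\dot\varphi_{s,w,r}$ from Lemma \ref{timebound1}. Each of these bad contributions is polynomial in $|S_{\tilde E}|^{-2}_{h_{\tilde E}}$, so for $\alpha$ large enough they are uniformly swallowed by the $\alpha\log|S_{\tilde E}|^2_{h_{\tilde E}}$ correction in $H$, producing a final exponent $\alpha$ independent of $s,w,r\in (0,1]$.
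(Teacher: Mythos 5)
You follow the paper's strategy exactly -- the same test function $H=\log tr_\vartheta(\omega_{s,w,r})-A\varphi_{s,w,r}+\alpha\log|S_{\tilde E}|^2_{h_{\tilde E}}$, the same maximum principle, the same inputs (Lemma \ref{timebound1}, the structure of $\Omega_{w,r}$) -- but the step where you absorb the bad trace term does not close as written. You ask for $A$ so large that $A\,tr_{\omega_{s,w,r}}(\omega_{t,s})$ dominates $(C_1+C\alpha)\,tr_{\omega_{s,w,r}}(\vartheta)$ together with $\alpha\,tr_{\omega_{s,w,r}}(Ric(h_{\tilde E}))$. Since $\omega_{t,s}\geq \epsilon\omega_0+s\vartheta$, $\omega_0$ vanishes along $E_0$ (Condition A) and $s$ may be arbitrarily small, no uniform $A$ gives $A\,\omega_{t,s}\geq C_1\vartheta$ near $E_0$; and your fallback -- that bad contributions ``polynomial in $|S_{\tilde E}|^{-2}_{h_{\tilde E}}$'' are ``swallowed by the $\alpha\log|S_{\tilde E}|^2_{h_{\tilde E}}$ correction'' -- is not a valid mechanism inside the differential inequality: an additive logarithmic term in $H$ cannot compensate a multiplicative degeneration $|S_{\tilde E}|^{2N}_{h_{\tilde E}}$ in the coefficient of the good trace term, so the inequality cannot be closed at maximum points lying close to $\tilde E$. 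What actually closes it (and what the paper's displayed estimate encodes) is the form-level inequality $A^2\omega_{t,s}-A\,Ric(h_{\tilde E})-C\vartheta\geq A\theta$, uniform in $s,w,r$ and $t\in[0,T]$, which holds precisely because $\omega_0-\epsilon\,Ric(h_{\tilde E})$ is a genuine K\"ahler form (Kodaira's lemma): the curvature term produced by the $\log|S_{\tilde E}|^2_{h_{\tilde E}}$ correction must be kept with its sign as the source of positivity transverse to $\tilde E$ (your first clause gestures at this), not estimated by $C\alpha\,tr_{\omega_{s,w,r}}(\vartheta)$ and then ``dominated''.

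The endgame also has a gap. Your inequality at the maximum point, $0\leq -A\dot\varphi_{s,w,r}+C+C(1+|S_{\tilde E}|^{-2}_{h_{\tilde E}})/tr_\vartheta(\omega_{s,w,r})$, has discarded every trace term; by Lemma \ref{timebound1} its right-hand side is positive no matter how large $tr_\vartheta(\omega_{s,w,r})$ is, so it carries no upper-bound information and the asserted conclusion $\log tr_\vartheta(\omega_{s,w,r})\leq C+\alpha'\log|S_{\tilde E}|^{-2}_{h_{\tilde E}}$ at $(t_0,z_0)$ does not follow from it. You must retain the positive trace term (the paper keeps $A\,tr_{\omega_{s,w,r}}(\theta)$), bound it at $(t_0,z_0)$ by a power of $|S_{\tilde E}|^{-2}_{h_{\tilde E}}$ using the $Ric(\Omega_{w,r})$ estimate and Lemma \ref{timebound1}, and then convert this into a bound on $tr_\vartheta(\omega_{s,w,r})$ via $tr_\vartheta(\omega_{s,w,r})\leq \frac{1}{(n-1)!}\,\big(tr_{\omega_{s,w,r}}(\theta)\big)^{n-1}\,\frac{\omega_{s,w,r}^n}{\theta^n}$ combined with $\omega_{s,w,r}^n=e^{\dot\varphi_{s,w,r}}\Omega_{w,r}$, Lemma \ref{timebound1} and Condition B -- the ``mean value inequality'' step in the paper, which is absent from your sketch. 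Only then is $H(t_0,z_0)$, and hence $H$, uniformly bounded above once $\alpha$ is taken at least as large as the resulting exponent; with these two repairs your argument coincides with the paper's.
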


\begin{proof} Standard calculations show that for some constant $C>0$,

$$(\ddt{} - \Delta_{s,w,r}) \log tr_{\vartheta} (\omega_{s,w,r} ) \leq C tr_{\omega_{s,w,r} } (\vartheta)  + \frac{ tr_{\omega_{s,w,r} } (Ric( \Omega_{w,r})) } { tr_{\omega_{s,w,r}} (\vartheta)} + C.$$
Define  $$H = \log tr_{\vartheta}(\omega_{s,w,r}) - A^2\varphi_{s,w,r} + A \log |S_{\tilde{E}}|_{h_{\tilde{E}}}^2.$$
Then for sufficiently large $A>0$, there exist uniform constants $C_1$ and $C_2$ such that
\begin{eqnarray*}
&&(\ddt{} -\Delta_{s,w,r}) H \\
&\leq& -  tr_{\omega_{s,w,r} } (A^2 \omega_{t,s} - A Ric(h_{\tilde{E}}) - C\vartheta)  + \frac{ tr_{\omega_{s,w,r} } (Ric( \Omega_{w,r})) } { tr_{\omega_{s,w,r}} (\vartheta)} -  A^2 \log \frac{\omega_{s,w,r}^n } {\Omega_{w,r} }+ C_1 \\
&\leq& - A~tr_{\omega_{s,w,r}}(\theta)  + \frac{ tr_{\omega_{s,w,r} } (Ric( \Omega_{w,r})) } { tr_{\omega_{s,w,r}} (\vartheta)} -  A^2 \log \frac{\omega_{s,w,r}^n } {\Omega_{w,r} }  +  C_2.
\end{eqnarray*}

Suppose $\max_{[0, t]\times X} H = H(t_0, z_0)$. Then $z_0 \in X\setminus \tilde{E}$ and at $(t_0, z_0)$,
there exist $\alpha_1$, $C_3$ and $C_4$ such that %
$$ tr_{\omega_{s,w,r}}(\theta) \leq A^{-1} \frac{ tr_{\omega_{s,w,r} } (
A~ Ric( \Omega_{w,r})) } { tr_{\omega_{s,w,r}} (\vartheta)} -  A \log \frac{\omega_{s,w,r}^n } {\Omega_{w,r} } + C_3\leq C_4 ~|S_{\tilde{E}}|^{-2\alpha_1}_{h_{\tilde{E}}}.$$
Applying the mean value inequality and Lemma \ref{timebound1},  there exist $\alpha_2$ and $C_5$

$$ tr_{\vartheta} (\omega_{s,w,r}) \leq C_5  |S_{\tilde{E}}|^{-2\alpha_2}_{h_{\tilde{E}}} .$$
Therefore $H(t_0, z_0)$ is uniformly bounded from above. The lemma then follows easily.

\qed
\end{proof}

The following proposition gives a uniform bound for the approximating K\"ahler metrics $\omega_{s,w,r}$ away from $\tilde{E}$.

\begin{proposition}\label{highestimate1} For any $T\in (0, T_0)$, $K\subset\subset X \setminus {\tilde{E}}$ and $k>0$, there exists $C_{k, K, T}$ such that

$$|| \varphi_{s,w,r}||_{C^k([0, T] \times K)} \leq C_{k, K, T} .$$

\end{proposition}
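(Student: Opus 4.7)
The plan is to show that on any compact set $K \subset\subset X \setminus \tilde{E}$, the flow is uniformly parabolic with uniformly smooth coefficients, and then bootstrap to higher regularity through a localized Calabi $C^3$-estimate followed by parabolic Schauder theory.

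First I would fix an auxiliary open set $K \subset\subset K' \subset\subset K'' \subset\subset X \setminus \tilde{E}$. On $K''$, the quantity $|S_{\tilde{E}}|^2_{h_{\tilde{E}}}$ is bounded below by a positive constant independent of $s, w, r$. Since by hypothesis $\mathrm{supp}\, E \cup \mathrm{supp}\, F \cup \mathrm{supp}\, E_0 \subset \mathrm{supp}\, \tilde{E}$, the density $\Omega_{w,r}/\vartheta^n$ is bounded above \emph{and} below on $K''$ by positive constants uniform in $w, r \in (0,1]$. Combined with Lemma \ref{timebound1}, the Monge-Amp\`ere equation yields
\begin{equation*}
C^{-1}\,\vartheta^n \leq \omega_{s,w,r}^n = e^{\partial_t \varphi_{s,w,r}}\, \Omega_{w,r} \leq C\,\vartheta^n \quad \text{on } [0,T]\times K'',
\end{equation*}
for a uniform $C>0$. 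Combined with the upper bound on $\mathrm{tr}_\vartheta(\omega_{s,w,r})$ from Lemma \ref{c2estimate1} (which becomes uniform on $K''$ since $|S_{\tilde{E}}|^{-2\alpha}_{h_{\tilde{E}}}$ is bounded there), the arithmetic-geometric mean inequality forces a uniform positive lower bound on each eigenvalue of $\omega_{s,w,r}$ relative to $\vartheta$. Hence $\omega_{s,w,r}$ is uniformly equivalent to $\vartheta$ on $[0,T]\times K''$.

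Next I would run the standard local Calabi-type third-order estimate to bound $S_{s,w,r} = |\nabla^{(s,w,r)}_{p}\partial_{\bar{k}}\partial_m \varphi_{s,w,r}|^2$, where the norm is taken with respect to $\omega_{s,w,r}$. Pick a smooth cutoff $\eta$ with $\eta \equiv 1$ on $K'$ and $\eta$ compactly supported in $K''$. A computation as in \cite{PSS} (or the proof of the preceding $C^3$-smoothing lemma of Section \ref{3.1}) gives a differential inequality of the form
\begin{equation*}
(\partial_t - \Delta_{s,w,r})(\eta^2 S_{s,w,r} + A\, \mathrm{tr}_\vartheta(\omega_{s,w,r})) \leq -c\, \eta^2 S_{s,w,r} + C,
\end{equation*}
for suitable $A, c, C$ depending only on $K', K'', T$ and on the already established uniform bounds. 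The maximum principle then yields $S_{s,w,r} \leq C_{K',T}$ on $[0,T]\times K'$.

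Finally, since $\omega_{s,w,r}$ is uniformly equivalent to $\vartheta$ on $[0,T]\times K'$ with uniformly bounded first covariant derivatives (from the $C^3$-bound on $\varphi_{s,w,r}$), the linearized flow equation is uniformly parabolic with $C^\alpha$ coefficients on $[0,T]\times K'$, uniformly in the parameters. Writing the Monge-Amp\`ere flow in local coordinates as $\partial_t \varphi = \log \det(g_{i\bar j} + \varphi_{i\bar j}) - \log(\Omega/\vartheta^n)$ and differentiating, standard interior parabolic Schauder estimates and the usual bootstrap procedure upgrade the $C^{2,\alpha}$ bound on $K'$ to a $C^{k,\alpha}$ bound on $K$ for every $k$, giving the desired $C^k([0,T]\times K)$ estimate. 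The main technical step is the Calabi third-order inequality, but since we are working on the relatively compact set $K''$ where the geometry is uniformly controlled, it proceeds exactly as in the closed manifold case with the standard cutoff trick absorbing the boundary terms.
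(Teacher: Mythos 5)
Your proof is correct and is essentially the argument the paper intends: the paper disposes of this proposition with the single line ``follows from standard Schauder's estimates,'' and your writeup supplies exactly the standard details behind that remark (uniform two-sided control of $\Omega_{w,r}$ and of the evolving metric on compact subsets of $X\setminus\tilde{E}$ via Lemmas \ref{timebound1} and \ref{c2estimate1}, a localized Calabi-type third-order estimate, then parabolic Schauder bootstrap). No substantive difference in approach, and no gap beyond the level of detail the paper itself leaves implicit.
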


\begin{proof} The proof follows from standard Schauder's estimates.

\qed\end{proof}


Our goal is to construct a solution by the approximating solutions $\varphi_{s,w,r}$.

\begin{lemma}The following monotonicity conditions hold for $\varphi_{s, w, r}$.

 \begin{enumerate}

\item For any $0< r_1 \leq r_2 \leq 1$ and $s, w\in (0,1]$ , $$\varphi_{s, w, r_1} \geq \varphi_{s, w, r_2}.$$

\item For any $0< w_1 \leq w_2 \leq 1$ and $s, r\in (0, 1]$, $$ \varphi_{s, w_1, r} \leq \varphi_{s, w_2. r}.$$

\item For any $0< s_1 \leq s_2 \leq 1$ and $w, r\in (0, 1]$, $$ \varphi_{s_1, w, r} \leq \varphi_{s_2, w, r}.$$

\end{enumerate}

\end{lemma}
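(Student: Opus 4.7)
The plan is to prove each of the three monotonicity statements by applying the parabolic maximum principle to the difference $\psi$ of two approximating solutions of (\ref{degmaflow2}). Since $\varphi_{s,w,r}$ is smooth on $[0,T_0)\times X$ for $s,w,r\in(0,1]$ and the complex Monge-Amp\`ere operator $\varphi\mapsto(\omega_{s,t}+\ddbar\varphi)^n$ is monotone in $\ddbar\varphi$, this is mostly bookkeeping: for each of the three parameters I need to identify whether its perturbation enters the PDE for $\psi$ as an inhomogeneous forcing term of definite sign or as a non-negative shift of the background $(1,1)$-form.

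First I would handle (1). Fix $s,w\in(0,1]$ and $0<r_1\leq r_2\leq 1$, set $\psi=\varphi_{s,w,r_2}-\varphi_{s,w,r_1}$, and subtract the two flow equations to obtain $\psi(0,\cdot)=0$ together with
\begin{equation*}
\ddt{\psi}=\log\frac{(\omega_{s,t}+\ddbar\varphi_{s,w,r_1}+\ddbar\psi)^n}{(\omega_{s,t}+\ddbar\varphi_{s,w,r_1})^n}+\log\frac{\Omega_{w,r_1}}{\Omega_{w,r_2}}.
\end{equation*}
Since $r_1\leq r_2$ gives $\Omega_{w,r_1}\leq\Omega_{w,r_2}$ pointwise, the second term is non-positive. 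To deduce $\psi\leq 0$ I would use the standard $\epsilon$-perturbation trick: for $\epsilon>0$ set $\tilde\psi=\psi-\epsilon(1+t)$, note $\tilde\psi(0,\cdot)<0$, and argue that if $\tilde\psi$ first attained $0$ at some $(t_1,x_1)$ with $t_1>0$, then $\ddt{\tilde\psi}(t_1,x_1)\geq 0$ and $\ddbar\psi(t_1,x_1)=\ddbar\tilde\psi(t_1,x_1)\leq 0$; the latter forces the first log term to be $\leq 0$, so $\ddt{\psi}\leq 0$ at $(t_1,x_1)$, contradicting $\ddt{\psi}=\ddt{\tilde\psi}+\epsilon>0$. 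Sending $\epsilon\to 0^+$ yields $\psi\leq 0$.

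The other two cases are parallel. For (2), with $\psi=\varphi_{s,w_2,r}-\varphi_{s,w_1,r}$ and $w_1\leq w_2$, enlarging $w$ enlarges the denominator of $\Omega_{w,r}$, so $\Omega_{w_1,r}\geq\Omega_{w_2,r}$ and the extra term $\log(\Omega_{w_1,r}/\Omega_{w_2,r})$ is $\geq 0$; I would run the same argument on the spatial minimum of $\psi+\epsilon(1+t)$ to conclude $\psi\geq 0$. For (3), with $\psi=\varphi_{s_2,w,r}-\varphi_{s_1,w,r}$ and $s_1\leq s_2$, the two flows share the same $\Omega_{w,r}$ but the background form changes by $(s_2-s_1)\vartheta$, yielding
\begin{equation*}
\ddt{\psi}=\log\frac{(\omega_{s_1,t}+\ddbar\varphi_{s_1,w,r}+(s_2-s_1)\vartheta+\ddbar\psi)^n}{(\omega_{s_1,t}+\ddbar\varphi_{s_1,w,r})^n};
\end{equation*}
K\"ahlerness of $\vartheta$ ensures that at an interior minimum of $\psi+\epsilon(1+t)$ the form $(s_2-s_1)\vartheta+\ddbar\psi$ is still semi-positive, so the log is $\geq 0$, which contradicts the hypothetical first touch and gives $\psi\geq 0$ after passing to the limit. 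There is no real obstacle: everything reduces to the parabolic maximum principle, and the only subtlety is deciding whether to track the spatial maximum or minimum depending on the sign contributed by the perturbed parameter.
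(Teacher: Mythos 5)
Your proposal is correct and follows the paper's own route: the paper proves this lemma by exactly the "straightforward application of the maximum principle" that you carry out, comparing the two smooth solutions via the difference $\psi$ and using the sign of the forcing term $\log(\Omega_{w,r_1}/\Omega_{w,r_2})$ (resp.\ the semi-positive shift $(s_2-s_1)\vartheta$) at a first-touching spatial extremum. Your $\epsilon$-perturbation bookkeeping is just the standard way of making that argument rigorous, so there is nothing to add.
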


\begin{proof} The proof is a straightforward application of the maximum principle.

\qed
\end{proof}

Fix $T\in (0, T_0)$, for each $t\in [0, T]$, let $$\varphi_{s, w } (t, \cdot)= (\limsup_{r\rightarrow 0}  \varphi_{s,w, r}(t, \cdot))^*$$
where $f^*(z) = \lim_{\delta\rightarrow 0 } \sup_{B_\delta(z) } f(\cdot).$ Then $\varphi_{s,w}\in PSH(X, \omega_{t,s})\cap L^{\infty}(X)\cap C^{\infty}(X \setminus {\tilde{E}})$ and $\varphi_{s, w, r}$ converges to $\varphi_{s, w}$ on $X\setminus {\tilde{E}}$ locally in $C^\infty$-topology by estimates from Lemma \ref{c0estimate1} and Proposition \ref{highestimate1}.  The following monotonicity also holds and follows easily from the above results.


\begin{lemma}   For any $0< s_1 \leq s_2 \leq 1$ and $w\in (0, 1]$, $$ \varphi_{s_1, w} \leq \varphi_{s_2, w}.$$ Also  for any $0< w_1 \leq w_2 \leq 1$ and $s\in (0, 1]$, $$ \varphi_{s, w_1} \leq \varphi_{s, w_2}.$$ Furthermore, for any $T\in (0, T_0)$, $K\subset\subset X\setminus {\tilde{E}}$ and $k>0$, there exists $C_{K,k,T}>0$ such that on $[0, T]$, 
$$||\varphi_{s, w} ||_{C^k(K)} \leq C_{ K, k, T}.$$

\end{lemma}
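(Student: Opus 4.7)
All three assertions descend from the corresponding properties of the three-parameter family $\varphi_{s,w,r}$, which have been established above, by passing to the limit $r\to 0^+$. The only ingredients are the monotonicity recorded in the preceding lemma, the local $C^\infty$-convergence $\varphi_{s,w,r}\to \varphi_{s,w}$ on $X\setminus\tilde{E}$, and the uniform estimates from Lemma \ref{c0estimate1} and Proposition \ref{highestimate1}.

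For the monotonicity in $s$, fix $0<s_1\leq s_2\leq 1$ and $w\in(0,1]$. The preceding lemma yields $\varphi_{s_1,w,r}(t,\cdot)\leq \varphi_{s_2,w,r}(t,\cdot)$ pointwise for every $r\in(0,1]$ and every $t\in[0,T]$. Taking $\limsup_{r\to 0}$ preserves the inequality, and the upper semicontinuous regularization $f\mapsto f^*=\lim_{\delta\to 0}\sup_{B_\delta(\cdot)}f$ is monotone, since $f\leq g$ pointwise implies $\sup_{B_\delta(z)}f\leq \sup_{B_\delta(z)}g$ for every $\delta>0$, hence $f^*\leq g^*$. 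Therefore $\varphi_{s_1,w}\leq \varphi_{s_2,w}$ on $[0,T]\times X$. Monotonicity in $w$ follows identically from the inequality $\varphi_{s,w_1,r}\leq \varphi_{s,w_2,r}$ already available at the level of $r>0$.

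For the $C^k$ bound, fix $T\in(0,T_0)$, $K\subset\subset X\setminus \tilde{E}$, and $k\geq 0$. Proposition \ref{highestimate1} supplies a constant $C_{k,K,T}$, independent of $(s,w,r)\in(0,1]^3$, such that $||\varphi_{s,w,r}||_{C^k([0,T]\times K)}\leq C_{k,K,T}$. Since $\varphi_{s,w,r}\to \varphi_{s,w}$ in $C^\infty$-topology on $[0,T]\times K$ as $r\to 0^+$, every space-time derivative up to order $k$ converges uniformly on $[0,T]\times K$, and the uniform bound therefore passes to the limit with exactly the same constant, which is still independent of $s$ and $w$.

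There is no real obstacle: the analytic work is entirely contained in the preceding estimates. The only point that needs to be noticed is that the USC regularization $f\mapsto f^*$ preserves pointwise inequalities, and this is immediate from its definition.
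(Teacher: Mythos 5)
Your proof is correct and follows exactly the route the paper intends: the paper states this lemma without proof, remarking only that it "follows easily from the above results," namely the monotonicity of $\varphi_{s,w,r}$ in each parameter and the uniform bounds of Lemma \ref{c0estimate1} and Proposition \ref{highestimate1}, passed to the limit $r\to 0$. Your observation that the upper semicontinuous regularization preserves pointwise inequalities, and that the $C^k$ bounds survive the local $C^\infty$-convergence with the same constants, supplies precisely the details the paper leaves implicit.
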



Let $$\varphi_{s} (t, \cdot)= \lim_{w \rightarrow 0}  \varphi_{s, w}(t, \cdot) .$$ Then $\varphi_{s}\in PSH(X, \omega_{t,s})\cap L^{\infty}(X)\cap C^{\infty}(X \setminus {\tilde{E}})$ and $\varphi_{s, w}$ converges to $\varphi_{s}$ on $X\setminus {\tilde{E}}$ locally in $C^\infty$-topology.



\begin{lemma}   For any $0< s_1 \leq s_2 \leq 1$, $$ \varphi_{s_1} \leq \varphi_{s_2}.$$ Furthermore, for any $T\in (0, T_0)$, $K\subset\subset X\setminus {\tilde{E}}$ and $k>0$, there exists $C_{K,k,T}>0$ such that  such that on $[0, T]$,
$$||\varphi_{s} ||_{C^k(K)} \leq C_{K. k, T}.$$

\end{lemma}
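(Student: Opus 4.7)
The plan is to derive both statements by passing to the limit $w \to 0^+$ in the corresponding assertions for $\varphi_{s,w}$ established in the preceding lemma. Since every relevant estimate at the level of $\varphi_{s,w}$ is uniform in $w$, and the convergence $\varphi_{s,w} \to \varphi_s$ on $X \setminus \tilde{E}$ is in $C^\infty_{loc}$ (as stated before the current lemma), the proof should be a routine limiting argument.

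For the monotonicity in $s$: fix any $w \in (0,1]$ and $0 < s_1 \leq s_2 \leq 1$. By the previous lemma (applied at level $\varphi_{s,w}$, which was inherited from the analogous monotonicity of $\varphi_{s,w,r}$ via the standard maximum principle argument for the Monge-Amp\`ere flow (\ref{degmaflow2})), we have $\varphi_{s_1,w}(t,z) \leq \varphi_{s_2,w}(t,z)$ pointwise on $[0,T]\times X$. Since $\varphi_{s,w}$ is increasing in $w$, the limits $\varphi_{s_i} = \lim_{w\to 0^+} \varphi_{s_i,w}$ are monotone (decreasing in $w$), well-defined thanks to the uniform lower bound from Lemma \ref{c0estimate1}, and inequalities are preserved under pointwise limits. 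Hence $\varphi_{s_1} \leq \varphi_{s_2}$ on $[0,T]\times X$.

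For the $C^k$ bound on compact $K \subset\subset X \setminus \tilde{E}$: the key observation is that the constants $C_{K,k,T}$ appearing in the previous lemma are independent of $w$ (and $s$), because they are ultimately derived from Lemma \ref{c2estimate1} and Proposition \ref{highestimate1}, whose bounds depend only on $T$, $K$, $k$ and the distance of $K$ from $\tilde{E}$ (through the weight $|S_{\tilde{E}}|_{h_{\tilde{E}}}$). Given the uniform bound $\|\varphi_{s,w}\|_{C^k(K)} \leq C_{K,k,T}$ independent of $w$, and the $C^\infty_{loc}$ convergence $\varphi_{s,w} \to \varphi_s$ on $X \setminus \tilde{E}$, standard Ascoli--Arzel\`a together with uniqueness of the limit yields $\|\varphi_s\|_{C^k(K)} \leq C_{K,k,T}$.

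There is no real obstacle here; the essential work was already carried out in the proofs of Lemma \ref{c2estimate1} and Proposition \ref{highestimate1}, which produced bounds that are uniform across all the regularization parameters $s,w,r$. The only point worth being careful about is verifying that every constant invoked from those earlier estimates is genuinely independent of $w$ (and $s$), so that the limit $w \to 0^+$ indeed inherits them; this is immediate by inspection of the maximum-principle arguments used, which control $\varphi_{s,w,r}$ in terms of $T$, the fixed forms $\omega_0, \chi, \vartheta$, the divisor $\tilde{E}$, and the $L^p$-bound on $F_{w,r}$ from Lemma \ref{gugong}, but not on the particular value of $w$.
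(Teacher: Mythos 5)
Your proof is correct and follows essentially the same route as the paper, which states this lemma without a separate proof precisely because it is the routine limiting argument you give: the monotonicity of $\varphi_{s,w,r}$ in $s$ passes through the (monotone) limits in $r$ and $w$, and the $C^k(K)$ bounds of Proposition \ref{highestimate1}, being uniform in all of $s,w,r$, are inherited by $\varphi_{s,w}$ and then by $\varphi_s$ via the local $C^\infty$ convergence. Your care in checking that the constants from Lemmas \ref{c0estimate1}, \ref{c2estimate1} and Proposition \ref{highestimate1} do not depend on $w$ (or $s$) is exactly the point the paper implicitly relies on.
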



Let $\varphi = \lim_{s \rightarrow 0} \varphi_{s}$. Since $ \varphi_{s}$ is decreasing as $s \rightarrow  0$ is and $\varphi_{s}$ is bounded below uniformly, we have

$$\varphi \in PSH(X, \omega_{t,0}) \cap L^{\infty}( X )\cap C^{\infty}(X\setminus {\tilde{E}}).$$
Furthermore, for any $K\subset\subset X\setminus {\tilde{E}}$,  $$\varphi_{s} \rightarrow \varphi$$ in  $C^{\infty}( [0, T]\times K).$       The following corollary is then immediate.
\begin{corollary}  \label{corexist}
$\varphi$ satisfyies the following Monge-Amp\`ere flow

\begin{equation}\label{krflowlim}
\left\{
\begin{array}{l}
{ \displaystyle \ddt{\varphi} = \log
 \frac{ ( \omega_t  + \ddbar \varphi ) ^n } {  \Omega} },   ~~~~ {\rm on}~[0, T] \times( X\setminus \tilde{E} ) \\
\\
 \varphi(0,\cdot)=\varphi_0 ,~~~~~~~~~~~~~~~~~~~~~~~~~~~~~~~{\rm on }~X  .
\end{array} \right.
\end{equation}

\end{corollary}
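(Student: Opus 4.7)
The statement of Corollary \ref{corexist} is essentially a summary of what has already been established by the three‐stage limiting procedure; my plan is simply to collect the pieces and pass to the limit carefully in the approximating flow.

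First, I would recall that each $\varphi_{s,w,r}$ solves the smooth Monge-Ampère flow
\begin{equation*}
\ddt{\varphi_{s,w,r}} \;=\; \log\frac{(\omega_{t,s}+\ddbar\varphi_{s,w,r})^n}{\Omega_{w,r}}, \qquad \varphi_{s,w,r}(0,\cdot)=\varphi_0,
\end{equation*}
on $[0,T]\times X$. The preceding lemmas provide, on any compact set $K\subset\subset X\setminus\tilde{E}$, uniform $C^k$-bounds (all $k\geq 0$) for $\varphi_{s,w,r}$ on $[0,T]\times K$, together with monotonicity in $r$, $w$, $s$. Hence the successive limits $r\to0$, $w\to0$, $s\to0$ (extracting subsequences if needed via Arzelà-Ascoli) produce the function $\varphi$ as a limit in $C^{\infty}([0,T]\times K)$ for every such $K$.

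Next I would observe that on $X\setminus\tilde{E}$ the data themselves converge smoothly: $\omega_{t,s}=\omega_t+s\vartheta\to\omega_t$ smoothly as $s\to 0$ uniformly in $t\in[0,T]$, while $\Omega_{w,r}=(r+|S_E|^2_{h_E})(w+|S_F|^2_{h_F})^{-1}\Theta$ converges in $C^\infty_{\mathrm{loc}}(X\setminus\tilde{E})$ to $\Omega=|S_E|^2_{h_E}|S_F|^{-2}_{h_F}\Theta$ as $r,w\to 0$, and $\Omega$ is smooth and strictly positive on $X\setminus\tilde{E}$ (recall $\operatorname{supp}E\cup\operatorname{supp}F\subset\operatorname{supp}\tilde E$). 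Therefore in the approximating PDE each term converges locally smoothly on $[0,T]\times(X\setminus\tilde E)$: $\ddt{\varphi_{s,w,r}}\to\ddt{\varphi}$, $(\omega_{t,s}+\ddbar\varphi_{s,w,r})^n\to(\omega_t+\ddbar\varphi)^n$, and $\Omega_{w,r}\to\Omega$; in particular the logarithm on the right-hand side is harmless because $\Omega>0$ on $X\setminus\tilde E$ and the limiting metric $\omega_t+\ddbar\varphi$ is strictly positive there (by the $C^2$ estimate of Lemma \ref{c2estimate1} applied locally). Passing to the limit yields the desired equation on $[0,T]\times(X\setminus\tilde E)$.

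Finally, for the initial condition I would note that $\varphi_{s,w,r}(0,\cdot)=\varphi_0$ holds identically in $s,w,r$, so the pointwise (indeed uniform on $X$) limit gives $\varphi(0,\cdot)=\varphi_0$ everywhere on $X$. There is no genuine obstacle here since all the analytic difficulty was absorbed into the earlier $L^\infty$, time-derivative, and parabolic Schwarz estimates (Lemmas \ref{c0estimate1}–\ref{c2estimate1} and Proposition \ref{highestimate1}); the only subtlety worth emphasizing is that the convergence is merely $C^\infty_{\mathrm{loc}}$ on the open set $X\setminus\tilde{E}$, which is precisely why the equation is stated only on that open locus, while the initial value is recovered globally on $X$.
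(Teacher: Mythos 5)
Your argument is correct and is essentially the paper's own: the corollary is obtained there exactly as you describe, by passing to the limit $r\to 0$, $w\to 0$, $s\to 0$ in the approximating flows using the uniform $L^\infty$, time-derivative, and local higher-order estimates, with the initial condition preserved since $\varphi_{s,w,r}(0,\cdot)=\varphi_0$ for all parameters. One small correction: Lemma \ref{c2estimate1} only bounds $tr_\vartheta(\omega_{s,w,r})$ from above, so the strict positivity of $\omega_t+\ddbar\varphi$ on $X\setminus\tilde E$ should instead be read off from the lower bound on $\dot\varphi_{s,w,r}$ in Lemma \ref{timebound1} (equivalently, from the locally smooth convergence of $\ddt{\varphi_{s,w,r}}$ itself), which does not affect the rest of your limiting argument.
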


In order to prove the uniqueness of the solution of the Monge-Amp\`ere flow (\ref{krflowlim}),   we consider a family of new Monge-Amp\`ere flows with one more parameter.

Let $\omega_{t, s}^{ (\delta)} = (1-\delta) \omega_0 + t\chi + s \vartheta = \omega_{t,s} -\delta \omega_0$. Then for fixed $T\in [0, T_0)$, there exists $\delta_0>0$, such that $(1-\delta) \omega_0 + t\chi  \geq 0 $ for all $t\in [0, T]$ and $\delta\in [-\delta_0, \delta_0]$.

 The following family of Monge-Amp\`ere flows admit smooth solutions in $C^{\infty}([0, T]\times X)$
 \begin{equation}\label{flowuniq}
\left\{
\begin{array}{l}
 { \displaystyle \ddt{}\varphi_{s,w, r}^{(\delta)} = \log \frac{( \omega_{t,s}^{(\delta)}  + \ddbar \varphi_{s,w, r}^{(\delta)} )^n }{ \Omega_{w,r}} }\\
\\
 \varphi_{s,w, r}^{(\delta)} (0, \cdot)= (1-\delta) \varphi_0,
\end{array} \right.
\end{equation}

Obviously for $s,w, r \in (0, 1]$, $\varphi_{s,w, r}^{(\delta)} $ is a smooth family in $\delta$ and   $$\lim_{\delta\rightarrow 0 } \varphi_{s,w, r}^{(\delta)} = \varphi_{s,w, r} $$ in $C^\infty$-topology.

\begin{lemma}\label{osc1} For any $T\in(0, T_0)$, there exist  $C$ and $\delta_0>0$ such that   for $s, w, r  \in (0,1]$, $t\in [0, T]$  and $\delta \in [-\delta_0,\delta_0]$, $$  C \log |S_{\tilde{E}}|_{h_{\tilde{E}}}^{2} - C \leq \frac{\partial}{\partial \delta } \varphi_{s,w, r}^{(\delta)}   \leq  C .$$

 \end{lemma}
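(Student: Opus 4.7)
The plan is to differentiate the Monge-Amp\`ere flow (\ref{flowuniq}) in the parameter $\delta$ and apply maximum principle arguments to the scalar function $u := \frac{\partial}{\partial \delta}\varphi_{s,w,r}^{(\delta)}$. Since $\frac{\partial}{\partial\delta}\omega_{t,s}^{(\delta)} = -\omega_0$ and the initial datum is $(1-\delta)\varphi_0$, $u$ satisfies the linear parabolic equation
\[
\left(\ddt{} - \Delta_{s,w,r}^{(\delta)}\right) u \;=\; -\,tr_{\omega_{s,w,r}^{(\delta)}}(\omega_0),\qquad u(0,\cdot) \;=\; -\varphi_0,
\]
where $\omega_{s,w,r}^{(\delta)} := \omega_{t,s}^{(\delta)} + \ddbar\varphi_{s,w,r}^{(\delta)}$ and $\Delta_{s,w,r}^{(\delta)}$ is its Laplacian.

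The upper bound is then immediate: since $\omega_0$ is semi-positive, the forcing term is $\leq 0$, and the parabolic maximum principle gives $\sup u \leq \sup_X(-\varphi_0) \leq C$ uniformly in all the parameters.

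For the lower bound I would mimic the barrier used in the lower-bound step of Lemma \ref{c0estimate1}. First fix $\epsilon > 0$ so small that $\omega_0 - \epsilon \, Ric(h_{\tilde{E}}) > 0$, and fix $\delta_0 > 0$ so small that $\omega_{t,s}^{(\delta)} \geq c_0\,\omega_0$ uniformly for $t\in[0,T]$, $s\in(0,1]$, and $|\delta|\leq\delta_0$ (as provided by the remark after Theorem \ref{firstthm}). Consider
\[
H \;:=\; u \,+\, \lambda\bigl(\varphi_{s,w,r}^{(\delta)} - \phi_{w,r}\bigr) \,-\, \mu \,\log|S_{\tilde{E}}|^{2}_{h_{\tilde{E}}},
\]
where $\phi_{w,r}$ is the solution of the auxiliary Monge-Amp\`ere equation (\ref{app1}) constructed in Lemma \ref{c0estimate1}, and $\lambda,\mu>0$ are to be chosen. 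Both $\varphi_{s,w,r}^{(\delta)}$ and $\phi_{w,r}$ are uniformly bounded by Lemma \ref{c0estimate1}, so $H\to +\infty$ near $\tilde{E}$ and the infimum of $H$ on $[0,T]\times X$ is attained either at $t=0$ (where it is manifestly bounded below) or at an interior point of $[0,T]\times(X\setminus\tilde{E})$.

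At such an interior minimum, one computes $(\ddt{} - \Delta_{s,w,r}^{(\delta)})H$ by combining the equation for $u$, the flow equation $\ddt{\varphi_{s,w,r}^{(\delta)}} = \log\bigl((\omega_{s,w,r}^{(\delta)})^n/\Omega_{w,r}\bigr)$, the Monge-Amp\`ere equation $(\theta + \ddbar \phi_{w,r})^n = C_{w,r}\Omega_{w,r}$ with $\theta = \delta_0\omega_0$, and the Poincar\'e-Lelong identity $\ddbar\log|S_{\tilde{E}}|^{2}_{h_{\tilde{E}}} = -Ric(h_{\tilde{E}})$ on $X\setminus\tilde{E}$. The arithmetic--geometric mean inequality
\[
tr_{\omega_{s,w,r}^{(\delta)}}(\theta + \ddbar\phi_{w,r}) \;\geq\; n\,\bigl(C_{w,r}\,e^{-\dot{\varphi}_{s,w,r}^{(\delta)}}\bigr)^{1/n},
\]
together with $\omega_{t,s}^{(\delta)}\geq c_0\omega_0$ and $\omega_0 - \epsilon\,Ric(h_{\tilde{E}}) > 0$, should allow one to choose $\lambda$ large and $\mu$ small (depending only on $c_0,\epsilon,T$) so that $(\ddt{} - \Delta_{s,w,r}^{(\delta)}) H \geq -C_1$ at the minimum. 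The maximum principle then yields $\inf H \geq -C$, and rearranging gives $\partial_\delta\varphi_{s,w,r}^{(\delta)} \geq C\log|S_{\tilde{E}}|^{2}_{h_{\tilde{E}}} - C$.

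The main obstacle will be exactly this calibration of $\lambda$ and $\mu$: one needs the parabolic inequality for $H$ to close uniformly in the four parameters $s,w,r\in(0,1]$ and $\delta\in[-\delta_0,\delta_0]$, with the favourable sign coming from the Monge-Amp\`ere equation for $\phi_{w,r}$ offsetting the unfavourable forcing $-tr_{\omega_{s,w,r}^{(\delta)}}(\omega_0)$ created by the $\delta$-derivative. This is directly analogous to the obstacle resolved in Lemma \ref{c0estimate1}.
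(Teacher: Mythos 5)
Your proposal is correct, and its skeleton is the paper's: differentiate (\ref{flowuniq}) in $\delta$ to get $(\ddt{}-\Delta^{(\delta)}_{s,w,r})\,\partial_\delta\varphi^{(\delta)}_{s,w,r}=-tr_{\omega^{(\delta)}_{s,w,r}}(\omega_0)\leq 0$ with initial value $-\varphi_0$, read off the upper bound from the maximum principle, and get the lower bound from a barrier combining a large multiple of the potential with a small multiple of $\log|S_{\tilde E}|^2_{h_{\tilde E}}$ so that the infimum is forced into $X\setminus\tilde E$. Where you diverge is in how the bad terms are absorbed in the lower-bound step. The paper takes $H=e^{-A^2t}\,\partial_\delta\varphi^{(\delta)}_{s,w,r}+A^2\varphi^{(\delta)}_{s,w,r}-A\log|S_{\tilde E}|^2_{h_{\tilde E}}$: the exponential time weight generates a $-A^2H$ term, and the degenerate volume ratio is handled by the pole estimate $\log\frac{\Omega_{w,r}}{(\omega^{(\delta)}_{t,s})^n}\leq C-C\log|S_{\tilde E}|^2_{h_{\tilde E}}$, yielding $(\ddt{}-\Delta^{(\delta)}_{s,w,r})H\geq -A^2H-C$ and hence the bound, with no auxiliary equation needed. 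You instead subtract the static solution $\phi_{w,r}$ of (\ref{app1}) and use the AM--GM inequality, so that $\lambda\,tr_{\omega^{(\delta)}_{s,w,r}}(\theta+\ddbar\phi_{w,r})+\lambda\,\ddt{\varphi^{(\delta)}_{s,w,r}}\geq \lambda\,(n(C_{w,r})^{1/n}e^{-\dot\varphi/n}+\dot\varphi)$ is bounded below (since $C_{w,r}$ is uniformly bounded above and away from zero), which does close: choosing $\lambda$ large and $\mu$ small against $\omega^{(\delta)}_{t,s}\geq c_0\omega_0$ and $\omega_0-\epsilon\,Ric(h_{\tilde E})>0$ gives $(\ddt{}-\Delta^{(\delta)}_{s,w,r})H\geq -C$ on all of $X\setminus\tilde E$, which suffices on a finite time interval. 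So your argument transplants the Lemma \ref{c0estimate1} lower-bound mechanism to the $\delta$-derivative at the cost of invoking $\phi_{w,r}$ and its uniform $L^\infty$ bound, while the paper's barrier is self-contained modulo the pole estimate already used in Lemma \ref{timebound1}; two small housekeeping points are that the uniform $L^\infty$ bound on $\varphi^{(\delta)}_{s,w,r}$ for $\delta\in[-\delta_0,\delta_0]$ is the $\delta$-perturbed repetition of Lemma \ref{c0estimate1} rather than that lemma itself, and your reference form $\theta=\delta_0\omega_0$ clashes notationally with the $\delta_0$ bounding the parameter $\delta$ (it should be the small fixed multiple of $\omega_0$ chosen in Lemma \ref{c0estimate1}).
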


\begin{proof} Let $\Delta_{s,w,r}^{(\delta)}$ be the Laplace operator with respect to $\omega_{s,w,r}^{(\delta)} = \omega_{t,s}^{(\delta)} + \ddbar \varphi_{s,w,r}^{(\delta)}$. Notice that $\frac{\partial}{\partial \delta } \varphi^{(\delta)}_{s, w, r} = -\varphi_0$ when $t=0$ and

$$( \ddt{} -\Delta_{s,w, r}^{(\delta)} )  (\frac{\partial}{\partial \delta } \varphi_{s,w, r}^{(\delta)}  ) =  - tr_{\omega_{s,w, r}^{(\delta)}  }(\omega_0) \leq 0.$$
It is easy to see that $\frac{\partial}{\partial \delta  } \varphi_{s,w, r}^{(\delta)} $ is uniformly bounded above by the maximum principle. By the similar argument as before, $\varphi_{s,w, r}^{(\delta)} $ is bounded in $L^{\infty}(X)$ uniformly  in $s,w, r, \delta $ and  $t\in [0, T]$.

Consider $H = e^{-A^2t} \frac{\partial}{\partial \delta  } \varphi_{s,w, r}^{(\delta)}  + A^2\varphi_{s,w, r}^{(\delta)}  - A \log|S_{\tilde{E}}|^2_{h_{\tilde{E}}} .$
$H$ is uniformly bounded when $t=0$. There exist constants $C_1$, $C_2$, $C_3$ and $C_4>0$ such that \begin{eqnarray*}
&& ( \ddt{} -\Delta_{s,w, r}^{(\delta)} )  H\\
 &=&  A^2 \ddt{} \varphi_{s,w, r}^{(\delta)}  + tr_{\omega_{s,w, r}^{(\delta)} }( A^2 \omega_{t, s}^{(\delta)}- A Ric(h_{\tilde{E}}) - e^{-A^2 t} \omega_0) - nA^2 - A^2 e^{-A^2t}\frac{\partial}{\partial \delta  } \varphi_{s,w, r}^{(\delta)}  \\
&\geq& - A^2 \log \frac{\Omega_{w,r}}{ (\omega_{s,w,r}^{(\delta)})^n} + C_1 ~tr_{\omega_{s,w, r}^{(\delta)} } (\omega_{t,s}^{(\delta)} )   - A^2 H - A^3  \log|S_{\tilde{E}}|^2_{h_{\tilde{E}}} - C_2\\
&\geq&  -A^2 \log \frac{\Omega_{w,r}}{ (\omega_{t,s} ^{(\delta)})^n} - A^3 \log|S_{\tilde{E}}|^2_{h_{\tilde{E}}} -  A^2 H - C_3\\
&\geq& -A^2H -C_4 .
\end{eqnarray*}

Therefore $H$ is uniformly bounded from below by the maximum principle and the lemma easily follows.

\qed
\end{proof}

By the same argument as for $\delta=0$,  for $s, \delta\in (0,1]$, $\lim_{w\rightarrow 0} ( \limsup_{r\rightarrow 0} \varphi^{(\delta)}_{s,w,r})^*$ exists on $X\setminus \tilde{E}$ and  there exists $\varphi^{(\delta)}_s \in L^{\infty}([0, T]\times X) \cap C^\infty([0, T]\times X\setminus \tilde{E}) $  such that  $$\varphi^{(\delta)}_s =\lim_{w\rightarrow 0} ( \limsup_{r\rightarrow 0} \varphi^{(\delta)}_{s,w,r})^*$$
and  it satisfies the following Monge-Amp\`ere equation%
 \begin{equation}\label{vaphidelta}
\left\{
\begin{array}{l}
 { \displaystyle \ddt{}\varphi_{s}^{(\delta)} = \log \frac{( \omega_{t,s}^{(\delta)}  + \ddbar \varphi_{s}^{(\delta)} )^n }{ \Omega} }, ~~~~on ~X\setminus \tilde{E}
\\
\\
 \varphi_{s}^{(\delta)} (0, \cdot)= (1-\delta) \varphi_0.
\end{array} \right.
\end{equation}


Let $$\varphi^{(\delta)} = \lim_{s\rightarrow 0} \varphi^{(\delta)}_s$$ as $\varphi_s^{(\delta)}$ is decreasing as $s\rightarrow 0$.  Then $\varphi^{(\delta)}\in L^\infty([0, T]\times X)\cap C^{\infty}((0,T]\times X\setminus \tilde{E})$ solves the following Monge-Amp\`ere equation.
 \begin{equation}\label{vaphidelta1}
\left\{
\begin{array}{l}
 { \displaystyle \ddt{}\varphi^{(\delta)} = \log \frac{( \omega_{t}^{(\delta)}  + \ddbar \varphi^{(\delta)} )^n }{ \Omega} }, ~~~~on ~[0, T]\times X\setminus \tilde{E}
\\
\\
 \varphi^{(\delta)} (0, \cdot)= (1-\delta) \varphi_0,
\end{array} \right.
\end{equation}
where $\omega_t^{(\delta)} = \omega_{t,0}^{(\delta)}= (1-\delta)\omega_0 + t\chi.$


\begin{lemma} \label{lip1}  For any $T\in(0, T_0)$, there exist  $C$ and $\delta_0>0$ such that on $[0, T]\times X$,   for all $s \in (0,1]$ and $\delta_1, \delta_2 \in [-\delta_0,\delta_0]$,
\begin{equation}
 | \varphi_{s}^{(\delta_1)} - \varphi_s^{(\delta_2)}  | \leq  C |\delta_1 - \delta_2|( 1- \log |S_{\tilde{E}}|_{h_{\tilde{E}}}^{2} ),
\end{equation} and so
\begin{equation}
 | \varphi^{(\delta_1)} - \varphi^{(\delta_2)}  | \leq  C |\delta_1 - \delta_2|( 1- \log |S_{\tilde{E}}|_{h_{\tilde{E}}}^{2} )
\end{equation}

\end{lemma}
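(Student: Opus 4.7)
The strategy is to promote the pointwise $\delta$-derivative estimate from Lemma \ref{osc1} to a Lipschitz-in-$\delta$ estimate on the approximating solutions $\varphi^{(\delta)}_{s,w,r}$, and then to pass to the limits $r\rightarrow 0$, $w\rightarrow 0$, $s\rightarrow 0$ successively while keeping the upper bound under control.

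First, for $s,w,r \in (0,1]$ fixed, the non-degenerate parabolic Monge-Amp\`ere flow (\ref{flowuniq}) has smooth solutions on $[0,T]\times X$ and, by standard parabolic implicit function arguments, the solution $\varphi^{(\delta)}_{s,w,r}$ depends smoothly on the parameter $\delta$. Therefore the fundamental theorem of calculus in $\delta$ applies pointwise: for all $(t,z)\in [0,T]\times X$ and $\delta_1,\delta_2\in[-\delta_0,\delta_0]$,
\begin{equation*}
\varphi^{(\delta_1)}_{s,w,r}(t,z) - \varphi^{(\delta_2)}_{s,w,r}(t,z)
=\int_{\delta_2}^{\delta_1} \frac{\partial}{\partial\delta}\varphi^{(\delta)}_{s,w,r}(t,z)\,d\delta.
\end{equation*}
Lemma \ref{osc1} gives the two-sided bound
$C\log|S_{\tilde{E}}|^2_{h_{\tilde{E}}} - C \leq \frac{\partial}{\partial\delta}\varphi^{(\delta)}_{s,w,r}\leq C$ uniformly in $s,w,r,t,\delta$, which we combine into
$|\frac{\partial}{\partial\delta}\varphi^{(\delta)}_{s,w,r}|\leq C(1-\log|S_{\tilde{E}}|^2_{h_{\tilde{E}}})$. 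Plugging this into the integral yields
\begin{equation*}
|\varphi^{(\delta_1)}_{s,w,r}(t,z) - \varphi^{(\delta_2)}_{s,w,r}(t,z)|
\leq C|\delta_1-\delta_2|\bigl(1-\log|S_{\tilde{E}}(z)|^2_{h_{\tilde{E}}}\bigr),
\end{equation*}
uniformly in $s,w,r\in(0,1]$ and $t\in[0,T]$.

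Next, I would pass to the limit $r\rightarrow 0$. On $X\setminus \tilde{E}$ the family $\varphi^{(\delta)}_{s,w,r}$ converges in $C^\infty_{\mathrm{loc}}$ to $\varphi^{(\delta)}_{s,w}$ by Proposition \ref{highestimate1}, so the inequality passes to the limit at every $z\in X\setminus\tilde{E}$. On $\tilde{E}$ the right-hand side is $+\infty$, so the inequality holds trivially. The same argument applied to $w\rightarrow 0$ gives the bound for $\varphi^{(\delta)}_s$, which is the first claim; the second claim follows by letting $s\rightarrow 0$ using the monotonicity $\varphi^{(\delta)}_s\searrow\varphi^{(\delta)}$ established earlier.

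The main technical point — which is really the only one that is not routine — is to ensure that the $\delta$-derivative estimate in Lemma \ref{osc1} holds uniformly in $s,w,r$ all the way down to $0$, so that it survives the sequential limiting procedure above. Since Lemma \ref{osc1} was proved with constants depending only on $T$ and not on $s,w,r$, there is no additional difficulty, and all limits can be taken pointwise on $X\setminus\tilde{E}$ without invoking any delicate compactness statement. The logarithmic blow-up of the bound at $\tilde{E}$ is harmless because it is exactly the kind of quasi-plurisubharmonic weight we expect to appear, and it matches the regularity of the approximating solutions smoothed out by the $s\vartheta$ and $\Omega_{w,r}$ perturbations.
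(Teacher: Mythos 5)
Your proposal is correct and follows exactly the paper's route: the paper also deduces Lemma \ref{lip1} directly from the $\delta$-derivative bound of Lemma \ref{osc1} (whose constants are uniform in $s,w,r,t$), letting $w,r\rightarrow 0$ and then $s\rightarrow 0$. Your write-up merely makes explicit the integration in $\delta$ and the pointwise passage to the limit on $X\setminus\tilde{E}$ (with the bound trivial on $\tilde{E}$), which is precisely what the paper's one-line proof leaves implicit.
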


\begin{proof} This is an immediate result of Lemma \ref{osc1} by letting $w, r \rightarrow 0$ and then $s\rightarrow 0$.

\qed
\end{proof}

\begin{corollary} For any $T\in(0, T_0)$ and $K\subset\subset X\setminus \tilde{E}$, $\varphi^{(\delta)}$ converges to $\varphi$ uniformly in $L^\infty([0, T]\times K)$ as $\delta \rightarrow 0.$

\end{corollary}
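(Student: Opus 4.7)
The plan is to deduce the corollary as an essentially immediate consequence of Lemma \ref{lip1}, together with the identification $\varphi^{(0)} = \varphi$. First I would verify this identification: when $\delta = 0$, equation (\ref{flowuniq}) coincides on the nose with the approximating Monge-Amp\`ere flow (\ref{degmaflow2}), the initial data $(1-\delta)\varphi_0$ becomes $\varphi_0$, and the reference forms $\omega_{t,s}^{(0)} = \omega_{t,s}$ and $\Omega_{w,r}$ are unchanged. By uniqueness of the smooth solutions at the $(s,w,r)$-level we get $\varphi^{(0)}_{s,w,r} = \varphi_{s,w,r}$ for all admissible parameters. Performing the three successive limits $r \to 0$, $w \to 0$, $s \to 0$ in exactly the same order in both constructions yields $\varphi^{(0)} = \varphi$ on $[0,T]\times(X\setminus \tilde E)$.

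Once this identification is in place, applying Lemma \ref{lip1} with $\delta_1 = \delta$ and $\delta_2 = 0$ gives the pointwise bound
$$|\varphi^{(\delta)} - \varphi| \;\leq\; C|\delta|\bigl(1 - \log |S_{\tilde{E}}|^2_{h_{\tilde{E}}}\bigr)$$
on $[0,T]\times X$ (more precisely, on $[0,T]\times (X\setminus\tilde E)$, where both functions are defined pointwise). Since $K \subset\subset X\setminus \tilde{E}$, the defining section $|S_{\tilde E}|_{h_{\tilde E}}$ is bounded below by a positive constant on $K$, so $-\log |S_{\tilde E}|^2_{h_{\tilde E}} \leq M_K$ for some constant $M_K$ depending only on $K$. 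Hence
$$\|\varphi^{(\delta)} - \varphi\|_{L^\infty([0,T]\times K)} \;\leq\; C(1+M_K)\,|\delta|,$$
which tends to $0$ as $\delta \to 0$, giving the desired uniform convergence.

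There is no serious obstacle: the corollary is a clean packaging of the quantitative $\delta$-Lipschitz estimate already proved in Lemma \ref{lip1} with the elementary observation that $\tilde E$ stays at positive distance from $K$. The only point that warrants care is the identification $\varphi^{(0)} = \varphi$, which is automatic because both objects are produced by the identical three-stage approximation $(s,w,r) \to 0$ starting from the same regularized data; all the genuine analytic work was already carried out upstream in constructing $\varphi^{(\delta)}$ and controlling the logarithmic blow-up of $\partial_\delta \varphi^{(\delta)}_{s,w,r}$ near $\tilde E$.
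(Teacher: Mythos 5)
Your proposal is correct and follows essentially the same route as the paper: the corollary is deduced from the Lipschitz-in-$\delta$ estimate of Lemma \ref{lip1} combined with the observation that $-\log|S_{\tilde{E}}|^2_{h_{\tilde{E}}}$ is bounded on any $K\subset\subset X\setminus\tilde{E}$. Your explicit check that $\varphi^{(0)}=\varphi$ (via coincidence of the approximating flows at $\delta=0$ and the identical three-stage limit) is a detail the paper leaves implicit, but it is the same argument.
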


\begin{proof} By Lemma \ref{lip1}, $\varphi_s^{(\delta)}$ is uniformly Lipschitz in $\delta$ on $K$ and $s\in [0,1)$. The corollary follows easily by letting $\delta\rightarrow 0$.

\qed\end{proof}


Now we are able to prove our main result for the existence and uniqueness of the Monge-Amp\`ere solution.

\medskip

\noindent{\bf Proof of Theorem \ref{firstthm}}

For any $T\in [0, T_0)$, Corollary \ref{corexist} gives the existence of the solution for the Monge-Amp\`ere flow (\ref{degmaflowlim}). By Lemma \ref{timebound1}, $\ddt{\varphi}$ is integrable and so $\ddt{ \varphi} = \log \frac{ (\omega_t + \ddbar\varphi)^n} { \Omega}$ as distributions. Now it suffices to prove the uniqueness for the solution on $[0, T] \times X$ for any $0<T<T_0$.

Suppose there is another solution $\varphi'$ satisfying the Monge-Amp\`ere flow (\ref{degmaflowlim}) such that $\varphi' \in  C^{\infty}([0, T])\times X \setminus {\tilde{E}} )$ and  $\varphi(t, \cdot) \in  PSH(X, \omega_{t}) \cap L^{\infty}(X)$ for each $t\in [0,T]$.

First, we show that $$\varphi' \leq \varphi.$$
Let $\psi_{s, \epsilon} = \varphi_{s} -\varphi' - \epsilon s \log|S_{\tilde{E}}|_{h_{\tilde{E}}}^2$ for sufficiently small $\epsilon>0$. Then $\psi_{\epsilon} \in C^\infty( [0, T]\times (X\setminus \tilde{E}))$ and

$$\ddt{}\psi_{s,\epsilon}  = \log \frac{ (\omega_t + \ddbar \varphi' + s(\vartheta -\epsilon Ric(h_{\tilde{E}})) + \ddbar\psi_{s,\epsilon})^n }   { (\omega_t + \ddbar\varphi')^n  } .$$ Suppose $\psi_{s,\epsilon} (t, z_{min})= \min_X  \psi_{s,\epsilon} (t, \cdot)$. Then $z_{min} \in X\setminus \tilde{E}$ since both $\varphi_s$ and $\varphi'$  $\in L^\infty(X)$.  If we choose $\epsilon$ sufficiently small, then by the maximum principle,
$$ \ddt{}  \psi_{s,\epsilon} (t, z_{min}) \geq  \log \frac{ (\omega_t (t, z_{min})+ \ddbar \varphi'(t, z_{min})  + \ddbar\psi_{s,\epsilon} (t, z_{min}) )^n }   { (\omega_t (t, z_{min})+ \ddbar\varphi'(t, z_{min}))^n  } \geq 0  .$$

Note that $\psi_{s,\epsilon} (0, \cdot) = -\epsilon s\log |S_{\tilde{E}}|_{h_{\tilde{E}}}^2\geq 0 $ and so
$$\psi_{s,\epsilon} \geq 0$$
for any $\epsilon$ sufficiently small. Therefore by letting $\epsilon \rightarrow 0$, we have

$$\varphi' \leq \varphi_{s}$$ and so $$\varphi' \leq \varphi$$  by letting $s\rightarrow 0$.

In order to prove $\varphi\leq \varphi'$,
we let  $$v_{\delta} = \varphi' -  \varphi^{( \delta)} - \delta^2  \log|S_{\tilde{E}}|_{h_{\tilde{E}}}^2.$$
At $t=0$, $v_\delta = \delta \varphi_0 - \delta^2 \log |S_{\tilde{E}}|_{h_{\tilde{E}}}^2$.

Suppose $v_{\delta} (t, z_{min})= \min_X  v_{\delta} (t, \cdot)$. Then $z_{min} \in X\setminus \tilde{E}$ since both $\varphi^{( \delta)}$ and $\varphi'$ $\in L^\infty(X)$.  By the maximum principle, if we choose $\delta$ sufficiently small, then  at $(t, z_{min})$,

\begin{eqnarray*}
\ddt{}  v_{\delta} &=& \log \frac{  ( \omega_{t} ^{(\delta)}   + \ddbar \varphi^{(\delta)}   + \delta (\omega_0 -\delta Ric (h_{\tilde{E}}) )  + \ddbar v_\delta )^n    }  {   ( \omega_{t}^{(\delta)} + \ddbar \varphi^{(\delta)}   )^n  } \\
&\geq&  \log \frac{  ( \omega_{t}^{(\delta)}     + \ddbar \varphi^{(\delta)}  + \ddbar v_\delta )^n    }  {   ( \omega_{t}^{(\delta)}   + \ddbar \varphi^{(\delta)} )^n  } \\
&\geq&0.
\end{eqnarray*}
Therefore $$v_{\delta}\geq \inf_X v_{\delta}(0, \cdot) \geq \delta  \inf_X( \varphi_0 - \delta \log |S_{\tilde{E}}|^2_{h_{\tilde{E}}}),$$
 and so we have for $t\in [0,T]$, $$       \varphi^{(\delta)}  + \delta  \inf_X( \varphi_0 - \delta \log |S_{\tilde{E}}|^2_{h_{\tilde{E}}})  + \delta^2  \log|S_{\tilde{E}}|_{h_{\tilde{E}}}^2           \leq   \varphi' \leq \varphi.$$
For any $K\subset \subset X\setminus {\tilde{E}}$, there exists a constant $C_K>0$ such that $$| \varphi^{(\delta)}   - \varphi |   \leq C_K \delta$$ for sufficiently small $\delta>0$ by Lemma \ref{lip1}  .

Therefore on $K$,   $$   \varphi' \geq \varphi   + \delta \inf_X( \varphi_0 - \delta \log |S_{\tilde{E}}|^2_{h_{\tilde{E}}}) - C_K \delta.$$



Letting $\delta\rightarrow 0$ and then $K\rightarrow X\setminus {\tilde{E}}$, we have on $X\setminus {\tilde{E}}$,

$$\varphi'\geq \varphi.$$

Therefore we have proved the uniqueness of the solution on $[0, T] \times X$. The theorem is proved by letting $T \rightarrow T_0$.

\qed


\subsection{Monge-Amp\`ere flows with rough  and degenerate initial data}  \label{3.3}

In this section, we will generalize Theorem \ref{firstthm} for Monge-Amp\`ere flows with rough initial data. The main result will be applied to the K\"ahler-Ricci flow on singular projective varieties with surgery.

Let $X$ be an $n$-dimensional projective manifold. Let $L_1$ and $L_2$ be two holomorphic line bundles on $X$ satisfying {\bf Condition A} along with $\omega_0\in c_1(L_1)$ and $\chi\in c_1(L_2)$ being smooth closed $(1,1)$-forms. Let $\Omega$ be a non-negative $(n,n)$-form on $X$ satisfying {\bf Condition B}. Let $$PSH_p(X, \omega_0, \Omega) = \{ \varphi\in PSH(X, \omega_0) \cap L^\infty(X) ~|~  \frac{ (\omega_0 + \ddbar\varphi)^n}{\Omega}\in L^p(X, \Omega)   \}$$ for $p >0$ and $$T_0 =\sup \{ t>0~|~ L_1 + tL_2 ~\textnormal{ is~semi-ample}\}. $$

Since $L_1$ is big and semi-ample, we denote $Exc(L_1)$ be the exceptional locus for the linear system $|mL_1| $ for sufficiently large $m$. Without loss of generality, we can assume that $\tilde{E}$ as defined in Section \ref{3.2} contains $Exc(L_1)$.

For any $\varphi_0 \in PSH_p (X, \omega_0, \Omega)$ for some $p>1$. We define the non-negative function $F$ by the following Monge-Amp\`ere equation
\begin{equation}
(\omega_0 + \ddbar \varphi_0)^n = F e^{\varphi_0} \Theta.
\end{equation}

\begin{lemma} Let $\Theta$ be a smooth volume form on $X$ and $F = \frac{ ( \omega_0 + \ddbar\varphi_0 )^n}{\Omega}$. Then there exists $p'>1$ such that \begin{equation} F\in L^{p'}(X, \Theta). \end{equation}

\end{lemma}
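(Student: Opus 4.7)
My plan is to obtain the conclusion by a single application of H\"older's inequality that transfers the given $L^p$-estimate for $(\omega_0+\ddbar\varphi_0)^n/\Omega$ over to an $L^{p'}$-estimate against the smooth volume form $\Theta$; the key input is the strict bound $b_j<1$ from Condition B.

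First I would unpack the hypothesis into integrals against $\Theta$. Writing $u=|S_E|^2_{h_E}$ and $v=|S_F|^2_{h_F}$, both of which are bounded above on $X$, Condition B gives $\Omega=(u/v)\,\Theta$. Set $G:=(\omega_0+\ddbar\varphi_0)^n/\Theta$, so that the $\Omega$-density of the Monge-Amp\`ere mass is $G\cdot(v/u)$. Since $\varphi_0\in L^\infty(X)$, the factor $e^{\varphi_0}$ is bounded above and below by positive constants, so the conclusion $F\in L^{p'}(X,\Theta)$ amounts (up to that bounded factor) to $G\in L^{p'}(X,\Theta)$. Expanding, the assumption $\int_X[(\omega_0+\ddbar\varphi_0)^n/\Omega]^p\,\Omega<\infty$ reads $\int_X G^p(v/u)^{p-1}\,\Theta<\infty$.

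Next I would apply H\"older's inequality with conjugate exponents $p/p'$ and $p/(p-p')$, valid for any $1<p'<p$. Writing
\begin{equation*}
G^{p'}=\bigl(G^p(v/u)^{p-1}\bigr)^{p'/p}\cdot(u/v)^{(p-1)p'/p},
\end{equation*}
H\"older yields
\begin{equation*}
\int_X G^{p'}\Theta \;\leq\; \left(\int_X G^p(v/u)^{p-1}\Theta\right)^{p'/p}\left(\int_X (u/v)^{\gamma}\Theta\right)^{(p-p')/p},\qquad \gamma:=\frac{(p-1)p'}{p-p'}.
\end{equation*}
The first factor is finite by the reformulated hypothesis. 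For the second, the bound $u\leq C$ gives $(u/v)^{\gamma}\leq C^{\gamma}v^{-\gamma}$, and $\int_X v^{-\gamma}\Theta=\int_X\prod_j|S_{F_j}|_{h_{F_j}}^{-2b_j\gamma}\Theta$ is finite if and only if $b_j\gamma<1$ for every $j$.

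The heart of the argument is the observation that $\gamma\to 1$ as $p'\searrow 1$. Since Condition B guarantees $b_j<1$ for every $j$, one has $1/\max_j b_j>1$, so I may choose $p'>1$ sufficiently close to $1$ that $\gamma<1/\max_j b_j$, making the second factor finite. I expect the only real delicacy to be bookkeeping the H\"older exponents so that the zeros of $\Omega$ along $E$ (carried by $u$) cause no obstruction; they do not, precisely because $u$ is bounded above, and the entire burden of integrability is concentrated on the pole divisor $F$, where the strict coefficient bound $b_j<1$ supplies exactly the slack needed to force $p'$ strictly greater than $1$.
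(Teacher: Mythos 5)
Your proof is correct and is essentially the paper's own one-line argument made explicit: a single H\"older inequality played off against the $L^p(X,\Omega)$ hypothesis, with the strict bound $b_j<1$ from {\bf Condition B} giving integrability of $\prod_j|S_{F_j}|_{h_{F_j}}^{-2b_j\gamma}$ once $p'$ (hence $\gamma=\frac{(p-1)p'}{p-p'}$) is taken close to $1$, and $\varphi_0\in L^\infty(X)$ absorbing the factor $e^{\varphi_0}$. One caveat: your reduction ``up to a bounded factor'' is accurate for the $F$ defined by the display preceding the lemma, $(\omega_0+\ddbar\varphi_0)^n=Fe^{\varphi_0}\Theta$ (the definition the rest of the section actually uses), not for the formula $F=(\omega_0+\ddbar\varphi_0)^n/\Omega$ written in the lemma itself, since $\Theta/\Omega=|S_F|^2_{h_F}|S_E|^{-2}_{h_E}$ is in general unbounded along $E$; what you proved is the statement the paper needs, and the lemma's displayed formula appears to be a misprint.
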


\begin{proof} The lemma is easily proved by H\"older's inequality and the fact that $\varphi_0 \in L^{\infty}(X)$.

\qed\end{proof}

There exist a family of positive functions $\{ F_s\}_{s\in(0,1]}$ such that $F_s \in C^\infty(X)$ and $$\lim_{s \rightarrow 0 } || F_s - F||_{L^{p'}(X, \Omega)} = 0.$$ We let $F_0 = F$ and then consider the following Monge-Amp\`ere equations \begin{equation}
(\omega_0 + s\vartheta + \ddbar \varphi_{(0,s)} )^n = F_s e^{\varphi_{(0,s)} } \Theta
\end{equation}
and
\begin{equation}
(\omega_0  + s\vartheta+ \ddbar \hat{\varphi}_{(s,\gamma)} )^n = F_{s+\gamma} e^{\hat{\varphi}_{(s,\gamma)} } \Theta
\end{equation}

Obviously, $\varphi_{(0,s)} \in C^{\infty}(X)$ and $\hat{\varphi}_{(s,r)} \in C^\infty(X)$ by Yau's theorem \cite{Y2} for $s>0$ and $\gamma>0$. Furthermore, both $||\varphi_{0,s}||_{L^\infty(X)}$ and $||\hat{\varphi}_{0,s}||_{L^\infty(X)}$ are uniformly bounded for $s\in [0,1]$.

\begin{lemma} There exists a decreasing function $\mu(s) \geq 0$ for $s  \geq 0$  with $\lim_{s\rightarrow 0} \mu(s) =0$ such that
\begin{equation}
 ||\hat{\varphi}_{(0,s)} - \varphi_0 ||_{L^\infty(X)} \leq \mu(s).
\end{equation}
In particular,
\begin{equation}
\lim_{s\rightarrow 0} ||\hat{\varphi}_{(0,s)} - \varphi_0 ||_{L^\infty(X)} = 0.
\end{equation}

\end{lemma}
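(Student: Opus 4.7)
The plan is to invoke the stability theorem for degenerate complex Monge--Amp\`ere equations in the version with an exponential factor on the right-hand side, which is noted in the remark after Theorem \ref{stability}. Both $\varphi_0$ and $\hat\varphi_{(0,s)}$ solve Monge--Amp\`ere equations of the same type: $\varphi_0$ satisfies $(\omega_0+\ddbar\varphi_0)^n = F e^{\varphi_0}\Theta$ while $\hat\varphi_{(0,s)}$ satisfies $(\omega_0+\ddbar\hat\varphi_{(0,s)})^n = F_s e^{\hat\varphi_{(0,s)}}\Theta$, both with reference form $\omega_0$ which is smooth, semi-positive and big (it lies in $c_1(L_1)$ by Condition A). By the preceding lemma $F\in L^{p'}(X,\Theta)$ for some $p'>1$, and by construction of the mollified densities $F_s\to F$ in $L^{p'}(X,\Theta)$ as $s\to 0$.

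First I would note that $\|\varphi_0\|_{L^\infty}$ and $\|\hat\varphi_{(0,s)}\|_{L^\infty}$ are uniformly bounded (the statement for $\hat\varphi_{(s,\gamma)}$ is already given, and the $s=0$ case follows from the same arguments by approximation since $L^{p'}$-bounds on the RHS control solutions of degenerate Monge--Amp\`ere equations in a big semi-ample class by Theorem \ref{zhang}). Consequently the exponential factors $e^{\varphi_0}$ and $e^{\hat\varphi_{(0,s)}}$ are uniformly bounded above and below by positive constants, so the perturbation in the right-hand side is
\[
F_s e^{\hat\varphi_{(0,s)}}\Theta - F e^{\varphi_0}\Theta = (F_s-F)e^{\hat\varphi_{(0,s)}}\Theta + F\,(e^{\hat\varphi_{(0,s)}}-e^{\varphi_0})\Theta,
\]
whose $L^1(X,\Theta)$ norm is bounded by $C(\|F_s-F\|_{L^1}+\|\hat\varphi_{(0,s)}-\varphi_0\|_{L^\infty})$.

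Second, I would apply the version of Ko\l odziej's stability estimate for big and semi-ample classes (Theorem \ref{stability}) with $e^\varphi$ factors in the data, as indicated in the remark after that theorem. This estimate gives, for some $\alpha>0$ and uniform $C>0$,
\[
\|\hat\varphi_{(0,s)}-\varphi_0\|_{L^\infty(X)} \le C\,\|F_s e^{\hat\varphi_{(0,s)}} - F e^{\varphi_0}\|_{L^1(X,\Theta)}^{\alpha}.
\]
The key feature of the $e^\varphi$-version (following Ko\l odziej \cite{Ko2}) is that the strict monotonicity of $\varphi\mapsto e^\varphi$ provides an extra damping which allows the term coming from $e^{\hat\varphi_{(0,s)}}-e^{\varphi_0}$ to be absorbed into the left-hand side; this produces a clean estimate
\[
\|\hat\varphi_{(0,s)}-\varphi_0\|_{L^\infty(X)} \le C\,\|F_s-F\|_{L^{p'}(X,\Theta)}^{\alpha}.
\]
Finally, setting $\mu(s):=\sup_{0<s'\le s} C\,\|F_{s'}-F\|_{L^{p'}(X,\Theta)}^{\alpha}$ gives a decreasing function with $\mu(s)\to 0$ as $s\to 0^+$, which proves the lemma.

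The main obstacle is the combination of three degeneracies: the reference form $\omega_0$ is only semi-positive (not K\"ahler), the measure $\Omega$ is singular along $F$ and vanishes along $E$, and the right-hand side carries the exponential factor. The first two are handled by working in $L^{p'}(X,\Theta)$ rather than $L^p(X,\Omega)$ and by invoking the version of the stability theorem valid in big semi-ample classes (Theorem \ref{stability}), while the exponential is accommodated by the remark following that theorem together with the uniform $L^\infty$ bounds on $\varphi_0$ and $\hat\varphi_{(0,s)}$.
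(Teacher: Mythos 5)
Your proposal is correct and follows essentially the same route as the paper: the paper's proof simply notes that $F_s\to F$ in $L^{p'}$ and invokes the stability theorem of Ko\l odziej and Dinew--Zhang (Theorem \ref{stability}, in the version with the $e^{\varphi}$ factor mentioned in the remark after it) for the big semi-ample class $[\omega_0]$. Your write-up just makes explicit the details the paper leaves to the cited references (the decomposition of the right-hand sides, the absorption of the $e^{\varphi}$ term via monotonicity, and the construction of $\mu$).
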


\begin{proof} Notice $F_s$ converges to $F$ uniformly in $L^{p'}(X, \Omega)$. The lemma follows by combining the proof of the stability theorems for degenerate Monge-Amp\`ere equations in \cite{Ko2} and \cite{DZ} (see Theorem \ref{stability}).

\qed
\end{proof}


\begin{lemma}\label{dapu}  There exists $C>0$ such that  \begin{equation}
\hat{\varphi}_{(0,s)}  \leq \varphi_{(0,s)} \leq \hat{\varphi}_{(0,s)} + Cs (1 -  \log |S_{ \tilde{E} }|_{h_{\tilde{E}}}^2) .
\end{equation}

\end{lemma}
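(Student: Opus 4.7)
The plan is to apply the maximum principle to two tailored barriers, exploiting the choice of $h_{\tilde{E}}$ made at the start of Section \ref{3.2}.

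For the lower bound $\hat\varphi_{(0,s)} \leq \varphi_{(0,s)}$, I observe that since $s\vartheta \geq 0$ and $\hat\varphi_{(0,s)} \in PSH(X,\omega_0) \subset PSH(X,\omega_0+s\vartheta)$, we have $(\omega_0 + s\vartheta + \ddbar\hat\varphi_{(0,s)})^n \geq (\omega_0 + \ddbar\hat\varphi_{(0,s)})^n = F_s e^{\hat\varphi_{(0,s)}}\Theta$, so that $\hat\varphi_{(0,s)}$ is a subsolution of the Monge-Amp\`ere equation defining $\varphi_{(0,s)}$. Applying the maximum principle to $\hat\varphi_{(0,s)} - \varphi_{(0,s)}$ at its maximum then forces this difference to be $\leq 0$, hence $\hat\varphi_{(0,s)} \leq \varphi_{(0,s)}$ globally.

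For the upper bound I apply the maximum principle to the barrier
$$w = \varphi_{(0,s)} - \hat\varphi_{(0,s)} - Cs + Cs\log|S_{\tilde{E}}|^2_{h_{\tilde{E}}},$$
for a constant $C>0$ to be chosen large. The term $Cs\log|S_{\tilde{E}}|^2\to -\infty$ on $\tilde{E}$ forces the maximum of $w$ to be attained at some $z_0 \in X\setminus \tilde{E}$, where both solutions are smooth by elliptic regularity of Monge-Amp\`ere on $\{\omega_0 > 0\}$. The inequality $\ddbar w(z_0)\leq 0$ combined with Poincar\'e-Lelong, $-\ddbar \log|S_{\tilde{E}}|^2_{h_{\tilde{E}}} = Ric(h_{\tilde{E}})$ on $X\setminus \tilde{E}$, yields at $z_0$
$$\omega_0 + s\vartheta + \ddbar\varphi_{(0,s)} \leq (\omega_0 + \ddbar\hat\varphi_{(0,s)}) + s\bigl(\vartheta + C\,Ric(h_{\tilde{E}})\bigr).$$
Taking $n$-th powers, substituting the two Monge-Amp\`ere equations and cancelling $F_s\Theta$, this produces the pointwise estimate
$$e^{\varphi_{(0,s)}(z_0)-\hat\varphi_{(0,s)}(z_0)} \leq \frac{\bigl((\omega_0+\ddbar\hat\varphi_{(0,s)})+s(\vartheta+C\,Ric(h_{\tilde{E}}))\bigr)^n}{(\omega_0+\ddbar\hat\varphi_{(0,s)})^n}\bigg|_{z_0}.$$
Using the choice of $h_{\tilde{E}}$ so that $\omega_0 - \epsilon Ric(h_{\tilde{E}}) \geq c_0\vartheta$ for some small $\epsilon, c_0 > 0$, taking $C\geq \epsilon/c_0$ bounds $s(\vartheta + CRic(h_{\tilde{E}}))$ by $(Cs/\epsilon)\omega_0$. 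Combined with a Tsuji-type lower bound $\omega_0+\ddbar\hat\varphi_{(0,s)} \geq c|S_{\tilde{E}}|^{2\alpha}\vartheta$ on $X\setminus\tilde E$ and $\omega_0 \leq M\vartheta$, the right-hand ratio is controlled by $(1 + C_1 s|S_{\tilde{E}}(z_0)|^{-2\alpha})^n$. The uniform $L^\infty$-bounds on $\varphi_{(0,s)}$ and $\hat\varphi_{(0,s)}$, via the maximality $w(z_0) \geq w(z_*)$ at any fixed $z_* \in X\setminus\tilde{E}$, confine $|S_{\tilde{E}}(z_0)|$ from below so that this polynomial growth is absorbed by the factor $e^{Cs}|S_{\tilde{E}}(z_0)|^{-2Cs}$ coming from the barrier. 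This gives $w(z_0) \leq 0$, hence $w \leq 0$ globally, which is the desired upper bound.

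The main obstacle is this last balancing step: the polynomial degeneracy of the Monge-Amp\`ere reference metric $\omega_0+\ddbar\hat\varphi_{(0,s)}$ near $\tilde{E}$ must be dominated by the logarithmic factor $|S_{\tilde{E}}|^{-2Cs}$ produced by the barrier. This relies on pinning down the location of the maximum point $z_0$ via the uniform oscillation bounds on the two solutions together with the positivity $\omega_0 - \epsilon Ric(h_{\tilde{E}}) \geq c_0\vartheta$ from Kodaira's lemma (Proposition \ref{qfac}); the constant $C$ must be chosen large enough (and independent of $s$) to absorb the polynomial degeneracy exponent $\alpha$ into the exponent $2Cs$ of the barrier.
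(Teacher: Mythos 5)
Your lower bound is essentially the paper's and is fine in substance, with one caveat: $\hat{\varphi}_{(0,s)}$ is only smooth on $X\setminus\tilde{E}$, so "apply the maximum principle to $\hat{\varphi}_{(0,s)}-\varphi_{(0,s)}$ at its maximum" needs either the pluripotential comparison principle for bounded quasi-psh functions or the paper's device of perturbing by $s^m\log|S_{\tilde{E}}|^2_{h_{\tilde{E}}}$ to force the extremum off $\tilde{E}$ before letting $m\to\infty$.

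The upper bound, however, has a genuine gap. With your barrier $w=\varphi_{(0,s)}-\hat{\varphi}_{(0,s)}-Cs+Cs\log|S_{\tilde{E}}|^2_{h_{\tilde{E}}}$, the inequality at the maximum point leaves you with the measure ratio $\bigl((\omega_0+\ddbar\hat{\varphi}_{(0,s)})+s(\vartheta+C\,Ric(h_{\tilde{E}}))\bigr)^n/(\omega_0+\ddbar\hat{\varphi}_{(0,s)})^n$, and to control it you invoke a Tsuji-type lower bound $\omega_0+\ddbar\hat{\varphi}_{(0,s)}\geq c|S_{\tilde{E}}|^{2\alpha}\vartheta$ with $c,\alpha$ independent of $s$. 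No such uniform second-order estimate is available here: the densities $F_s$ only converge in $L^{p'}$, so the $C^2$-theory for $\hat{\varphi}_{(0,s)}$ is not uniform in $s$. Worse, even granting it, the balancing you describe fails quantitatively: maximality of $w$ only gives $-\log|S_{\tilde{E}}(z_0)|^2\lesssim 1/(Cs)$, so the term $n\log\bigl(1+C_1 s|S_{\tilde{E}}(z_0)|^{-2\alpha}\bigr)$ can be of order $1/s$, while your barrier contributes only $Cs\bigl(1-\log|S_{\tilde{E}}(z_0)|^2\bigr)=O(1)$; since the fixed exponent $\alpha$ cannot be absorbed into the exponent $2Cs\to 0$ for any $s$-independent $C$, the conclusion $w(z_0)\leq 0$ does not follow. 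The paper avoids needing any lower bound on the degenerate metric by rescaling the potential: it compares $\frac{1}{1+A^2s}\varphi_{(0,s)}$ with $\hat{\varphi}_{(0,s)}$, so that at the maximum of $\psi^-=\frac{1}{1+A^2s}\varphi_{(0,s)}-\hat{\varphi}_{(0,s)}+As\log|S_{\tilde{E}}|^2_{h_{\tilde{E}}}$ the full form $\frac{1}{1+A^2s}(\omega_0+s\vartheta+\ddbar\varphi_{(0,s)})$ appears on the left after subtracting the positive form $\frac{A^2s}{1+A^2s}\omega_0-\frac{s}{1+A^2s}\vartheta-As\,Ric(h_{\tilde{E}})$, and the ratio is then bounded by $(1+A^2s)^n$ outright; together with the uniform $L^\infty$-bound on $\varphi_{(0,s)}$ this yields exactly the $Cs(1-\log|S_{\tilde{E}}|^2_{h_{\tilde{E}}})$ estimate. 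To repair your argument you would need either this rescaling trick or an $s$-uniform degeneracy estimate that is not established in the paper.
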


\begin{proof}

First let $\psi^+ = \varphi_{(0,s)} - \hat{\varphi}_{(0,s)} - s^m \log |S_{ \tilde{E} }|_{h_{\tilde{E}}}^2$ for sufficiently large $m>0$. Notice that $\hat{\varphi}_{0 , s} \in C^\infty(X \setminus \tilde{E})$ for $s>0$.
Then
$$\frac{(\omega_0 + \ddbar \hat{\varphi}_{(0,s)} + (s\vartheta -s^m Ric(h_{\tilde{E}})) + \ddbar \psi^+)^n } {(\omega_0 + \ddbar \hat{\varphi}_{(0,s)})^n} = e^{\psi^+ + s^m \log |S_{ \tilde{E} }|_{h_{\tilde{E}}}^2} \leq e^{\psi^+} .$$
Note that $s\vartheta - s^m Ric(h_{\tilde{E}})>0$ for $m>>1$. By the maximum principle, $\psi^+ \geq 0$ and so $\hat{\varphi}_{(0,s)} +  s^m \log |S_{ \tilde{E} }|_{h_{\tilde{E}}}^2 \leq \varphi_{(0,s)}$. Then by letting $m \rightarrow \infty$, we have
$$\hat{\varphi}_{0, s} \leq \varphi_{0,s}.$$

Now we will bound $\varphi_{(0,s)}$ from above. Let $\psi^- = \frac{1}{1+A^2 s}\varphi_{(0,s)} - \hat{\varphi}_{(0,s)} + As \log |S_{ \tilde{E} }|_{h_{\tilde{E}}}^2$. Then

\begin{eqnarray*}
&& \frac{(1+ A^2s)^n (\omega_0 + \ddbar \hat{\varphi}_{(0,s)} - (\frac{A^2s}{1+A^2 s} \omega_0 - \frac{s}{1+A^2s} \vartheta - A s Ric(h_{\tilde{E}})) + \ddbar \psi^-)^n } {(\omega_0 + \ddbar \hat{\varphi}_{(0,s)})^n} \\
&=& e^{\psi^- + \frac{A^2 s}{1+A^2s} \varphi_{(0,s)}  - As  \log |S_{ \tilde{E} }|_{h_{\tilde{E}}}^2} \\
&\geq& e^{\psi^- + \frac{A^2 s}{1+A^2s} \varphi_{(0,s)} }.
\end{eqnarray*}

Note that $\frac{A^2s}{1+A^2 s} \omega_0 - \frac{s}{1+A^2s} \vartheta - A s Ric(h_{\tilde{E}}) >0$ for $A>>1$ and $s>0$ sufficiently small. By the maximum principle, $$\psi^- \leq  n \log (1+A^2 s) + \frac{A^2 s}{1+A^2 s} || \varphi_{(0,s)} ||_{L^{\infty}(X)} . $$ Therefore $\varphi_{(0,s)} \leq \hat{\varphi}_{(0,s)} + Cs - As \log |S_{ \tilde{E} }|_{h_{\tilde{E}}}^2$ for some $C>0$.

\qed
\end{proof}


\begin{corollary} Let $\vartheta$ be a K\"ahler form on $X$. Then for $s>0$, there exists $ \varphi_{0, s} \in PSH(X, \omega_0+ s\vartheta) \cap C^\infty( X)$ such that  for any $K \subset \subset X\setminus \tilde{E} $

\begin{equation}
\lim_{s\rightarrow 0} ||\varphi_{(0, s)} - \varphi_0||_{L^\infty ( K  )} =0.
\end{equation}

\end{corollary}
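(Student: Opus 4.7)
The proof is essentially a direct combination of the two preceding lemmas. First, the smoothness of $\varphi_{(0,s)}$ for $s>0$ and membership in $PSH(X,\omega_0+s\vartheta)$ have already been established: since $F_s\in C^\infty(X)$ is strictly positive and $\omega_0 + s\vartheta$ is a genuine K\"ahler form for $s>0$, Yau's theorem applied to the equation $(\omega_0 + s\vartheta + \ddbar\varphi_{(0,s)})^n = F_s e^{\varphi_{(0,s)}}\Theta$ produces the unique smooth solution, and the uniform bound $\|\varphi_{(0,s)}\|_{L^\infty(X)}\leq C$ is already in hand.

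For the convergence statement, the plan is to insert $\hat{\varphi}_{(0,s)}$ as a bridge between $\varphi_{(0,s)}$ and $\varphi_0$ and estimate via the triangle inequality
\begin{equation*}
\|\varphi_{(0,s)}-\varphi_0\|_{L^\infty(K)} \;\leq\; \|\varphi_{(0,s)}-\hat{\varphi}_{(0,s)}\|_{L^\infty(K)} \;+\; \|\hat{\varphi}_{(0,s)}-\varphi_0\|_{L^\infty(K)}.
\end{equation*}
The second term is controlled by the stability-type estimate $\|\hat{\varphi}_{(0,s)}-\varphi_0\|_{L^\infty(X)}\leq \mu(s)\to 0$ from the preceding lemma (whose proof uses the convergence $F_s\to F$ in $L^{p'}$ together with Kolodziej--Dinew--Zhang stability).

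The first term is controlled by Lemma \ref{dapu}: we have
\begin{equation*}
0 \;\leq\; \varphi_{(0,s)} - \hat{\varphi}_{(0,s)} \;\leq\; C s\bigl(1 - \log |S_{\tilde E}|_{h_{\tilde E}}^2\bigr).
\end{equation*}
Since $K\subset\subset X\setminus\tilde{E}$, the function $-\log|S_{\tilde{E}}|_{h_{\tilde{E}}}^2$ is bounded above on $K$ by a constant $C_K$, so on $K$ the first term is bounded by $C(1+C_K)s$. Adding the two estimates gives
\begin{equation*}
\|\varphi_{(0,s)}-\varphi_0\|_{L^\infty(K)} \;\leq\; C(1+C_K)s + \mu(s) \;\longrightarrow\; 0 \quad\text{as } s\to 0,
\end{equation*}
which is the claimed conclusion. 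There is no real obstacle here; the genuine content of the proof lies in Lemma \ref{dapu} (comparing $\varphi_{(0,s)}$ to $\hat{\varphi}_{(0,s)}$ via a barrier using $\log|S_{\tilde E}|^2_{h_{\tilde E}}$ and the maximum principle) and in the Kolodziej-type uniform stability used for $\hat{\varphi}_{(0,s)}\to\varphi_0$ — both of which are already available.
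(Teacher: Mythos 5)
Your argument is correct and is exactly what the paper intends: the corollary is stated as an immediate consequence of the two preceding lemmas, namely the stability estimate $\|\hat{\varphi}_{(0,s)}-\varphi_0\|_{L^\infty(X)}\leq\mu(s)$ and Lemma \ref{dapu}, combined via the triangle inequality and the boundedness of $-\log|S_{\tilde E}|^2_{h_{\tilde E}}$ on compact subsets of $X\setminus\tilde E$, with smoothness of $\varphi_{(0,s)}$ coming from Yau's theorem as already noted in the text. No gaps.
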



\begin{lemma}There exist $\gamma_0>0$ and $C>0$ such that for $0<\gamma <\gamma_0$,
\begin{equation}
| \varphi_{(0,s+\gamma)}  - \varphi_{(0,s)} | \leq  C \gamma ( 1 -  \log |S_{ \tilde{E} }|_{h_{\tilde{E}}}^2 ) + C \mu(s+\gamma) .
\end{equation}

\end{lemma}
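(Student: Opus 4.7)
The plan is to compare $\varphi_{(0,s+\gamma)}$ and $\varphi_{(0,s)}$ indirectly, by inserting the auxiliary solution $\hat{\varphi}_{(s,\gamma)}$ as a bridge, and then bounding each of the two resulting differences by one of the two terms on the right-hand side. Concretely, write
\begin{equation*}
\varphi_{(0,s+\gamma)} - \varphi_{(0,s)} = \bigl(\varphi_{(0,s+\gamma)} - \hat{\varphi}_{(s,\gamma)}\bigr) + \bigl(\hat{\varphi}_{(s,\gamma)} - \varphi_{(0,s)}\bigr).
\end{equation*}
In the first parenthesis the two solutions have the \emph{same} right-hand side $F_{s+\gamma} e^{\,\cdot\,}\Theta$ but the K\"ahler classes differ by the extra $\gamma\vartheta$. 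In the second, the K\"ahler class is the same ($\omega_0+s\vartheta$) while the densities are $F_s$ and $F_{s+\gamma}$.

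First I would estimate $\varphi_{(0,s+\gamma)} - \hat{\varphi}_{(s,\gamma)}$ by essentially the same two-sided maximum principle argument used for Lemma \ref{dapu}. For the lower bound, set $\psi^+ = \varphi_{(0,s+\gamma)} - \hat{\varphi}_{(s,\gamma)} - \gamma^m \log|S_{\tilde{E}}|^2_{h_{\tilde{E}}}$ for $m$ large; the extra form $\gamma\vartheta - \gamma^m Ric(h_{\tilde{E}})$ is positive, and the maximum principle applied to the quotient equation forces $\psi^+ \geq 0$, so letting $m\to\infty$ gives $\hat{\varphi}_{(s,\gamma)} \leq \varphi_{(0,s+\gamma)}$. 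For the upper bound, set $\psi^- = \tfrac{1}{1+A^2\gamma}\varphi_{(0,s+\gamma)} - \hat{\varphi}_{(s,\gamma)} + A\gamma \log|S_{\tilde{E}}|^2_{h_{\tilde{E}}}$ and argue exactly as in Lemma \ref{dapu}, choosing $A$ large so that $\tfrac{A^2\gamma}{1+A^2\gamma}(\omega_0+s\vartheta) - \tfrac{\gamma}{1+A^2\gamma}\vartheta - A\gamma\, Ric(h_{\tilde{E}}) \geq 0$; uniform $L^\infty$ control of $\varphi_{(0,s+\gamma)}$ then yields
\begin{equation*}
\varphi_{(0,s+\gamma)} \leq \hat{\varphi}_{(s,\gamma)} + C\gamma\bigl(1 - \log|S_{\tilde{E}}|^2_{h_{\tilde{E}}}\bigr).
\end{equation*}

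Second, I would bound $\hat{\varphi}_{(s,\gamma)} - \varphi_{(0,s)}$ by using the stability estimate of Theorem \ref{stability} (adapted as in \cite{Ko2} to accommodate the $e^{\varphi}$ factor on the right-hand side, and applied uniformly in the family $\omega_0+s\vartheta$ for $s\in[0,1]$). Both $\hat{\varphi}_{(s,\gamma)}$ and $\varphi_{(0,s)}$ are elements of $PSH(X,\omega_0+s\vartheta)\cap L^\infty(X)$ solving elliptic Monge-Amp\`ere equations with right-hand sides $F_{s+\gamma} e^{\,\cdot\,}\Theta$ and $F_s e^{\,\cdot\,}\Theta$ respectively. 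Since $F_s\to F$ in $L^{p'}(X,\Theta)$, the same stability argument that produced $\mu(\cdot)$ in the previous lemma gives
\begin{equation*}
\|\hat{\varphi}_{(s,\gamma)} - \varphi_{(0,s)}\|_{L^\infty(X)} \;\leq\; C\bigl(\|F_{s+\gamma}-F\|_{L^{p'}} + \|F-F_s\|_{L^{p'}}\bigr)^{\alpha} \;\leq\; C\mu(s+\gamma),
\end{equation*}
where in the last step I absorb the $\mu(s)$ contribution into $\mu(s+\gamma)$ using monotonicity of $\mu$ (after possibly redefining $\mu$). Combining the two bounds and noting $\hat{\varphi}_{(s,\gamma)} \leq \varphi_{(0,s+\gamma)}$ from the first step (so the opposite inequality follows by a symmetric argument swapping the roles via positivity of $\gamma\vartheta$), we obtain
\begin{equation*}
|\varphi_{(0,s+\gamma)}-\varphi_{(0,s)}| \leq C\gamma\bigl(1-\log|S_{\tilde{E}}|^2_{h_{\tilde{E}}}\bigr) + C\mu(s+\gamma),
\end{equation*}
as claimed.

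The main technical obstacle is the second step: one needs a stability estimate that is uniform in the family of K\"ahler classes $\omega_0+s\vartheta$ as $s\to 0$, in the degenerate limit where $\omega_0$ is only big and semi-ample. The remark after Theorem \ref{stability} (and the results of \cite{DZ}, \cite{DP}) handles exactly this situation, but one must also absorb the $e^{\varphi}$ term on the right-hand side, which is standard via Kolodziej's argument in \cite{Ko2}. The choice $\gamma_0$ comes from the requirement that $s+\gamma$ stays in the range where the uniform $L^\infty$ bounds and stability hold.
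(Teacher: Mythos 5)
Your proposal follows essentially the same route as the paper: the same insertion of the auxiliary solution $\hat{\varphi}_{(s,\gamma)}$, with the class-perturbation piece $\varphi_{(0,s+\gamma)}-\hat{\varphi}_{(s,\gamma)}$ handled by the two-sided maximum principle argument of Lemma \ref{dapu} (giving the $C\gamma(1-\log|S_{\tilde{E}}|^2_{h_{\tilde{E}}})$ term) and the density-perturbation piece $\hat{\varphi}_{(s,\gamma)}-\varphi_{(0,s)}$ handled by the family version of the stability theorem (giving the $C\mu(s+\gamma)$ term). Your write-up in fact supplies more detail than the paper does on how the stability bound is converted into $\mu(s+\gamma)$, and is correct.
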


\begin{proof} Notice that
$$
|\varphi_{(0, s+\gamma)} - \varphi_{(0,s)}|
= |\varphi_{(0, s+\gamma)} - \hat{\varphi}_{(s,\gamma)}|+|\varphi_{(0, s)} - \hat{\varphi}_{(s,\gamma)}|.
$$
By the same argument as that in the proof of Lemma \ref{dapu}, there exists $C>0$ such that
$$|\varphi_{(0, s+\gamma)} - \hat{\varphi}_{(s,\gamma)}| \leq Cr (1 -  \log |S_{ \tilde{E} }|_{h_{\tilde{E}}}^2).$$ Applying the family version of the stability theorem, there exists $C>0$ such that

$$ ||\varphi_{(0, s)} - \hat{\varphi}_{(s,\gamma)}||_{L^\infty(X)} \leq C ||F_{s+\gamma} - F_s||_{L^1(X, \Omega)}^{\frac{1}{n+3}}.$$
The Lemma then follows from the above estimates.

\qed
\end{proof}


Consider the following family of Monge-Amp\`ere equations.
\begin{equation}\label{degmaflow5}
\left\{
\begin{array}{cl}
&{ \displaystyle \ddt{\varphi^{(\delta)}_{s,w,r} } = \log
 \frac{ ( \omega^{(\delta)}_{t,s} + \ddbar \varphi^{(\delta)}_{s,w, r}  ) ^n } {\Omega_{w,r}} },\\
&\\
& \varphi^{(\delta)}_{s, w, r} (0,\cdot)=(1-\delta) \varphi_{(0,s)} ,
\end{array} \right.
\end{equation}
where $\omega_{t,s}^{(\delta)} = (1-\delta)\omega_0 + t\chi + s\vartheta$ is defined as in Section \ref{3.2}.

For any $T \in [0, T_0)$, there exists $\delta_0>0$ such that for $\delta\in [-\delta_0, \delta_0]$, the equation (\ref{degmaflow5}) admits a smooth solution on $[0, T]\times X$ as shown in Section \ref{3.2}. We will then fix such $T$ and  $\delta_0$.

\begin{lemma} There exists $C>0$ such that for $s$, $w$, $r\in (0,1]$ and $\delta\in [-\delta_0, \delta_0]$,
\begin{equation}
||\varphi^{(\delta)}_{s,w,r}(t, \cdot)||_{L^\infty([0, T]\times X)} \leq C.
\end{equation}

\end{lemma}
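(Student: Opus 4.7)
The plan is to imitate the proof of Lemma \ref{c0estimate1} almost verbatim, exploiting the fact that the only new feature here is that the initial data $(1-\delta)\varphi_{(0,s)}$ now depends on the additional parameter $s$. The crucial preliminary observation is that $\|\varphi_{(0,s)}\|_{L^\infty(X)}$ is uniformly bounded for $s\in(0,1]$: indeed $\hat\varphi_{(0,s)}$ is uniformly close to $\varphi_0\in L^\infty(X)$ by the stability lemma, and Lemma \ref{dapu} sandwiches $\varphi_{(0,s)}$ between $\hat\varphi_{(0,s)}$ and $\hat\varphi_{(0,s)}+Cs(1-\log|S_{\tilde E}|^2_{h_{\tilde E}})$, which is uniformly bounded after integration against $\Omega$. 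Hence $\|(1-\delta)\varphi_{(0,s)}\|_{L^\infty(X)}\le C$ uniformly in $s\in(0,1]$ and $\delta\in[-\delta_0,\delta_0]$.

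For the upper bound I would differentiate $\int_X \varphi^{(\delta)}_{s,w,r}\,\Omega_{w,r}$ along the flow and apply Jensen's inequality exactly as in Lemma \ref{c0estimate1}, using that $[\omega^{(\delta)}_{t,s}]^n/\int_X\Omega_{w,r}$ stays uniformly bounded for $t\in[0,T]$ and $\delta\in[-\delta_0,\delta_0]$; combined with the uniform initial bound this yields $\int_X \varphi^{(\delta)}_{s,w,r}\,\Omega_{w,r}\le C$. Then H\"ormander--Tian's $\alpha$-invariant estimate applied to $\varphi^{(\delta)}_{s,w,r}\in PSH(X,\omega^{(\delta)}_{t,s})$ (after normalizing by $\sup_X$) together with Jensen's inequality converts this into $\sup_X \varphi^{(\delta)}_{s,w,r}\le C$.

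For the lower bound I would reuse the Kolodziej barrier $\phi_{w,r}\in PSH(X,\theta)\cap L^\infty(X)$ solving $(\theta+\ddbar\phi_{w,r})^n=C_{w,r}\Omega_{w,r}$, whose uniform $L^\infty$ bound in $w,r$ comes from Lemma \ref{gugong} and \cite{EGZ1}. Setting
\[
H=\varphi^{(\delta)}_{s,w,r}-\phi_{w,r}-\epsilon\log|S_{\tilde E}|^2_{h_{\tilde E}}
\]
for $\epsilon>0$ small, the minimum of $H$ is attained on $X\setminus\tilde E$ and the calculation in Lemma \ref{c0estimate1} gives $(\partial_t-\Delta^{(\delta)}_{s,w,r})H\ge -C$ there, where one now uses that $\theta-\epsilon\,Ric(h_{\tilde E})>0$ and that $\omega^{(\delta)}_{t,s}-\theta\ge 0$ for $\delta\in[-\delta_0,\delta_0]$ (shrinking $\delta_0$ if necessary). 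The maximum principle, combined with the uniform lower bound on $H$ at $t=0$ coming from the uniform $L^\infty$ bound on the initial data, yields $H\ge -C$; letting $\epsilon\to 0$ gives the desired lower bound on $\varphi^{(\delta)}_{s,w,r}$.

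The main obstacle is not analytic but bookkeeping: ensuring that each constant in the upper-bound and barrier arguments is genuinely independent of the four parameters $(s,w,r,\delta)$. This reduces to two uniform facts already established above, namely the uniform $L^\infty$ control on the initial data $(1-\delta)\varphi_{(0,s)}$ and the uniform $L^p$ control on $F_{w,r}=\Omega_{w,r}/\vartheta^n$ from Lemma \ref{gugong}; once these are in hand, the proof is a direct parameterized replay of Lemma \ref{c0estimate1}.
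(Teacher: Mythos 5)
Your proposal is correct and is essentially the paper's proof: the paper disposes of this lemma by saying it follows from the same argument as Lemma \ref{c0estimate1}, which is exactly the parameterized replay you carry out, with the only genuinely new input being the uniform $L^\infty$ control on the initial data $(1-\delta)\varphi_{(0,s)}$ (which the paper has already recorded as a consequence of the stability theorem and the comparison with $\hat{\varphi}_{(0,s)}$). Your bookkeeping of the $(s,w,r,\delta)$-uniformity via Lemma \ref{gugong} and the barrier $\phi_{w,r}$, with $\delta_0$ shrunk so that $\omega^{(\delta)}_{t,s}\geq 2\theta$, matches the intended argument.
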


\begin{proof} It can be proved by the same argument as that in the proof of Lemma \ref{c0estimate1}.

\qed\end{proof}

\begin{lemma} There exists $C>0$ such that on $[0, T]\times X$, for all $s$, $w$,$r\in (0,1]$ and $\delta \in [-\delta_0, \delta_0]$,
\begin{equation}
-C  \leq t \dot{\varphi}^{(\delta)}_{s,w,r} \leq C. \end{equation}

\end{lemma}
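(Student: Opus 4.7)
The strategy is to mimic the two-sided maximum principle argument of Lemma \ref{volsm}, adapted to handle simultaneously the rough initial data $\varphi_{(0,s)}$ (which is only uniformly bounded in $L^\infty$, not in any higher-order norm), the degeneracy of $\omega^{(\delta)}_{t,s}$ as $s\to 0$, and the degeneracy of $\Omega_{w,r}$ as $w,r\to 0$. Throughout, denote $\omega' = \omega^{(\delta)}_{t,s} + \ddbar\varphi^{(\delta)}_{s,w,r}$ and let $\Delta$ be the Laplacian of $\omega'$.

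For the upper bound, I would apply the maximum principle to
\[
H^+ \;=\; t\,\dot\varphi^{(\delta)}_{s,w,r} \;-\; (1-\delta)\,\varphi^{(\delta)}_{s,w,r}.
\]
Using $(\partial_t-\Delta)\dot\varphi = \mathrm{tr}_{\omega'}(\chi)$ together with $-\Delta\varphi = \mathrm{tr}_{\omega'}(\omega^{(\delta)}_{t,s})-n$, one finds
\[
(\partial_t-\Delta)H^+ \;=\; -\,\mathrm{tr}_{\omega'}\!\bigl((1-\delta)\omega_0 + s\vartheta\bigr) + n \;\le\; n.
\]
The initial value $H^+(0,\cdot) = -(1-\delta)^2\varphi_{(0,s)}$ is uniformly bounded because $\|\varphi_{(0,s)}\|_{L^\infty(X)}$ is uniformly bounded (shown earlier in Section \ref{3.3}), so the maximum principle gives $H^+\le C$ on $[0,T]\times X$. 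Combined with the uniform $L^\infty$ bound on $\varphi^{(\delta)}_{s,w,r}$ from the previous lemma, this yields $t\dot\varphi^{(\delta)}_{s,w,r}\le C$.

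For the lower bound, I would follow the template of Lemma \ref{volsm} but twist the auxiliary function by a mild log-singularity along $\tilde E$ to neutralise the degeneracy of $\omega^{(\delta)}_{0,s}$ along $E_0$ and of $\Omega_{w,r}$ along $E\cup F$. Set
\[
H^- \;=\; \dot\varphi^{(\delta)}_{s,w,r} \;+\; A\,\varphi^{(\delta)}_{s,w,r} \;-\; n\log t \;-\; \epsilon\log|S_{\tilde E}|^2_{h_{\tilde E}},
\]
with $A$ large and $\epsilon>0$ small. Choose $A$ and $\epsilon$ so that, for all $t\in[0,T]$, $\delta\in[-\delta_0,\delta_0]$ and $s\in(0,1]$, one has $A\,\omega^{(\delta)}_{t,s}+\chi - \epsilon\,\mathrm{Ric}(h_{\tilde E})\ge C_0\,\omega_0$. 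Then a direct computation gives
\[
(\partial_t-\Delta)H^- \;\ge\; C_1\bigl(\omega_0^n/(\omega')^n\bigr)^{1/n} + A\,\dot\varphi - An - n/t.
\]
The singularity $-n\log t$ forces the minimum of $H^-$ on $[0,T]\times X$ to be attained at some $(t_0,z_0)$ with $t_0>0$, and $-\epsilon\log|S_{\tilde E}|^2\to+\infty$ along $\tilde E$ forces $z_0\in X\setminus\tilde E$. Writing $(\omega_0^n/(\omega')^n)^{1/n}=e^{-\dot\varphi/n}(\omega_0^n/\Omega_{w,r})^{1/n}$ and applying the maximum principle at $(t_0,z_0)$, the exponential term dominates the linear $A\dot\varphi$ term as $\dot\varphi\to-\infty$, and one concludes $\dot\varphi(t_0,z_0)\ge n\log t_0 - C$ uniformly in the parameters; this in turn gives the global inequality $\dot\varphi^{(\delta)}_{s,w,r}\ge n\log t-C'$. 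Since $t\cdot n\log t$ is bounded on $[0,T]$, this yields $t\dot\varphi^{(\delta)}_{s,w,r}\ge -C$.

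The delicate step is the last one: verifying that the AM--GM term $(\omega_0^n/\Omega_{w,r})^{1/n}(z_0)$ can be controlled at the minimum point uniformly in $w,r\in(0,1]$, despite the fact that the divisors $E_0$, $E$ and $F$ all sit inside $\tilde E$ and cause mutual degeneracy. The $-\epsilon\log|S_{\tilde E}|^2$ barrier, together with Kodaira's lemma (Proposition \ref{qfac}) applied to the ample twist $\omega_0-\epsilon\mathrm{Ric}(h_{\tilde E})$, is precisely what absorbs this degeneracy into a harmless constant. Once this is done, all constants are independent of $s,w,r\in(0,1]$ and $\delta\in[-\delta_0,\delta_0]$.
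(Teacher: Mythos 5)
Your upper bound is essentially the paper's: the maximum principle applied to $H^+=t\dot{\varphi}^{(\delta)}_{s,w,r}-\varphi^{(\delta)}_{s,w,r}$, using the uniform $L^\infty$ bounds on $\varphi_{(0,s)}$ and on $\varphi^{(\delta)}_{s,w,r}$. (With your extra factor $(1-\delta)$ the identity you state does not quite hold --- an uncancelled $\delta\dot{\varphi}$ term appears --- but with coefficient $1$ the computation is exactly as you wrote, so this is only a slip.)

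The lower bound has a genuine gap, located precisely at the step you call delicate. At the minimum point your AM--GM term is $C_1\,e^{-\dot{\varphi}/n}\bigl(\omega_0^n/\Omega_{w,r}\bigr)^{1/n}$, and for it to dominate $A\dot{\varphi}$ with constants uniform in $w,r$ you need $(\omega_0^n/\Omega_{w,r})(z_0)$ bounded away from $0$ uniformly. It is not: $\omega_0^n$ vanishes along $E_0$ (Condition A only gives $\omega_0\geq C|S_{E_0}|^2_{h_{E_0}}\vartheta$), and $\Omega_{w,r}$ develops poles along $F$ as $w\to 0$, so near $E_0\cup F$ this factor decays at a fixed rate (comparable to $|S_{E_0}|^{2n}|S_F|^2$) that has nothing to do with your barrier parameter $\epsilon$. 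The barrier $-\epsilon\log|S_{\tilde{E}}|^2_{h_{\tilde{E}}}$ only forces the minimum point off $\tilde{E}$, not out of a neighborhood of it; carrying it through the estimate one only gets $H^-(t_0,z_0)\geq(\kappa-\epsilon)\log|S_{\tilde{E}}|^2(z_0)-C$, where $\kappa>0$ is fixed by the multiplicities of $nE_0+F$ inside $\tilde{E}$, while Kodaira's lemma (and the final $\epsilon\to 0$ limit) forces $\epsilon$ small. The conclusion you can actually extract is $\dot{\varphi}\geq n\log t+\kappa\log|S_{\tilde{E}}|^2-C$, i.e.\ a bound degenerating along $\tilde{E}$ of the type proved in Lemma \ref{timebound1}, not the uniform bound $t\dot{\varphi}^{(\delta)}_{s,w,r}\geq -C$ on all of $X$ that this lemma claims and that the volume estimate $e^{-C/t}\Omega\leq\omega^n\leq e^{C/t}\Omega$ requires. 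The paper closes exactly this hole by a different device: it introduces the auxiliary Monge--Amp\`ere equations $(\omega_{0,s}+\ddbar\phi_{s,w,r})^n=A_{s,w,r}\Omega_{w,r}$, whose solutions are uniformly bounded in $L^\infty$ because $A_{s,w,r}\Omega_{w,r}$ is uniformly in $L^p(X,\Theta)$, and applies the maximum principle to $H^-=t\dot{\varphi}^{(\delta)}_{s,w,r}+A^2\varphi^{(\delta)}_{s,w,r}-A\phi_{s^2,w,r}$. The AM--GM term then produces $C_2\bigl(\Omega_{w,r}/(\omega^{(\delta)}_{s,w,r})^n\bigr)^{1/n}$, which competes directly against $(A^2+1)\log\bigl((\omega^{(\delta)}_{s,w,r})^n/\Omega_{w,r}\bigr)$; since $x\mapsto C_2x^{-1/n}+(A^2+1)\log x$ is bounded below on $(0,\infty)$, no comparison between $\Omega_{w,r}$ and a fixed (degenerate) reference volume, no $n\log t$ term, and no log barrier are needed, and the bound is uniform in $s,w,r,\delta$. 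To repair your argument you would have to replace the reference $\omega_0^n$ by such an adapted reference measure; as written, the proposal does not yield the stated uniform lower bound.
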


\begin{proof} The upper bound can be proved using the same argument as that in Lemma \ref{volsm} by applying the maximum principle on $$H^+ = t\dot{\varphi}^{(\delta)}_{s,w,r} - \varphi^{(\delta)}_{s,w,r}.$$

In order to prove the lower bound. We consider the following family of Monge-Amp\`ere equations

$$ (\omega_{0,s} + \ddbar \phi_{s, w,r} )^n = A_{s,w,r} \Omega_{w,r}$$
where $A_{s,w,r} = \frac{[\omega_{0,s}]^n}{\int_X \Omega_{w,r}}$ and  $\sup_X \phi_{s,w,r} = 0.$
Then  $A_{s,w,r}$ is uniformly bounded from above and below for $s$, $w$ and $r\in (0,1]$. As $A_{s,w,r} \Omega_{w,r}$ is uniformly bounded in $L^p(X, \Theta)$ for some $p>1$,  $\phi_{s,w,r} $  uniformly bounded in $L^\infty(X)$ for  $s,w,r\in (0,1]$.

Let
 $$H^- = t\dot{\varphi}^{(\delta)}_{s,w,r} + A^2 \varphi^{(\delta)}_{s,w,r} -  A \phi_{s^2,w,r} .$$

Let $\Delta^{(\delta)}_{s,w,r}$ be the Laplace operator associated to $\omega^{(\delta)}_{s,w,r}$. Then there exist $C_1$, $C_2$ and $C_3>0$ such that
\begin{eqnarray*}
&& (\ddt{} - \Delta^{(\delta)}_{s,w,r}) H^- \\
&=&  tr_{\omega^{(\delta)}_{s,w,r}} ( A^2 \omega^{(\delta)}_{t,s} + t \chi + A \ddbar \phi_{s^2,w,r}) + (A^2 +1)\dot{\varphi}^{(\delta)}_{s,w,r} - A^2n \\
&\geq & tr_{\omega^{(\delta)}_{s,w,r}} ( A \omega_{0,s^2} + A \ddbar \phi_{s^2,w,r}) + (A^2 +1)\dot{\varphi}^{(\delta)}_{s,w,r} - A^2n \\
&\geq& C_1 \left( \frac{ (\omega_{0, s^2} + \ddbar \phi_{s^2, w,r})^n } {(\omega^{(\delta)}_{s,w,r})^n } \right)^{\frac{1}{n}} + (A^2 + 1)\log \frac{(\omega^{(\delta)}_{s,w,r})^n}{\Omega_{w,r}} -A^2n\\
&\geq& C_2 \left( \frac{ \Omega_{w,r} } {(\omega^{(\delta)}_{s,w,r})^n } \right)^{\frac{1}{n}} - C_3\\
&\geq& - C_3
\end{eqnarray*}

 Applying the maximum principle, $H^-$ is uniformly bounded from below since both $\varphi_{s,w,r}^{(\delta)}$ and $\phi_{s,w,r}$ are uniformly bounded in $L^\infty(X)$. We are done.

\qed
\end{proof}

We have the following volume estimate.

\begin{corollary}
There exists $C>0$ such that on $[0, T]\times X$, for all $s$, $w$,$r\in (0,1]$ and $\delta \in [-\delta_0, \delta_0]$,
\begin{equation}\label{volest0}
e^{-\frac{C}{t}}  \leq \frac{(\omega^{(\delta)}_{s,w,r})^n}{\Omega_{w,r}}  \leq e^{\frac{C}{t}}.
\end{equation}

\end{corollary}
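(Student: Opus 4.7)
The plan is short: this is an immediate consequence of the preceding lemma, combined with the defining equation of the flow. From the Monge-Amp\`ere flow \eqref{degmaflow5} I read off the identity
\begin{equation*}
\dot{\varphi}^{(\delta)}_{s,w,r} \;=\; \log \frac{(\omega^{(\delta)}_{t,s} + \ddbar \varphi^{(\delta)}_{s,w,r})^n}{\Omega_{w,r}} \;=\; \log \frac{(\omega^{(\delta)}_{s,w,r})^n}{\Omega_{w,r}} ,
\end{equation*}
since $\omega^{(\delta)}_{s,w,r} = \omega^{(\delta)}_{t,s} + \ddbar \varphi^{(\delta)}_{s,w,r}$ by definition.

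I would then apply the previous lemma, which furnishes a uniform constant $C>0$ (independent of $s,w,r\in(0,1]$ and $\delta\in[-\delta_0,\delta_0]$) such that $-C \le t\dot{\varphi}^{(\delta)}_{s,w,r} \le C$ on $[0,T]\times X$. Dividing by $t>0$ yields
\begin{equation*}
-\frac{C}{t} \;\le\; \log \frac{(\omega^{(\delta)}_{s,w,r})^n}{\Omega_{w,r}} \;\le\; \frac{C}{t},
\end{equation*}
and exponentiating gives the two-sided bound \eqref{volest0}.

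There is no real obstacle: the work has already been done in establishing the $t\dot{\varphi}$ bound (which itself used a careful maximum principle argument involving the auxiliary potentials $\phi_{s^2,w,r}$ and test functions of the form $t\dot{\varphi} - \varphi$ and $t\dot{\varphi} + A^2\varphi - A\phi_{s^2,w,r}$). The only thing to check is that all the bounds are uniform in the five parameters $(t,s,w,r,\delta)$, and this uniformity is already built into the hypothesis of the lemma being invoked, so it passes through verbatim to the conclusion.
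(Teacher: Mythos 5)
Your proposal is correct and is exactly the intended argument: the paper states this corollary without proof because it follows immediately from the flow equation $\dot{\varphi}^{(\delta)}_{s,w,r} = \log \frac{(\omega^{(\delta)}_{s,w,r})^n}{\Omega_{w,r}}$ together with the preceding lemma's uniform bound $-C \leq t\dot{\varphi}^{(\delta)}_{s,w,r} \leq C$, by dividing by $t$ and exponentiating, just as you do. The uniformity in $(s,w,r,\delta)$ and $t\in[0,T]$ indeed passes through verbatim from that lemma.
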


\begin{lemma} There exist $\alpha>0$ and $C>0$ such that on $[0, T]\times X$, for all $s$, $w$,$r\in (0,1]$ and $\delta \in [-\delta_0, \delta_0]$,
\begin{equation} tr_{\vartheta}( \omega^{(\delta)}_{s,w,r} )\leq  C |S_{\tilde{E}} |_{h_{\tilde{E}}}^{-\frac{2\alpha} {t} } . \end{equation}

\end{lemma}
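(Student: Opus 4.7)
The plan is to adapt the maximum principle argument of Lemma \ref{c2estimate1} (which produces the $|S_{\tilde{E}}|^{-2\alpha}$ factor from the poles of $Ric(\Omega_{w,r})$) by inserting a factor of $t$ as in Lemma \ref{c2smoothing} (which accommodates rough initial data). Concretely, consider the test function
\begin{equation*}
H \;=\; t\,\log tr_\vartheta(\omega^{(\delta)}_{s,w,r}) \;-\; A^2\,\varphi^{(\delta)}_{s,w,r} \;+\; A\,\log|S_{\tilde{E}}|^2_{h_{\tilde{E}}}
\end{equation*}
for $A$ sufficiently large. At $t=0$ the first summand vanishes and, thanks to the uniform $L^\infty$-bound on the initial data $\varphi^{(\delta)}_{(0,s)}$ (smooth for each fixed $s>0$) together with the normalization $|S_{\tilde{E}}|^2_{h_{\tilde{E}}}\leq 1$, one has $H(0,\cdot)\leq C$ uniformly in the parameters. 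Since $H\to-\infty$ along $\tilde{E}$ for every $t>0$, the maximum of $H$ on $[0,T]\times X$ is attained at an interior point $(t_0,z_0)$ with $t_0>0$ and $z_0\in X\setminus\tilde{E}$.

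At $(t_0,z_0)$ I will combine the parabolic Schwarz-type estimate
\begin{equation*}
(\ddt{} - \Delta^{(\delta)}_{s,w,r})\log tr_\vartheta(\omega^{(\delta)}_{s,w,r}) \;\leq\; C_0\,tr_{\omega^{(\delta)}_{s,w,r}}(\vartheta) \;+\; \frac{tr_{\omega^{(\delta)}_{s,w,r}}(Ric(\Omega_{w,r}))}{tr_\vartheta(\omega^{(\delta)}_{s,w,r})} \;+\; C_0
\end{equation*}
used in Lemma \ref{c2estimate1} with the identities $\ddt{\varphi^{(\delta)}_{s,w,r}}=\log((\omega^{(\delta)}_{s,w,r})^n/\Omega_{w,r})$ and $\Delta^{(\delta)}_{s,w,r}\varphi^{(\delta)}_{s,w,r} = n - tr_{\omega^{(\delta)}_{s,w,r}}(\omega^{(\delta)}_{t,s})$. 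Using the lower bound $\omega^{(\delta)}_{t,s}\geq c_0\omega_0$ on $[0,T]$ together with $\omega_0-\epsilon\,Ric(h_{\tilde{E}})>0$ (as in Section \ref{3.2}), I can choose $A$ so that $A^2\omega^{(\delta)}_{t,s}+A\,Ric(h_{\tilde{E}})-C_0 t\vartheta \geq A\theta$ for a fixed K\"ahler form $\theta$. The inequality $(\ddt{}-\Delta^{(\delta)}_{s,w,r})H\geq 0$ at $(t_0,z_0)$ then rearranges into
\begin{equation*}
A\,tr_{\omega^{(\delta)}_{s,w,r}}(\theta) \;\leq\; \log tr_\vartheta(\omega^{(\delta)}_{s,w,r}) \;-\; A^2\,\ddt{\varphi^{(\delta)}_{s,w,r}} \;+\; t\,\frac{tr_{\omega^{(\delta)}_{s,w,r}}(Ric(\Omega_{w,r}))}{tr_\vartheta(\omega^{(\delta)}_{s,w,r})} \;+\; C.
\end{equation*}

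The volume estimate just proved bounds $\ddt{\varphi^{(\delta)}_{s,w,r}}\geq -C/t$, so the second term on the right contributes at most $A^2C/t$. The Ricci term has polar behaviour $O(|S_{\tilde{E}}|^{-2\alpha_1}_{h_{\tilde{E}}})$ along $\tilde{E}$ and is absorbed exactly as in Lemma \ref{c2estimate1}. The new term $\log tr_\vartheta(\omega^{(\delta)}_{s,w,r})$ (absent from Lemma \ref{c2estimate1}, arising from $\ddt{}$ hitting the factor $t$ in $H$) is absorbed into the dominant $A\,tr_{\omega^{(\delta)}_{s,w,r}}(\theta)$ by Newton's inequality $tr_\vartheta(\omega)\leq C(tr_\omega(\theta))^{n-1}(\omega^n/\vartheta^n)$, Young's inequality, and the volume bound. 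Putting everything together gives $H(t_0,z_0)\leq C$, hence $H\leq C$ on $[0,T]\times X$, i.e.
\begin{equation*}
t\,\log tr_\vartheta(\omega^{(\delta)}_{s,w,r}) \;\leq\; -A\,\log|S_{\tilde{E}}|^2_{h_{\tilde{E}}} \;+\; C.
\end{equation*}
Rescaling $h_{\tilde{E}}$ so that $|S_{\tilde{E}}|^2_{h_{\tilde{E}}}\leq c_1<1$ on $X$, the additive constant can be absorbed into the exponent by enlarging $\alpha$, yielding the claimed bound $tr_\vartheta(\omega^{(\delta)}_{s,w,r})\leq C|S_{\tilde{E}}|^{-2\alpha/t}_{h_{\tilde{E}}}$. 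The main technical obstacle is handling the spurious $\log tr_\vartheta(\omega)$ term generated by the $t$-factor in $H$; trading it against a small multiple of $tr_\omega(\theta)$ through Newton's inequality and the volume estimate is precisely what forces the final exponent to blow up like $1/t$ rather than remain bounded as in Lemma \ref{c2estimate1}.
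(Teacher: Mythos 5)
Your proposal is correct and follows essentially the same route as the paper: the paper's own (two-line) proof uses exactly your test function $H = t\log tr_{\vartheta}(\omega^{(\delta)}_{s,w,r}) - A^2\varphi^{(\delta)}_{s,w,r} + A\log|S_{\tilde{E}}|^2_{h_{\tilde{E}}}$ and invokes the maximum principle with the Schwarz-type inequality of Lemma \ref{c2estimate1} combined with the $t$-weighting mechanism of Lemma \ref{c2smoothing}, which is precisely how you argue (including using the just-proved bound $|t\dot{\varphi}^{(\delta)}_{s,w,r}|\leq C$ and trading the spurious $\log tr_\vartheta$ term against $tr_{\omega}(\theta)$ via the elementary trace/volume inequality). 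The only inessential slip is asserting the maximum must occur at some $t_0>0$; if it occurs at $t=0$ the bound $H\leq C$ is immediate from the uniform bound on the initial data, so nothing is affected.
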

\begin{proof} 
Define $$H= t \log tr_{\vartheta} (\omega^{(\delta)}_{s,w,r}) - A^2 \varphi^{(\delta)}_{s,w,r} + A \log |S_{\tilde{E} }|_{h_{\tilde{E}}}^2.$$
By  applying $(\ddt{} - \Delta^{(\delta)}_{s,w,r})$ to $H$, the lemma follows from the maximum principle applied to $(\ddt{} - \Delta_{s,w,r}^{\delta}) H$ and   the similar argument in the proof of Lemma \ref{c2smoothing}.

\qed
\end{proof}

 \begin{lemma}  For $s\in (0,1]$ and $\delta\in [-\delta_0, \delta_0]$, $\varphi^{ (\delta)}_{s} \in C^\infty( (0, T] \times X\setminus \tilde{E})$.

 \end{lemma}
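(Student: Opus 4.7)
The plan is to upgrade the previously established uniform $C^0$ and $C^2$ estimates for the approximating flow (\ref{degmaflow5}) to uniform $C^k$ estimates on any compact subset of $(0, T] \times X \setminus \tilde{E}$, and then pass to the limits $r \to 0$ and $w \to 0$ to extract the smoothness of $\varphi^{(\delta)}_s$. Fix $s \in (0,1]$, $\delta \in [-\delta_0,\delta_0]$, a compact set $K \subset\subset X \setminus \tilde{E}$, and an interval $[\tau, T]$ with $\tau > 0$.

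First I would combine the preceding lemmas to show that the approximating K\"ahler metrics $\omega^{(\delta)}_{s,w,r}$ are uniformly equivalent to the fixed K\"ahler form $\vartheta$ on $[\tau, T] \times K$. The upper bound on $tr_\vartheta(\omega^{(\delta)}_{s,w,r})$ follows from the last stated lemma together with the fact that $|S_{\tilde E}|^2_{h_{\tilde E}}$ is bounded below on $K$. For the lower bound, I use the volume estimate (\ref{volest0}) together with the bound on $\Omega_{w,r}$: since $K \cap \text{supp}(E \cup F) = \emptyset$ and $\Omega_{w,r}\to\Omega$ uniformly on $K$, the volume form $(\omega^{(\delta)}_{s,w,r})^n$ is pinched between positive multiples of $\vartheta^n$ on $[\tau,T] \times K$, which combined with the trace upper bound yields a uniform lower bound on the metric.

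Next I would derive uniform $C^3$ bounds on $\varphi^{(\delta)}_{s,w,r}$ on $[\tau, T] \times K$ by running the Calabi--Yau third-order quantity $S_{s,w,r}$, exactly as in the proof of the lemma bounding $\|\varphi_j(t,\cdot)\|_{C^3}$ in Section \ref{3.1}. The point is that once the metric is uniformly bounded and uniformly non-degenerate on $[\tau, T]\times K$ and $\Omega_{w,r}$ together with its derivatives are uniformly bounded on $K$, the standard Calabi-type inequality $(\partial_t - \Delta^{(\delta)}_{s,w,r}) S \leq C_1 S + C_2$ combined with the auxiliary quantity $S + A \, tr_\vartheta(\omega^{(\delta)}_{s,w,r})$ controls $S$ by the maximum principle on a slightly larger compact neighborhood of $K$ and a slightly earlier time. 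From here, parabolic Schauder estimates (applied to a parabolic equation with uniformly parabolic, now-H\"older-continuous coefficients) bootstrap to uniform $C^k$ bounds on any compact $K \subset\subset X \setminus \tilde{E}$ and any $[\tau, T] \subset (0, T]$.

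Finally, I pass to the limits. By the monotonicity and $L^\infty$ bounds established in Section \ref{3.2}, $\varphi^{(\delta)}_{s,w,r}$ converges as $r \to 0$ and then $w \to 0$ to $\varphi^{(\delta)}_s$, and the uniform local $C^k$ estimates on $(0,T] \times X \setminus \tilde{E}$ promote this convergence to $C^\infty_{\textnormal{loc}}$ convergence on that set, giving $\varphi^{(\delta)}_s \in C^\infty((0,T]\times X \setminus \tilde{E})$. The main obstacle is the uniformity of the $C^2$-equivalence step: the factor $|S_{\tilde E}|_{h_{\tilde E}}^{-2\alpha/t}$ in the $tr_\vartheta$ estimate blows up both near $\tilde E$ and as $t \to 0$, so the argument genuinely must be localized to compact subsets of $(0,T]\times X\setminus\tilde E$, and one must verify that the third-order estimate of \cite{PSS} can be carried out with constants depending only on such local geometry and on the uniform $L^p$ bound for $F_{w,r}$ obtained in Lemma \ref{gugong}.
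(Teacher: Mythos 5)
Your proposal is correct and follows essentially the same route as the paper, whose proof is exactly the one-line "standard argument combined with the $C^2$-estimate": on compact subsets of $(0,T]\times X\setminus \tilde{E}$ the $C^0$, volume and trace bounds make the approximating flows uniformly parabolic, higher-order interior estimates follow, and the locally uniform bounds pass to the limits in $r$, $w$. The only point to tighten is your maximum-principle step for the Calabi quantity "on a slightly larger compact neighborhood," which as stated does not control boundary values; one should either localize with a cutoff (or a weight such as a power of $|S_{\tilde{E}}|^2_{h_{\tilde{E}}}$ vanishing on $\tilde{E}$, as done elsewhere in the paper) or simply invoke interior Evans--Krylov plus parabolic Schauder, after which the argument is complete.
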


 \begin{proof} The lemma follows from the standard argument combined with the $C^2$-estimate.

 \qed\end{proof}

Obviously for $s,w, r \in (0, 1]$, $\varphi_{s,w, r}^{(\delta)} $ is a smooth family in $\delta$ and   $$\lim_{\delta\rightarrow 0 } \varphi_{s,w, r}^{(\delta)} = \varphi_{s,w, r} $$ in $C^\infty$-topology.

 For each $s\in (0, 1]$, let

 \begin{equation}
 \varphi^{(\delta)}_s = \lim_{w\rightarrow 0 }(  \limsup_{r\rightarrow 0} \varphi^{(\delta)}_{s,w,r} )^*
 \end{equation}
 and
 \begin{equation}
 \varphi_s = \lim_{w, r\rightarrow 0 } \varphi_{s,w,r}= \lim_{w, r\rightarrow 0 } \varphi^{(0)}_{s,w,r}
. %
 \end{equation}

\begin{lemma}\label{lip2} There exist constants $C$ and $\delta_0>0$ such that   for $s, w, r  \in (0,1]$, $t\in [0, T]$  and $\delta \in [-\delta_0,\delta_0]$,

\begin{equation}
  C \log |S_{\tilde{E}}|_{h_{\tilde{E}}}^{2} - C \leq \frac{\partial}{\partial \delta } \varphi_{s,w, r}^{(\delta)}   \leq  C .
\end{equation}

 \end{lemma}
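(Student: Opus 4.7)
The strategy is the direct analogue of the proof of Lemma \ref{osc1}: differentiate the equation in $\delta$ and apply the maximum principle, using barriers involving $\log|S_{\tilde E}|^2_{h_{\tilde E}}$ to control the behaviour of the derivative near the pluripolar set $\tilde E$. All the preparatory ingredients needed to run that proof verbatim in the rough-initial-data setting have been assembled in the preceding lemmas of Section \ref{3.3}: the uniform $L^\infty$ bound on $\varphi^{(\delta)}_{s,w,r}$, the two-sided volume bound \eqref{volest0}, and crucially the uniform $L^\infty$ bound on the smoothed initial data $\varphi_{(0,s)}$.

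Concretely, let $u := \dfrac{\partial}{\partial\delta}\varphi^{(\delta)}_{s,w,r}$. Differentiating \eqref{degmaflow5} in $\delta$ gives
\begin{equation*}
\bigl(\ddt{} - \Delta^{(\delta)}_{s,w,r}\bigr) u \;=\; -\,tr_{\omega^{(\delta)}_{s,w,r}}(\omega_0) \;\leq\; 0,
\end{equation*}
with initial data $u(0,\cdot) = -\varphi_{(0,s)}$. Since $\|\varphi_{(0,s)}\|_{L^\infty(X)} \leq C$ uniformly in $s\in(0,1]$ (by the approximation and stability results established earlier in this section), the maximum principle immediately yields the upper bound $u \leq C$ on $[0,T]\times X$.

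For the lower bound I take the same barrier as in Lemma \ref{osc1}, namely
\begin{equation*}
H \;:=\; e^{-A^2 t}\,u \;+\; A^2\,\varphi^{(\delta)}_{s,w,r} \;-\; A\,\log|S_{\tilde E}|^2_{h_{\tilde E}}
\end{equation*}
for $A$ chosen large enough that $A^2\omega^{(\delta)}_{t,s} - A\,Ric(h_{\tilde E}) - e^{-A^2 t}\omega_0 \geq 0$ on $[0,T]$. At $t=0$, $H = -\varphi_{(0,s)} + A^2(1-\delta)\varphi_{(0,s)} - A\log|S_{\tilde E}|^2_{h_{\tilde E}}$ is bounded below uniformly because $-\log|S_{\tilde E}|^2 \geq 0$ and $\varphi_{(0,s)}$ is uniformly bounded. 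A computation identical to the one following Lemma \ref{osc1} (using the trace inequality, the uniform $L^\infty$ bound on $\varphi^{(\delta)}_{s,w,r}$, and the lower bound $\log\frac{(\omega^{(\delta)}_{s,w,r})^n}{\Omega_{w,r}} \geq -C/t$ from \eqref{volest0} combined with absorption into the $-A^3\log|S_{\tilde E}|^2$ term) produces
\begin{equation*}
\bigl(\ddt{} - \Delta^{(\delta)}_{s,w,r}\bigr) H \;\geq\; -A^2 H \;-\; C.
\end{equation*}
The maximum principle then gives $H \geq -C$ on $[0,T]\times X$, and unfolding the definition of $H$ yields $u \geq C\log|S_{\tilde E}|^2_{h_{\tilde E}} - C$.

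The only genuine point of care — and the step I expect to demand the most attention — is verifying that the absorption step in the computation of $(\ddt{} - \Delta^{(\delta)}_{s,w,r})H$ still closes: the term $-A^2 \log(\Omega_{w,r}/(\omega^{(\delta)}_{s,w,r})^n)$, which a priori blows up like $-A^2\log|S_F|^2_{h_F}$ near $F$ and $+A^2\log|S_E|^2_{h_E}$ near $E$, must be dominated by $-A^3\log|S_{\tilde E}|^2_{h_{\tilde E}}$ together with the uniform upper volume bound from \eqref{volest0}. Since $\mathrm{supp}\,E\cup\mathrm{supp}\,F\subset \mathrm{supp}\,\tilde E$ and the vanishing orders are bounded, this absorption is achieved by taking $A$ large, exactly as in the proof of Lemma \ref{osc1}. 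All other ingredients are essentially algebraic once one has the $L^\infty$ bounds; the novelty compared with Section \ref{3.2} is only that the initial datum is the smoothed $\varphi_{(0,s)}$ rather than a fixed smooth function, and this enters only through its uniform $L^\infty$ bound, which is available.
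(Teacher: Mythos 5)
Your proposal is essentially the paper's proof: the paper handles this lemma by simply repeating the argument of Lemma \ref{osc1}, i.e. differentiating (\ref{degmaflow5}) in $\delta$, using the uniform $L^\infty$ bound on $\varphi_{(0,s)}$ at $t=0$ for the upper bound, and applying the maximum principle to the barrier $e^{-A^2t}\frac{\partial}{\partial\delta}\varphi^{(\delta)}_{s,w,r}+A^2\varphi^{(\delta)}_{s,w,r}-A\log|S_{\tilde E}|^2_{h_{\tilde E}}$ for the lower bound, exactly as you describe. One correction to your closing step: the absorption should not rely on the volume estimate (\ref{volest0}), whose constants degenerate like $e^{\pm C/t}$ as $t\to 0$ and therefore cannot produce a bound uniform on all of $[0,T]$; as in Lemma \ref{osc1}, after the trace (arithmetic--geometric mean) step one bounds $\log\frac{(\omega^{(\delta)}_{t,s})^n}{\Omega_{w,r}}$ from below by a fixed multiple of $\log|S_{\tilde E}|^2_{h_{\tilde E}}$ using only {\bf Condition A} and {\bf Condition B} (the reference form degenerates only along $E_0$, while $\Omega_{w,r}\leq C|S_F|^{-2}_{h_F}\Theta$, and $\mathrm{supp}\,E_0\cup\mathrm{supp}\,F\subset\mathrm{supp}\,\tilde E$), which is then absorbed by the $-A^3\log|S_{\tilde E}|^2_{h_{\tilde E}}$ term for $A$ sufficiently large --- this is the mechanism your "computation identical to Lemma \ref{osc1}" actually uses, and it needs no information about the evolving volume form.
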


\begin{proof} The lemma can be proved by the same argument in the proof of Lemma \ref{osc1}.

\qed
\end{proof}


\begin{lemma} \label{difference}  There exist $\delta_0>0$ and  $C>0$ such that for $s\in [0,1]$ and  $0<\delta< \delta_0$

\begin{equation}
\varphi^{(\delta)}_{s} \leq \varphi_{s+ \delta^3} -  \delta^2 \log |S_{\tilde{E}}|_{h_{\tilde{E}}}^{2} + C (\mu(s+\delta)+\delta).
\end{equation}

\end{lemma}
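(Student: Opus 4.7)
The plan is to run a parabolic comparison between $\varphi^{(\delta)}_{s,w,r}$ and $\varphi_{s+\delta^3,w,r}$ at the level of the smooth approximations, then pass to the limits $r\to 0$ and $w\to 0$. Introduce on $[0,T]\times (X\setminus \tilde{E})$ the auxiliary function
$$v_{w,r} \;=\; \varphi^{(\delta)}_{s,w,r} - \varphi_{s+\delta^3,w,r} + \delta^2 \log |S_{\tilde{E}}|^2_{h_{\tilde{E}}},$$
which is smooth off $\tilde{E}$ and tends to $-\infty$ as one approaches $\tilde{E}$ (since both $\varphi^{(\delta)}_{s,w,r}$ and $\varphi_{s+\delta^3,w,r}$ are bounded in $L^\infty$), so the spatial maximum $\max_X v_{w,r}(t,\cdot)$ is always attained on $X\setminus \tilde{E}$.

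The key algebraic identity, obtained from $\omega_{t,s+\delta^3}-\omega^{(\delta)}_{t,s} = \delta\omega_0+\delta^3\vartheta$ and $\ddbar\log|S_{\tilde{E}}|^2_{h_{\tilde{E}}} = -Ric(h_{\tilde{E}})$ off $\tilde{E}$, is
$$\omega_{t,s+\delta^3}+\ddbar\varphi_{s+\delta^3,w,r} \;=\; \bigl[\omega^{(\delta)}_{t,s}+\ddbar\varphi^{(\delta)}_{s,w,r}\bigr] + P_\delta - \ddbar v_{w,r},$$
where $P_\delta = \delta\omega_0 + \delta^3\vartheta - \delta^2 Ric(h_{\tilde{E}})$. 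Using the normalization $\omega_0 - \epsilon Ric(h_{\tilde{E}})\geq 0$ for small $\epsilon>0$, the convex combination $\omega_0 - \delta Ric(h_{\tilde{E}}) = (1-\delta/\epsilon)\omega_0 + (\delta/\epsilon)(\omega_0-\epsilon Ric(h_{\tilde{E}}))\geq 0$ for $0<\delta\leq\epsilon$, so $P_\delta\geq \delta^3\vartheta\geq 0$ for all sufficiently small $\delta$. At a spatial maximum of $v_{w,r}(t,\cdot)$ one has $\ddbar v_{w,r}\leq 0$, so the identity forces $(\omega_{t,s+\delta^3}+\ddbar\varphi_{s+\delta^3,w,r})^n \geq (\omega^{(\delta)}_{t,s}+\ddbar\varphi^{(\delta)}_{s,w,r})^n$ as $(n,n)$-forms, and hence $\partial_t v_{w,r}\leq 0$ at that point. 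The parabolic maximum principle then yields $\max_X v_{w,r}(t,\cdot)\leq \max_X v_{w,r}(0,\cdot)$ for $t\in[0,T]$.

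It remains to control the initial data. Writing $v_{w,r}(0,\cdot) = -\delta\varphi_{(0,s)} + (\varphi_{(0,s)} - \varphi_{(0,s+\delta^3)}) + \delta^2\log|S_{\tilde{E}}|^2_{h_{\tilde{E}}}$, using the uniform boundedness of $\varphi_{(0,s)}$, and applying the previous difference estimate with $\gamma=\delta^3$, one obtains
$$v_{w,r}(0,\cdot)\;\leq\; C\delta + C\mu(s+\delta^3) + (\delta^2 - C\delta^3)\log |S_{\tilde{E}}|^2_{h_{\tilde{E}}} \;\leq\; C\bigl(\delta + \mu(s+\delta)\bigr),$$
since $\delta^2 - C\delta^3>0$ for small $\delta$ and $\log|S_{\tilde{E}}|^2_{h_{\tilde{E}}}\leq 0$ (after $|S_{\tilde{E}}|^2_{h_{\tilde{E}}}\leq 1$), while $\mu(s+\delta^3)\leq \mu(s+\delta)$ by monotonicity. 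Combining with the maximum-principle bound and letting $r\to 0$ then $w\to 0$, using the locally uniform convergence of $\varphi^{(\delta)}_{s,w,r}$ to $\varphi^{(\delta)}_s$ and of $\varphi_{s+\delta^3,w,r}$ to $\varphi_{s+\delta^3}$ off $\tilde{E}$, yields the claimed inequality.

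The main technical obstacle is the simultaneous calibration of the exponents $\delta$, $\delta^2$, $\delta^3$: the barrier coefficient $\delta^2$ in $v_{w,r}$ must dominate the $C\delta^3(-\log|S_{\tilde{E}}|^2)$ contribution from the difference estimate (forcing $\delta^2>C\delta^3$, hence small $\delta$), while the $-\delta^2 Ric(h_{\tilde{E}})$ in $P_\delta$ must be absorbed by $\delta\omega_0$ via the hypothesis on $h_{\tilde{E}}$. The shift $s\mapsto s+\delta^3$ in the second parameter, rather than a shift of order $\delta$ or $\delta^2$, is tuned so that both balances succeed at once and the initial-data error stays bounded by $C(\delta+\mu(s+\delta))$.
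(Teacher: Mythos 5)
Your proof is correct and takes essentially the same route as the paper: the same barrier $\delta^2\log|S_{\tilde{E}}|^2_{h_{\tilde{E}}}$, the same positivity $\delta\omega_0+\delta^3\vartheta-\delta^2 Ric(h_{\tilde{E}})\geq 0$ for small $\delta$, the same initial-data bound via the stability estimate with $\gamma=\delta^3$ together with the uniform bound on $\varphi_{(0,s)}$, and a maximum-principle comparison. The only (harmless) difference is that you run the comparison at the level of the smooth $(w,r)$-regularized flows and then let $r\to 0$, $w\to 0$, whereas the paper applies the maximum (minimum) principle directly to the limiting solutions $\varphi_{s+\delta^3}$ and $\varphi^{(\delta)}_s$, which are already bounded and smooth off $\tilde{E}$ so the barrier forces the extremum into the smooth locus.
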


\begin{proof}   Let $\psi_{\delta} = \varphi_{s+ \delta^3} - \varphi^{(\delta)}_{s}  - \delta^2 \log |S_{\tilde{E}}|_{h_{\tilde{E}}}^{2} + A( \mu(s+\delta)+ \delta)$.

Then for each $t\in (0, T]$, at the maximal point of $\psi_\delta$,
\begin{eqnarray*}
\ddt{} \psi_{\delta} &=& \log \frac{ ( \omega^{(\delta)}_s + \ddbar \varphi^{(\delta)}_s + \delta (\omega_0 + \delta^2 \vartheta - \delta Ric( h_{\tilde{E}})) + \ddbar \psi_{\delta})^n} {(  \omega^{(\delta)}_s + \ddbar \varphi^{(\delta)}_s)^n  }\\
&\geq& \log \frac{ ( \omega^{(\delta)}_s + \ddbar \varphi^{(\delta)}_s  + \ddbar \psi_{\delta})^n} {(  \omega^{(\delta)}_s + \ddbar \varphi^{(\delta)}_s)^n  }\\
&\geq& 0,
\end{eqnarray*}
for sufficiently small $\delta>0$.

For sufficiently large $A>0$, $$\psi_{\delta} |_{t=0} = \varphi_{(0, s+ \delta^3)} - (1-\delta)\varphi_{(0, s)} -  \delta^2 \log |S_{\tilde{E}}|_{h_{\tilde{E}}}^{2} + A \mu(s+\gamma) + A\delta \geq 0 .$$

Therefore  $\psi_{\delta} \leq 0 $ on $[0, T] \times X$ by the maximum principle.

\qed
\end{proof}


\begin{lemma} There exist $\delta_0>0$ and  $C>0$ such that on $[0, T] \times X$, for $s\in [0,1]$ and  $0<\delta< \delta_0$,

 \begin{equation}
 \varphi^{(\delta)}_{s+ \delta^3} \leq \varphi_{s} - \delta^2 \log |S_{\tilde{E}}|_{h_{\tilde{E}}}^{2} + C(\mu(s+\delta) + \delta).
 \end{equation}

\end{lemma}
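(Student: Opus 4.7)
The plan mirrors the argument of Lemma \ref{difference} above with the roles of the two flows exchanged. Define
$$v_\delta = \varphi_s - \varphi^{(\delta)}_{s+\delta^3} - \delta^2\log|S_{\tilde{E}}|^2_{h_{\tilde{E}}} + A\bigl(\mu(s+\delta)+\delta\bigr),$$
with $A>0$ to be chosen large. The desired inequality is equivalent to $v_\delta\geq 0$ on $[0,T]\times X$, and I will establish this by the minimum principle. Because $-\delta^2\log|S_{\tilde{E}}|^2_{h_{\tilde{E}}}\to+\infty$ near $\tilde{E}$ while both $\varphi_s$ and $\varphi^{(\delta)}_{s+\delta^3}$ are bounded, for every $t>0$ the minimum of $v_\delta(t,\cdot)$ is attained at some point $z_0\in X\setminus\tilde{E}$, where both functions are smooth and $\ddbar v_\delta(t,z_0)\geq 0$.

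Subtracting the two Monge--Amp\`ere equations and using $\omega_{t,s}-\omega^{(\delta)}_{t,s+\delta^3}=\delta\omega_0-\delta^3\vartheta$, one rewrites
$$\omega_{t,s}+\ddbar\varphi_s = \bigl(\omega^{(\delta)}_{t,s+\delta^3}+\ddbar\varphi^{(\delta)}_{s+\delta^3}\bigr) + \delta\bigl(\omega_0-\delta Ric(h_{\tilde{E}})-\delta^2\vartheta\bigr) + \ddbar v_\delta.$$
For $\delta$ sufficiently small, the bracketed form is semi-positive: from the choice of $h_{\tilde{E}}$ there exist $\epsilon_0,c_0>0$ with $\omega_0-\epsilon_0 Ric(h_{\tilde{E}})\geq c_0\vartheta$, and the convex-combination identity $\omega_0-\delta Ric(h_{\tilde{E}}) = (1-\delta/\epsilon_0)\omega_0 + (\delta/\epsilon_0)\bigl(\omega_0-\epsilon_0 Ric(h_{\tilde{E}})\bigr)\geq(c_0\delta/\epsilon_0)\vartheta$ dominates $\delta^2\vartheta$ once $\delta\leq c_0/\epsilon_0$. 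Combined with $\ddbar v_\delta(t,z_0)\geq 0$ and the monotonicity of the determinant on positive Hermitian forms, this yields $(\omega_{t,s}+\ddbar\varphi_s)^n\geq(\omega^{(\delta)}_{t,s+\delta^3}+\ddbar\varphi^{(\delta)}_{s+\delta^3})^n$ at $(t,z_0)$, so $\partial_t v_\delta(t,z_0)\geq 0$ and $\min_X v_\delta(t,\cdot)$ is non-decreasing in $t$.

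For the initial data, write
$$v_\delta|_{t=0} = \bigl(\varphi_{(0,s)}-\varphi_{(0,s+\delta^3)}\bigr) + \delta\varphi_{(0,s+\delta^3)} - \delta^2\log|S_{\tilde{E}}|^2_{h_{\tilde{E}}} + A\bigl(\mu(s+\delta)+\delta\bigr)$$
and apply the earlier Lipschitz-type estimate $|\varphi_{(0,s)}-\varphi_{(0,s+\delta^3)}|\leq C\delta^3(1-\log|S_{\tilde{E}}|^2_{h_{\tilde{E}}}) + C\mu(s+\delta^3)$, together with the uniform $L^\infty$ bound on $\varphi_{(0,\cdot)}$ and the monotonicity $\mu(s+\delta^3)\leq\mu(s+\delta)$. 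The logarithmic contributions combine to $(-\delta^2+O(\delta^3))\log|S_{\tilde{E}}|^2_{h_{\tilde{E}}}\geq 0$ for small $\delta$, while the residual $O(\delta)+O(\mu(s+\delta))$ errors are absorbed by $A(\mu(s+\delta)+\delta)$ once $A$ is chosen large. Combining with the interior step, the minimum principle gives $v_\delta\geq 0$ on $[0,T]\times X$, which is the claim. The principal delicacy is the quantitative positivity $\omega_0-\delta Ric(h_{\tilde{E}})\geq c\delta\vartheta$: this linear-in-$\delta$ lower bound is essential because $\omega_0$ may vanish along $\tilde{E}$, and a weaker bound would fail to dominate the $\delta^2\vartheta$ perturbation coming from the shift $s\mapsto s+\delta^3$.
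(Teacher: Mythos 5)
Your proposal is correct and is essentially the paper's own argument in mirror image: you work with $v_\delta=-\psi_\delta$ and the minimum principle where the paper applies the maximum principle to $\psi_\delta=\varphi^{(\delta)}_{s+\delta^3}-\varphi_s+\delta^2\log|S_{\tilde{E}}|^2_{h_{\tilde{E}}}-A(\mu(s+\delta)+\delta)$, using the same decomposition of the evolving forms, the same perturbation term $\delta(\omega_0-\delta Ric(h_{\tilde{E}})-\delta^2\vartheta)$, and the same initial-data comparison via the preceding Lipschitz-in-$s$ estimate. The only additions are your explicit quantitative verification that $\omega_0-\delta Ric(h_{\tilde{E}})-\delta^2\vartheta\geq 0$ for small $\delta$ and that the extremum lies off $\tilde{E}$, which the paper leaves implicit.
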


\begin{proof} Let $\psi_{\delta} =  \varphi^{(\delta)}_{s+ \delta^3}- \varphi_{s} + \delta^2 \log |S_{\tilde{E}}|_{h_{\tilde{E}}}^{2} - A (\mu(s+\delta) +  \delta) $.

 Then for each $t\in (0, T]$, at the minimal point of $\psi_\delta$,

\begin{eqnarray*}
\ddt{} \psi_{\delta} &=& \frac{ ( \omega_s + \ddbar \varphi_s - (\delta \omega_0 -  \delta^3  \vartheta - \delta^2 Ric( h_{\tilde{E}}) ) + \ddbar \psi_{\delta})^n} {(  \omega_s + \ddbar \varphi_s)^n  } \\
& \leq &  \frac{ ( \omega_s + \ddbar \varphi_s + \ddbar \psi_{\delta})^n} {(  \omega_s + \ddbar \varphi_s)^n  }
\end{eqnarray*}
for sufficiently small $\delta>0$.

For sufficiently large $A >0$, $$\psi_{\delta} |_{t=0} = (1- \delta) \varphi_{(0, s+ \delta^3)} - \varphi_{(0, s)}  +  \delta^2 \log |S_{\tilde{E}}|_{h_{\tilde{E}}}^{2}  - A(\mu(s+\delta) +  \delta) \leq 0 .$$
Therefore  $\psi_{\delta} \leq 0 $ on $[0, T\times X]$ by the maximum principle if $\delta>0$ is sufficiently small.

\qed\end{proof}



Then we can show that $\{ \varphi_s \}_{s\in (0,1]}$ is a Cauchy family in $L^\infty([0,T]\times K)$ for any compact subset in $X\setminus \tilde{E}$.

\begin{lemma}  On any $K \subset \subset X \setminus \tilde{E}$,

 \begin{equation}
    \lim_{s_1, s_2 \rightarrow 0 } || \varphi_{s_1} - \varphi_{s_2}||_{L^\infty([0, T]\times K)} = 0.
 \end{equation}

\end{lemma}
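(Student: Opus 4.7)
The plan is to combine the two ``difference'' estimates (Lemma~\ref{difference} and its twin bounding $\varphi^{(\delta)}_{s+\delta^3}$ from above by $\varphi_s$) with the $\delta$-Lipschitz estimate coming from Lemma~\ref{lip2}, exploiting that on any compact $K \subset\subset X\setminus \tilde{E}$ the weight $\log|S_{\tilde{E}}|^2_{h_{\tilde{E}}}$ is bounded.

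First I would record the consequence of Lemma~\ref{lip2} after letting $w,r\to 0$: integrating the derivative bound in $\delta$ yields, on $[0,T]\times X\setminus \tilde{E}$,
\begin{equation*}
\delta\bigl(C\log|S_{\tilde{E}}|^2_{h_{\tilde{E}}} - C\bigr) \le \varphi^{(\delta)}_s - \varphi_s \le C\delta,
\end{equation*}
so on any fixed compact $K\subset\subset X\setminus \tilde{E}$ there is a constant $C_K$ with $|\varphi^{(\delta)}_s-\varphi_s|\le C_K\delta$.

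Next, given $0<s_1<s_2\le 1$, I would set $\delta = (s_2-s_1)^{1/3}$ so that $s_1+\delta^3 = s_2$, and apply the two difference lemmas at $s=s_1$. Restricted to $K$, the term $-\delta^2\log|S_{\tilde{E}}|^2_{h_{\tilde{E}}}$ is bounded by $C_K\delta^2$, giving
\begin{align*}
\varphi^{(\delta)}_{s_1} &\le \varphi_{s_2} + C_K\delta^2 + C\bigl(\mu(s_1+\delta)+\delta\bigr),\\
\varphi^{(\delta)}_{s_2} &\le \varphi_{s_1} + C_K\delta^2 + C\bigl(\mu(s_1+\delta)+\delta\bigr).
\end{align*}
Combining each with the Lipschitz-in-$\delta$ bound $|\varphi^{(\delta)}_{s_i}-\varphi_{s_i}|\le C_K\delta$ yields
\begin{equation*}
|\varphi_{s_1}-\varphi_{s_2}|_{L^\infty([0,T]\times K)} \le C_K\delta + C_K\delta^2 + C\bigl(\mu(s_1+\delta)+\delta\bigr).
\end{equation*}
Since $\delta=(s_2-s_1)^{1/3}\to 0$ and $\mu(s_1+\delta)\to 0$ as $s_1,s_2\to 0$, the right side tends to $0$, so $\{\varphi_s\}$ is Cauchy in $L^\infty([0,T]\times K)$.

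The only subtle point is justifying the Lipschitz-in-$\delta$ bound for $\varphi^{(\delta)}_s$ (not just for $\varphi^{(\delta)}_{s,w,r}$): one must pass to the limit $w,r\to 0$ in an inequality that blows up near $\tilde{E}$. This is not a real obstacle because the $\log|S_{\tilde{E}}|^2_{h_{\tilde{E}}}$ factor is locally uniform on $X\setminus\tilde{E}$ and the limiting procedure defining $\varphi^{(\delta)}_s$ takes place precisely there, so the estimate survives on every compact $K\subset\subset X\setminus\tilde{E}$; this is the same mechanism already used in Section~\ref{3.2} to prove the analogous Cauchy statement in $\delta$.
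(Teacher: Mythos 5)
Your proposal is correct and follows essentially the same route as the paper: the paper also applies Lemma \ref{lip2} (integrated in $\delta$, after passing to the $w,r\to 0$ limit) together with Lemma \ref{difference} and its twin, with the parameter chosen so that $s_1+\delta^3=s_2$ (the paper writes $\delta=s_2-s_1$ and uses $\delta^{1/3}$, which is the same choice), and concludes on compact $K\subset\subset X\setminus\tilde{E}$ where $\log|S_{\tilde{E}}|^2_{h_{\tilde{E}}}$ is bounded. No substantive difference.
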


\begin{proof} Assume $\delta = s_2 - s_1 \geq 0$.  Then on $[0, T]\times K$, by Lemma \ref{lip2} and Lemma \ref{difference}, there exist $C$ and $C'>0$ such that

$$\varphi_{s_1} \leq \varphi_{s_1}^{(\delta^{1/3})} + C\delta^{1/3} \leq  \varphi_{s_2} -  \delta^{2/3} \log |S_{\tilde{E}}|_{h_{\tilde{E}}}^{2} + C(\mu(s + \delta^{1/3}) + \delta^{1/3}) $$
and
\begin{eqnarray*}
\varphi_{s_1} & \geq & \varphi^{(\delta^{1/3})}_{s_1 + \delta} + \delta^{2/3} \log |S_{\tilde{E}}|_{h_{\tilde{E}}}^{2}  - C (\mu(s+\delta^{1/3}) + \delta^{1/3}) \\
&\geq& \varphi_{s_2} +  \delta^{2/3} \log |S_{\tilde{E}}|_{h_{\tilde{E}}}^{2}  -  C'  (\mu(s+\delta^{1/3}) + \delta^{1/3}) .
\end{eqnarray*}

 The lemma follows immediately by letting $s_1$ and $s_2\rightarrow 0$, $\delta \rightarrow 0$.

\qed\end{proof}



\begin{proposition}  For any $\varphi_0\in PSH_p(X, \omega_0, \Omega)$ for some $p>1$, there exists a unique $\varphi \in C^\infty( (0, T_0)\times (X\setminus \tilde{E})) $ with $\varphi(t, \cdot) \in PSH(X, \omega_0 + t\chi) \cap L^\infty(X)$ for each $t\in (0, T_0)$ such that

\begin{enumerate}

\item $\ddt{\varphi} = \log \frac{(\omega_0 + t\chi +\ddbar \varphi)^n}{\Omega}$ on $(0, T_0)\times X\setminus\tilde{E} $.

\item  for any $K \subset\subset X\setminus \tilde{E}$, $\lim_{t\rightarrow 0^-} ||\varphi(t, \cdot) - \varphi_0(\cdot)||_{L^\infty(K)} = 0$.

\item $||\varphi||_{L^\infty( (0, T]\times X))} $ is bounded for each $T< T_0$.

\end{enumerate}
Furthermore, for any $T\in (0, T_0)$, there exists $C>0$ such that on $[0, T]\times X$,
\begin{equation}\label{volest}
e^{-\frac{C}{t}}  \leq \frac{ (\omega_0+t\chi + \ddbar\varphi)^n}{\Omega}  \leq e^{\frac{C}{t}}.
\end{equation}

\end{proposition}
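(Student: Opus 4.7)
The plan is to produce $\varphi$ as a limit of the doubly-indexed family $\varphi_{s,w,r}^{(\delta)}$ at $\delta=0$, passing to the limit in the order $r,w\to 0$, then $s\to 0$, using the estimates already collected.

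First I would define, for each $s\in (0,1]$, the function $\varphi_s=\lim_{w\to 0}(\limsup_{r\to 0}\varphi_{s,w,r})^{*}$ as in Section \ref{3.2}. The uniform $L^\infty$ bound on $\varphi_{s,w,r}^{(\delta)}$, the interior $C^k$ bounds coming from the volume estimate \eqref{volest0} together with the $C^2$ estimate $tr_{\vartheta}(\omega_{s,w,r}^{(\delta)})\le C|S_{\tilde E}|_{h_{\tilde E}}^{-2\alpha/t}$, and the standard Evans--Krylov/Schauder machinery give smoothness of $\varphi_s$ on $(0,T]\times(X\setminus\tilde E)$, with $\varphi_s(t,\cdot)\in PSH(X,\omega_{t,s})\cap L^\infty(X)$. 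Next, the Cauchy lemma just established shows $\varphi_s\to\varphi$ in $L^\infty([0,T]\times K)$ for every $K\subset\subset X\setminus\tilde E$, and the uniform $L^\infty$-bound on the whole of $X$ is preserved in the limit. Interior estimates on $K$ are uniform in $s$, so $\varphi_s\to\varphi$ in $C^\infty_{\rm loc}((0,T_0)\times(X\setminus\tilde E))$, giving both the regularity and the Monge--Amp\`ere equation \eqref{degmaflowlim}. The two-sided volume estimate \eqref{volest} then follows by passing \eqref{volest0} to the limit in $r,w,s$ with $\delta=0$.

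For property (2), the initial condition on compact subsets of $X\setminus\tilde E$, I would combine three pieces: the triangle inequality
\[
|\varphi(t,z)-\varphi_0(z)|\le |\varphi(t,z)-\varphi_s(t,z)|+|\varphi_s(t,z)-\varphi_{(0,s)}(z)|+|\varphi_{(0,s)}(z)-\varphi_0(z)|,
\]
the uniform convergence $\varphi_s\to\varphi$ on $[0,T]\times K$, the continuity at $t=0$ of the smoothed flow $\varphi_s$ for each fixed $s>0$, and the corollary that $\varphi_{(0,s)}\to\varphi_0$ in $L^\infty(K)$ as $s\to 0$. Choosing $s$ first (to control the first and third terms) and then $t$ small (to control the middle term) delivers the desired limit.

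Uniqueness I would prove by mimicking the two-sided comparison argument from the proof of Theorem \ref{firstthm}. Suppose $\varphi'$ is another solution satisfying (1)--(3). For $\varphi'\le\varphi$, I compare $\varphi'$ with $\varphi_s$ using $\psi_{s,\epsilon}=\varphi_s-\varphi'-\epsilon s\log|S_{\tilde E}|_{h_{\tilde E}}^2$, noting that the singular term forces the minimum into $X\setminus\tilde E$ where both solutions are smooth and a maximum-principle inequality holds; sending $\epsilon\to 0$ and then $s\to 0$ gives $\varphi'\le\varphi$. For the reverse inequality I compare $\varphi'$ with $\varphi_s^{(\delta)}$ via $v_\delta=\varphi'-\varphi_s^{(\delta)}-\delta^2\log|S_{\tilde E}|_{h_{\tilde E}}^2$, using Lemma \ref{lip2} (Lipschitz dependence on $\delta$) to pass $\delta\to 0$. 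The main obstacle here, and the point requiring care, is that both $\varphi$ and $\varphi'$ are only bounded on $X$ and smooth off $\tilde E$, while their values at $t=0$ agree only off $\tilde E$; the logarithmic correction together with the strict positivity of $\omega_0-\delta\,{\rm Ric}(h_{\tilde E})$ for small $\delta$ is precisely what localizes the extrema away from $\tilde E$ and lets the maximum principle close. Once the comparison is in hand on each compact subset of $X\setminus\tilde E$, uniqueness on all of $[0,T_0)\times X$ follows from boundedness and the pluripotential extension of bounded psh functions across $\tilde E$.
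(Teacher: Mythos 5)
Your proposal is correct and follows essentially the same route as the paper: the paper constructs $\varphi$ as the limit of the approximants $\varphi_s$ using the Cauchy lemma and the interior estimates, proves property (2) by exactly the same three-term triangle inequality (choosing $s$ first, then $t$ small), obtains the volume bound by passing \eqref{volest0} to the limit, and handles uniqueness by the same comparison argument with logarithmic corrections as in Theorem \ref{firstthm}. The only difference is one of presentation: the paper's proof of this proposition is terse ("it suffices to show (2) and the volume estimate"), deferring uniqueness to the subsequent theorem, while you spell out the construction and the uniqueness step explicitly.
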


\begin{proof} It suffices to show $(2)$ and the volume estimate.  On any $K \subset\subset X \setminus \tilde{E}$,
\begin{eqnarray*}
&&||\varphi ( t, \cdot) - \varphi_0(\cdot)  ||_{L^\infty(K)} \\
&\leq& ||\varphi ( t, \cdot) - \varphi_s(\cdot)  ||_{L^\infty(K)} + ||\varphi_s ( t, \cdot) - \varphi_{(0,s)} (\cdot)  ||_{L^\infty(K)}+ ||\varphi_{(0,s)} ( \cdot) - \varphi_0(\cdot)  ||_{L^\infty(K)}\\
\end{eqnarray*}
For any $\epsilon>0$, let $s$ be sufficiently small such that $$||\varphi ( t, \cdot) - \varphi_s(\cdot)  ||_{L^\infty([0, T]\times K)} < \epsilon$$ and $$ ||\varphi_{(0,s)} ( \cdot) - \varphi_0(\cdot)  ||_{L^\infty(K)}  < \epsilon .$$
Fix such $s$. There exists $t_0>0$ such that  $$  ||\varphi_s ( t, \cdot) - \varphi_{(0,s)} (\cdot)  ||_{L^\infty([0, t_0] \times K)} < \epsilon.  $$
Therefore $\lim_{t\rightarrow 0^-} ||\varphi(t, \cdot) - \varphi_0(\cdot)||_{L^\infty(K)} = 0$.

The volume estimate (\ref{volest}) follows from (\ref{volest0}) by letting $s$, $w$, $r$ and $\delta \rightarrow 0$.

\qed\end{proof}


\begin{theorem}\label{allinone}
Let $X$ be an $n$-dimensional algebraic manifold. Let $L_1$ and $L_2$ be two holomorphic line bundles on $X$ satisfying {\bf Condition A} and $\omega_0\in c_1(L_1)$ and $\chi\in c_1(L_2)$ are smooth closed $(1,1)$-forms. Let $\Omega$ be an $(n,n)$-form on $X$ satisfying {\bf Condition B}. Let $$T_0 =\sup \{ t>0~|~ L_1 + tL_2 >0\}. $$

Then for any $\varphi_0\in PSH_p(X, \omega_0, \Omega)$ for some $p>1$, there exists a unique $\varphi \in C^0( [0, T_0) \times X\setminus \tilde{E}) \cap C^\infty( (0, T_0)\times (X\setminus \tilde{E})) $ with $\varphi(t, \cdot) \in PSH(X, \omega_0 + t\chi) \cap L^\infty(X)$ for each $t\in [0, T_0)$ such that

\begin{enumerate}

\item $\ddt{\varphi} = \log \frac{(\omega_0 + t\chi +\ddbar \varphi)^n}{\Omega}$ on $(0, T_0)\times X\setminus \tilde{E}$,

\item $\varphi |_{t=0} = \varphi_0$ on $X$.


\end{enumerate}

Furthermore, for any $T\in (0, T_0)$, there exists $C>0$ such that on $[0, T]\times X$,
\begin{equation}\label{volestt}
e^{-\frac{C}{t}}  \leq \frac{ (\omega_0+t\chi + \ddbar\varphi)^n}{\Omega}  \leq e^{\frac{C}{t}}.
\end{equation}
\end{theorem}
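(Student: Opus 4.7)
The plan is to derive Theorem \ref{allinone} as essentially a repackaging of the preceding proposition, which already supplies the existence, smoothness on $(0,T_0)\times(X\setminus\tilde E)$, bounded $L^\infty$-control of $\varphi(t,\cdot)$, local uniform convergence to $\varphi_0$ on compacts of $X\setminus\tilde E$, and the volume estimate. The theorem only differs in that it explicitly asserts continuity up to $t=0$ on $X\setminus\tilde E$, the initial condition $\varphi|_{t=0}=\varphi_0$ globally on $X$, and uniqueness inside the class of such continuous-up-to-$t=0$ solutions.

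First, I would observe that combining smoothness of $\varphi$ on $(0,T_0)\times(X\setminus\tilde E)$ with the local uniform convergence $\varphi(t,\cdot)\to\varphi_0$ on every compact $K\subset\subset X\setminus\tilde E$ (supplied by the proposition) immediately yields $\varphi\in C^0([0,T_0)\times(X\setminus\tilde E))$. To extend the identity $\varphi|_{t=0}=\varphi_0$ from $X\setminus\tilde E$ to all of $X$, I would use that both sides are bounded $\omega_0$-psh functions on $X$ agreeing on the complement of the pluripolar analytic set $\tilde E$; hence their upper semi-continuous regularizations coincide and so do the functions themselves on $X$. The volume bound \eqref{volestt} follows by passing \eqref{volest0} to the limit along the approximating family $\varphi^{(\delta)}_{s,w,r}$ as $s,w,r,\delta\to 0$, the constant $C$ being uniform in all parameters.

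For the uniqueness statement, I would follow the maximum-principle scheme used in the proof of Theorem \ref{firstthm}, adapted to rough initial data. Given two candidate solutions $\varphi_1,\varphi_2$ of the asserted regularity, I compare $\varphi_s-\varphi_2-\epsilon s\log|S_{\tilde E}|^2_{h_{\tilde E}}$, where $\varphi_s$ is the $s$-approximation of $\varphi_1$ from Section \ref{3.3}. Since both $\varphi_s,\varphi_2\in L^\infty(X)$ and the corrective term diverges to $+\infty$ on $\tilde E$, the minimum over $X$ at each fixed time is attained in $X\setminus\tilde E$; there the parabolic equation gives the nonnegative-time-derivative sign needed to propagate the inequality from $t=0$. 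Sending $\epsilon\to 0$ and then $s\to 0$ (absorbing the stability modulus $\mu(s)\to 0$) yields $\varphi_1\leq\varphi_2$, and the reverse inequality is obtained by the same device applied to the $\delta$-shifted approximation $\varphi_s^{(\delta)}$ together with the Lipschitz-in-$\delta$ control of Lemma \ref{lip2} and the comparison estimate of Lemma \ref{difference}.

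The main obstacle is the bookkeeping of the initial-data discrepancy: the approximants begin from the regularizations $\varphi_{(0,s)}$ rather than from $\varphi_0$ itself, so each maximum-principle comparison must carry an auxiliary $-A(\mu(s)+\delta)$ correction — exactly as in Lemma \ref{difference} — to ensure the sign at $t=0$ is correct while remaining negligible as $s,\delta\to 0$. Once this correction is installed, all limits are uniform on compact subsets of $X\setminus\tilde E$, and the uniqueness transfers without further difficulty to the globally bounded $\omega_t$-psh class in which the theorem is stated.
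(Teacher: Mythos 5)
Your proposal follows essentially the same route as the paper: the paper's own proof simply takes existence, the initial-value statement and the volume bound from the preceding proposition, and disposes of uniqueness ``by the similar argument for the proof of Theorem \ref{firstthm}'' --- which is exactly the two-sided maximum-principle comparison (via $\varphi_s$ with the $\epsilon\log|S_{\tilde E}|^2$ barrier in one direction, and the $\delta$-shifted family with Lemmas \ref{lip2} and \ref{difference} in the other) that you spell out, including the $\mu(s)+\delta$ correction for the discrepancy between $\varphi_{(0,s)}$ and $\varphi_0$. Your extra remarks on extending $\varphi|_{t=0}=\varphi_0$ across $\tilde E$ by uniqueness of bounded psh extensions are consistent with what the paper leaves implicit.
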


\begin{proof} It suffices to show the uniqueness of the solution $\varphi$ and it can be proved by the similar argument for the proof of Theorem \ref{firstthm}.

\qed\end{proof}



%
%
%





\section{K\"ahler-Ricci flow on varieties with log terminal singularities} \label{4}

\subsection{Notations} \label{4.1}

Let $X$ be a $\mathbf{Q}$-factorial projective variety with at worst log terminal singularities. We denote the singular set of $X$ by $X_{sing}$ and let $X_{reg} =  X\setminus X_{sing}$. Let  $\pi: \tilde{X} \rightarrow X$ be  the resolution of singularities and $K_{\tilde{X}} = \pi^* K_X + \sum a_i E_i$ where $E_i$ is the irreducible component of the exceptional locus $Exc(\pi)$ of $\pi$.  Since $X$ is log terminial,  $a_i >-1$.

Since $K_X$ is a $\mathbf{Q}$-Cartier divisor,  there exists a positive $m\in \mathbf{Z}$ such that $mK_X$ is Cartier.

\begin{definition} $\Omega$ is said to be a smooth volume form on $X$ if $\Omega$ is a smooth $(n,n)$-form on $X$ such that for any $z\in X$, there exists an open neighborhood $U$ of $z$ such that $$\Omega = f_U ( \alpha \wedge \overline{\alpha})^{\frac{1}{m}},$$ where $f_U$ is a smooth positive function on $U$ and $\alpha$ is a local generator of $mK_X$ on $U$.
\end{definition}

On each $U$, $\ddbar \log (\alpha\wedge\overline{\alpha})=0$ on $U$ and $\ddbar \log f_U$ is a well-defined smooth closed $(1,1)$-form on $U$ if we extend $f_U$ in the ambient space of $U$.  Then $\chi = \ddbar \log \Omega $ is a well-defined smooth closed $(1,1)$-form on $X$, furthermore, $\chi \in [K_X]$. We can then define $Ric(\Omega)$ to be $\chi = \ddbar\log h_\Omega$, where  $h_\Omega = \Omega^{-1}$ defines a smooth hermitian metric on $K_X$.

After pulling back $\Omega$ by the resolution $\pi$, $\pi^* \Omega$ is then a non-negative $(n,n)$-form on $\tilde{X}$. In particular,  $\pi^*\Omega$ has zeros or poles along the exceptional divisor $E_i$ of order $|a_i|$ and $$\pi^*\chi = Ric( \pi^* h_\Omega).$$


 Let $D$ be an ample divisor on $\tilde{X}$ such that $$\omega_D  = Ric(h_D) = - \ddbar\log h_D >0$$ where  $h_D$ is a hermitian metric equipped on the line bundle associated to $D$.

Let  $\iota : X \rightarrow \mathbf{CP}^N$ be any imbedding of $X$ into a projective space and
$\omega_0 $ be the pullback of a smooth K\"ahler metric from $ \mathbf{CP}^N$ in a multiple of the K\"ahler class $\mathcal{O}(1)$.
  Then $\omega_0$ is a smooth K\"ahler metric on $X$.  Since $[ \pi^*\omega_0]$ is the pullback of an ample class on $\mathbf{CP}^N$, it is a big and semi-ample divisor on $\tilde{X}$. By the Kodaira's lemma,  there exists an effective divisor ${\tilde{E}}$ on $\tilde{X}$ such that $$[\pi^* \omega_0] -\epsilon [\tilde{E}]$$ is ample for any $\epsilon>0$ sufficiently small. Furthermore, since $X$ is $\mathbf{Q}$-factorial, we can assume by Proposition \ref{qfac} that the support of $\tilde{E}$ is contained in the exceptional locus of $\pi$.
 There exists  a hermitian metric $h_{\tilde{E}}$ equipped on the line bundle associated to ${\tilde{E}}$ such that for sufficiently small $\epsilon>0$, $$\pi^* \omega_0 - \epsilon Ric(h_{\tilde{E}}) >0.$$  Let $S_D$ and $S_{\tilde{E}}$ be the defining section of $D$ and ${\tilde{E}}$.


\begin{definition}\label{khp}   Let $X$ be a $\mathbf{Q}$-factorial projective variety  with log terminal singularities and $H$ be a big and semi-ample $\mathbf{Q}$-divisor on $X$. Let $\omega_0\in [H]$ be a smooth closed $(1,1)$-form and $\Omega$ a smooth volume form on $X$. We define for $p\in(0, \infty]$,

\begin{equation}
PSH_{p}(X,\omega_0, \Omega) = \{ \varphi\in PSH(X,\omega_0) \cap L^\infty(X) ~|~\frac{(\omega_0 +\ddbar\varphi)^n}{\Omega}\in L^p(X, \Omega) \}.
\end{equation}
and
\begin{equation}
\mathcal{K}_{H, p}(X) = \{ \omega_0 +\ddbar\varphi ~|~ \varphi\in PSH_p (X, \omega_0, \Omega) \}.
\end{equation}

\end{definition}
The definition of $\mathcal{K}_{H, p}(X)    $ does not depend on the choice of the smooth closed $(1,1)$-form $\omega_0 \in [H]$ and the smooth volume form $\Omega$.


We define the following notion of the weak K\"ahler-Ricci flow on projective varieties with singularities.
\begin{definition}\label{weakdef}  Let $X$ be a $\mathbf{Q}$-factorial projective variety  with log terminal singularities and $\omega_0 \in [H]$ be a closed semi-positive $(1,1)$-current on $X$ associated to a big and semi-ample $\mathbf{Q}$-divisor $H$ on $X$. Suppose that $$T_0 = \sup\{ t>0 ~|~ H + t K_X ~is~ nef~\}>0.$$ A family of closed positive $(1,1)$-current $\omega (t, \cdot)$ on $X$ for $t\in [0, T_0)$ is called a solution of the unnormalized weak K\"ahler-Ricci flow if  the following conditions hold.

\begin{enumerate}

\item $\omega \in C^{\infty} ((0, T_0 )\times X\setminus D ) $, where $D$ is a subvariety of $X$. Let $\hat{\omega}_t \in [H+ tK_X]$ be a smooth family of smooth closed $(1,1)$-forms on $X$ for $t\in [0, T_0)$. Then $\omega= \hat{\omega}_t + \ddbar \varphi$ for some $\varphi \in C^0([0, T_0)\times X\setminus D )\cap C^\infty((0, T_0)\times X\setminus D)$  and $\varphi(t, \cdot) \in PSH(X, \hat{\omega}_t)\cap L^{\infty}(X)$ for all $t\in [0, T_0)$.

\item \begin{equation}
\left\{
\begin{array}{l}
{ \displaystyle \frac{\partial \omega } {\partial t} = - Ric(\omega)}, ~~~~~~~{on } ~~ (0, T_0) \times X\setminus D,\\
\\
\omega(0,\cdot)=\omega_0, ~~~~~~~~~~~~~{on}~~ X .
\end{array} \right.
\end{equation}

\end{enumerate}

\end{definition}

In particular, when $H$ is ample, $T_0$ is always positive and $X\setminus D = X_{reg}$.

We would like to prove the existence and uniqueness of the weak K\"ahler-Ricci flow on singular varieties if the initial metric satisfies certain regularity conditions.

The following theorems are well-known as the rationality theorem and base-point-free theorem in the Minimal Model Program (see \cite{KMM}, \cite{D}).
\begin{theorem}\label{rationality} Let $X$ be a projective manifold such that $K_X$ is not nef. Let $H$ be an ample $\mathbf{Q}$-divisor and let

\begin{equation}
\lambda = \max \{ t\in \mathbf{R}~|~ H + t K_X ~ is ~ nef~\}.
\end{equation}
Then $\lambda\in \mathbf{Q}$.

\end{theorem}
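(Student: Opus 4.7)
The plan is to follow Kawamata's classical rationality argument (as presented in \cite{KMM}), which proceeds by contradiction using three ingredients: the Kawamata--Viehweg vanishing theorem, asymptotic Riemann--Roch, and Dirichlet's theorem on Diophantine approximation.

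After replacing $H$ by a suitable integer multiple, I may assume $pH + qK_X$ is an integral line bundle for every $(p,q) \in \mathbf{Z}^2$. The Euler characteristic
$$P(x,y) = \chi(X, xH + yK_X)$$
is a polynomial of total degree at most $n$ with rational coefficients by Riemann--Roch. For positive integers $(p,q)$ with $p/q$ slightly exceeding $\lambda$, the class $pH + qK_X$ is ample, and Kawamata--Viehweg vanishing applied to the twist $(pH+qK_X) - K_X = pH + (q-1)K_X$ (still nef and big in the appropriate range) gives $h^i(X, pH+qK_X) = 0$ for $i \ge 1$, hence $h^0(X, pH+qK_X) = P(p,q)$.

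Now assume for contradiction that $\lambda \notin \mathbf{Q}$. By Dirichlet's theorem there exist infinitely many positive integer pairs $(p_k, q_k)$ with $q_k \to \infty$ and $0 < p_k - \lambda q_k < 1/q_k$. For each such pair, $p_k H + q_k K_X$ lies arbitrarily close to (but inside) the boundary of the nef cone, while incrementing $q$ by $1$ drops the class just outside the nef cone. Using the base-point-free theorem for the nef divisor $p_k H + q_k K_X$ one shows that $P(p_k, q_k) > 0$ for all large $k$, while the analogous analysis of $P(p_k, q_k+1)$, together with the polynomial structure of $P$, forces $P$ to vanish to high order along the irrational line $\{x = \lambda y\}$ in the limit. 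Since $P \in \mathbf{Q}[x,y]$ has bounded degree, this is impossible unless $\lambda$ itself is rational.

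The main obstacle is making the final combinatorial step precise, i.e., extracting a genuine algebraic contradiction from the Diophantine density of approximations. The delicate point is verifying Kawamata--Viehweg vanishing uniformly along the approximating sequence; this typically requires an auxiliary perturbation by a small rational multiple of $H$ together with a careful check that the resulting shift in $P$ does not spoil the vanishing conclusion. Once uniformity is in hand, rationality of $\lambda$ follows from the fact that infinitely many integer points in the strip $|p - \lambda q| < 1/q$ cannot all lie on the zero set of a fixed rational polynomial of bounded degree.
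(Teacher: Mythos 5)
The paper does not actually prove this statement: it is quoted as the classical Rationality Theorem of the Minimal Model Program, with a citation to \cite{KMM} and \cite{D}. So the only question is whether your sketch would stand on its own, and as written it does not: the step you yourself flag as "the main obstacle" is not a technicality but the entire core of Kawamata's argument, and the mechanism you propose for it is inverted. Nothing in your sketch forces $P(x,y)=\chi(X,xH+yK_X)$ to vanish at lattice points near the critical line, so the concluding Diophantine observation ("a rational polynomial of bounded degree cannot vanish at infinitely many points in the strip") has nothing to bite on; there is no reason at all that $P(p_k,q_k+1)=0$, since non-nef divisors may well have sections. The actual proof runs the other way: one uses the arithmetic lemma (for irrational $\lambda$) to produce arbitrarily large lattice points $(p,q)$ with $0<q-\lambda p<1$ at which $P(p,q)\neq 0$; for such points $pH+(q-1)K_X$ is ample, so Kawamata--Viehweg vanishing gives $h^0(pH+qK_X)=\chi=P(p,q)\neq 0$, i.e.\ an effective divisor lying just \emph{outside} the nef cone; and the contradiction is then extracted by running the non-vanishing/base-point-free machinery (Shokurov's non-vanishing theorem, log resolutions of the base locus, a second application of vanishing) to show such a linear system would have to be free, hence nef. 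That whole second half — the part that genuinely needs the birational-geometry input rather than Riemann--Roch and Dirichlet — is absent from your proposal.

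Two further concrete problems. First, your use of the base-point-free theorem to conclude $P(p_k,q_k)>0$ for a nef class is not justified: BPF gives freeness of $|mD|$ for large $m$, hence $h^0(mD)>0$, but says nothing about $h^0(D)$ itself; positivity at the given lattice point is exactly what Shokurov's non-vanishing theorem is for (for points strictly inside the ample cone one can instead use asymptotic Riemann--Roch, but those points are not near the boundary, which is where you need the information). Second, the parametrization is off: $pH+qK_X=p\left(H+\tfrac{q}{p}K_X\right)$ is nef iff $q/p\leq\lambda$, so "$p/q$ slightly exceeding $\lambda$" does not make the class ample in general, and the critical line is $\{y=\lambda x\}$ rather than $\{x=\lambda y\}$; this slip propagates into your choice of Dirichlet approximations. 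In short, you have correctly identified the classical KMM strategy that the paper is citing, but the proposal as it stands omits the non-vanishing theorem and the base-locus induction that make the contradiction work, and the combinatorial endgame you describe would fail even if the earlier steps were repaired.
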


\begin{theorem}\label{basepointfree}

Let $X$ be a projective manifold. Let $D$ be a nef $\mathbf{Q}$-divisor such that $aD - K_X$ is nef and big for some $a>0$. Then $D$ is semi-ample.

\end{theorem}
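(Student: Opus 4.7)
I would follow Kawamata's classical approach to the base-point-free theorem, combining a non-vanishing statement with an inductive reduction of the base locus, both powered by the Kawamata-Viehweg vanishing theorem. I will sketch how the pieces assemble rather than reprove the deep vanishing ingredients.

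First, invoke Shokurov's non-vanishing theorem to produce sections: $H^0(X, \mathcal{O}(mD)) \neq 0$ for some $m > 0$. This applies because for every integer $m \geq a$ the difference $mD - K_X = (m-a)D + (aD - K_X)$ is nef and big, which is the standard input for non-vanishing.

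Next comes the base-point-free bootstrap. Suppose $B = \mathrm{Bs}|m_0 D|$ is non-empty for some $m_0$ and fix $x \in B$. Take a log resolution $\pi: \tilde{X} \to X$ of $B$ together with the exceptional locus, so that $\pi^* |m_0 D| = |M_0| + F$ with $|M_0|$ base-point free and $F + \mathrm{Exc}(\pi) = \sum E_i$ a simple normal crossing divisor. Writing $K_{\tilde{X}} = \pi^* K_X + \sum a_i E_i$ and $F = \sum r_i E_i$, choose a rational $c > 0$ by
$$
c = \min_{i : r_i > 0} \frac{1 + a_i}{r_i},
$$
so that $c r_i - a_i \leq 1$ for all $i$ with equality on at least one component $E_{i_0}$ mapping to $x$ (after possibly perturbing the resolution). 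For $N$ sufficiently large and divisible, the $\mathbf{Q}$-divisor $\pi^*(ND) - K_{\tilde{X}} - cF$ can be arranged to be nef and big: it equals $\pi^*(aD - K_X) + (N - a - c m_0)\pi^* D$ up to exceptional correction, using that $aD - K_X$ is big and nef, that $\pi^* D$ is nef, and that the exceptional terms can be absorbed by enlarging $N$. Kawamata-Viehweg vanishing then kills $H^1$ of $\mathcal{O}_{\tilde{X}}(\lceil \pi^*(ND) - cF \rceil - E_{i_0})$, and the short exact sequence
$$
0 \to \mathcal{O}(\lceil \cdots \rceil - E_{i_0}) \to \mathcal{O}(\lceil \cdots \rceil) \to \mathcal{O}_{E_{i_0}}(\lceil \cdots \rceil) \to 0
$$
lifts a section from $E_{i_0}$ that does not vanish identically there. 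Combined with the sections from $|M_0|$, which are already base-point free away from $E_{i_0}$, this strictly shrinks the base locus of $|ND|$.

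Iterating finitely many times (noetherian induction on the base locus) eliminates $\mathrm{Bs}|ND|$ for some $N$, and a standard gcd argument (the set of $m$ with $|mD|$ base-point free is closed under addition and eventually contains every sufficiently divisible integer) upgrades this to semi-ampleness of $D$. The main obstacle is the delicate numerical choice of $c$ and $N$: one must ensure that the round-up of $\pi^*(ND) - K_{\tilde{X}} - cF$ isolates exactly one component $E_{i_0}$ with coefficient $-1$ so that vanishing cleanly lifts sections from a single divisor. This is precisely where both the nef and the big hypotheses on $aD - K_X$ are used in an essential way, and where the theorem's technical depth resides.
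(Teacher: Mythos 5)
The paper does not prove this statement at all: it is quoted as the classical base-point-free theorem of the Minimal Model Program, with the proof delegated to the cited references (\cite{KMM}, \cite{D}). Your sketch is essentially the canonical Kawamata--Shokurov argument found in those sources, so there is no divergence of method to compare, only the question of whether your outline is sound.

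As a sketch it is broadly correct, but one assembly step is glossed over in a way that matters. After the Kawamata--Viehweg vanishing kills $H^1$ of $\mathcal{O}_{\tilde X}(\lceil \pi^*(ND)-cF\rceil - E_{i_0})$, surjectivity of restriction only helps if you already know that $H^0$ of the restricted sheaf on $E_{i_0}$ is nonzero; your phrase ``lifts a section from $E_{i_0}$ that does not vanish identically there'' presupposes this. In the standard proof this is exactly where induction on dimension enters: one applies the non-vanishing theorem (or the base-point-free statement inductively) to $E_{i_0}$ with the appropriate nef and big $\mathbf{Q}$-divisor and fractional boundary coming from the other components $\sum (cr_i - a_i)E_i$ restricted to $E_{i_0}$, checking the Kawamata log terminal condition there. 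Invoking Shokurov non-vanishing only on $X$ at the outset, as you do, does not supply these sections. Relatedly, the conclusion of one round of the argument is that the new sections of $|ND|$ do not vanish along $E_{i_0}$, hence $\mathrm{Bs}|ND|$ is strictly smaller than $\mathrm{Bs}|m_0D|$ near $\pi(E_{i_0})$; the noetherian induction must be run on the stable base loci for the two coprime-power subsystems $|2^kD|$, $|3^kD|$ (or a similar gcd device), which is what your final paragraph alludes to and is fine. With the induction-on-dimension use of non-vanishing on $E_{i_0}$ made explicit, your outline matches the proof in the references the paper relies on.
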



\subsection{Existence and uniqueness of the weak K\"ahler-Ricci flow}  \label{4.2}

Let $X$ be a $\mathbf{Q}$-factorial projective variety  with log terminal singularities. Let $H$ be an ample $\mathbf{Q}$-divisor on $X$,  $\omega_0 \in [H] $ be a smooth K\"ahler metric and  $\Omega$ be a smooth volume form on $X$ and $\chi = \ddbar \log \Omega$.

Consider the ordinary differential equation for the K\"ahler class defined by the unnormalized Ricci flow on $X$

\begin{equation}\label{classflow}
\left\{
\begin{array}{ll}
&{ \displaystyle \ddt{ [\omega]} = [\chi] = [K_X] , }\\
&\\
& [\omega(t, \cdot)] = [\omega_0] = [H].
\end{array} \right.
\end{equation}
Then $$[\omega] = [\omega_0] + t [\chi].$$   Heuristically, if the K\"ahler-Ricci flow exists for $t\in [0, T)$, the unnormalized K\"ahler-Ricci flow should be equivalently to the following Monge-Amp\`ere flow

\begin{equation}\label{krflow2.1}
\left\{
\begin{array}{rcl}
&&{ \displaystyle \ddt{\varphi} = \log
 \frac{ ( \omega_0 + t\chi + \ddbar \varphi ) ^n } {\Omega} }\\
&&\\
&& \varphi(0,\cdot)=0 .
\end{array} \right.
\end{equation}

Let $\pi: \tilde{X} \rightarrow X$ be the resolution of singularities as defined in Section \ref{4.1}.
In order to define the Monge-Amp\`ere flow on $X$, one might want to lift the flow to the nonsingular model $\tilde{X}$ of $X$. However, $\omega_0$ is not necessarily K\"ahler on $\tilde{X}$ and $\Omega$ in general vanishes or blows up along the exceptional divisor of $\pi$ on $\tilde{X}$ unless the resolution $\pi$ is crepant, hence the lifted flow is degenerate near the exceptional locus. So we have to perturb the Monge-Amp\`ere flow (\ref{krflow2.1}) and obtain uniform estimates so that the flow might be allowed to be pushed down on $X$.

Let $$T_0 = \sup \{ t \geq 0 ~|~ [\omega_t] ~is ~ nef ~{\rm on }~ X  \} = \sup \{ t \geq 0 ~|~ H+ t K_X ~is ~ nef ~{\rm on }~ X  \}.$$
Then for any $t\in [0, T_0)$, $[\omega_t]$ is ample and $T_0>0$ is a rational number or $T_0= \infty$ by the rationality theorem \ref{rationality}.  The base-point-free theorem \ref{basepointfree} implies the following important proposition.

\begin{proposition} When $T_0<\infty$,
$H+ T_0 K_X$  is semi-ample.
\end{proposition}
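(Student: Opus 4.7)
The plan is to deduce this directly from the base-point-free theorem (Theorem \ref{basepointfree}) applied to the divisor $D := H + T_0 K_X$, using the ampleness of $H$ to produce the required nef-and-big auxiliary divisor.

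First I would justify that $D$ is a well-defined $\mathbf{Q}$-Cartier divisor. By the rationality theorem (Theorem \ref{rationality}) applied to $H$ and $K_X$, the critical value $T_0$ is rational. Since $X$ is $\mathbf{Q}$-factorial with log terminal singularities, both $H$ and $K_X$ are $\mathbf{Q}$-Cartier, so $D = H + T_0 K_X$ makes sense as a $\mathbf{Q}$-Cartier divisor. Next, $D$ is nef: by definition of $T_0$, the class $H + t K_X$ is nef for every $t \in [0, T_0)$, and since the nef cone is closed in $N^1(X)_{\mathbf{R}}$, the limit class $D = H + T_0 K_X$ is nef as well.

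The key observation is that for any rational $a > 1/T_0$, the class $aD - K_X$ can be rewritten as
\[
aD - K_X \;=\; aH + (aT_0 - 1)K_X \;=\; \Bigl(a - \tfrac{1}{T_0}\Bigr)(H + T_0 K_X) + \tfrac{1}{T_0} H \;=\; \Bigl(a - \tfrac{1}{T_0}\Bigr) D + \tfrac{1}{T_0} H.
\]
Since $D$ is nef, $H$ is ample, and the coefficient of $H$ is strictly positive, this expresses $aD - K_X$ as a positive combination of a nef and an ample divisor, hence $aD - K_X$ is itself ample. In particular it is nef and big. Choosing such an $a$, the base-point-free theorem then yields that $D = H + T_0 K_X$ is semi-ample.

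The only subtle point is that Theorem \ref{basepointfree} is stated for projective manifolds, whereas here $X$ has log terminal singularities; so the step one must actually invoke is the Kawamata--Shokurov base-point-free theorem in the log terminal (klt) setting, which is the version standard in the Minimal Model Program (\cite{KMM}) and is the form needed here. This is the main conceptual obstacle: once one is comfortable that the singular version of the base-point-free theorem applies to the $\mathbf{Q}$-factorial klt pair $(X, 0)$, the algebra above is immediate.
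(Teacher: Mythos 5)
Your proof is correct and takes essentially the same route as the paper, which simply asserts that the base-point-free theorem implies the proposition; you supply the standard verification the paper leaves implicit (rationality of $T_0$, nefness of $H+T_0K_X$ as a limit of nef classes, and the decomposition $aD-K_X=\bigl(a-\tfrac{1}{T_0}\bigr)D+\tfrac{1}{T_0}H$, which is ample, hence nef and big). Your closing remark is also apt: since $X$ is only $\mathbf{Q}$-factorial with log terminal singularities, the version of the base-point-free theorem actually invoked must be the Kawamata--Shokurov statement for klt pairs as in \cite{KMM}, not the manifold version as literally stated in the paper.
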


\begin{theorem}\label{liftflow} Let $\varphi_0 \in PSH_p(X, \omega_0, \Omega)$ for some $p>1$. Then the Monge-Amp\`ere flow on $\tilde{X}\setminus {\tilde{E}}$

\begin{equation}\label{krflowlimit}
\left\{
\begin{array}{rcl}
&&{ \displaystyle \frac{\partial \tilde{\varphi} (t, \cdot) } {\partial t} = \log
 \frac{ ( \pi^* \omega_t + \ddbar \tilde{\varphi} ) ^n } { \pi^* \Omega} }\\
&&\\
&& \tilde{\varphi}(0,\cdot)=\pi^*\varphi_0 
\end{array} \right.
\end{equation}
has a unique solution $\tilde{\varphi} \in
 C^{\infty} ((0, T_0)\times \tilde{X} \setminus {\tilde{E}} ) \cap C^0( [0, T_0) \times \tilde{X} \setminus \tilde{E}) $ such that for all $t\in [0, T_0)$, $\varphi (t, \cdot) \in L^{\infty} (\tilde{X}) \cap PSH(\tilde{X}, \pi^* \omega_t )$. Furthermore, $\tilde{\varphi}$ is constant along each connected fibre of $\pi$, hence $\tilde{\varphi}$ descends to a unique solution $\varphi \in C^{\infty}((0, T_0) \times X_{reg}) \cap C^0([0, T_0)\times X_{reg})$ of the Monge-Amp\`ere flow (\ref{krflow2.1}) such that  for each $t\in [0, T_0)$, $\varphi \in PSH(X, \omega_t) \cap L^\infty(X)$.

\end{theorem}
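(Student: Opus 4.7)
The plan is to reduce Theorem \ref{liftflow} to the degenerate flow on the smooth model $\tilde X$ via Theorem \ref{allinone}, and then to descend the resulting solution to $X$ by a fibrewise maximum principle on $\pi$. On $\tilde X$ I take $L_1 = \pi^*H$, $L_2 = \pi^*K_X$, the smooth semi-positive form $\pi^*\omega_0 \in [L_1]$, and the smooth closed form $\pi^*\chi \in [L_2]$; since $H + tK_X$ is ample on $X$ for $t \in [0, T_0)$, its pullback $L_1 + tL_2$ is semi-ample on $\tilde X$ on the same interval, so the parameter $T_0$ for the lifted flow is at least the original $T_0$. Condition A is met by the $\mathbf{Q}$-factoriality of $X$ through Proposition \ref{qfac}, which supplies the effective divisor $\tilde E$ supported on $Exc(\pi)$ and a hermitian metric $h_{\tilde E}$ with $\pi^*\omega_0 - \epsilon\, Ric(h_{\tilde E}) > 0$ for small $\epsilon > 0$. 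Condition B follows from the log terminal hypothesis: writing $K_{\tilde X} = \pi^*K_X + \sum a_i E_i$ with $a_i > -1$ and splitting into effective pieces $E = \sum_{a_i \geq 0} a_i E_i$ and $F = \sum_{a_i < 0}(-a_i) E_i$ gives $\pi^*\Omega = |S_E|^2_{h_E}|S_F|^{-2}_{h_F}\Theta$ with all coefficients $b_j$ of $F$ in $(0,1)$, so $\pi^*\Omega/\Theta \in L^{p_0}(\tilde X, \Theta)$ for some $p_0 > 1$.

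Next, the lifted initial datum $\pi^*\varphi_0$ lies in $PSH_p(\tilde X, \pi^*\omega_0, \pi^*\Omega)$ for the same $p > 1$ as on $X$: plurisubharmonicity and boundedness are immediate, while the identity $(\pi^*\omega_0 + \ddbar\pi^*\varphi_0)^n = \pi^*((\omega_0 + \ddbar\varphi_0)^n)$ holds off the exceptional locus and transfers the $L^p$ bound from $X$ to $\tilde X$ via the change-of-variables formula. Theorem \ref{allinone} then produces a unique $\tilde\varphi \in C^0([0, T_0) \times \tilde X \setminus \tilde E) \cap C^\infty((0, T_0) \times \tilde X \setminus \tilde E)$ with $\tilde\varphi(t, \cdot) \in PSH(\tilde X, \pi^*\omega_t) \cap L^\infty(\tilde X)$ for each $t$, solving the Monge-Amp\`ere flow (\ref{krflowlimit}).

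The principal obstacle is the descent of $\tilde\varphi$ to $X$, which reduces to showing that, for every $t \in [0, T_0)$, $\tilde\varphi(t, \cdot)$ is constant on each connected fibre of $\pi$. Fix $t$ and let $F = \pi^{-1}(y)$ for some $y \in X$; since $X$ is normal and $\pi$ is birational, Zariski's main theorem makes $F$ a compact connected analytic subspace of $\tilde X$. Because $\pi^*\omega_t$ is pulled back from $X$ and $\pi|_F$ is constant, $\pi^*\omega_t|_F \equiv 0$; combined with $\pi^*\omega_t + \ddbar\tilde\varphi(t, \cdot) \geq 0$, the upper semicontinuous representative $\tilde\varphi^*(t, \cdot)|_F$ is a bounded plurisubharmonic function on $F$. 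I would invoke the maximum principle for psh functions on a compact connected analytic space — justified, if needed, by pulling back to a resolution $\rho: \hat F \to F$ where the classical maximum principle on compact connected K\"ahler manifolds applies — to conclude that $\tilde\varphi^*(t, \cdot)|_F$ is constant. Since $\pi$ is an isomorphism on $\tilde X \setminus \tilde E$, where $\tilde\varphi$ is continuous and $\tilde\varphi = \tilde\varphi^*$, this forces $\tilde\varphi = \pi^*\varphi$ for a unique function $\varphi$ on $X$. The regularity $\varphi \in C^0([0, T_0) \times X_{reg}) \cap C^\infty((0, T_0) \times X_{reg})$, the equation (\ref{krflow2.1}) on $(0, T_0) \times X_{reg}$, and the condition $\varphi(t, \cdot) \in PSH(X, \omega_t) \cap L^\infty(X)$ all transfer through the biholomorphism $\pi: \tilde X \setminus \tilde E \to X_{reg}$, and uniqueness of $\varphi$ follows from uniqueness of $\tilde\varphi$ in Theorem \ref{allinone}.
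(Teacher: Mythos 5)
Your proposal is correct and follows essentially the same route as the paper: verify Conditions A and B on the resolution (using $\mathbf{Q}$-factoriality via Proposition \ref{qfac} for $\tilde E$ and the log terminal discrepancies $a_i>-1$ for the pulled-back volume form), check $\pi^*\varphi_0\in PSH_p(\tilde X,\pi^*\omega_0,\pi^*\Omega)$, apply Theorem \ref{allinone}, and descend by noting that $\pi^*\omega_t$ is trivial along the contracted fibres so the bounded potential is constant on each connected fibre. Your fibrewise maximum-principle justification is merely a more detailed write-up of the paper's one-line descent argument.
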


\begin{proof} Since $[\pi^*\omega_0]$ corresponds to  a big and semi-ample divisor on $\tilde{X}$ and $[\pi^* \omega_0] - \epsilon [\pi^* \chi]$ is also big and semi-ample for sufficiently small $\epsilon>0$. The adjunction formula gives $K_{\tilde{X}} =\pi^* K_X + \sum_{i} a_i E_i + \sum_{j} F_j$, where $E_i$ and $F_j$ are irreducible components of the exceptional locus with $a_i\geq 0$ and $b_j>-1$.
Note that $\pi^* \Omega$ vanishes only on each $E_i$ to order $a_i$ and $\pi^*\Omega$ has poles along those $F_i$ with $b_i$.  Then $\pi^*\omega_0$, $\pi^*\chi$ and $\pi^*\Omega$ satisfy {\bf Condition A} and {\bf Condition B}. Furthermore, $\pi^*\varphi_0 \in PSH_p(\tilde{X}, \pi^* \omega_0, \pi^*\Omega)$ and so the assumptions in Theorem \ref{allinone} are satisfied.   The first part of the theorem is then an immediate corollary of Theorem \ref{allinone}.

 The singular set $\tilde{E}$ can be chosen to  be contained in the exceptional locus $Exc(\pi)$ of $\pi$, since $X$ is Q-factorial. Also $\tilde{\varphi}$ must be constant along each component of
$Exc(\pi)$ as $[\pi^*\omega_t]$ is trivial along each component of the exceptional divisors. So it descends to a function in $PSH(X, \omega_t)$ on $X$.

\qed
\end{proof}



\begin{theorem} \label{kunpao} Let $X$ be a $\mathbf{Q}$-factorial projective variety  with log terminal singularities and $H$ be an ample $\mathbf{Q}$-divisor on $X$. Let $$T_0 = \sup\{ t>0 ~|~ H + t K_X ~is~ nef~\}.$$
If $\omega_0 \in \mathcal{K}_{H,p}(X)$ for some $p>1$, then there exists a unique solution $\omega$ of the unnormalized weak K\"ahler-Ricci flow for $t\in [0, T_0)$.

Furthermore, if $\Omega$ is a smooth volume form on $X$, then for any $T\in (0, T_0)$, there exists a constant $C>0$ such that on $[0, T]\times X$,

\begin{equation}
e^{-\frac{C}{t} } \Omega \leq \omega^n \leq e^{\frac{C}{t} }\Omega.
\end{equation}

\end{theorem}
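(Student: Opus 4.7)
The plan is to reduce everything to the nonsingular model. Let $\pi:\tilde{X}\to X$ be the resolution of singularities fixed in Section \ref{4.1}. The key observation is that the data pull back nicely: $\pi^*\omega_0$ is big and semi-ample on $\tilde{X}$ (since it is the pullback of an ample class on a projective embedding of $X$), $\pi^*\chi\in c_1(K_{\tilde X}-\sum a_i E_i)$ where the $a_i>-1$ because the singularities of $X$ are log terminal, and $\pi^*\Omega$ is an $(n,n)$-form on $\tilde X$ with zeros along exceptional divisors of order $a_i\geq 0$ and poles of order $|a_i|<1$ where $a_i<0$. Thus $\pi^*\omega_0, \pi^*\chi, \pi^*\Omega$ satisfy Conditions A and B from Section \ref{3.2}. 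Also $\pi^*\varphi_0\in PSH_p(\tilde X,\pi^*\omega_0,\pi^*\Omega)$ for the same $p>1$, since $\pi$ is an isomorphism off a measure-zero set.

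First, for existence, I would apply Theorem \ref{liftflow} directly to obtain a unique $\tilde\varphi\in C^\infty((0,T_0)\times\tilde{X}\setminus\tilde{E})\cap C^0([0,T_0)\times\tilde{X}\setminus\tilde{E})$ solving the lifted Monge-Amp\`ere flow on $\tilde{X}$ starting from $\pi^*\varphi_0$, with $\tilde\varphi(t,\cdot)\in PSH(\tilde{X},\pi^*\omega_t)\cap L^\infty(\tilde X)$. Since $[\pi^*\omega_t]$ is trivial along every fibre of $\pi$, the function $\tilde\varphi(t,\cdot)$ must be constant on each connected exceptional fibre, so $\tilde\varphi$ descends to a function $\varphi$ on $X$ which is smooth on $X_{reg}$, continuous on $X$, and lies in $PSH(X,\omega_t)\cap L^\infty(X)$ for each $t$. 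Setting $\omega=\omega_0+t\chi+\ddbar\varphi$ gives the required solution of the weak K\"ahler-Ricci flow: the equation $\ddt\omega=-Ric(\omega)$ on $X_{reg}\times(0,T_0)$ is equivalent to the Monge-Amp\`ere flow up to a $t$-dependent constant, and the $C^0$ convergence $\varphi(t,\cdot)\to\varphi_0$ as $t\to 0$ verifies the initial condition in the sense of currents.

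For the volume estimate, the bound (\ref{volestt}) from Theorem \ref{allinone} gives
\begin{equation*}
e^{-C/t}\,\pi^*\Omega \;\leq\; (\pi^*\omega_t+\ddbar\tilde\varphi)^n \;\leq\; e^{C/t}\,\pi^*\Omega
\end{equation*}
on $\tilde{X}\setminus\tilde{E}$. Pushing down by $\pi_*$ (which is an isomorphism away from a proper subvariety) and using that both sides extend uniquely across the exceptional locus (the Monge-Amp\`ere measure of a bounded $\omega$-psh function puts no mass on analytic sets), I obtain the claimed two-sided bound $e^{-C/t}\Omega\leq\omega^n\leq e^{C/t}\Omega$ on $[0,T]\times X$.

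For uniqueness, suppose $\omega'=\hat\omega_t+\ddbar\varphi'$ is another weak solution in the sense of Definition \ref{weakdef}, with smooth exceptional set $D\subset X$. Pulling back by $\pi$ gives a function $\tilde\varphi'=\pi^*\varphi'$ which is bounded, $\pi^*\omega_t$-psh, and smooth on $\tilde X\setminus\pi^{-1}(D\cup X_{sing})$, solving the lifted Monge-Amp\`ere flow there with the correct initial value. The main technical obstacle is to verify that $\tilde\varphi'$ satisfies enough regularity to invoke the uniqueness clause of Theorem \ref{allinone}, whose maximum principle argument needs the competing solution to be smooth off $\tilde E$ and continuous down to $t=0$. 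This follows from the smoothing estimates already built into Section \ref{3} applied to $\tilde\varphi'$: the $C^2$ and volume bounds depend only on the data and the $L^\infty$ norm, so any bounded weak solution on the regular part automatically inherits them, forcing $\tilde\varphi'=\tilde\varphi$ on $\tilde X\setminus\tilde E$, and hence $\varphi'=\varphi$ on $X_{reg}$. Because both functions are bounded and quasi-psh, they agree on all of $X$, completing uniqueness.
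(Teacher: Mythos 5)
Your existence and volume-estimate arguments follow the paper's route exactly (lift to the resolution, apply Theorem \ref{liftflow} and Theorem \ref{allinone}, descend using triviality of $[\pi^*\omega_t]$ on the fibres), and those parts are fine. The gap is in uniqueness. You assert that pulling back a second weak solution $\omega'=\hat\omega_t+\ddbar\varphi'$ gives a function $\tilde\varphi'$ ``solving the lifted Monge-Amp\`ere flow with the correct initial value,'' but that is precisely what is not automatic and is the whole content of the paper's uniqueness proof. Definition \ref{weakdef} only requires $\ddt{\omega'}=-Ric(\omega')$ as an equation on $(1,1)$-currents/forms, which at the level of potentials gives only $\ddbar\bigl(\ddt{\varphi'}-\log\frac{(\omega')^n}{\Omega}\bigr)=0$; hence $F:=\ddt{\varphi'}-\log\frac{(\omega')^n}{\Omega}$ is a priori an arbitrary function on $(0,T_0)\times(\tilde X\setminus\tilde E)$ that is pluriharmonic in the space variable, not zero and not even obviously constant in $z$, because $\tilde X\setminus\tilde E$ (equivalently $X_{reg}$) is noncompact. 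Without knowing $F=F(t)$ you cannot place $\tilde\varphi'$ into the class covered by the uniqueness clause of Theorem \ref{allinone}, so the maximum-principle comparison you invoke never gets off the ground.

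The paper closes exactly this hole: using $\mathbf{Q}$-factoriality to choose $\tilde E\subset Exc(\pi)$, it shows $F$ descends to $X_{reg}$ with $\ddbar F=0$ there; then, since $\mathrm{codim}\,X_{sing}\geq 2$, any two generic points of $X$ are joined by a complete curve avoiding $X_{sing}$, on which the pluriharmonic $F$ must be constant, so by continuity $F$ is constant on $X_{reg}$, i.e.\ $F=F(t)$. Only after absorbing this purely time-dependent function into the potential (as in the argument of Theorem \ref{smoothing1}, which also handles the behaviour of the normalization as $t\to 0$) does one reduce to the Monge-Amp\`ere flow and apply the uniqueness of Theorem \ref{allinone}/\ref{firstthm}. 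By contrast, the issue you single out as the ``main technical obstacle''---regularity of the competing solution---is largely taken care of by Definition \ref{weakdef} itself, which already demands $\varphi'\in C^0([0,T_0)\times X_{reg})\cap C^\infty((0,T_0)\times X_{reg})$ with bounded potentials (and $D=X_{sing}$ since $H$ is ample); no rederivation of the $C^2$ or volume bounds for $\varphi'$ is needed for the comparison argument. So the proposal needs the descent-and-constancy step for $F$ to be a complete proof.
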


\begin{proof} It suffices to prove the uniqueness as the existence and the volume estimate follow easily from Theorem \ref{liftflow} and Theorem \ref{allinone}.

Let $\omega_t = \omega_0 + t\chi$ and then $\omega = \omega_t + \ddbar \varphi$ with $\varphi\in L^{\infty}(\tilde{X})\cap C^{\infty}(\tilde{X}\setminus \tilde{E} )$.
Then the K\"ahler-Ricci flow is equivalent to the following equation

\begin{equation}\label{krflow4}
\left\{
\begin{array}{rcl}
&&{ \displaystyle \ddbar \left( \ddt{}\varphi - \log \frac{\omega^n}{\Omega}  \right)= 0}, ~~~~{\textnormal on }~ \tilde{X}\setminus {\tilde{E}} \\
&&\\
&& \varphi(0,\cdot)=0 .
\end{array} \right.
\end{equation}

Let $\displaystyle{F=  \ddt{}\varphi - \log \frac{\omega^n}{\Omega} }$. Then $F\in C^\infty( \tilde{X}\setminus \tilde{E})$ and $\ddbar F= 0 $ on $\tilde{X}\setminus \tilde{E}$.

Since $X$ is $\mathbf{Q}$-factorial, $\pi^* [\omega_0] - \epsilon [Exc(\pi)]$ is ample for $\epsilon>0$ sufficiently small.  So we can choose ${\tilde{E}}$ to be contained in $ Exc(\pi)$. Hence  $F$ descends to $X_{reg}$ and $\ddbar F = 0$ on $X_{reg}$. For each $t\in (0, T_0)$, $F$ is smooth on $X_{reg}$, therefore $F$ is constant on each curve in $X$ which does not intersect $X_{sing}$.  On the other hand, for any two generic points $z$ and $w$ on $X$, there exists a curve joining $z$ and $w$ without intersecting $X_{sing}$ since $codim(X_{sing})\geq 2$. So $F(z)=F(w)$ as $F$ is constant on $C$. Then $F$ is constant on $X_{reg}$ since $F$ is continuous on $X_{reg}$.

By modifying $\varphi$ by a function only in $t$, $\varphi$ would satisfy the Monge-Amp\`ere flow (\ref{krflowlimit}). The theorem follows from the uniqueness of the solution $\varphi$.

The volume estimate also follows from Theorem \ref{allinone}.

\qed
\end{proof}

We immediately have the following long time existence result generalizing the case  for nonsingular minimal models due to Tian-Zhang \cite{TiZha}.

\begin{corollary}\label{kunpao1} Let $X$ be a minimal model  with log terminal singularities and $H$ be an ample $\mathbf{Q}$-divisor on $X$. Then $$T_0 = \sup\{ t>0 ~|~ H + t K_X ~is~ nef~\}=\infty$$
and the unnormalized weak K\"ahler-Ricci flow starting with $\omega_0 \in \mathcal{K}_{H,p}(X)$ for some $p>1$ exists for  $t\in [0, \infty)$.

\end{corollary}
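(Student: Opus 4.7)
The plan is to verify that the hypothesis of Theorem \ref{kunpao} can be promoted to hold for arbitrarily large $t$, then invoke that theorem directly. The work consists essentially of one geometric observation plus a citation.

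First I would check that $T_0 = \infty$. By assumption $X$ is minimal, which means $K_X$ is nef. Since $H$ is ample, for every $t \geq 0$ the class $[H + tK_X]$ is the sum of an ample class and a nonnegative multiple of a nef class. The nef cone is a closed convex cone containing the ample cone in its interior, and adding a nef class to an ample class yields an ample class; in particular $H + tK_X$ is ample, hence nef, for all $t \geq 0$. Therefore
\[
T_0 \;=\; \sup\{\,t > 0 \,:\, H + tK_X \text{ is nef}\,\} \;=\; \infty.
\]

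Second, once $T_0 = \infty$ is established, the statement is an immediate application of Theorem \ref{kunpao}: under the hypotheses that $X$ is a $\mathbf{Q}$-factorial projective variety with log terminal singularities, $H$ is ample, and $\omega_0 \in \mathcal{K}_{H,p}(X)$ for some $p > 1$, that theorem produces a unique solution of the unnormalized weak K\"ahler-Ricci flow on $[0, T_0)$. Substituting $T_0 = \infty$ yields a solution on $[0, \infty)$, which is what we want.

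There is no serious obstacle here: the only nontrivial content is the convexity of the nef cone, which is standard, and the minimality hypothesis $K_X$ nef is exactly what makes $H + tK_X$ remain nef for all $t \geq 0$. The entire difficulty has already been absorbed into Theorem \ref{kunpao} and the smoothing/existence theory of Sections \ref{3.1}--\ref{3.3}; this corollary is simply a specialization of the singular-time statement to the minimal case.
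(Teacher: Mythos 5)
Your proposal is correct and matches the paper's intent exactly: the paper states this corollary as an immediate consequence of Theorem \ref{kunpao}, with the only content being that nefness of $K_X$ plus ampleness of $H$ keeps $H+tK_X$ nef (indeed ample) for all $t\geq 0$, so $T_0=\infty$. Your nef-cone observation and the citation of Theorem \ref{kunpao} are precisely that argument, so nothing further is needed.
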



\subsection{K\"ahler-Ricci flow on projective varieties with orbiforld or crepant singularities}  \label{4.3}

Given a normal projective variety $X$, very little is known how to construct "good" K\"ahler metrics on $X$ with reasonable curvature conditions. In general, the restriction of Fubini-Study metrics $\omega_{FS}$ on $X$ from ambient projective spaces behaves badly near the singularities of $X$. Even the scalar curvature of $\omega_{FS}$ would have to blow up. In particular, $\omega^n$ is not necessarily equivalent to a smooth volume form on $X$. For example, let $X$ be a surface containing a curve $C$ with self-intersection number $-2$ and $Y$ be the surface obtained from $X$ by contracting $C$. Then $Y$ has an isolated orbifold singularity. Let $\omega$ be a smooth K\"ahler metric and $\Omega$ a smooth volume on $Y $. Then $\frac{\omega^n}{\Omega}=0$ at the orbifold singularity. It tells that one should look at the category of smooth orbifold K\"ahler metrics on $Y$ instead of smooth K\"ahler metrics from ambient spaces.

As it turns out, the K\"ahler-Ricci flow produces K\"ahler currents whose Monge-Amp\`ere mass is equivalent to a smooth volume form on singular varieties by Theorem \ref{kunpao}. It is desirable that the K\"ahler-Ricci flow indeed improves the regularity of the initial data. In the case when $X$ has orbifold or crepant singularities, we show that at least the scalar curvature of the K\"ahler currents are bounded. In particular if $X$ has only orbifold singularities, the K\"ahler-Ricci flow immediately smoothes out the initial K\"ahler current.

\begin{theorem}\label{love}
Let $X$ be a $\mathbf{Q}$-factorial projective normal variety with orbifold singularities.  Let $H$ be an ample $\mathbf{Q}$-divisor on $X$ and $$T_0 = \sup\{ t>0 ~|~ H + t K_X ~is~ nef~\}.$$
If $\omega_0 \in \mathcal{K}_{H,p}(X)$ for some $p>1$, then there exists a unique solution $\omega$ of the unnormalized weak K\"ahler-Ricci flow for $t\in [0, T_0)$.

Furthermore, $\omega(t, \cdot)$ is a smooth orbifold K\"ahler-metric on $X$ for all $t>0$ and so the weak K\"aher-Ricci flow becomes the smooth K\"ahler-Ricci flow on $X$ immediately when $t>0$.

\end{theorem}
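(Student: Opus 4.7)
Existence and uniqueness of the weak K\"ahler-Ricci flow $\omega(t,\cdot)$ on $[0,T_0)\times X$, along with the two-sided volume bound $e^{-C/t}\Omega\leq\omega^n\leq e^{C/t}\Omega$ for every $T\in(0,T_0)$, are already supplied by Theorem \ref{kunpao}. The genuinely new assertion is that for each $t\in(0,T_0)$ the current $\omega(t,\cdot)$ is a smooth orbifold K\"ahler metric on $X$. I plan to establish this by running an orbifold-equivariant version of the a priori estimates from Section \ref{3}, exploiting the crucial fact that on a local uniformization cover the reference volume form $\Omega$ pulls back to a genuinely smooth positive volume form — a much more favorable situation than what holds on a resolution.

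\textbf{Reduction to a smooth local cover.} Fix an orbifold point $p\in X_{sing}$ and a local uniformization chart $\pi_p\colon U_p\to V_p=U_p/G_p$, with $U_p$ a smooth polydisc in $\mathbf{C}^n$ and $G_p$ a finite group. By the definition of a smooth orbifold volume form in Section \ref{4.1} (locally $\Omega=f_U(\alpha\wedge\overline{\alpha})^{1/m}$ for a generator $\alpha$ of $mK_X$), the pullback $\pi_p^*\Omega$ is a smooth positive volume form on $U_p$, and $\pi_p^*\omega_0$ is a smooth K\"ahler form. The volume bound from Theorem \ref{kunpao} pulls back to
\begin{equation*}
e^{-C/t}\,\pi_p^*\Omega\ \leq\ (\pi_p^*\omega)^n\ \leq\ e^{C/t}\,\pi_p^*\Omega
\end{equation*}
on $[0,T]\times U_p$, with $\pi_p^*\varphi$ uniformly bounded in $L^\infty$. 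In contrast with the resolution $\tilde X$, there is no singular factor $|S_{\tilde E}|^{-2\alpha}_{h_{\tilde E}}$ to absorb.

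\textbf{Main argument.} I would rerun the approximation construction of Section \ref{3.3}, averaging the smooth approximations $F_s$ of the initial $L^p$-density and the perturbations $\omega_{t,s}$, $\Omega_{w,r}$ over each $G_p$ so that every approximating solution $\omega_j$ descends to a smooth orbifold K\"ahler-Ricci flow on the compact orbifold $X$. Taking as test function the orbifold-defined $H=t\log\mathrm{tr}_{\omega_0}(\omega_j)-A\varphi_j$ and computing $(\partial_t-\Delta_j)H$ as in Lemma \ref{c2smoothing}, the error term $\mathrm{Ric}(\pi_p^*\Omega)$ is now uniformly bounded across the orbifold locus — precisely because $\pi_p^*\Omega$ is smooth — so the maximum principle on the compact orbifold $X$ yields the uniform bound
\begin{equation*}
\mathrm{tr}_{\pi_p^*\omega_0}(\pi_p^*\omega_j)\ \leq\ e^{C/t}
\end{equation*}
on all of $U_p$. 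On compact subsets of $(0,T_0]\times U_p$ the parabolic complex Monge-Amp\`ere equation is then uniformly parabolic with smooth coefficients, so Evans-Krylov supplies $C^{2,\alpha}$ bounds and parabolic Schauder theory bootstraps to uniform $C^k$ bounds for every $k$. Passing to the limit in the approximating parameters and invoking uniqueness from Theorem \ref{kunpao}, we conclude that $\pi_p^*\omega(t,\cdot)$ is a smooth $G_p$-invariant K\"ahler form on $U_p$ for every $t\in(0,T_0)$. Since $p$ was arbitrary and $\omega$ is already smooth on $X_{reg}$, this gives the claimed orbifold smoothness.

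\textbf{Main difficulty.} The hard step is the equivariant $C^2$ bound. In Section \ref{3.2}, the divisor $\tilde E$ forced the trace estimate to carry a singular factor $|S_{\tilde E}|^{-2\alpha}$, which is incompatible with smoothness across the orbifold locus. The key point enabling the orbifold case — and precisely what distinguishes it from the general log terminal setting of Theorem \ref{kunpao} — is that $\pi_p^*\Omega$ is smooth, so $\mathrm{Ric}(\pi_p^*\Omega)$ is bounded rather than singular, allowing the maximum principle on the compact orbifold $X$ (applied to $G_p$-invariant test functions) to produce an honest global $C^2$ estimate. Once this is in place, the rest of the argument is standard parabolic regularity theory.
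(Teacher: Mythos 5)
Your overall route is the paper's own: the paper proves this theorem by a one-line reduction, taking existence, uniqueness and the volume bound from Theorem \ref{kunpao} and obtaining orbifold smoothness by repeating the argument of Theorem \ref{smoothing1} in the orbifold category, where on local uniformizing charts the data are genuinely smooth; the details are explicitly left as an exercise. Your proposal fills in that exercise in essentially the intended way (equivariant approximation of the initial density, maximum principle on the compact orbifold, Evans--Krylov plus Schauder bootstrap, then uniqueness from Theorem \ref{kunpao}).

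One step as written would fail, though it is repairable. You assert that $\pi_p^*\omega_0$ is a smooth K\"ahler form on the uniformizing chart $U_p$ and use it as the reference both in the trace estimate $\mathrm{tr}_{\pi_p^*\omega_0}(\pi_p^*\omega_j)\leq e^{C/t}$ and for uniform parabolicity. In general the pullback of the smooth reference form in $[H]$ (a restriction of a Fubini--Study metric) degenerates at the orbifold points: for an $A_1$ point the uniformizing map $(z_1,z_2)\mapsto(z_1^2,z_1z_2,z_2^2)$ has vanishing differential at the origin, so the pulled-back form has zero Hessian there; the paper itself stresses in the Introduction and at the start of Section \ref{4.3} that $\omega_{FS}$ is in general not a smooth orbifold K\"ahler metric. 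A trace taken with respect to a degenerate form gives no upper bound on the evolving metric at exactly the points where orbifold smoothness has to be proved, and the Schwarz-lemma computation of Lemma \ref{c2smoothing} also requires a curvature bound for the reference, which a degenerate pullback does not supply. The fix is routine: choose a smooth orbifold K\"ahler metric $\vartheta_{orb}\in[H]$ (for instance $\omega_0+\ddbar\rho$ with $\rho$ built from $G_p$-averaged local potentials $\epsilon\,\eta|z|^2$ glued to the K\"ahler locus), rewrite the initial current as $\vartheta_{orb}+\ddbar\tilde\varphi_0$ with $\tilde\varphi_0\in PSH_p(X,\vartheta_{orb},\Omega)$, and run the estimates of Section \ref{3.1} against $\vartheta_{orb}$. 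With that substitution your argument goes through and coincides with the proof the paper has in mind.
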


\begin{proof} $X$ is automatically log terminal under the assumptions in the theorem if it only admits orbifold singularities. The theorem can be proved by the same argument as in Theorem \ref{smoothing1}. We leave the details for the readers as an exercise.

\qed
\end{proof}

Theorem \ref{love} can also be applied to the K\"ahler-Ricci flow on projective manifolds whose initial class is not K\"ahler.

\begin{theorem}\label{smoothman}
Let $X$ be a smooth projective variety. Let $H$ be a big and semi-ample $\mathbf{Q}$-divisor on $X$. Suppose that $$T_0 = \sup\{ t>0 ~|~ H + t K_X ~\textnormal{is~ semi-ample~}\}>0.$$
If $\omega_0 \in \mathcal{K}_{H,p}(X)$ for some $p>1$, then there exists a unique solution $\omega$ of the unnormalized weak K\"ahler-Ricci flow for $t\in [0, T_0)$.

Furthermore, for any $t\in(0, T_0)$, there exists $C(t)>0$ such that the scalar curvature $S(\omega(t, \cdot))$ is bounded by $C(t)$

\begin{equation}
||S(\omega(t, \cdot))||_{L^\infty(X)}\leq C(t).
\end{equation}

\end{theorem}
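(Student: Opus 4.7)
The plan is to separate the proof into existence/uniqueness and the scalar curvature bound, treating each independently and then combining.

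For existence and uniqueness, since $X$ is already smooth the resolution $\pi$ from Section \ref{4.1} is the identity, and I would apply Theorem \ref{allinone} directly on $X$. Set $L_1 = [H]$ and $L_2 = [K_X]$, and choose a smooth reference $\omega_0 \in c_1(L_1)$ vanishing to finite order along $\tilde E := \mathrm{Exc}(|mH|)$ for large $m$, which is possible since $H$ is big and semi-ample. Condition A is then satisfied because $T_0 > 0$ ensures $L_1 + \epsilon L_2$ remains semi-ample for small $\epsilon > 0$, and Condition B holds trivially with $E = F = 0$ by taking $\Omega$ smooth and strictly positive on $X$. Given initial data $\omega_0 + \ddbar\varphi_0$ with $\varphi_0 \in PSH_p(X, \omega_0, \Omega)$, Theorem \ref{allinone} produces a unique solution $\varphi \in C^0([0, T_0) \times (X \setminus \tilde E)) \cap C^\infty((0, T_0) \times (X \setminus \tilde E))$ of the Monge--Amp\`ere flow with $\varphi(0,\cdot) = \varphi_0$. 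The $\ddbar$-lemma argument used in the proof of Theorem \ref{kunpao} then promotes $\omega = \omega_t + \ddbar \varphi$ to the unique weak K\"ahler--Ricci flow on $X$.

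For the scalar curvature bound I would work with the smooth approximating family $\omega_{s,w,r}^{(\delta)}$ constructed in Section \ref{3.3}, each a genuine K\"ahler metric on $X$ evolving by a smooth K\"ahler--Ricci flow, and establish bounds on the scalar curvatures $S_{s,w,r}^{(\delta)}$ that are uniform in $s,w,r,\delta$ on compact subsets of $X \setminus \tilde E$; these will descend as the parameters go to zero. From the standard identity $S = -\ddot\varphi$ and the evolution
\begin{equation*}
(\partial_t - \Delta) S = |Ric(\omega)|^2 \geq \tfrac{1}{n}S^2,
\end{equation*}
comparison with $-n/(t+\epsilon)$ and $\epsilon \to 0$ gives the universal lower bound $S_{s,w,r}^{(\delta)} \geq -n/t$; this bound is automatic and passes to the limit.

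The upper bound is where the real work lies. Differentiating once more yields $(\partial_t - \Delta)\ddot\varphi = -|Ric(\omega)|^2 \leq 0$, which in isolation only gives monotonicity of $\max_X \ddot\varphi$. To convert this into a uniform bound at positive times in the degenerate setting, I would apply the maximum principle to an auxiliary function of the form
\begin{equation*}
H \;=\; t^2\, \ddot\varphi_{s,w,r}^{(\delta)} \;+\; A t\, \dot\varphi_{s,w,r}^{(\delta)} \;-\; A^2\, \varphi_{s,w,r}^{(\delta)} \;+\; B \log |S_{\tilde E}|^2_{h_{\tilde E}},
\end{equation*}
combining the identities $(\partial_t - \Delta)(t^2 \ddot\varphi) = 2t\ddot\varphi - t^2 |Ric|^2$, $(\partial_t - \Delta)(t\dot\varphi) = \dot\varphi + t\cdot tr_\omega(\chi)$, and $(\partial_t - \Delta) \varphi = \dot\varphi - n + tr_\omega(\omega_t)$ with the weighted estimates already available in Section \ref{3.3}: the $L^\infty$-bound on $\varphi_{s,w,r}^{(\delta)}$, the weighted pointwise estimate $|\dot\varphi_{s,w,r}^{(\delta)}| \leq C + \log |S_{\tilde E}|^{-2\alpha}$ from Lemma \ref{timebound1}, and the weighted metric bound $tr_\vartheta \omega_{s,w,r}^{(\delta)} \leq C |S_{\tilde E}|^{-2\alpha}$ from Lemma \ref{c2estimate1}. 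For $A$ large enough and $B$ chosen so that the singular barrier forces the maximum of $H$ into $X \setminus \tilde E$, the maximum principle bounds $t^2 \ddot\varphi_{s,w,r}^{(\delta)}$ uniformly on compact subsets of $X \setminus \tilde E$; letting $s, w, r, \delta \to 0$ gives the stated bound on $S(\omega(t,\cdot))$.

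The principal obstacle is precisely this upper bound: since $[H + t K_X]$ is only big and semi-ample rather than K\"ahler, Perelman's classical scalar curvature estimate for the K\"ahler--Ricci flow does not apply directly, and the approximating metrics $\omega_{s,w,r}^{(\delta)}$ genuinely degenerate along $\tilde E$. The delicate step is to calibrate the coefficients $A, B$ in the auxiliary function so that the singular barrier $B \log |S_{\tilde E}|^2_{h_{\tilde E}}$ dominates the bad terms generated by contracting $\chi$ against the degenerating metric, while leaving the favorable term $-t^2|Ric|^2$ free to absorb $\ddot\varphi$ at an interior maximum.
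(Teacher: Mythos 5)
Your reduction of existence and uniqueness to Theorem \ref{allinone} (with $L_1=H$, $L_2=K_X$, $E=F=0$) and your lower bound $S\geq -n/t$ via the evolution of $tS(\tilde\omega_s)$ both match the paper. The gap is in the upper bound for $S$, and it is twofold. First, a directional problem: since $S=-\ddot\varphi$, applying the maximum principle to $H=t^2\ddot\varphi+At\dot\varphi-A^2\varphi+B\log|S_{\tilde E}|^2_{h_{\tilde E}}$ can only bound $H$, hence $t^2\ddot\varphi$, from \emph{above}, which is once again a lower bound on $S$. An upper bound on $S$ requires bounding $\ddot\varphi$ from \emph{below}, and at a minimum of any such auxiliary function the term $-t^2|Ric|^2$ in $(\partial_t-\Delta)(t^2\ddot\varphi)=2t\ddot\varphi-t^2|Ric|^2$ enters with the unfavorable sign and is not controlled from above by $S^2$ (one only has $|Ric|^2\geq S^2/n$), so a direct maximum-principle argument on $\ddot\varphi$ cannot close; this is exactly why an upper scalar curvature bound along the flow is not a soft consequence of the evolution equation for $S$. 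Second, even granting some estimate, your barrier scheme only yields bounds that degenerate like $-\log|S_{\tilde E}|^2_{h_{\tilde E}}$ near $\tilde E$ (you claim bounds only on compact subsets of $X\setminus\tilde E$), which falls short of the asserted $\|S(\omega(t,\cdot))\|_{L^\infty(X)}\leq C(t)$.

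The paper's proof of the upper bound runs differently: it controls $-\Delta_s\dot\varphi_s$ via Phong--Sturm-type quantities $\mathcal{H}=\frac{|\nabla_s\dot\varphi_s|^2}{A-\dot\varphi_s}+\mathrm{tr}_{\tilde\omega_s}(\omega_0')+\mathrm{tr}_{\tilde\omega_s}(\omega_0'+T\chi)$ and $\mathcal{K}=-\frac{\Delta_s\dot\varphi_s}{A-\dot\varphi_s}+B\mathcal{H}$, applying the maximum principle to $(t-t_0)\mathcal{H}$ and $(t-t_0)\mathcal{K}$. The structural input you do not exploit is that $H$ and $H+TK_X$ are semi-ample, so $\omega_0'$ and $\omega_0'+T\chi$ may be taken to be pullbacks of Fubini--Study metrics under the morphisms defined by $|mH|$ and $|m'(H+TK_X)|$; the parabolic Schwarz lemma (Claim 2) then bounds $\mathrm{tr}_{\tilde\omega_s}(\omega_0')$ and $\mathrm{tr}_{\tilde\omega_s}(\omega_0'+T\chi)$ --- hence $\mathrm{tr}_{\tilde\omega_s}(\chi)$ and its gradient terms --- uniformly on all of $[t_0,T]\times X$ and in $s$, with no $\log|S_{\tilde E}|^2$ barrier, while Claim 1 supplies the quadratic differential inequalities needed to absorb the gradient terms in the maximum principle for $\mathcal{H}$ and $\mathcal{K}$. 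Combining with $S(\tilde\omega_s)=-\Delta_s\dot\varphi_s-\mathrm{tr}_{\tilde\omega_s}(\chi)$ and letting $s\rightarrow 0$ gives the global $L^\infty(X)$ bound. To repair your argument you would need to replace the direct maximum principle on $\ddot\varphi$ by such second/third-order estimates on $\dot\varphi$, and replace the $\log|S_{\tilde E}|^2$ barrier by the global trace bounds coming from the Fubini--Study structure.
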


\begin{proof} Let $\Omega$ be a smooth volume form on $X$ and $\chi = \ddbar\log \Omega$. Let $\vartheta$ be a K\"ahler form on $X$. Suppose that $\omega_0 = \omega_0' + \ddbar\phi$, where $\omega'_0\in [H]$ is a smooth K\"ahler current and $\phi\in PSH_p(X, \omega'_0)$ for some $p>1$. We consider the special case of the Monge-Ampere flow (\ref{degmaflow5}) by letting $\delta=w=r=0$ and $\omega'_{t,s} = \omega_0' + s \vartheta + t\chi $.

\begin{equation}\label{alsoricci}
\ddt{\varphi_s} = \log \frac{(\omega'_{t,s}+\ddbar\varphi_s)^n}{\Omega}, ~~~~~~~~~~\varphi_s |_{t=0}= \varphi_{(0,s)}.
\end{equation}

In fact, equation (\ref{alsoricci}) is equivalent to the unnormalized K\"ahler-Ricci flow on $X$ starting with $\omega_0$. Furthermore, $\varphi_s(t, \cdot)$ is smooth for $t \in (0, T_0)$. Let $\tilde{\omega}_s(t, \cdot) = \omega_{t,s}' + \ddbar \varphi_s$. Then for $t>0$, $$\ddt{\tilde{\omega}_s} = -Ric(\tilde{\omega}_s)$$ and so $$ \ddt{}S(\tilde{\omega}_s) = \Delta_s S(\tilde{\omega}_s) + |Ric(\tilde{\omega}_s)|^2,$$
where $\Delta_s$ is the Laplace operator associated to $\tilde{\omega}_s$.

Since $$(\ddt{}-\Delta_s) t S(\tilde{\omega}_s) = S(\tilde{\omega}_s) + t|Ric(\tilde{\omega}_s)|^2 \geq S(\tilde{\omega}_s) + \frac{t}{n} S(\tilde{\omega}_s)^2 .$$
The maximum principle immediately implies that $t S(\tilde{\omega}_s)$ is bounded from below on $[0, T_0) \times X$ uniformly in $s \in (0, 1]$. By letting $s\rightarrow 0$, $tS(\omega(t, \cdot))$ is uniformly bounded from below on $[0, T_0)\times X$.

Now we will prove the upper bound for $S(\omega)$.

\begin{claim}  For any $0 < t_0< T<  T_0$, there exist $A$ and $B>0$ such that for all $s\in (0, 1]$ and  on $[t_0, T]\times X$,

\begin{equation}
(\ddt{} - \Delta_s) tr_{\tilde{\omega}_s}(\omega'_{0}) \leq A (tr_{\tilde{\omega}_s}(\omega'_{0}))^2 - B |\nabla_s tr_{\tilde{\omega}_s}(\omega'_{0})|^2,
\end{equation}
\begin{equation}
(\ddt{} - \Delta_s) tr_{\tilde{\omega}_s}(\omega'_{0}+T\chi) \leq A (tr_{\tilde{\omega}_s}(\omega'_{0}+T\chi))^2 - B |\nabla_s tr_{\tilde{\omega}_s}(\omega'_{0}+T\chi)|^2,
\end{equation}
where $\nabla_s$ is the gradient operator associated to $\tilde{\omega}_s$.

\end{claim}

\begin{claim} For any $0< t_0 <T< T_0$, There exists $C>0$ such that for all $s\in (0, 1]$ and on $[t_0, T)\times X$,

\begin{equation}
0\leq tr_{\tilde{\omega}_s}(\omega'_{t,0}) <C.
\end{equation}

\end{claim}

In particular, there exists $C>0$ such that \begin{equation} 0\leq tr_{\tilde{\omega}_s}(\omega'_0) <C, ~~~~-C <tr_{\tilde{\omega}_s}( \chi) <C.\end{equation}

Straightforward calculations show that

\begin{eqnarray}
&&(\ddt{} - \Delta_s) (|\nabla_s \ddt{\varphi_s}|^2)  \nonumber \\
&=& - |\nabla_s\nabla_s \ddt{\varphi_s}|^2 - |\overline{\nabla}_s \nabla_s \ddt{\varphi_s}|^2 + (\nabla_s tr_{\tilde{\omega}_s}(\chi) \cdot \overline{\nabla}_s \ddt{\varphi_s} + \nabla_s \ddt{\varphi_s} \cdot \overline{\nabla}_s tr_{\tilde{\omega}_s}(\chi))
\end{eqnarray}
and
\begin{equation}
(\ddt{} - \Delta_s) \Delta_s \ddt{\varphi_s} = - |\overline{\nabla}_s \nabla_s \ddt{\varphi_s}|^2 - g^{i\bar{l}}g^{k\bar{j}}\chi_{i\bar{j}} (\ddt{\varphi_s})_{k\bar{l}}
\end{equation}

Notice that $\ddt{\varphi_s}$ is uniformly bounded on $[t_0, T]$ for any $0< t_0 < T < T_0$ and $s\in (0,1]$.

Then similar argument in the proof of Theorem 5.1 can be applied. Namely, one can apply the maximum principle for $(t-t_0)  \mathcal{H}$ and $(t_0)  \mathcal{K}$, where

$$\mathcal{H} = \frac{|\nabla_s \ddt{\varphi_s}|^2} {A - \ddt{\varphi_s}} + tr_{\tilde{\omega}_s}(\omega'_0)+ tr_{\tilde{\omega}_s}(\omega'_0+ T\chi)$$

and $$ \mathcal{K} = - \frac{\Delta_s \ddt{\varphi_s}}{A-u} + B  \mathcal{H}.$$

If we choose $A>0$ sufficiently large,
\begin{eqnarray*}
&&(\ddt{} - \Delta_s)( t-t_0) \mathcal{H} \\
&\leq& - \epsilon(t-t_0) \frac{| \ddt{\varphi_s} |^4}{(A- \ddt{\varphi_s})^3} - \frac{2(1-\epsilon)(t-t_0)}{A-u} Re (\nabla_s  \mathcal{ H} \cdot \overline{\nabla}_s \ddt{\varphi_s}) +  C_1\mathcal{H} + C_1\\
&\leq& -\epsilon C_2 (t-t_0)  \mathcal{H}^2 +  C_1 \mathcal{H} -\frac{2(1-\epsilon)(t-t_0)}{A-u} Re (\nabla_s  \mathcal{H} \cdot \overline{\nabla}_s \ddt{\varphi_s}) +C_3.
\end{eqnarray*}
Hence $(t-t_0) \mathcal{H}$ is uniformly bounded on $[t_0, T]\times X$ for any $s\in (0,1]$.

If we choose $A$ and $B>0$ sufficiently large,
\begin{eqnarray*}
&&(\ddt{} - \Delta_s)( t-t_0)  \mathcal{K} \\
&\leq& - C_4(t-t_0) \frac{| \nabla_s\overline{\nabla}_s\ddt{\varphi_s} |^2}{A- \ddt{\varphi_s}} - \frac{2(t-t_0)}{A-u} Re (\nabla_s  \mathcal{K} \cdot \overline{\nabla}_s \ddt{\varphi_s}) +  C_1\mathcal{K} + C_1\\
&\leq& - C5 (t-t_0)  \mathcal{K}^2 +  C_1 \mathcal{K} - \frac{2(t-t_0)}{A-u} Re (\nabla_s  \mathcal{K} \cdot \overline{\nabla}_s \ddt{\varphi_s}) +C_6 .
\end{eqnarray*}
Hence $(t-t_0)K$ is uniformly bounded on $[t_0, T]\times X$ for any $s\in (0,1]$.  Here we make use of Claim 1 that $$T~ tr_{\tilde{\omega}_s}(\chi) = tr_{\tilde{\omega}_s} (\omega'_0 + T\chi) - tr_{\tilde{\omega}_s}(\omega'_0)$$ is uniformly bounded on $[t_0, T]\times X$ uniformly for $s\in (0, 1]$. Also the term $$T^2 |\nabla_s tr_{\tilde{\omega}_s}(\chi)|^2 \leq |\nabla_s tr_{\tilde{\omega}_s}(\omega'_0 + T\chi)|^2 + |\nabla_s tr_{\tilde{\omega}_s}(\omega'_0)|^2$$ can be controlled by $(\ddt{} - \Delta_s) ( tr_{\tilde{\omega}_s}(\omega'_0)+ tr_{\tilde{\omega}_s}(\omega'_0+ T\chi))$.

Therefore there exists $C>0$ such that on $[t_0, T]\times X$
$$ S(\tilde{\omega}_s ) = - \Delta_s \ddt{\varphi_s} - tr_{\tilde{\omega}_s}(\chi) \leq C $$ uniformly in $s\in (0,1]$.

The theorem is then proved by letting $s\rightarrow 0$.

\qed
\end{proof}

Now we shall prove the two claims in the proof of Theorem \ref{smoothman}.

\medskip

\noindent{\bf Proof of Claim 1} Without loss of generality, we let $\pi: X \rightarrow \mathbf{CP}^{N_m}$ be the morphism induced by $mH$ and $m \omega'_0$ is the pullback of the Fubini-Study metric on $\mathbf{CP}^{N_m}$ if $m$ is sufficiently large. Notice that for $t\in [t_0, T]$, $H+ TK_X$ is still semi-ample and big, so $m'(H+TK)$ induces a morphism $\pi': X \rightarrow \mathbf{CP}^{N_{m'}}$. We can again assume that $\omega_0 + T\chi$ is the pullback of the Fubini-Study metric on $\mathbf{CP}^{N_{m'}}$.  The curvature of $\omega'_0$ on $\mathbf{CP}^{N_m}$ and the curvature of $\omega'_0 + T\chi$ on $\mathbf{CP}^{N_{m'}}$ are both bounded. Then it becomes a straightforward calculation from \cite{SoT1}.
\qed

\medskip

\noindent{\bf Proof of Claim 2} This can be proved by the parabolic Schwarz lemma from \cite{SoT1}. We apply the maximum principle for $t\log tr_{\tilde{\omega}_s}(\omega'_0) -A \varphi_s$ and $t\log tr_{\tilde{\omega}_s}(\omega'_0 + T\chi) -A \varphi_s$ for sufficiently large $A$ so that both terms are uniformly bounded on $[0,T]\times X$ uniformly for $s\in (0,1]$. The claim then easily follows.

\qed

\medskip

Theorem \ref{smoothman} shows that the K\"ahler-Ricci flow can be defined even if the initial smooth K\"ahler current is in the semi-ample cone of divisors.

The following theorem is an immediate corollary of Theorem \ref{smoothman}.
\begin{theorem}
Let $X$ be a $\mathbf{Q}$-factorial projective variety  with crepant singularities.  Let $H$ be an ample $\mathbf{Q}$-divisor on $X$ and $$T_0 = \sup\{ t>0 ~|~ H + t K_X ~is~ nef~\}.$$
If $\omega_0 \in \mathcal{K}_{H,p}(X)$ for some $p>1$, then there exists a unique solution $\omega$ of the unnormalized weak K\"ahler-Ricci flow for $t\in [0, T_0)$.

Furthermore, for any $t\in(0, T_0)$, there exists $C(t)>0$ such that the scalar curvature $S(\omega(t, \cdot))$ is bounded by $C(t)$

\begin{equation}
||S(\omega(t, \cdot))||_{L^\infty(X)}\leq C(t).
\end{equation}

\end{theorem}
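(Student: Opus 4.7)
The plan is to reduce directly to Theorem \ref{smoothman} via a crepant resolution. Existence and uniqueness of the weak flow $\omega$ on $X$ are already supplied by Theorem \ref{kunpao}, since crepant singularities are in particular log terminal, so I only need to produce the scalar curvature estimate, and for that I lift the flow to the nonsingular model.

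Let $\pi:\tilde{X}\to X$ be a crepant resolution, so that $K_{\tilde{X}}=\pi^*K_X$. Set $\tilde{H}=\pi^*H$; this is big and semi-ample on $\tilde{X}$, and because $\pi$ is crepant one has $\tilde{H}+tK_{\tilde{X}}=\pi^*(H+tK_X)$, so nef-ness (and hence semi-ampleness via Theorem \ref{basepointfree}) passes between base and resolution, giving the same critical time $T_0$. If $\Omega$ is a smooth volume form on $X$, then crepancy guarantees that $\pi^*\Omega$ extends to a smooth, nowhere-vanishing volume form on $\tilde{X}$; consequently $\pi^*\omega_0\in\mathcal{K}_{\tilde{H},p}(\tilde{X})$, since the Monge-Amp\`ere density $(\pi^*\omega_0)^n/\pi^*\Omega$ is just the pullback of $\omega_0^n/\Omega$ and $L^p$ integrability is preserved. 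Then Theorem \ref{smoothman} applied on $\tilde{X}$ to the initial data $\pi^*\omega_0$ and class $\tilde{H}$ yields a unique weak K\"ahler-Ricci flow $\tilde{\omega}$ on $[0,T_0)\times\tilde{X}$ with $||S(\tilde{\omega}(t,\cdot))||_{L^\infty(\tilde{X})}\leq \tilde{C}(t)$ for every $t\in(0,T_0)$.

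It remains to descend. As in Theorem \ref{liftflow}, the potential of $\tilde{\omega}$ is constant along the connected fibres of $\pi$ (the pulled-back K\"ahler class is trivial along $Exc(\pi)$), so $\tilde{\omega}=\pi^*\omega$ for the unique weak flow $\omega$ on $X$ furnished by Theorem \ref{kunpao}. Since $\pi$ restricts to a biholomorphism from $\pi^{-1}(X_{reg})$ onto $X_{reg}$, and scalar curvature is a pointwise local biholomorphic invariant, the estimate on $\tilde{X}$ transfers verbatim to $||S(\omega(t,\cdot))||_{L^\infty(X)}\leq \tilde{C}(t)$. The only substantive point in the whole argument is the equivalence of $\pi^*\Omega$ with a smooth volume form on $\tilde{X}$ --- this is precisely the content of crepancy ($a_i=0$ for every exceptional divisor) and is the unique place where this hypothesis, rather than mere log-terminality, is used. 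Everything else is formal.
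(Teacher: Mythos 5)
Your proposal is correct and follows the same route as the paper: the paper's proof is exactly the observation that a crepant resolution pulls the smooth volume form $\Omega$ back to a smooth volume form on $\tilde{X}$, after which Theorem \ref{smoothman} applies (with existence/uniqueness on $X$ coming from Theorem \ref{kunpao}). You have merely filled in the routine details of matching $T_0$, the $L^p$ condition, and the descent over $X_{reg}$, which the paper leaves implicit.
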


\begin{proof} Let $\Omega$ be a smooth volume form on $X$. Let $\pi: \tilde{X} \rightarrow X$ be a crepant resolution of $X$. Then $\pi^* \Omega$ is again a smooth volume form on $\tilde{X}$. Then we can apply Theorem \ref{smoothman}.

We remark that it might be interesting to remove the dependence on the $t$ for the scalar curvature bound if $T_0=\infty$.

\qed
\end{proof}

It is shown in \cite{Z2} that the scalar curvature is uniformly bounded along the normalized K\"ahler-Ricci flow on smooth manifolds of general type. On the other hand, the scalar will in general blow up if the K\"ahler-Ricci flow develops finite time singularities (see \cite{Z3}).


\section{K\"ahler-Ricci flow with surgery} \label{5}

\subsection{Minimal Model Program with Scaling} \label{5.1}

\begin{definition} Let $X$ be a projective variety and $N_1(X)_{\mathbf{Z}}$ the group of numerically equivalent $1$-cycles (two $1$-cycles are numerically equivalent if they have the same intersection number with every Cartier divisor). Let $N_1(X)_{\mathbf{R}} = N_1(X)_{\mathbf{Z}} \otimes _{\mathbf{Z}} \mathbf{R}$. We denote by $NE(X)$ the set of classes of effective $1$-cycles. $NE(X)$ is convex and we let $\overline{NE}(X)$ be the closure of $NE(X)$ in the Euclidean topology.
\end{definition}

A special case of the Minimal Model Program is proposed in \cite{BCHM} and plays an important role for the termination of flips. We briefly explain the Minimal Model Program with Scaling below.

\begin{definition}[MMP with scaling] \label{mmps}~~

\begin{enumerate}

\item We start with a pair $(X, H)$, where $X$ is  a projective $\mathbf{Q}$-factorial variety $X$ with log terminal singularities  and  $H$ is a big and semi-ample $\mathbf{Q}$-divisor on $X$.

\item Let $\lambda_0 = \inf \{ \lambda>0~|~ \lambda H+K_X~is ~nef \}$ be the nef threshold. If $\lambda_0 =0$, then we stop since $K_X$ is already nef.

\item Otherwise, there is an extremal ray $R$ of the cone  of curves $\overline{NE}(X)$ on which $K_X$ is negative and $\lambda_0 H + K_X $ is zero. So there exists a contraction $\pi: X \rightarrow Y$ of $R$.

\begin{itemize}
\item If $\pi$ is a divisorial contraction, we replace $X$ by $Y$ and let $H_Y$ be the strict transformation of $\lambda_0 H + K_X$ by $\pi$. Then we return to $1.$ with $(Y, H_Y)$.

\item If $\pi$ is a small contraction, we replace $X$ by  its flip $X^+$ and let $H_{X^+}$ be the strict transformation of $\lambda_0 H+ K_X$ by the flip. Then we return to $1.$ with $(X^+, H_{X^+})$.

\item If $\dim Y < \dim X$, then $X$ is a Mori fibre space, i.e., the fibers of $\pi$ are Fano. Then we stop.

\end{itemize}

\end{enumerate}

\end{definition}

The following theorem is proved in \cite{BCHM}.
\begin{theorem} \label{finiteterm} If $X$ is of general type, the Minimal Model Program with Scaling terminates in finite steps.

\end{theorem}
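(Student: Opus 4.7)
The statement is the central termination result from \cite{BCHM}, so rather than reproving it from scratch I would explain the strategy and indicate why the argument reduces to results already in the minimal model theory literature. First I would observe that any single step of the MMP with scaling is of one of two types: a divisorial contraction, which strictly decreases the Picard number $\rho(X)$, or a flip, which preserves $\rho(X)$ but replaces $X$ by $X^+$. Since $\rho(X)$ is a nonnegative integer, only finitely many divisorial contractions can occur in any infinite run of the program. Hence the problem reduces to showing that an infinite sequence of flips cannot occur when $X$ is of general type.

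For the flip step, I would follow the BCHM strategy: run the MMP with scaling for a suitable klt pair $(X, \Delta)$ with $K_X + \Delta$ big, where the initial scaling divisor $H$ is built from the given ample $\mathbf{Q}$-divisor. The key analytic/algebraic inputs are (i) the existence of pl-flips, which follows from finite generation of the restricted algebra along an extremal divisor, and (ii) special termination, which says that flips become disjoint from any fixed divisor after finitely many steps. Combining these with the assumption that $K_X + \Delta$ is big, one proves by induction on dimension that the MMP with scaling terminates. The scaling divisor $H$ provides a monotonically decreasing invariant, namely the nef threshold $\lambda_0$, and the sequence of thresholds $\lambda_0^{(1)} \geq \lambda_0^{(2)} \geq \cdots$ together with boundedness of extremal rays controls the flip sequence.

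The main obstacle, and the genuinely deep part of the argument, is the termination of flips with scaling; this is where one cannot argue purely by a discrete monotone invariant on $X$ itself, and where BCHM's simultaneous induction (existence of minimal models, non-vanishing, finite generation, and termination with scaling, all in dimension $n$ from the analogous statements in dimension $n-1$) is essential. I would therefore structure the proof as an appeal to \cite{BCHM}, Corollary 1.4.2 (or equivalently to the main theorem on existence of minimal models for klt pairs of general type), noting that the MMP with scaling as formulated in Definition \ref{mmps} is precisely the program shown there to terminate after finitely many divisorial contractions and flips, ending either with a minimal model (when $K_X$ is pseudo-effective, which holds since $X$ is of general type) or with a Mori fiber space (which cannot occur for $X$ of general type since then $K_X$ is big and hence pseudo-effective). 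Thus in our setting the program terminates with a minimal model $X_{\min}$ of $X$ in finitely many steps, which is exactly the content of Theorem \ref{finiteterm}.
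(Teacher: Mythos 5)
Your proposal is correct and matches the paper's treatment: the paper offers no independent argument for Theorem \ref{finiteterm}, stating it as a result quoted directly from \cite{BCHM}, which is exactly the citation you reduce to (your sketch of the Picard-number count for divisorial contractions and of the BCHM induction for flips with scaling is accurate but not required by the paper). Nothing further is needed.
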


In general, the contraction of the extremal ray might not be the same as the contraction induced by the semi-ample divisor $\lambda_0 H + K_X$. We define the following special ample divisors so that at each step, there is only one extremal ray contracted by the morphism induced by $\lambda_0 H + K_X$.
\begin{definition}\label{goodinitial} Let $X$ be a projective $\mathbf{Q}$-factorial variety with log terminal singularities. An ample  $\mathbf{Q}$-divisor $H$ on $X$ is called a good initial divisor $H$ if the following conditions are satisified.

\begin{enumerate}
\item
Let $X_0=X$ and $H_0=H$. The MMP with scaling terminates in finite steps by replacing $(X_0, H_0)$ by  $(X_1, H_1)$, ..., $(X_m, H_m)$ until $X_{m+1}$ is a minimal model or $X_m$ is a Mori fibre space.

\item Let $\lambda_i$ be the nef threshold for each pair $(X_i, H_i)$ for $i=1, ..., m$. Then the contraction induced by the semi-ample divisor $\lambda_i H_i + K_{X_i}$ contracts exactly one extremal ray.

\end{enumerate}

\end{definition}

It might be possible that good initial divisors are generic if MMP with scaling holds for any pair $(X,H)$.  It will be seen in the future that good initial divisors simplify the analysis for surgery along the K\"ahler-Ricci flow, though such an assumption is not necessary. We will explain it in detail in Section \ref{5.5}.

Now we relate the K\"ahler-Ricci flow to MMP with scaling. Consider the unnormalized K\"ahler-Ricci flow $ \displaystyle{ \ddt{\omega} = - Ric(\omega)}$ on $X$ with the initial K\"ahler current $\omega_0\in [H]$ for an ample divisor $H$ on $X$.
Let $T_0= \sup\{ t\geq 0 ~|~  H + t K_X >0 \}$. By the rationality theorem \ref{rationality}, $T_0 = \infty$ or $T_0$ is a positive rational number. In particular, if $X$ is a minimal model, then $T_0= \infty$. In fact, $T_0 = \frac{1}{\lambda_0}$ is the inverse of the nef threshold. The following theorem is a natural generalization for the long time existence theorem of Tian and Zhang \cite{TiZha} for the K\"ahler-Ricci flow on smooth minimal models.

\begin{theorem}\label{longtime}
Let $X$ be an $n$-dimensional $\mathbf{Q}$-factorial projective variety with log terminal singularities with nef $K_X$. For any ample $\mathbf{Q}$-divisor $H$ on $X$ and $\omega_0 \in \mathcal{K}_{H, p}(X)$ with $p>1$, there unnormalized weak K\"ahler-Ricci flow starting with $\omega_0$, exists for $t\in [0, \infty)$.

\end{theorem}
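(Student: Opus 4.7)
The plan is to reduce this to the earlier existence result, Theorem \ref{kunpao}, by showing that the nef threshold $T_0$ is infinite under the hypothesis that $K_X$ is nef. Recall that
\[
T_0 = \sup\{ t>0 \mid H + t K_X \text{ is nef on } X\}.
\]
Since $H$ is an ample $\mathbf{Q}$-divisor and $K_X$ is nef, for every $t \geq 0$ the class $H + t K_X$ is the sum of an ample class and a non-negative multiple of a nef class. By standard cone considerations on $\mathbf{Q}$-factorial projective varieties (ampleness is open in $N^1(X)_{\mathbf{R}}$ and the ample cone is the interior of the nef cone, so ample $+$ nef is ample), $H + t K_X$ is ample, and in particular nef, for all $t \geq 0$. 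Hence $T_0 = \infty$.

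Given $T_0 = \infty$, Theorem \ref{kunpao} applies directly: for any $\omega_0 \in \mathcal{K}_{H,p}(X)$ with $p>1$, there is a unique solution of the unnormalized weak K\"ahler-Ricci flow on $[0, T_0) = [0,\infty)$. This is precisely the conclusion of the theorem; no further estimates are required at this stage, since the long-time existence is already encoded in the fact that $T_0$ is exactly the first singular time of the weak flow starting from a K\"ahler current in $\mathcal{K}_{H,p}(X)$.

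There is essentially no obstacle in this argument once the machinery of Theorem \ref{kunpao} is in place; the only point to check is the elementary algebro-geometric fact that ample $+$ nef is ample on a projective variety, so that $T_0 = \infty$. The statement is the natural singular analogue of the long-time existence theorem of Tian--Zhang \cite{TiZha}, and it is in fact a restatement of Corollary \ref{kunpao1} in the slightly more general language of ``$K_X$ nef'' (as opposed to ``minimal model''), which are the same thing under our standing $\mathbf{Q}$-factorial log terminal assumptions.
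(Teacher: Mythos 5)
Your argument is correct and is essentially the paper's own: the paper treats this as an immediate consequence of Theorem \ref{kunpao} (stated as Corollary \ref{kunpao1}), since $K_X$ nef and $H$ ample force $H+tK_X$ to be nef for every $t\geq 0$, so $T_0=\infty$. The only remark is that you do not even need the (true) fact that ample plus nef is ample; nefness of $H+tK_X$ follows directly from $(H+tK_X)\cdot C = H\cdot C + t\,K_X\cdot C \geq 0$ for every curve $C$, which is all that the definition of $T_0$ requires.
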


Suppose that $X$ is not minimal and so $T_0 <\infty$. Then $H+T_0 K_X$ is nef and the weak K\"ahler-Ricci exists uniquely for $t\in [0, T_0)$. By Kawamata's base point free theorem, $H+ T_0 K_X$ is semi-ample and hence the ring $R(X, H+ T_0 K_X) = \oplus_{m=0}^\infty H^0(X, m (H+ T_0 K_X)$ is finitely generated.

\begin{itemize}

\item

If $H+ T_0 K_X$ is big and hence $R(X, H+T_0 K_X)$ induces a birational morphism $\pi:  X \rightarrow Y$. For a generic ample divisor $H$, the morphism $\pi$ contracts exactly one extremal ray of $\overline{NE}(X)$.   We discuss the following two cases according to the size of the exceptional locus of $\pi$.
\begin{enumerate}

\item $\pi$ is a divisorial contraction, that is, the exceptional locus $Exc(\pi)$ is a divisor whose image of $\pi$ has codimension at least two. In this case, $Y$ is still $\mathbf{Q}$-factorial and has at worst log terminal singularities.

\item $\pi$ is a small contraction, that is,  the exceptional locus $Exc(\pi)$  has codimension at least two. In this case, $Y$ have rather bad singularities and $K_Y$ is no longer a Cartier $\mathbf{Q}$-divisor. The solution to such a small contraction is to replace $X$ by a birationally equivalent variety with singularities milder than those of $Y$.

    \begin{definition}\label{flip}( see \cite{KMM}) Let $\pi: X\rightarrow Y$ be a small contraction such that $-K_X$ is $\pi$-ample. A variety $X^+$ together with a proper birational morphism $\pi^+: X^+ \rightarrow Y$ is called a flip of $\pi$ if $\pi^+$ is also a small contraction and $K_{X^+}$ is $\pi$-ample.
 \begin{equation}
\begin{diagram}\label{diag2}
\node{X} \arrow{se,b,}{\pi}  \arrow[2]{e,t,..}{(\pi^+)^{-1}\circ\pi }     \node[2]{X^+} \arrow{sw,r}{\pi^+} \\
\node[2]{Y}
\end{diagram}
\end{equation}

    \end{definition}

Here $X^+$ is again $\mathbf{Q}$-factorial and has at worst log terminal singularities.
\end{enumerate}

\item If $H+T_0 K_X$ is not big, then the Kodaira dimension $0\leq \kappa= \kod(H+T_0 K_X) <n$ and $X$ is a Mori fibre space admitting a Fano fibration over a normal variety $Y$ of dimension $\kappa$. In particular, $Y$ is $\mathbf{Q}$-factorial and has log terminal singularities.

\end{itemize}

We will discuss in the following sections the behavior of the K\"ahler-Ricci flow at the singular time $T_0$ according to the above situations.


\subsection{Estimates}  \label{5.2}

In this section, we assume that $T_0 < \infty$ and $H+T_0K_X$ is big.

Let $\Omega$ be a smooth volume form on $ X$ and $\chi= \ddbar\log \Omega  \in [K_X]$. Consider the Monge-Amp\`ere flow associated to the unnormalized K\"ahler-Ricci flow on $X$ with the initial K\"ahler form $\omega_0$,

\begin{equation}\label{maflow_deglim2}
\left\{
\begin{array}{rcl}
&&{ \displaystyle \ddt{\varphi} = \log
 \frac{ ( \omega_t + \ddbar \varphi ) ^n } {\Omega} },  ~~~~~[0, T_0)\times X\\
&&\\
&& \varphi(0,\cdot)=\varphi_0 \in PSH_p(X, \omega_0, \Omega)
\end{array} \right.
\end{equation}
where $p>1$, $\omega_t = \omega_0 + t \chi$ and $\omega = \omega_t + \ddbar\varphi$.

Since $H+ T_0 K_X$ is big and semi-ample, the linear system $|m(H+T_0K_X)|$ for sufficiently large $m$ induces a morphism

$$\pi: X \rightarrow Y \subset \mathbf{CP}^{N_m}.$$

Let $\omega_Y$ be the pullback of a multiple of the Fubini-Study metric form on $\mathbf{CP}^{N_m}$ with $\omega_Y \in [H+ T_0K_X]$.
There exists a resolution of singularities and the exceptional locus of $\pi$

$$\mu: \tilde{X} \rightarrow Y$$ satisfying the following conditions.

\begin{enumerate}

\item $\tilde{X}$ is smooth.

\item  There exists
an effective divisor $E_Y$ on $\tilde{X}$ such that $\mu^*[H+ T_0 K_X] - \epsilon [E_Y]$ is ample for any sufficiently small $\epsilon >0$ and the support of $E_Y$ coincides with the exceptional locus of $\mu$.

\end{enumerate}

Let $S_{E_Y}$ be the defining section for the line bundle associated to $[E_Y]$ and $h_{E_Y}$ the hermitian metric such that  for any $\epsilon>0$, $$\mu^* \omega_Y - \epsilon Ric(h_{E_Y})>0.$$
Let $Exc(\pi)$ be the exceptional locus of $\pi$. Then we have the following uniform estimates.

\begin{theorem}\label{tianzhang1} Let $\varphi\in C^0([0,T_0)\times X_{reg})\cap C^\infty((0,T_0)\times X_{reg})$ with $\varphi(t,\cdot)\in L^\infty(X)$ for all $t\in [0, T_0)$,  be the solution solving the Monge-Amp\`ere flow (\ref{maflow_deglim2}). There exists a constant $C>0$ such that
\begin{equation}
||\varphi||_{L^{\infty} ([0, T_0) \times X)} \leq C.
\end{equation}

Furthermore, for any $K \subset\subset X \setminus Exc(\pi) $ and $k\geq 0$, there exists $C_{K, k}>0$ such that

\begin{equation}
||\varphi||_{C^k([0, T_0)\times K } \leq C_{K,k}.
\end{equation}

\end{theorem}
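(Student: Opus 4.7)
The proof proceeds in three steps: a change of variable that realizes the limiting class by a smooth semi-positive form; a global uniform $L^\infty$ bound on $\varphi$ as $t \to T_0$; and uniform $C^k$ estimates on compacts of $X\setminus Exc(\pi)$ via weighted parabolic estimates.

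\textbf{Step 1 (reduction).} Since $H+T_0 K_X$ is big and semi-ample, the pullback $\omega_Y$ of a Fubini--Study metric from $Y$ is a smooth, semi-positive closed $(1,1)$-form on $X$ lying in the class $[\omega_{T_0}]$, so $\omega_Y = \omega_{T_0} + \ddbar\rho$ for some smooth $\rho$ on $X$. Setting $\hat\omega_t = \tfrac{T_0-t}{T_0}\omega_0 + \tfrac{t}{T_0}\omega_Y$ and $\tilde\varphi = \varphi - \tfrac{t}{T_0}\rho$, the flow becomes
\[
\ddt{\tilde\varphi} \;=\; \log \frac{(\hat\omega_t + \ddbar \tilde\varphi)^n}{\Omega} \;-\; \frac{\rho}{T_0},
\]
where the new reference forms $\hat\omega_t$ are K\"ahler on $[0,T_0)$ and converge to the big, semi-positive $\omega_Y$ as $t \to T_0$. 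Bounds on $\tilde\varphi$ translate into bounds on $\varphi$ because $\rho$ is smooth on $X$.

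\textbf{Step 2 (global $L^\infty$ bound).} For the upper bound, since $\omega_t^n/\Omega$ is uniformly bounded on $[0,T_0]\times X$ (as $\chi$ is smooth on $X$), the maximum principle applied to $\tilde\varphi$ yields $\sup_X \tilde\varphi(t,\cdot) \leq C$. For the lower bound, for each $t\in [0,T_0]$ use Theorem \ref{zhang} to solve the degenerate Monge--Amp\`ere equation $(\hat\omega_t + \ddbar\psi_t)^n = c_t \Omega$ with $\sup_X \psi_t = 0$ and $c_t = [\hat\omega_t]^n / \int_X \Omega$; the Kolodziej--EGZ $L^\infty$ estimate gives $\|\psi_t\|_{L^\infty(X)} \leq C$ uniformly in $t$, exploiting the uniform $L^p$ bound on the data and the uniform bigness of $[\hat\omega_t]$. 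Apply the maximum principle on the resolution $\tilde X$ to the perturbed quantity $\tilde\varphi - \psi_t - \epsilon \log|S_{E_Y}|_{h_{E_Y}}^2$ for $\epsilon>0$ small, valid because $\hat\omega_t - \epsilon\, Ric(h_{E_Y})$ remains positive; letting $\epsilon \to 0$ yields the desired lower bound on $\tilde\varphi$, and hence on $\varphi$.

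\textbf{Step 3 ($C^k$ bounds on compacts).} Once $\varphi$ is uniformly bounded on $[0,T_0)\times X$, maximum principle arguments on $\tilde X$ analogous to Lemma \ref{timebound1}, applied to suitable combinations of $\dot\varphi$, $\varphi$ and $\log|S_{E_Y}|_{h_{E_Y}}^2$, produce a time-derivative bound of the form $|\dot\varphi| \leq C + C\log|S_{E_Y}|_{h_{E_Y}}^{-2}$. The weighted parabolic Schwarz estimate of Lemma \ref{c2estimate1}, with weight $\log|S_{E_Y}|_{h_{E_Y}}^2$, then delivers $tr_{\omega_0}(\omega) \leq C\, |S_{E_Y}|_{h_{E_Y}}^{-2\alpha}$ on $\tilde X$. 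On any $K\subset\subset X\setminus Exc(\pi)$, the section $S_{E_Y}$ is bounded away from zero, so this is a uniform $C^2$ bound; standard Evans--Krylov plus Schauder bootstrap then upgrades it to uniform $C^k$ estimates for all $k$.

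\textbf{Main obstacle.} The crux is the global $L^\infty$ bound in Step 2: the reference class $[\hat\omega_t]$ degenerates at $t=T_0$, so the maximum principle alone cannot control $\varphi$ near $Exc(\pi)$. One must intertwine the parabolic maximum principle with the pluripotential $L^\infty$ theory of Kolodziej--EGZ applied at each time slice, and use the defining section $S_{E_Y}$ of the exceptional locus as a regularizer together with the fact that $[\hat\omega_t]$ is uniformly big, to ensure that the comparison constants do not blow up as $t \to T_0$.
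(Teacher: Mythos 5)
Your overall plan — reduce to reference forms interpolating between $\omega_0$ and a semi-positive representative $\omega_Y$ of $[H+T_0K_X]$, prove a uniform $L^\infty$ bound by comparison with static degenerate Monge--Amp\`ere solutions plus the regularizer $\epsilon\log|S_{E_Y}|^2_{h_{E_Y}}$, and then get interior $C^k$ bounds from a time-derivative bound, the weighted parabolic Schwarz estimate and bootstrapping — is the same route the paper takes, which simply defers to Lemma \ref{c0estimate1} (using that $[\omega_t]$ stays big and semi-ample up to $t=T_0$) and Lemma \ref{c2estimate1} on a resolution. Your Steps 1 and 3 match that outline.

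The gap is in the lower bound of Step 2. You take as barrier, for each time $t$, the solution $\psi_t$ of $(\hat\omega_t+\ddbar\psi_t)^n=c_t\Omega$ and apply the parabolic maximum principle to $\tilde\varphi-\psi_t-\epsilon\log|S_{E_Y}|^2_{h_{E_Y}}$. Two things break. First, the barrier depends on $t$, so the evolution of this quantity involves $\partial_t\psi_t$, over which you have no control: the $\psi_t$ are produced slice by slice by Theorem \ref{zhang}, and pluripotential stability gives at best continuity of $t\mapsto\psi_t$ in $L^\infty$, not a pointwise bound on its time derivative. Second, even at a fixed time the key inequality fails: at a spatial minimum one only gets $\hat\omega_t+\ddbar\tilde\varphi\geq(\hat\omega_t+\ddbar\psi_t)-\epsilon\,Ric(h_{E_Y})+\ddbar H$, and because your barrier equation already consumes the full form $\hat\omega_t$, the positive current $\hat\omega_t+\ddbar\psi_t$ has no surplus positivity left to absorb $-\epsilon\,Ric(h_{E_Y})$; the positivity of $\hat\omega_t-\epsilon\,Ric(h_{E_Y})$ that you invoke is not the quantity that enters. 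The setup of Lemma \ref{c0estimate1} is designed exactly to avoid both problems: one uses a single time-independent barrier solving $(\theta+\ddbar\phi)^n=c\,\Omega$ for a fixed semi-positive big form $\theta$ chosen with $\hat\omega_t-\theta\geq\theta$ for all $t\in[0,T_0]$ and $\theta-\epsilon\,Ric(h_{E_Y})>0$; here one can take $\theta=\delta\,\omega_Y$ (not $\delta\,\omega_0$), since $\omega_0\geq c\,\omega_Y$ gives $\hat\omega_t\geq c'\omega_Y$ uniformly up to $T_0$, and $[\omega_Y]$ is big and semi-ample so Theorem \ref{zhang} still applies. With that replacement the surplus $\hat\omega_t-\theta$ absorbs the curvature term and no time derivative of the barrier appears, and your argument goes through. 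A smaller point: your upper bound applies the maximum principle directly on $X$, where $\varphi$ is merely bounded across $X_{sing}$ and the equation holds only on $X_{reg}$; on the resolution, where everything is smooth, $\log\bigl((\hat\omega_t)^n/\pi^*\Omega\bigr)$ is not bounded above because $\pi^*\Omega$ vanishes along exceptional divisors of positive discrepancy, which is why Lemma \ref{c0estimate1} derives $\sup_X\varphi$ from the monotonicity of $\int_X\varphi\,\Omega$ together with the $\alpha$-invariant estimate and Jensen's inequality rather than from a pointwise maximum principle; you should do the same.
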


\begin{proof} We lift the Monge-Amp\`ere flow (\ref{maflow_deglim2}) on $\tilde{X}$. The proof of the $L^\infty$-estimate proceeds in the same way as in the proof of Lemma \ref{c0estimate1} since $[\omega_t]$ is big and semi-ample for all $t\in [0, T_0]$. The $C^2$-estimate on $\tilde{X}$ follows the same argument as in Lemma \ref{c2estimate1}, which is valid on $\tilde{X}\setminus E_Y$. Since the support of $\mu(E_Y)$ is contained in $Exc(\pi)$, the $C^2$-estimate holds on $X_{reg}\setminus Exc(\pi)$.  We leave the details for the readers as an exercise.

\qed
\end{proof}

\begin{theorem} There exists $C>0$ such that on $[0, T_0)\times X$,

\begin{equation}
      \frac{\omega^n}{\Omega}\leq e^{\frac{C}{t}}.
      \end{equation}

\end{theorem}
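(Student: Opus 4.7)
The plan is to reduce the claim to an upper bound on $\dot\varphi$ via the equivalent Monge-Amp\`ere formulation. Writing $\omega = \omega_t + \ddbar\varphi$, the Monge-Amp\`ere flow (\ref{maflow_deglim2}) gives $\dot\varphi = \log(\omega^n/\Omega)$, so the desired estimate $\omega^n/\Omega \leq e^{C/t}$ is equivalent to the pointwise bound $\dot\varphi(t,\cdot) \leq C/t$ on $(0, T_0) \times X$.

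The natural barrier is $H^+ = t\dot\varphi - \varphi$. Using the identities $\ddt\omega = \chi + \ddbar\dot\varphi$ together with $\Delta_\omega \varphi = n - tr_\omega(\omega_t)$, one computes
$$(\ddt{} - \Delta_\omega) H^+ = \dot\varphi + t\, tr_\omega(\chi) - \dot\varphi + n - tr_\omega(\omega_t) = n - tr_\omega(\omega_0) \leq n,$$
the last inequality by semi-positivity of $\omega_0$. Since $T_0 < \infty$, the parabolic maximum principle yields
$$\max_X H^+(t,\cdot) \leq \max_X H^+(0,\cdot) + nT_0 = -\min_X \varphi_0 + nT_0,$$
which is a uniform constant. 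Combined with the uniform $L^\infty$ bound $\|\varphi\|_{L^\infty([0,T_0) \times X)} \leq C$ from Theorem \ref{tianzhang1}, this gives $t\dot\varphi \leq H^+ + \varphi \leq C'$ uniformly on $[0, T_0) \times X$, which exponentiates to the claim.

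The main obstacle is that $\varphi$ and $\omega$ are only smooth on the Zariski open set $X_{reg} \setminus Exc(\pi)$, so the maximum principle cannot be invoked directly. As in Sections \ref{3.2}--\ref{3.3}, the remedy is to run the barrier argument at the level of the globally smooth approximating solutions $\varphi_{s,w,r}$ on the resolution $\tilde X$, establishing the estimate $\dot\varphi_{s,w,r} \leq C/t$ with a constant independent of the regularization parameters $s, w, r$, and then passing to the limit $s, w, r \to 0$. The only point needing verification is that the crucial sign $-tr_{\omega_{s,w,r}}(\pi^*\omega_0) \leq 0$ survives the pullback to $\tilde X$, which is immediate since $\omega_0 \geq 0$ on $X$ implies $\pi^*\omega_0 \geq 0$ on $\tilde X$.
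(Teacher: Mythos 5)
Your argument is correct and rests on exactly the paper's key quantity: the barrier $t\dot\varphi-\varphi$, the computation $(\ddt{}-\Delta_\omega)(t\dot\varphi-\varphi)=n-tr_\omega(\omega_0)\leq n$, and the uniform $L^\infty$ bound of Theorem \ref{tianzhang1} (which is what makes the constant uniform up to $T_0$, since $H+T_0K_X$ is big and semi-ample). The only place you diverge from the paper is in how the lack of global smoothness is handled: the paper applies the maximum principle directly to the limiting solution on the resolution, but perturbs the barrier to $H=t\dot\varphi-\varphi+\epsilon\log|S_{E_Y}|^2_{h_{E_Y}}$ so that the log singularity forces the maximum into the smooth locus $\tilde X\setminus E_Y$, and then lets $\epsilon\to 0$; you instead run the same maximum principle at the level of the globally smooth regularized flows $\varphi_{s,w,r}$ from Sections \ref{3.2}--\ref{3.3} (where your observation $\pi^*\omega_0\geq 0$ is indeed all that is needed) and pass to the limit using the locally smooth convergence away from $\tilde E$ together with the uniform bounds on $\varphi_{s,w,r}$ and $\varphi_{(0,s)}$. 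Both routes work: the paper's avoids re-invoking the whole approximation scheme at this stage but needs the auxiliary divisor $E_Y$, its hermitian metric, and the positivity $\omega_0-\epsilon Ric(h_{E_Y})\geq 0$ coming from the Kodaira-lemma setup; yours recycles machinery already set up in Section \ref{3.3} (where the bound $t\dot\varphi_{s,w,r}\leq C$ is proved verbatim for $T<T_0$) at the cost of having to note that the $L^\infty$ estimates there remain uniform as $T\to T_0$ in the present big and semi-ample situation, which is precisely the content of Theorem \ref{tianzhang1}. Incidentally, your bookkeeping of the drift term ($\leq n$, hence the $nT_0$ contribution) is more careful than the paper's displayed inequality, which drops the harmless $+n$.
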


\begin{proof} We consider $H= t\dot{\varphi} - \varphi + \epsilon \log|S_{E_Y}|^2_{h_Y}$ on $\tilde{X}$. Let $\Delta$ be the Laplace operator with respect to the pullback of $\omega$. Then $H$ is smooth outside $E_Y$ and
$$(\ddt{}- \Delta) H = - tr_{\omega} (\omega_0 - \epsilon Ric(h_{E_Y})) \leq 0.$$
As $H|_{t=0} = -\varphi_0 + \epsilon\log|S_{E_Y}|^2_{h_Y}$ is bounded from above and for each $t\in (0, T_0)$, the maximum of $H$ can only be achieved on $\tilde{X}\setminus E_Y$, $H\leq H|_{t=0}$ is uniformly bounded above and by letting $\epsilon \rightarrow 0$, there exists $C>0$ such that  $$t\dot{\varphi} \leq C.$$ We are done.

\qed\end{proof}

\begin{corollary}\label{descend} We consider the unique solution $\omega$ for the unnormalized weak K\"ahler-Ricci flow on $X$ starting with an initial current in $\mathcal{K}_{H, p}(X)$ for some $p>1$. If $H+ T_0 K_X$ is big, then $\omega(t, \cdot)$ converges to a K\"ahler current $\tilde{\omega}_{T_0} \in \mathcal{K}_{H+T_0 K_X, \infty}(X)$ in $C^\infty(X_{reg}\setminus Exc(\pi))$-topology. That is, there exists $C>0$ such that
\begin{equation}
\tilde{\omega}_{T_0}^n \leq C \Omega
\end{equation}
for a fixed smooth volume form $\Omega$ on $X$.

\end{corollary}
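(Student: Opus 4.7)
The plan is to pass to the limit $t \to T_0^-$ in the Monge-Amp\`ere potential $\varphi(t,\cdot)$ of $\omega(t,\cdot) = \omega_t + \ddbar\varphi(t,\cdot)$, where $\omega_t = \omega_0 + t\chi$. The preceding theorem gives $\omega^n \leq e^{C/t}\Omega$ uniformly on $[0,T_0)\times X$, so on $[T_0/2, T_0)$ one has $\dot\varphi = \log(\omega^n/\Omega) \leq 2C/T_0 =: C_1$. Hence the family $\varphi(t,\cdot) - C_1 t$ is pointwise non-increasing in $t$ for $t \in [T_0/2, T_0)$, and the uniform $L^\infty(X)$ bound from Theorem \ref{tianzhang1} forces the pointwise limit
$$\varphi_{T_0}(z) := \lim_{t \to T_0^-}\varphi(t,z)$$
to exist on all of $X$ with $||\varphi_{T_0}||_{L^\infty(X)}\leq C$. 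As a decreasing limit (after subtracting $C_1 t$) of upper semi-continuous $\omega_t$-psh functions with $\omega_t \to \omega_{T_0}$ smoothly, $\varphi_{T_0}$ is upper semi-continuous and, working with local $\ddbar$-potentials of $\omega_{T_0}$, satisfies $\omega_{T_0}+\ddbar\varphi_{T_0}\geq 0$; thus $\varphi_{T_0} \in PSH(X,\omega_{T_0})\cap L^\infty(X)$, and dominated convergence yields $\varphi(t,\cdot) \to \varphi_{T_0}$ in $L^1(X)$.

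Next, Theorem \ref{tianzhang1} provides $||\varphi||_{C^k([0,T_0)\times K)} \leq C_{K,k}$ for every compact $K \subset\subset X_{reg}\setminus Exc(\pi)$ and every $k \geq 0$. By Arzel\`a--Ascoli, any sequence $t_j \to T_0^-$ admits a subsequence along which $\varphi(t_j,\cdot)$ converges in $C^\infty(K)$; the limit must coincide with $\varphi_{T_0}|_K$ by the pointwise identification in the previous step. Hence the entire family $\varphi(t,\cdot)$ converges to $\varphi_{T_0}$ in $C^\infty(K)$, and setting $\tilde\omega_{T_0} := \omega_{T_0} + \ddbar\varphi_{T_0}$ yields the claimed convergence $\omega(t,\cdot) \to \tilde\omega_{T_0}$ in $C^\infty(X_{reg}\setminus Exc(\pi))$-topology.

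For the mass bound, the estimate $\omega^n \leq e^{C/t}\Omega \leq e^{2C/T_0}\Omega$ on $[T_0/2,T_0)\times X$ passes to the weak limit of currents as $t\to T_0^-$, yielding $\tilde\omega_{T_0}^n \leq e^{2C/T_0}\Omega$; since $\varphi_{T_0}$ is bounded, the Monge-Amp\`ere mass $\tilde\omega_{T_0}^n$ is well-defined in the Bedford--Taylor sense, and this upper bound shows $\varphi_{T_0}\in PSH_\infty(X,\omega_{T_0},\Omega)$. Because $[H+T_0K_X]$ is big by hypothesis, $\tilde\omega_{T_0}$ is a K\"ahler current in the sense of the paper, and it lies in $\mathcal{K}_{H+T_0K_X,\infty}(X)$.

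The main subtlety in this scheme is the absence of a uniform lower bound on $\dot\varphi$ up to $T_0$: the two-sided volume bound of Theorem \ref{kunpao} degenerates as $T \to T_0$, while only the one-sided upper bound $\dot\varphi \leq C_1$ is available globally. This is why the existence of a bounded limit $\varphi_{T_0}$ is extracted through the monotone structure of $\varphi - C_1 t$ rather than from a Cauchy-in-$t$ argument, and why the higher-order convergence is obtained only on $X_{reg}\setminus Exc(\pi)$; the $L^\infty$-bound and the Monge-Amp\`ere mass estimate nevertheless remain global on $X$, which is exactly what is needed to place $\tilde\omega_{T_0}$ in $\mathcal{K}_{H+T_0K_X,\infty}(X)$.
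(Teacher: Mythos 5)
Your argument is correct and is essentially the paper's (implicit) proof of this corollary: the bound $t\dot\varphi\leq C$ from the preceding theorem makes $\varphi-C_1t$ monotone near $T_0$, the uniform $L^\infty$ and local $C^k$ estimates of Theorem \ref{tianzhang1} upgrade the pointwise limit to $C^\infty$-convergence away from $Exc(\pi)$, and the volume bound passes to the limit to give $\tilde{\omega}_{T_0}^n\leq C\Omega$. The only cosmetic refinement is that the monotonicity argument literally produces $\varphi_{T_0}$ on $X_{reg}\setminus Exc(\pi)$, and one then extends it to all of $X$ as a bounded $\omega_{T_0}$-psh function across the pluripolar set $X_{sing}\cup Exc(\pi)$, as the paper notes bounded psh functions on $X_{reg}$ extend uniquely.
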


By Corollary \ref{descend},  $H$ is trivial over the fibres and so $\omega_{Y}$ is trivial restricted on each fibre. Let $\tilde{\omega}_{T_0} =\omega_{Y} + \ddbar\varphi_{T_0}$, then $\varphi_{T_0}$ must be constant on each fibre as any fibre of $\pi$ are connected. Therefore $\varphi_{T_0}$ can descend onto $Y$ and $\varphi_{T_0}\in PSH(Y, \omega_{Y})\cap L^\infty(Y)$. So the limiting K\"ahler current $\tilde{\omega}_{T_0}$ descends onto $Y$ as a semi-positive closed $(1,1)$-current.


\subsection{Extending K\"ahler-Ricci flow through singularities by divisorial contractions } \label{5.3}

In this section, we will prove that the weak K\"ahler-Ricci flow can be continued through divisorial contractions.

We assume that $\pi: X \rightarrow Y$ is a divisorial contraction and the fibres of $\pi$ are connected. It is well-known that $Y$ is again a $\mathbf{Q}$-factorial projective variety with at worst log terminal singularities if $X$ is.

\begin{proposition}\label{push1} Let $\Omega_Y$ be a smooth volume form on $Y$ and $H_Y=\pi_* (H+T_0K_X)$. Then for some $p>1$,

\begin{equation}
(\pi^{-1})^*\tilde{\omega}_{T_0} \in \mathcal{K}_{H_Y, p}(Y).
\end{equation}

\end{proposition}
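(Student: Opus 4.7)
From Corollary~\ref{descend}, $\tilde\omega_{T_0}\in\mathcal{K}_{H+T_0K_X,\infty}(X)$, so I write $\tilde\omega_{T_0}=\pi^{*}\omega_{Y,0}+\ddbar\varphi_{T_0}$ with $\omega_{Y,0}$ a smooth K\"ahler form in $[H_Y]$ on $Y$ (pulled back to $X$ to give $\omega_Y$) and $\varphi_{T_0}\in L^\infty(X)$. Because $\pi^{*}\omega_{Y,0}$ is trivial along each connected fibre of $\pi$ and $\varphi_{T_0}$ is $\pi^{*}\omega_{Y,0}$-plurisubharmonic, the maximum principle forces $\varphi_{T_0}$ to be constant on every fibre, hence $\varphi_{T_0}=\pi^{*}\varphi_Y$ for some $\varphi_Y\in PSH(Y,\omega_{Y,0})\cap L^\infty(Y)$, giving $(\pi^{-1})^{*}\tilde\omega_{T_0}=\omega_{Y,0}+\ddbar\varphi_Y$. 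The task is therefore to show $(\omega_{Y,0}+\ddbar\varphi_Y)^n/\Omega_Y\in L^p(Y,\Omega_Y)$ for some $p>1$.

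\textbf{Reduction to an integral on $X$.} Since $\pi$ is an isomorphism off the codimension-$\geq 2$ set $\pi(Exc(\pi))\subset Y$ and the two currents have bounded local potentials, the push-forward identity $\pi_{*}(\tilde\omega_{T_0}^n)=((\pi^{-1})^{*}\tilde\omega_{T_0})^n$ holds as positive Radon measures on $Y$. Writing $\pi^{*}\Omega_Y=F\cdot\Omega$ with $F$ a non-negative function on $X$ that vanishes along $Exc(\pi)$, and using the pointwise bound $\tilde\omega_{T_0}^n\leq C\,\Omega$ from Corollary~\ref{descend}, change of variables on the locus where $\pi$ is an isomorphism yields
\[
\int_Y\left(\frac{((\pi^{-1})^{*}\tilde\omega_{T_0})^n}{\Omega_Y}\right)^{p}\Omega_Y=\int_X\left(\frac{\tilde\omega_{T_0}^n}{\Omega}\right)^{p}F^{1-p}\,\Omega\leq C^p\int_X F^{1-p}\,\Omega,
\]
so the matter reduces to showing $\int_X F^{1-p}\,\Omega<\infty$ for some $p>1$.

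\textbf{Discrepancy computation on a log resolution.} Choose a log resolution $\mu:\tilde X\to X$ such that $\tilde\pi:=\pi\circ\mu:\tilde X\to Y$ is also a log resolution, and write
\[
K_{\tilde X}=\mu^{*}K_X+\sum_i a_iE_i=\tilde\pi^{*}K_Y+\sum_j b_jF_j,
\]
where the $E_i,F_j$ are simple normal crossing divisors and $a_i,b_j>-1$ by log terminality of $X$ and $Y$. Then up to smooth positive factors, $\mu^{*}\Omega\sim\prod_i|S_{E_i}|_{h_i}^{2a_i}\tilde\Omega$ and $\tilde\pi^{*}\Omega_Y\sim\prod_j|S_{F_j}|_{h_j}^{2b_j}\tilde\Omega$ for a fixed smooth volume form $\tilde\Omega$ on $\tilde X$. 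Cancellation gives
\[
\mu^{*}F^{1-p}\cdot\mu^{*}\Omega\;\sim\;\prod_j|S_{F_j}|_{h_j}^{-2b_j(p-1)}\,\prod_i|S_{E_i}|_{h_i}^{2a_ip}\,\tilde\Omega,
\]
which is locally $L^1$ on $\tilde X$ precisely when every exponent exceeds $-2$, i.e.\ $b_j(p-1)<1$ for each $b_j>0$ and $-a_ip<1$ for each $a_i<0$. Since $a_i,b_j>-1$ strictly, all these inequalities are satisfied by any $p>1$ chosen sufficiently close to $1$, completing the proof.

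\textbf{Main obstacle.} The essential subtlety is the simultaneous discrepancy bookkeeping on the common log resolution: after the smooth factors in $\mu^{*}\Omega$ and $\tilde\pi^{*}\Omega_Y$ cancel, the residual exponents on the exceptional divisors must combine so that the \emph{strict} log terminal inequalities $a_i,b_j>-1$ translate exactly into the existence of some $p>1$ for which the resulting integrand is locally integrable on $\tilde X$. The bound $\tilde\omega_{T_0}^n\leq C\Omega$ from Corollary~\ref{descend} is what allows us to absorb the density of $\tilde\omega_{T_0}^n$ and reduce to the purely geometric singular integral $\int_X F^{1-p}\Omega$.
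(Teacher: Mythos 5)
Your proposal is correct and follows essentially the same route as the paper: the descent of $\tilde\omega_{T_0}$ to $Y$ via constancy of the potential on fibres, the change of variables reducing the $L^p$ bound to $\int_X \bigl(\Omega/\pi^*\Omega_Y\bigr)^{p-1}\Omega<\infty$ (your $\int_X F^{1-p}\Omega$), and the key use of the bound $\tilde\omega_{T_0}^n\leq C\,\Omega$ from Corollary \ref{descend}. Your log-resolution discrepancy computation merely spells out the paper's terse observation that $\Omega/\Omega_Y$ has at worst poles, so the integral converges once $p-1$ is sufficiently small.
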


\begin{proof} Obviously, $\tilde{\omega}_{T_0}$ has bounded local potentia and the restriction of $\tilde{\omega}$ has constant local potential along each fibre of $\pi$. So  $\tilde{\omega}_{T_0}$ descends to $Y$ and $(\pi^{-1})^*\tilde{\omega}_{T_0}$ is well-defined and admits bounded local potential on $Y$. Let $F  = \frac{ ( \tilde{\omega}_{T_0})^n}{\Omega_Y }$. It suffices to show $F \in L^p(Y, \Omega_Y) $ for some $p>1$. There exists $C>0$ such that

$$ \int_{Y} F^{p} \Omega_Y = \int_Y \left( \frac{\tilde{\omega}_{T_0}^n }{ \Omega_Y } \right)^{p-1} \tilde{\omega}_{T_0}^n = \int_X \left( \frac{\tilde{\omega}_{T_0}^n }{ \pi^* \Omega_Y } \right)^{p-1} \tilde{\omega}_{T_0}^n \leq C \int_X \left( \frac{\Omega }{ \Omega_Y } \right)^{p-1} \Omega.$$

Since  $\frac{\Omega }{ \Omega_Y } $ has at worst poles, $ \int_{Y} F^{p} \Omega_Y < \infty$ for $p-1>0$ sufficiently small.

\qed\end{proof}

\begin{theorem} \label{thrudivcon}Let $X$ be a $\mathbf{Q}$-factorial projective variety  with log terminal singularities and $H$ be an ample $\mathbf{Q}$-divisor on $X$. Let $$T_0 = \sup\{ t>0 ~|~ H + t K_X ~is~ nef~\}$$ be the first singular time.  Suppose that the semi-ample divisor $H+T_0K_X$ induces a divisorial contraction $\pi: X \rightarrow Y$.

Let $\omega$ be the unique solution  of the unnormalized weak K\"ahler-Ricci flow for $t\in [0, T_0)$ starting with $\omega_0 \in \mathcal{K}_{H, p}(X)$ for some $p>1$.
Then there exists $\omega_{Y,0} \in \mathcal{K}_{\pi_*H_Y, p'}(Y) \cap C^\infty(Y_{reg} \setminus \pi(Exc(\pi)) $ for some $p'>1$ such that $\omega(t, \cdot)$ converges to $\pi^* \omega_{Y,0}$ in $C^\infty(X_{reg}\setminus Exc(\pi))$-topology as $t\rightarrow T_0$.

Furthermore, the unnormalized weak K\"ahler-Ricci flow can be continued on $Y$ with the initial K\"ahler current $\omega_{Y,0}$.

\end{theorem}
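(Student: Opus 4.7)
The plan is to combine the convergence at $T_0$ already furnished by Corollary \ref{descend} with a descent argument from $X$ to $Y$, and then to invoke the short-time existence result of Theorem \ref{kunpao} on the $\mathbf{Q}$-factorial log terminal target variety $Y$. The entire hard analysis (uniform $L^\infty$ estimate, higher-order estimates off the exceptional locus, and the volume upper bound) has been carried out in Section \ref{5.2}; the remaining work is structural.

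First I would obtain $\tilde{\omega}_{T_0}$ as the $C^\infty(X_{reg}\setminus Exc(\pi))$-limit of $\omega(t,\cdot)$ as $t\to T_0$ from Corollary \ref{descend}. Since $\pi$ is induced by the semi-ample divisor $H+T_0K_X$, there is an ample $\mathbf{Q}$-divisor $H_Y$ on $Y$ with $\pi^*H_Y=H+T_0K_X$, so $[H+T_0K_X]$ is trivial on every fibre of $\pi$. Writing $\tilde{\omega}_{T_0}=\pi^*\omega_{Y,\mathrm{ref}}+\ddbar\varphi_{T_0}$ with $\omega_{Y,\mathrm{ref}}$ a smooth K\"ahler form in $[H_Y]$, the function $\varphi_{T_0}\in L^\infty(X)$ is $\omega_{Y,\mathrm{ref}}$-psh on each connected fibre of $\pi$; being bounded and plurisubharmonic on compact fibres forces it to be constant on each connected fibre. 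Hence $\varphi_{T_0}$ descends to a bounded $\omega_{Y,\mathrm{ref}}$-psh function on $Y$, giving a well-defined positive current $\omega_{Y,0}:=(\pi^{-1})^*\tilde{\omega}_{T_0}$ on $Y$ lying in $[H_Y]$.

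Next I would check the regularity and integrability claimed for $\omega_{Y,0}$. The $L^p$-integrability of its Monge-Amp\`ere mass, i.e.\ $\omega_{Y,0}\in\mathcal{K}_{H_Y,p'}(Y)$ for some $p'>1$, is exactly Proposition \ref{push1} applied to $\tilde{\omega}_{T_0}$ (combined with the uniform volume bound $\omega(t,\cdot)^n\leq e^{C/t}\Omega$ from Section \ref{5.2}, which passes to $\tilde{\omega}_{T_0}^n$). Smoothness of $\omega_{Y,0}$ on $Y_{reg}\setminus\pi(Exc(\pi))$ is inherited from the smoothness of $\tilde{\omega}_{T_0}$ on $X_{reg}\setminus Exc(\pi)$ via the biholomorphism $\pi:X_{reg}\setminus Exc(\pi)\to Y_{reg}\setminus\pi(Exc(\pi))$. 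Pulling back to $X$, the equality $\omega(t,\cdot)\to\pi^*\omega_{Y,0}$ in $C^\infty(X_{reg}\setminus Exc(\pi))$ then follows from the same Corollary \ref{descend} convergence together with the identification $\tilde{\omega}_{T_0}=\pi^*\omega_{Y,0}$.

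Finally, to continue the flow past $T_0$, I would restart the clock on $Y$. Since $H_Y$ is ample and $\omega_{Y,0}\in\mathcal{K}_{H_Y,p'}(Y)$ for some $p'>1$, Theorem \ref{kunpao} (applied with $(Y,H_Y)$ in place of $(X,H)$) produces a unique weak K\"ahler-Ricci flow on $Y$ for $t\in[0,T_1)$ where $T_1=\sup\{t>0:H_Y+tK_Y \text{ is nef}\}>0$, with the prescribed initial data. The main conceptual obstacle is really the first descent step, namely verifying that the bounded-potential limit of $\omega(t,\cdot)$ at $T_0$ genuinely factors through $\pi$; once the fibrewise-constancy of $\varphi_{T_0}$ is nailed down, every other ingredient is either purely formal or already established in Sections \ref{3} and \ref{5.2}.
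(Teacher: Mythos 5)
Your proposal is correct and follows essentially the same route as the paper: the convergence of Corollary \ref{descend}, fibrewise constancy of the bounded potential (since $H+T_0K_X=\pi^*H_Y$ is trivial on the connected fibres) giving the descent to $Y$, the $L^{p'}$ Monge-Amp\`ere bound from Proposition \ref{push1}, and then restarting the flow on the $\mathbf{Q}$-factorial log terminal variety $Y$ via Theorem \ref{kunpao} since $H_Y$ is ample. The paper merely distributes these steps between the discussion after Corollary \ref{descend}, Proposition \ref{push1}, and the short proof of Theorem \ref{thrudivcon}, so no substantive difference remains.
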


\begin{proof}

Since $H_Y$ is the strict transformation of $H$ by $\pi$ and $\omega_{Y, 0}$ admits bounded local potential, $\omega_{Y, 0}\in \mathcal{K}_{H_Y, p'}(X)$ for some $p'>1$ by Proposition \ref{push1}. Then the K\"ahler-Ricci flow can start with $\omega_{Y, 0}$ on $Y$ uniquely as $H_Y$ is ample.

\qed
\end{proof}


\subsection{Extending K\"ahler-Ricci flow through singularities by flips } \label{5.4}

In this section, we will prove that the weak K\"ahler-Ricci can be continued through flips.

We assume that $\pi: X \rightarrow Y$ is a small contraction and there exists a flip $$\check{\pi} = \pi^+ \circ \pi^{-1} : X^+ \dashrightarrow X . $$
Then $X^+$ is $\mathbf{Q}$-factorial and it has at worst log terminal singularities.
The limiting K\"ahler current $\tilde{\omega}_{T_0}$ descends on $Y$ and it can be then pulled back on $X^+$ by $\pi^+$. Furthermore, there exists $C>0$ such that

\begin{equation}
\frac{ (\check{\pi}^* \tilde{\omega}_{T_0})^n }{\check{\pi} ^* \Omega} \leq C.
\end{equation}

\begin{proposition}\label{push2} Let $\Omega_{X^+}$ be a smooth volume form on $X^+$ and $H_{X^+}$ be the strict transformation of $H+T_0K_X$ by $\check{\pi}$. Then for some $p>1$,

\begin{equation}
\check{\pi}^* \tilde{\omega}_{T_0} \in \mathcal{K}_{H_{X^+}, p}(X^+).
\end{equation}

\end{proposition}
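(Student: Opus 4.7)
The plan parallels the proof of Proposition \ref{push1}, but the descent-then-pullback is performed through the common target $Y$, and the volume form comparison is performed on a common log resolution of $X$ and $X^{+}$.

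First I claim that $\check{\pi}^{*}\tilde{\omega}_{T_{0}}$ is well defined as a closed semi-positive $(1,1)$-current on $X^{+}$ with bounded local potential. Since $\tilde{\omega}_{T_{0}}$ lies in the semi-ample class $[H+T_{0}K_{X}]=\pi^{*}[H_{Y}]$ and has bounded local potential (Corollary \ref{descend}), the paragraph following Corollary \ref{descend} shows that its $\omega_{Y}$-psh potential is constant along connected fibres of $\pi$, so it descends to a closed semi-positive current $\tilde{\omega}_{T_{0},Y}\in[H_{Y}]$ on $Y$ with bounded local potential. Define $\omega^{+}:=(\pi^{+})^{*}\tilde{\omega}_{T_{0},Y}$. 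Pulling the bounded $\omega_{Y}$-psh potential on $Y$ back by the proper morphism $\pi^{+}$ produces a bounded psh potential on $X^{+}$, so $\omega^{+}$ has bounded local potential; by definition of the strict transform, $[\omega^{+}]=[H_{X^{+}}]$, and on $X^{+}\setminus\mathrm{Exc}(\pi^{+})$ (canonically identified with $X\setminus\mathrm{Exc}(\pi)$) it agrees with $\check{\pi}^{*}\tilde{\omega}_{T_{0}}$.

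Next choose a common log resolution $q:W\to Y$ factoring as $q=\pi\circ p=\pi^{+}\circ p^{+}$ with birational $p:W\to X$ and $p^{+}:W\to X^{+}$. Writing $K_{W}=p^{*}K_{X}+\sum a_{i}E_{i}=(p^{+})^{*}K_{X^{+}}+\sum a_{i}^{+}E_{i}$, log terminality of both $X$ and $X^{+}$ gives $a_{i},\,a_{i}^{+}>-1$ for every exceptional $E_{i}$, and moreover $p^{*}\tilde{\omega}_{T_{0}}=q^{*}\tilde{\omega}_{T_{0},Y}=(p^{+})^{*}\omega^{+}$ on $W$.

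Finally, to establish $\omega^{+}\in\mathcal{K}_{H_{X^{+}},p}(X^{+})$ for some $p>1$, I compute, using birationality of $p^{+}$ together with the uniform volume bound $\tilde{\omega}_{T_{0}}^{n}\le C\,\Omega$ from Corollary \ref{descend},
\[
\int_{X^{+}}\!\left(\frac{(\omega^{+})^{n}}{\Omega_{X^{+}}}\right)^{\!p}\!\Omega_{X^{+}}
=\int_{W}\!\left(\frac{(p^{*}\tilde{\omega}_{T_{0}})^{n}}{p^{*}\Omega}\right)^{\!p}\!\left(\frac{p^{*}\Omega}{(p^{+})^{*}\Omega_{X^{+}}}\right)^{\!p-1}\!p^{*}\Omega
\le C\int_{W}\!\left(\frac{p^{*}\Omega}{(p^{+})^{*}\Omega_{X^{+}}}\right)^{\!p-1}\!p^{*}\Omega .
\]
Locally on $W$, $p^{*}\Omega$ and $(p^{+})^{*}\Omega_{X^{+}}$ are equivalent (up to smooth positive factors) to $\prod_{i}|s_{E_{i}}|_{h_{i}}^{2a_{i}}\Omega_{W}$ and $\prod_{i}|s_{E_{i}}|_{h_{i}}^{2a_{i}^{+}}\Omega_{W}$ respectively, for a smooth volume form $\Omega_{W}$ on $W$, so the last integrand behaves like $\prod_{i}|s_{E_{i}}|^{2(pa_{i}-(p-1)a_{i}^{+})}$. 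Integrability requires $pa_{i}-(p-1)a_{i}^{+}>-1$ for each $i$; this holds at $p=1$ since $a_{i}>-1$, hence by continuity it holds for all $p>1$ sufficiently close to $1$. The main obstacle is that the volume-form discrepancy on $W$ is governed by the $a_{i}^{+}$ as well as the $a_{i}$, so the argument genuinely needs the log terminality of $X^{+}$ (not merely of $X$); once that is in hand, the $L^{\infty}$ bound from Corollary \ref{descend} reduces the estimate to the elementary discrepancy integrability above.
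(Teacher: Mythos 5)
Your proof is correct and follows essentially the same route as the paper: descend $\tilde{\omega}_{T_0}$ through $Y$, pull back to $X^+$, and then bound $\int_{X^+} F^p\,\Omega_{X^+}$ by a change of variables together with the estimate $\tilde{\omega}_{T_0}^n \le C\,\Omega$ from Corollary \ref{descend}, the only difference being that you carry out the final integrability estimate explicitly on a common log resolution via the discrepancies, which is just an unpacking of the paper's remark that $\Omega/(\check{\pi}^{-1})^*\Omega_{X^+}$ has at worst poles. (A minor quibble: your exponent condition $p\,a_i-(p-1)a_i^+>-1$ for $p$ near $1$ only uses $a_i>-1$ and the finiteness of the $a_i^+$, i.e.\ that $K_{X^+}$ is $\mathbf{Q}$-Cartier, so log terminality of $X^+$ is not actually what the estimate needs.)
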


\begin{proof} $(\pi^{-1})^* \tilde{\omega}_{T_0}$ is well-defined on $Y$ with bounded local potential. Then $\check{\pi}^*\tilde{\omega}_{T_0} = ((\pi\circ\pi^+)^{-1})^* \tilde{\omega}_{T_0}$ is semi-positive closed $(1,1)$ current on $X^+$ with bounded local potential as well.   Let $F  = \frac{ ( \check{\pi}^*\tilde{\omega}_{T_0})^n}{\Omega_{X^+} }$. It suffices to show $F \in L^p(X^+, \Omega_{X^+}) $ for some $p>1$. There exists $C>0$ such that
\begin{eqnarray*}
 \int_{X^+} F^{p} \Omega_{X^+}& =& \int_{X^+} \left( \frac{(\check{\pi}^* \tilde{\omega}_{T_0})^n }{ \Omega_{X^+} } \right)^{p-1} (\check{\pi}^*\tilde{\omega}_{T_0})^n \\
 &=& \int_X \left( \frac{\tilde{\omega}_{T_0}^n }{ (\check{\pi}^{-1}) ^* \Omega_{X^+} } \right)^{p-1} \tilde{\omega}_{T_0}^n \\
 &\leq&  C \int_X \left( \frac{\Omega }{ (\check{\pi}^{-1}) ^*\Omega_{X^+} } \right)^{p-1} \Omega.
 \end{eqnarray*}

Since  $\frac{\Omega }{ (\check{\pi}^{-1}) ^*\Omega_{X^+} } $ has at worst poles, $ \int_{X^+} F^{p} \Omega_{X^+} < \infty$ for $p-1>0$ sufficiently small.

\qed\end{proof}

\begin{theorem} \label{thruflip} Let $X$ be a $\mathbf{Q}$-factorial projective variety  with log terminal singularities and $H$ be an ample $\mathbf{Q}$-divisor on $X$. Let $$T_0 = \sup\{ t>0 ~|~ H + t K_X ~is~ nef~\}$$ be the first singular time.   Suppose that the semi-ample divisor $H+T_0K_X$ induces a small contraction $\pi: X \rightarrow Y$ and there exists a flip \begin{equation}
\begin{diagram}\label{diag3}
\node{X} \arrow{se,b,}{\pi}  \arrow[2]{e,t,..}{\check{\pi}^{-1} }     \node[2]{X^+} \arrow{sw,r}{\pi^+} \\
\node[2]{Y}
\end{diagram} .
\end{equation}

Let $\omega$ be the unique solution  of the unnormalized weak K\"ahler-Ricci flow for $t\in [0, T_0)$ starting with $\omega_0 \in \mathcal{K}_{H, p}(X)$ for some $p>1$.
Then there exists $\omega_{X^+,0} \in \mathcal{K}_{H_{X^+}, p'}(X^+)$ such that $\omega(t, \cdot)$ converges to $(\check{\pi}^{-1})^* \omega_{X^+,0}$ in $C^\infty(X_{reg}\setminus Exc(\pi))$-topology, where $H_{X^+}$ is the strict transformation of $H$ by $\check{\pi}$.

Furthermore, $\omega_{X^+, 0}$ is smooth outside the singularities of $X^+$ and where the flip is performed, and the unnormalized weak K\"ahler-Ricci flow can be continued on $X^+$ with the initial K\"ahler current $\omega_{X^+,0}$.

\end{theorem}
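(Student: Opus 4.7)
The strategy mirrors the proof of Theorem~\ref{thrudivcon} for divisorial contractions, with the added ingredient that the limiting current must be transferred across the flip diagram through the common base $Y$.

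First, I would apply Corollary~\ref{descend} to deduce that $\omega(t,\cdot)$ converges, as $t\to T_0$ and in $C^\infty(X_{reg}\setminus Exc(\pi))$-topology, to a closed positive $(1,1)$-current $\tilde{\omega}_{T_0}$ lying in $\mathcal{K}_{H+T_0K_X,\infty}(X)$, with a uniform estimate $\tilde{\omega}_{T_0}^n \leq C\,\Omega$ for a fixed smooth volume form $\Omega$ on $X$. Since $[H+T_0K_X] = \pi^*[H_Y]$ for an ample class $[H_Y]$ on $Y$, and every fibre of $\pi$ is connected, the bounded local potentials of $\tilde{\omega}_{T_0}$ are constant along fibres, so $\tilde{\omega}_{T_0}$ descends to a semi-positive closed $(1,1)$-current $\tilde{\omega}_Y$ on $Y$ with bounded local potential.

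Next, I would pull $\tilde\omega_Y$ back along $\pi^+$ to define $\omega_{X^+,0} := (\pi^+)^*\tilde{\omega}_Y = \check{\pi}^*\tilde{\omega}_{T_0}$ on $X^+$. Since local potentials of $\tilde\omega_Y$ are bounded plurisubharmonic functions in local embeddings and pull back under the proper morphism $\pi^+$ to such functions, $\omega_{X^+,0}$ is a closed positive current with bounded local potential in the class $[H_{X^+}]$. Proposition~\ref{push2} then delivers $\omega_{X^+,0} \in \mathcal{K}_{H_{X^+},p'}(X^+)$ for some $p'>1$. The asserted smoothness of $\omega_{X^+,0}$ on $X^+_{reg}\setminus Exc(\pi^+)$ follows because $\pi^+$ is an isomorphism over this locus onto $Y_{reg}\setminus\pi(Exc(\pi))$, where $\tilde{\omega}_Y$ is smooth by the $C^\infty$ convergence of the first step. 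The claimed $C^\infty(X_{reg}\setminus Exc(\pi))$ convergence of $\omega(t,\cdot)$ to $(\check{\pi}^{-1})^*\omega_{X^+,0}$ is then immediate by transport of structure.

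For the continuation statement, I would invoke the defining geometric properties of a flip: $K_{X^+}$ is $\pi^+$-ample and $H_{X^+}$ is the pullback of an ample class $H_Y$ on $Y$ by $\pi^+$. Consequently $H_{X^+} + \epsilon K_{X^+}$ is ample on $X^+$ for all sufficiently small rational $\epsilon>0$, so that $T_0^+ := \sup\{t\geq 0 \mid H_{X^+}+tK_{X^+}\ \text{nef}\}$ is strictly positive. Since $H_{X^+}$ itself is only big and semi-ample (not ample) on $X^+$, Theorem~\ref{kunpao} does not apply verbatim; instead I would invoke the more flexible Theorem~\ref{allinone}, whose hypotheses are tailored to exactly this configuration ($L_1$ big and semi-ample, $L_1+tL_2$ semi-ample on $[0,T_0^+)$). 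Lifting the flow to a log-resolution of $X^+$ as in Theorem~\ref{liftflow}, and pushing the bounded, fibre-constant solution back down along $\pi^+$, produces the unique weak K\"ahler-Ricci flow on $X^+$ starting from $\omega_{X^+,0}$.

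The main obstacle is the verification that the hypotheses of Theorem~\ref{allinone} are genuinely satisfied on a suitable resolution $\tilde{X}^+ \to X^+$: one must choose the exceptional divisor $\tilde{E}$ large enough to absorb simultaneously the $\pi^+$-exceptional locus (where $H_{X^+}$ degenerates), the singular locus of $X^+$, and the discrepancy divisors arising from $K_{X^+}$, while keeping $\tilde{E}$ small enough that $[\pi^*H_{X^+}] - \epsilon[\tilde{E}]$ remains ample. The $\mathbf{Q}$-factoriality of $X^+$, preserved by flipping, together with Proposition~\ref{qfac} and the log terminal property, is precisely what makes such a choice possible and allows the Monge-Amp\`ere output to be pushed back down to an honest weak solution on $X^+$.
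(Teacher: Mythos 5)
Your proposal is correct and follows essentially the same route as the paper: convergence via Corollary \ref{descend} and the estimates of Section \ref{5.2}, descent of $\tilde{\omega}_{T_0}$ to $Y$ by fibre-constancy of its bounded potentials, pullback by $\pi^+$ and Proposition \ref{push2} to land in $\mathcal{K}_{H_{X^+},p'}(X^+)$, and continuation using that $H_{X^+}$ is big and semi-ample with $H_{X^+}+\epsilon K_{X^+}$ ample (since $K_{X^+}$ is $\pi^+$-ample), so the general existence machinery of Theorems \ref{allinone} and \ref{liftflow} applies. Your explicit remarks on why Theorem \ref{kunpao} does not apply verbatim and on the choice of $\tilde{E}$ on a resolution of $X^+$ merely spell out details the paper leaves implicit.
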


\begin{proof} Since $H^+$ is the strict transformation of $H$ by $\check{\pi}$ and $\omega_{X^+, 0}$ admits bounded local potential, $\omega_{X^+, 0}\in \mathcal{K}_{H^+, p'}$ for some $p>1$ by Proposition \ref{push2}. Then the K\"ahler-Ricci flow can start with $\omega_{X^+, 0}$ on $X^+$ uniquely as $H^+$ is big and semi-ample, and $H^+ + \epsilon K_{X^+}$ is ample for sufficiently small $\epsilon>0$.

\qed\end{proof}


\subsection{Long time existence assuming MMP} \label{5.5}

As proved in Section \ref{5.3} and \ref{5.4}, the K\"ahler-Ricci flow can flow through divisorial contractions and flips.  If the exceptional loci of the contracted extremal rays do not meet each other, Theorem \ref{thrudivcon} and \ref{thruflip} still hold.  However, at the singular time $T_0$, the morphism $\pi: X \rightarrow Y$ induced by the semi-ample divisor $H+T_0K_X$ might contract more than one extremal ray. It simplifies the analysis to assume the existence of a good initial divisor as in Definition \ref{goodinitial} as to avoid complicated contractions.

\begin{theorem}\label{longtime1} Let $X$ be a $\mathbf{Q}$-factorial projective variety  with log terminal singularities. If there exists a good initial divisor $H$ on $X$, then either $X$ does not admit a minimal model or the unnormalized weak K\"ahler-Ricci flow has long time existence for any K\"ahler current $\omega_0\in \mathcal{K}_{H, p}(X)$ with $p>1$, after finitely many surgeries through divisorial contractions and flips.

\end{theorem}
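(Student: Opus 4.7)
The plan is to induct on the steps of the MMP with scaling associated to $(X,H)$. Since $H$ is by hypothesis a good initial divisor, Definition \ref{goodinitial} provides a finite sequence of pairs $(X_0, H_0)=(X,H), (X_1, H_1), \ldots, (X_m, H_m)$ such that at each step the contraction induced by $\lambda_i H_i + K_{X_i}$ contracts exactly one extremal ray and the process terminates either with $X_{m+1}$ a minimal model or with $X_m$ a Mori fibre space. I will run the weak K\"ahler-Ricci flow from $\omega_0$ in a piecewise fashion along this sequence, using $H_i$ as the initial class at step $i$, with singular times $T_i = 1/\lambda_i$.

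For the base step, Theorem \ref{kunpao} gives a unique solution of the unnormalized weak K\"ahler-Ricci flow on $X_0$ for $t\in[0,T_0)$ starting from $\omega_0\in \mathcal{K}_{H_0,p}(X_0)$. If $T_0=\infty$, then $K_{X_0}$ is nef and $X_0$ is already a minimal model, so we are done. Otherwise, by the goodness assumption, $\lambda_0 H_0 + K_{X_0}$ induces a contraction $\pi_0$ of a single extremal ray, and this contraction is either divisorial, small with flip, or a Mori fibre contraction. The first two cases are handled by Theorems \ref{thrudivcon} and \ref{thruflip} respectively: the flow converges to a K\"ahler current $\omega_{X_1,0}\in \mathcal{K}_{H_1,p_1}(X_1)$ for some $p_1>1$, smooth away from the exceptional locus, and Theorem \ref{kunpao} restarts the flow on $X_1$ from this current. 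The third case (Mori fibre space, $\lambda_0 H_0 + K_{X_0}$ not big) means the final class has Kodaira dimension strictly less than $n$ and $X_0$ admits a Fano fibration; by the termination properties encoded in Definition \ref{goodinitial} and Theorem \ref{finiteterm}, $X$ does not admit a minimal model in this birational class, and we exit with the first alternative of the conclusion.

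Iterating the above, I pass through $(X_1, H_1), (X_2, H_2), \ldots$, performing at most $m$ surgeries. At each step the hypotheses of Theorem \ref{kunpao}, \ref{thrudivcon} or \ref{thruflip} are satisfied because $X_{i+1}$ is again $\mathbf{Q}$-factorial with log terminal singularities (this is preserved under divisorial contractions and flips) and the pushed-forward/strict-transformed initial current lies in $\mathcal{K}_{H_{i+1},p_{i+1}}$ for some $p_{i+1}>1$ by Propositions \ref{push1} and \ref{push2}. After finitely many such steps we reach $X_{m+1}$ with $K_{X_{m+1}}$ nef, at which point Theorem \ref{longtime} (or equivalently, $T_{m+1}=\infty$ in Theorem \ref{kunpao}) furnishes long time existence; concatenating the solutions on each interval $[\sum_{j<i} T_j, \sum_{j\leq i} T_j)$ yields a solution defined for all $t\in[0,\infty)$ up to finitely many surgeries.

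The main obstacle is bookkeeping at the surgery times: one must verify that the limit $\tilde{\omega}_{T_i}$ constructed on $X_i$ genuinely descends (or lifts, through the flip diagram) to an element of $\mathcal{K}_{H_{i+1},p_{i+1}}(X_{i+1})$ for some $p_{i+1}>1$, and that the restarted flow agrees in a compatible sense with the weak flow at the surgery time so that the concatenated object is indeed a solution of the weak K\"ahler-Ricci flow in the sense of Definition \ref{weakdef}. The $L^p$-integrability of the new initial Monge-Amp\`ere mass is the delicate point, and is handled by Propositions \ref{push1} and \ref{push2}, which exploit log terminality of $X_i$ and $X_{i+1}$ to control the poles of the relative volume form; once this is in place, the finiteness built into the definition of a good initial divisor closes the argument.
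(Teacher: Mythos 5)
Your proposal is correct and follows essentially the same route as the paper: run the weak flow via Theorem \ref{kunpao} until the nef threshold, use the good initial divisor condition (Definition \ref{goodinitial}) to ensure each singular-time contraction is a single divisorial contraction or flip, continue through it by Theorems \ref{thrudivcon} and \ref{thruflip} (with Propositions \ref{push1} and \ref{push2} supplying the $L^p$ control on the new initial current), and conclude long time existence once the finitely many steps reach a minimal model with $K$ nef. The only cosmetic difference is that you treat the Mori fibre space branch explicitly, whereas the paper argues contrapositively by assuming from the outset that $X$ admits a minimal model; both rest on the same standard fact that a variety birational to a Mori fibre space has no minimal model.
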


\begin{proof} Assume $X$ admits a minimal model and let $X_0=X$ and $H_0=H$.  Since $H$ is a good initial divisor, by MMP with scaling, at each singular time, the morphism induced by the semi-ample divisor is always a contractional contraction or flipping contraction.

More precisely, suppose the K\"ahler-Ricci flow performs surgeries and replaces $(X_0, H_0)$ by a finite sequence of $(X_i, H_i)$ at each singular time $T_i$, $i = 1, ..., m$ , and $X_{m+1}$ is a minimal model of $X$. If $\lambda_i$ is the nef threshold for $(X_i, H_i)$ as in Definition \ref{mmps}, $i=1, ..., m$, $\lambda_i >0$ and $$ T_i = T_{i-1}+ \frac{1}{\lambda_i} .$$
At $T_i$, the morphism induced by the semi-ample divisor $H_i + T_i K_{X_i}$ contracts exactly one extremal ray and so it must be a divisorial contraction or a flip. By Theorem \ref{thrudivcon} and Theorem \ref{thruflip}, the K\"ahler-Ricci flow with the pair $(X_i, H_i)$ is replaced by the one with the pair $(X_{i+1}, H_{i+1})$ with $H_{i+1}$ being the strict transform of $H_i + T_i K_{X_i}$ until $X$ is finally replaced by its minimal model $X_{m+1}$ and the K\"ahler-Ricci flow exists for all time afterwards by Theorem \ref{kunpao} as $K_{X_{m+1}}$ is nef.

\qed
\end{proof}

If $H$ is not a good initial divisor, the surgery at the finite singular time could be complicated and a detailed speculation is given in Section \ref{6.2}.


\subsection{Convergence on projective varieties of general type} \label{5.6}

Let $X$ be a minimal model of general type with log terminal singularities and so $K_X$ is big and nef.  Let $H$ be an ample $\mathbf{Q}$-divisor on $X$ and $\omega_0 \in \mathcal{K}_{H, p}$. We consider the normalized K\"ahler-Ricci flow on $X$.

\begin{equation}\label{normalflow}
\left\{
\begin{array}{rcl}
&&{ \displaystyle \ddt{ \omega} = -Ric(\omega) - \omega }\\
&&\\
&& \omega|_{t=0} = \omega_0 .
\end{array} \right.
\end{equation}
Let $\Omega$ be a smooth volume form on $X$ and $\chi \in \ddbar \log \Omega \in c_1(K_X)$.
Then the K\"ahler-Ricci flow (\ref{normalflow}) is equivalent to the following Monge-Amp\`ere flow.

\begin{equation}\label{normalmaflow}
\left\{
\begin{array}{rcl}
&&{ \displaystyle \ddt{\varphi} = \log \frac{ (\omega_t + \ddbar\varphi)^n}{\Omega} - \varphi }\\
&&\\
&& \varphi |_{t=0} = \varphi_0,
\end{array} \right.
\end{equation}
where $\omega_t = e^{-t} \omega_0 + (1 - e^{-t}) \chi.$

Now that $K_X$ is semi-ample as $X$ is a minimal model, the abundance conjecture holds for general type. The linear system $|mK_X|$ for sufficiently large $m>0$ induces a morhpism $$\pi: X  \rightarrow X_{can},$$  where $X_{can}$ is the canonical model of $X$. Without loss of generality, we can always assume that $\chi\geq 0 $ and $\chi$ is big. Furthermore, we can assume $\omega_0 \geq \epsilon \chi $ for sufficiently small $\epsilon>0$ since $H$ is ample.

The long time existence is guaranteed by Theorem \ref{kunpao} since $K_X$ is nef and $T_0=\infty$.
\begin{proposition} The weak normalized K\"ahler-Ricci flow (\ref{normalflow}) exists on $[0, \infty)\times X$ for any initial K\"ahler current $\omega\in\mathcal{K}_{H,p}(X)$ with $p>1$.

\end{proposition}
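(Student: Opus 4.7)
The plan is to convert the normalized flow into the unnormalized one by the classical time--scale substitution and then invoke the long-time existence already established for minimal models with nef canonical bundle, namely Corollary \ref{kunpao1}.

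First I would set $s = e^t - 1$ and $\tilde{\omega}(s,\cdot) = e^t \omega(t,\cdot)$. A direct differentiation, together with the scaling invariance of Ricci curvature, turns the normalized equation $\ddt{\omega} = -\ric(\omega) - \omega$ into the unnormalized flow $\partial_s \tilde{\omega} = -\ric(\tilde{\omega})$. The initial data are preserved at $t=s=0$, i.e.\ $\tilde{\omega}(0,\cdot) = \omega_0 \in \mathcal{K}_{H,p}(X)$ for some $p>1$. Under the unnormalized evolution the K\"ahler class becomes $[\tilde{\omega}(s)] = [H] + s[K_X]$, and since $H$ is ample and $K_X$ is nef by the minimal model hypothesis, this class is nef (in fact ample) for every $s\geq 0$. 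Hence the first singular time for the unnormalized flow,
$$T_0 = \sup\{ s>0 ~|~ H + s K_X \text{ is nef} \},$$
equals $+\infty$.

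Next I would apply Corollary \ref{kunpao1} to the pair $(X, H)$ with initial current $\omega_0 \in \mathcal{K}_{H,p}(X)$, obtaining a unique solution $\tilde{\omega}$ of the unnormalized weak K\"ahler-Ricci flow on $[0,\infty)\times X$ in the sense of Definition \ref{weakdef}. Inverting the substitution produces $\omega(t,\cdot) = e^{-t}\tilde{\omega}(e^t-1,\cdot)$, a weak solution of the normalized flow (\ref{normalflow}) on $[0,\infty)\times X$ with the prescribed initial data. Uniqueness follows by applying the same substitution to any competing solution and invoking the uniqueness half of Theorem \ref{kunpao}.

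There is no serious obstacle here: the substitution is smooth and invertible, and the one substantive step---that the class remains nef for all positive time so that the weak flow of Theorem \ref{kunpao} extends indefinitely---is furnished precisely by the minimality of $X$ together with the ampleness of $H$. The single routine verification is that the rescaling $e^t$ applied at $t=0$ leaves membership in $\mathcal{K}_{H,p}(X)$ unchanged, which is immediate from Definition \ref{khp}.
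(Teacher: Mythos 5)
Your proposal is correct and follows essentially the same route as the paper: the paper simply notes that long-time existence "is guaranteed by Theorem \ref{kunpao} since $K_X$ is nef and $T_0=\infty$," which implicitly relies on exactly the rescaling $s=e^t-1$, $\tilde{\omega}=e^t\omega$ that you make explicit to pass between the normalized and unnormalized weak flows. Your extra checks (ampleness of $H+sK_X$ for all $s\geq 0$ and preservation of the initial data and of $\mathcal{K}_{H,p}(X)$) are the right routine details and match the paper's intent.
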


\begin{lemma} There exists $C>0$ such that

\begin{equation}
||\varphi(t, \cdot)||_{L^\infty([0, \infty)\times X) }\leq C.
\end{equation}

\end{lemma}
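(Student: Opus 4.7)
The plan is to prove the upper and lower bounds for $\varphi$ separately. The upper bound is routine from the maximum principle: since $\omega_t = e^{-t}\omega_0 + (1-e^{-t})\chi \leq \omega_0 + \chi$ as forms and $\Omega$ is a smooth volume form bounded away from zero, $\log(\omega_t^n/\Omega) \leq C$ uniformly. At a spatial maximum of $\varphi(t,\cdot)$ one has $\ddbar\varphi \leq 0$, so the evolution equation gives $\dot\varphi \leq C - \varphi$, and hence $\sup_X \varphi(t,\cdot) \leq \max(\sup_X \varphi_0, C)$ for all $t\geq 0$.

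For the lower bound I will compare $\varphi$ with the potential of the canonical K\"ahler--Einstein current. Since $X$ is a minimal model of general type with log terminal singularities, $K_X$ is semi-ample by the base-point-free theorem, and Theorem~\ref{canmetric} provides a unique K\"ahler--Einstein current $\omega_{KE}$ on $X_{can}$ with bounded local potential satisfying $\mathrm{Ric}(\omega_{KE}) = -\omega_{KE}$. Pulling back along $\pi: X \rightarrow X_{can}$, we obtain $\psi \in L^\infty(X) \cap PSH(X, \chi)$ with $\chi + \ddbar\psi = \pi^*\omega_{KE}$ and, after normalizing $\Omega$ by a multiplicative constant, $(\chi+\ddbar\psi)^n = e^\psi \Omega$ in the sense of pluripotential theory. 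Set
\[
u(t,\cdot) = \varphi(t,\cdot) - (1-e^{-t})\psi.
\]
Then $\dot u = \log\frac{(\omega_t+\ddbar\varphi)^n}{\Omega} - \varphi - e^{-t}\psi$. At a spatial minimum of $u(t,\cdot)$, $\ddbar\varphi \geq (1-e^{-t})\ddbar\psi$, so
\[
\omega_t + \ddbar\varphi \geq e^{-t}\omega_0 + (1-e^{-t})(\chi+\ddbar\psi) \geq (1-e^{-t})(\chi+\ddbar\psi).
\]
Raising to the $n$-th power and using the K\"ahler--Einstein equation gives $\dot u \geq n\log(1-e^{-t}) + \psi - \varphi - e^{-t}\psi = n\log(1-e^{-t}) - u$ at the minimum. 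For $t \geq 1$ the inhomogeneous term is bounded below by $-C_0$, so a Gronwall-type argument yields $u \geq -C$ uniformly, and since $\psi$ is bounded this gives a uniform lower bound on $\varphi$ for $t \geq 1$. On $[0,1]$ the boundedness of $\varphi$ follows from Theorem~\ref{kunpao} applied to the equivalent unnormalized flow (related by a time-dependent rescaling).

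The main technical obstacle is rigorously justifying the maximum principle argument on the singular variety $X$, since $\varphi$ is only smooth on $X_{reg}$ and $\chi$ is only semi-positive and big (not K\"ahler), degenerating along the exceptional locus of $\pi$. This is handled by the approximation scheme developed in Sections~\ref{3} and~\ref{4}: lift the flow to a resolution $\pi': \tilde X \rightarrow X$, insert the barrier $\epsilon \log|S_{\tilde E}|^2_{h_{\tilde E}}$ into the test function to force the extremum into the smooth locus $\tilde X \setminus \tilde E$, carry out the maximum principle calculation there for the smooth approximating solutions $\varphi_{s,w,r}$ of the associated perturbed Monge--Amp\`ere flows, and then successively let $r,w,s,\epsilon \rightarrow 0$. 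The same approximation was already used to construct $\varphi$ in Theorem~\ref{liftflow} and Theorem~\ref{allinone}, and the comparison $\omega_t+\ddbar\varphi \geq (1-e^{-t})(\chi+\ddbar\psi)$ persists at the approximate level, which is what allows the Gronwall inequality to pass to the limit.
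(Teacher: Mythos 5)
Your proposal is essentially the paper's own argument: a maximum-principle comparison of $\varphi$ with the static K\"ahler--Einstein-type potential attached to $\chi$, made rigorous by lifting to a resolution, perturbing the data, and inserting the barrier $\epsilon\log|S_{\tilde E}|^2_{h_{\tilde E}}$ so that the extremum lands in the smooth locus. The one adjustment is that the pointwise comparison cannot be run against the fixed singular potential $\psi$, which is merely bounded (so ``$\ddbar\varphi\geq(1-e^{-t})\ddbar\psi$ at the minimum'' has no pointwise meaning there); exactly as in the paper, one replaces $\psi$ by the smooth solutions $\phi_{s,w,r}$ of $(\chi+s\vartheta+\ddbar\phi_{s,w,r})^n=e^{\phi_{s,w,r}}\Omega_{w,r}$, which are uniformly bounded in $L^\infty$ thanks to the uniform $L^p$ control of $\Omega_{w,r}$ (Lemma \ref{gugong}) and Kolodziej-type estimates, and your $(1-e^{-t})$-weight with the Gronwall step and the separate treatment of $[0,1]$ are harmless variants of that comparison.
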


\begin{proof} Let $\tilde{X}$ be a nonsingular model of $X$. Without loss of generality, we can consider the Monge-Amp\`ere flow (\ref{normalmaflow}) on $\tilde{X}$ by pullback and the following smooth approximation for the Monge-Amp\`ere flow as discussed in Section \ref{3.2}.

\begin{equation}\label{normalmaflowapp}
\left\{
\begin{array}{rcl}
&&{ \displaystyle \ddt{\varphi_{s,w,r}} = \log \frac{ (\omega_{t,s} + \ddbar\varphi_{s,w,r})^n}{\Omega_{w,r}} - \varphi_{s,w,r} }\\
&&\\
&& \varphi_{s,w,r} |_{t=0} = \varphi_{(0,s)},
\end{array} \right.
\end{equation}
where $\omega_{t,s}=\omega_t+ s\vartheta$ and $\Omega_{w,r}$ are defined as in Section \ref{3.2}.

Let $\phi_{s,w,r}\in C^\infty(\tilde{X})$ be the solution of the following Monge-Amp\`ere equation

\begin{equation}
(\chi+s\vartheta + \ddbar \phi_{s,w,r})^n = e^{\phi_{s,w,r}}\Omega_{w,r}.
\end{equation}
There exists $C>0$ such that for all $s, w, r \in (0,1]$, $$||\phi_{s,w,r}||_{L^\infty(\tilde{X})}\leq C.$$

Let $\psi_\epsilon = \varphi_{s,w,r} - \phi_{s,w,r} -\epsilon \log |S_{\tilde{E}}|_{h_{\tilde{E}}}$, where $\tilde{E}$ is a divisor whose support contains the exceptional locus of the resolution of $\tilde{X}$ over $X$ and $\chi - \epsilon Ric(h_{\tilde{E}})>0$ for sufficiently small $\epsilon>0$. Then similar argument by the maximum principle as in Section \ref{3.3} shows that $\psi_{\epsilon}$ is uniformly bounded from below for all $t\in [0, \infty)$ and for all sufficiently small $\epsilon>0$. Then by letting $\epsilon \rightarrow 0$, there exists $C>0$ such that for $t\in [0, \infty)$, $s$, $w$ and $r\in (0,1]$,  $$\varphi_{s,w,r}\geq -C.$$
Therefore $\varphi$ is uniformly bounded from below for all $t\in [0, \infty)$ by its definition.
The uniform upper bound of $\varphi$ can be obtained by similar argument.

\qed\end{proof}

\begin{lemma} Let $X^{\circ} = X_{reg}\setminus Exc(\pi)$. For any $K \subset\subset X^{\circ}$, $t_0$ and $k >0$ , there exists $C_{K, k, t_0}>0$ such that for $t\in [t_0, \infty)$,

\begin{equation}
||\varphi(t,\cdot)||_{ C_{\omega_0} ^k(X) } \leq C_{K,k,t_0}.
\end{equation}

\end{lemma}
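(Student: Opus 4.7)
The plan is to pass bounds through the smooth approximations $\varphi_{s,w,r}$ on a log resolution $\pi_{\tilde X}:\tilde X \to X$, as constructed in Sections \ref{3.2}--\ref{3.3} but adapted to the normalized Monge-Amp\`ere flow (\ref{normalmaflowapp}). Fix a compact $K \subset\subset X^{\circ} = X_{reg}\setminus Exc(\pi)$; on $K$ the form $\chi$ is smooth and strictly positive, hence uniformly equivalent to $\omega_0$, and we may freely measure everything with respect to $\omega_0$ or $\chi$ there. All estimates below will be uniform in the approximation parameters $s,w,r \in (0,1]$, and the final claim follows by letting $s,w,r \to 0$.

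First I would establish a two-sided bound on the volume ratio $\omega_{s,w,r}^n/\Omega$ on $[t_0,\infty)\times K$. The upper bound $\dot{\varphi}_{s,w,r} \leq C$ can be obtained globally on $\tilde X$ by applying the maximum principle to $H^+ = \dot\varphi_{s,w,r} + A\varphi_{s,w,r}$ for $A$ large, together with the already-established $L^\infty$ bound on $\varphi_{s,w,r}$; this is the normalized analogue of Lemma \ref{timebound1}. For the matching lower bound on $K$, apply the maximum principle to the barrier $H^- = \dot\varphi_{s,w,r} + A\varphi_{s,w,r} - A\phi_{s,w,r} + \epsilon \log|S_{\tilde E}|^2_{h_{\tilde E}}$, where $\phi_{s,w,r}$ is the bounded solution of $(\chi + s\vartheta + \ddbar\phi_{s,w,r})^n = e^{\phi_{s,w,r}}\Omega_{w,r}$ already used in the preceding lemma, and $\tilde E$ is chosen to dominate both $Exc(\pi_{\tilde X})$ and $\pi_{\tilde X}^{-1}(Exc(\pi))$. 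Sending $\epsilon \to 0$ and using that $\phi_{s,w,r}$ is uniformly bounded, one obtains $\omega_{s,w,r}^n \geq c_K \Omega$ on $K$.

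Second, I would run the parabolic Laplacian estimate of Lemma \ref{c2estimate1} adapted to the normalized flow, which yields $tr_{\omega_0}(\omega_{s,w,r}) \leq C\,|S_{\tilde E}|^{-2\alpha}_{h_{\tilde E}}$ globally on $\tilde X$ for uniform $C,\alpha>0$; restricting to $K$ this gives a uniform $C^2$ bound on $\varphi_{s,w,r}$ with respect to $\omega_0$ on $K$. Together with the two-sided volume bound, the Monge-Amp\`ere operator is then uniformly parabolic on $K$. Parabolic Evans-Krylov in local coordinates upgrades this to a $C^{2,\alpha}$ bound, and parabolic Schauder bootstrap yields $C^k$ bounds for every $k$, uniform in $(s,w,r)$. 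Passing to the limit $s,w,r \to 0$ gives the required bound on $\varphi$.

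The main obstacle is the localized lower bound on the volume form on $K$: globally on $X$, $\omega_t^n/\Omega$ must degenerate near $Exc(\pi)$ as $t\to\infty$, since the background class approaches $K_X$ which is trivial along the fibers of $\pi:X\to X_{can}$, so no global uniform lower bound can hold. The argument must therefore exploit the strict positivity of $\chi$ on $X^{\circ}$ through a carefully chosen barrier that tends to $-\infty$ along $Exc(\pi_{\tilde X}) \cup \pi_{\tilde X}^{-1}(Exc(\pi))$; the rest of the proof is a routine combination of parabolic maximum principle, Evans-Krylov and Schauder theory.
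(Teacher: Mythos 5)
Your argument is essentially the paper's own (which is only sketched there): lift to a resolution, obtain the two-sided bound on the volume ratio and the trace bound $tr_{\omega_0}(\omega_{s,w,r})\le C|S_{\tilde{E}}|^{-2\alpha}_{h_{\tilde{E}}}$ by adapting Lemma \ref{timebound1} and Lemma \ref{c2estimate1} to the normalized flow (uniformly in $t\ge t_0$, using the positivity of $\chi$ via Kodaira's lemma), and then conclude on $K$ by Evans--Krylov and Schauder, letting $s,w,r\to 0$. The only corrections needed are the signs in your two auxiliary barriers, which should match the paper's templates: for the upper bound take $H^+=\dot{\varphi}_{s,w,r}-A\varphi_{s,w,r}$ (with your choice $+A\varphi_{s,w,r}$ the evolution produces the uncontrolled nonnegative term $+A\,tr_{\omega_{s,w,r}}(\omega_{t,s})$, so the maximum principle does not close), and for the lower bound take $H^-=\dot{\varphi}_{s,w,r}+A\varphi_{s,w,r}-A\phi_{s,w,r}-\epsilon\log|S_{\tilde{E}}|^2_{h_{\tilde{E}}}$, so that $H^-\to+\infty$ along $\tilde{E}$ and its minimum is attained in the smooth locus as in Lemma \ref{timebound1}; with your sign $\inf H^-=-\infty$ and the minimum principle yields nothing (the resulting bound $\dot{\varphi}_{s,w,r}\ge \epsilon\log|S_{\tilde{E}}|^2_{h_{\tilde{E}}}-C$ is exactly what the trace estimate needs, and incidentally your heuristic that no global lower bound can hold is not what obstructs the argument).
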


\begin{proof} We can assume that $X$ is nonsingular with the cost of $\omega_0$ and $\Omega$ being degenerate. We first have to show that $tr_{\omega_0}(\omega)$ is uniformly bounded on $K$. This is achieved by similar arguments for Lemma \ref{c2estimate1}. Then the higher order estimates follow by standard argument.

\qed
\end{proof}

\begin{lemma} For any $t_0>0$, there exists $C>0$ such that on $[t_0, \infty)\times X$,

\begin{equation}
\ddt{\varphi} \leq C te^{-t}.
\end{equation}

\end{lemma}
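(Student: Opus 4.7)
The plan is to construct a barrier quantity that satisfies a differential inequality amenable to the parabolic maximum principle, then extract the exponential decay from its explicit form. I would set $u = \ddt{\varphi}$ and introduce the auxiliary function
$$H = (e^t - 1) u - \varphi - nt.$$
Differentiating the flow equation (using that $\Omega$ is time-independent and $\ddt{\omega_t} = e^{-t}(\chi - \omega_0)$) gives
$$\left(\ddt{} - \Delta_\omega\right) u = e^{-t}\, tr_\omega(\chi - \omega_0) - u.$$
Together with $\Delta_\omega \varphi = tr_\omega(\omega - \omega_t) = n - tr_\omega(\omega_t)$ and the algebraic identity $(1-e^{-t})(\chi - \omega_0) - \omega_t \equiv -\omega_0$, a direct computation collapses everything to
$$\left(\ddt{} - \Delta_\omega\right) H = -tr_\omega(\omega_0) \leq 0.$$

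Since $\varphi$ is only smooth on $X_{reg}\setminus Exc(\pi)$, I would run the maximum principle on the smooth approximating flows $\varphi_{s,w,r}$ constructed in Section \ref{3.3}, applied to the normalized equation (\ref{normalmaflowapp}); the corresponding barrier $H_{s,w,r} = (e^t-1)\ddt{\varphi_{s,w,r}} - \varphi_{s,w,r} - nt$ satisfies the same type of differential inequality, with $-tr_{\omega_{s,w,r}}(\omega_0 + s\vartheta)$ on the right, on the compact smooth manifold $\tilde X$. Fix any $t_0 > 0$ and let $t_0' = t_0/2$. The uniform $L^\infty$ bound on $\varphi_{s,w,r}$ already established for the normalized flow, together with the smoothing estimate $|t\,\ddt{\varphi_{s,w,r}}| \leq C$ (obtained by the argument of Lemma \ref{volsm} adapted to the normalized flow, namely maximum principle on $t\,\ddt{\varphi_{s,w,r}} - \varphi_{s,w,r}$), bounds $H_{s,w,r}(t_0', \cdot) \leq C$ uniformly in $s,w,r$. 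The parabolic maximum principle then gives $H_{s,w,r} \leq C$ on $[t_0', \infty)\times \tilde X$, and passing to the limit $s,w,r \to 0$ (via $C^\infty_{loc}$-convergence on $X_{reg}\setminus Exc(\pi)$) yields $H \leq C$ on $[t_0',\infty) \times (X_{reg}\setminus Exc(\pi))$.

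Solving for $u$ in the inequality $H \leq C$ and using the uniform bound on $\varphi$ gives
$$\ddt{\varphi} \leq \frac{C + \varphi + nt}{e^t - 1} \leq \frac{C' + nt}{e^t - 1}.$$
Since $e^t - 1 \geq \tfrac{1}{2}e^t$ for $t$ bounded below by any fixed positive constant, and $1 + t \leq (1 + t_0^{-1})\, t$ on $[t_0, \infty)$, we conclude $\ddt{\varphi} \leq C''\,t\,e^{-t}$ on $[t_0,\infty)\times X$, as desired. The only point needing care is verifying that the smoothing time-derivative estimate from Section \ref{3} carries over to the normalized flow and survives the limits $s,w,r \to 0$; this is a direct analogue of the unnormalized case and introduces no essentially new difficulty.
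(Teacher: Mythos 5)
Your proof is correct, and its core mechanism is the one the paper uses: apply the maximum principle to an exponentially weighted version of $\ddt{\varphi}$ corrected by the potential, then divide by the exponential factor. Your computation $(\ddt{}-\Delta)u = e^{-t}tr_\omega(\chi-\omega_0)-u$, the identity $(1-e^{-t})(\chi-\omega_0)-\omega_t=-\omega_0$, and the resulting $(\ddt{}-\Delta)H=-tr_\omega(\omega_0)\le 0$ for $H=(e^t-1)u-\varphi-nt$ all check out, as does the analogous inequality $-tr(\omega_0+s\vartheta)\le 0$ for the approximating flows. Where you differ from the paper is in two technical choices. First, the paper takes $H=e^t\ddt{\varphi}-A\varphi-Ant+\epsilon\log|S_{\tilde{E}}|^2_{h_{\tilde{E}}}$ with $A$ large so that $A\omega_0\ge\chi$, obtaining only the ODE-type inequality $(\ddt{}-\Delta)H\le -Ae^{-t}H-C$ and hence $H\le C(t+1)$; your choice of coefficients produces an exact cancellation, so a bare maximum principle gives $H\le C$ with no comparison argument and no largeness condition on $A$. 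Second, the paper handles the degeneracy (smoothness of $\varphi$ only on $X^\circ$) by the $\epsilon\log|S_{\tilde{E}}|^2_{h_{\tilde{E}}}$ barrier, which forces the maximum into the smooth locus and is removed by letting $\epsilon\to 0$; you instead run the estimate on the smooth approximating flows $\varphi_{s,w,r}$ on $\tilde{X}$ (as the paper itself does for the uniform $L^\infty$ bound in this section, via equation (\ref{normalmaflowapp})) and pass to the limit. The price of your route is exactly the ingredient you flag: a uniform upper bound on $\ddt{\varphi}_{s,w,r}$ at the fixed positive time $t_0/2$, which your Lemma \ref{volsm}-style argument on $t\ddt{\varphi}_{s,w,r}-\varphi_{s,w,r}$ does deliver here, since $\chi\ge 0$, $\omega_0\ge 0$, $1-(1+t)e^{-t}\ge 0$, and $\varphi_{s,w,r}$ is uniformly bounded; the paper needs the corresponding bound on $\ddt{\varphi}(t_0,\cdot)$ as well, so neither route escapes it. Both arguments then finish identically by absorbing the bounded $\varphi$ and converting $(1+t)e^{-t}$ into $Cte^{-t}$ on $[t_0,\infty)$.
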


\begin{proof} Notice that

$$(\ddt{} - \Delta) \ddt{\varphi} = - e^{-t} tr_{\omega} (\omega_0 -\chi ) - \ddt{\varphi} .$$

Let $H=e^t \ddt{\varphi} - A\varphi + \epsilon \log |S_{\tilde{E}}|_{h_{\tilde{E}}}^2- An t$, where $A>0$ is sufficiently large such that $A\omega_0 \geq \chi$ and $\epsilon>0$ is chosen to be sufficiently small.  Then there exists $C>0$ for all sufficiently small $\epsilon>0$ such that

\begin{eqnarray*}
(\ddt{} - \Delta)   H
&=& - tr_{\omega}( A\omega_t +\omega_0- \chi -\epsilon Rich(h_{\tilde{E}}) ) -A \ddt{\varphi}\\
&\leq& - A \ddt{\varphi}\\
&\leq& - A e^{-t} H -A^2\varphi + A\epsilon \log |S_{\tilde{E}}|_{h_{\tilde{E}}}^2 - Ant\\
&\leq& - A e^{-t} H - C.
\end{eqnarray*}

Since the maximum can only be achieved on $X^\circ$ and $H|_{t_0=0}$ is bounded from above, by the maximum principle, there exists $C>0$ such that on $[t_0, \infty)\times X$, $$ H \leq C (t+1).$$

Therefore there exists $C>0$ independent of $\epsilon$ such that on $[t_0, \infty) \times X$,

$$ \ddt{\varphi} \leq C e^{-t} ( 1 + t - \epsilon \log |S_{\tilde{E}}|_{h_{\tilde{E}}}^2).$$

The lemma is proved by letting $\epsilon \rightarrow 0$.

\qed\end{proof}

\begin{corollary}\label{volcon}

\begin{equation}
\lim_{t\rightarrow \infty} || \ddt{\varphi}||_{L^1(X)} = 0.
\end{equation}

\end{corollary}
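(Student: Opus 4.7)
The strategy is to upgrade the upper bound from the previous lemma to a two-sided estimate $|\dot\varphi| \le C(t+1)e^{-t}$, from which the $L^1$ conclusion is immediate:
\[
\int_X |\dot\varphi|\,\Omega \;\leq\; C(t+1) e^{-t}\,\mathrm{Vol}(X,\Omega) \;\longrightarrow\; 0.
\]
So it suffices to produce a matching lower bound $\dot\varphi \ge -C(t+1)e^{-t}$, which I would obtain by running a maximum principle argument on the resolution $\tilde X$ parallel to the proof of the previous lemma.

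First I would differentiate the Monge-Amp\`ere equation $\dot\varphi = \log((\omega_t+\ddbar\varphi)^n/\Omega) - \varphi$ in $t$. Using $\omega_t = e^{-t}\omega_0 + (1-e^{-t})\chi$ and $\dot\omega = -e^{-t}(\omega_0 - \chi) + \ddbar\dot\varphi$, one gets
\[
(\partial_t - \Delta)\dot\varphi \;=\; -\dot\varphi + e^{-t}\,\mathrm{tr}_\omega(\chi - \omega_0).
\]
Then I would apply the maximum principle to the test function
\[
\tilde H \;=\; -e^t \dot\varphi + A\varphi + \epsilon \log |S_{\tilde E}|^2_{h_{\tilde E}} - A n t
\]
on $\tilde X$, where $\tilde E$ is the Kodaira divisor so that $\omega_0 - \epsilon\,\mathrm{Ric}(h_{\tilde E}) > 0$, and $A$ is taken large while $\epsilon$ is small. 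Since $\tilde H \to -\infty$ along $\tilde E$, its supremum is attained on $\tilde X \setminus \tilde E$, where $\tilde H$ is smooth.

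A direct computation (using $\Delta\varphi = n - \mathrm{tr}_\omega(\omega_t)$ and solving back for $\dot\varphi$ in terms of $\tilde H$) yields
\[
(\partial_t - \Delta)\tilde H \;=\; \mathrm{tr}_\omega\!\bigl((Ae^{-t}+1)\omega_0 + (A(1-e^{-t}) - 1)\chi - \epsilon\,\mathrm{Ric}(h_{\tilde E})\bigr) \;-\; Ae^{-t}\tilde H + O(1).
\]
For $A$ large and $t \ge \log 2$ the bracketed $(1,1)$-form is positive (by Kodaira's lemma and the choice of $\epsilon$), so $\mathrm{tr}_\omega$ of it is non-negative. Hence at a maximum of $\tilde H$ one has $Ae^{-t}\tilde H \le O(1)$, which together with the initial bound on $\tilde H$ at $t=0$ gives a uniform upper bound $\tilde H \le C$. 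Unwinding the definition yields $-e^t \dot\varphi \le C - \epsilon \log|S_{\tilde E}|^2_{h_{\tilde E}} + An t$ on $\tilde X \setminus \tilde E$, and letting $\epsilon\downarrow 0$ pointwise gives $\dot\varphi \ge -C(t+1)e^{-t}$ on $X_{reg}$.

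The main obstacle will be the absorption of the perturbative term $-\epsilon\,\mathrm{tr}_\omega(\mathrm{Ric}(h_{\tilde E}))$ and of the $A^2 e^{-t}\varphi$ contribution into the $O(1)$ and damping terms; this forces the precise coupling between the factor $A$, the smallness of $\epsilon$, and the use of Kodaira's lemma. An alternative route, if the above computation proved too delicate in the singular setting, is to invoke monotonicity of a Mabuchi-type energy along the normalized flow, giving $\int_{t_0}^\infty \!\!\int_X (\dot\varphi)^2 \omega^n\,dt < \infty$, and then combine with the uniform upper bound on $\dot\varphi$ to extract $L^1$-convergence via Cauchy-Schwarz; this requires a careful smooth approximation on $X$ following Section~\ref{3.3}.
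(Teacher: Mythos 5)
Your main route rests on a two-sided bound $|\ddt{\varphi}|\leq C(t+1)e^{-t}$, and the proposed proof of the lower half has the maximum principle running in the wrong direction. With $\tilde H=-e^t\ddt{\varphi}+A\varphi+\epsilon\log|S_{\tilde E}|^2_{h_{\tilde E}}-Ant$ one computes
\begin{equation}
(\partial_t-\Delta)\tilde H=\mathrm{tr}_\omega\bigl((1+Ae^{-t})\omega_0+(A(1-e^{-t})-1)\chi+\epsilon\,Ric(h_{\tilde E})\bigr)-Ae^{-t}\tilde H+O(1),
\end{equation}
and the trace term now enters with a {\em plus} sign (this is exactly the asymmetry with the upper-bound lemma, where the analogous trace appears with a minus sign and can be discarded once the form is made positive). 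At a maximum of $\tilde H$ you only get $Ae^{-t}\tilde H\leq \mathrm{tr}_\omega(\cdots)+O(1)$; a nonnegative trace on the right does not help you bound $\tilde H$ from above, and $\mathrm{tr}_\omega(\omega_0)$ is not under control (it degenerates near $\tilde E$ and is only locally bounded away from it). Your sentence ``the bracketed form is positive, so $\mathrm{tr}_\omega$ of it is non-negative, hence $Ae^{-t}\tilde H\leq O(1)$'' is therefore a non sequitur. Moreover, even granting a favorable inequality $(\partial_t-\Delta)\tilde H\leq -Ae^{-t}\tilde H+O(1)$, the damping coefficient $Ae^{-t}$ only yields $\tilde H\lesssim e^{t}$, i.e.\ $\ddt{\varphi}\geq -C$, not exponential decay. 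A uniform bound $\ddt{\varphi}\geq -C(t+1)e^{-t}$ on all of $X$ is not established anywhere in the paper (and is stronger than what the known estimates give even for nonsingular minimal models of general type), so the corollary cannot be reduced to it.

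The paper's proof is designed precisely to avoid any lower bound on $\ddt{\varphi}$: from the one-sided bound $\ddt{\varphi}\leq e^{-t/2}$ for large $t$ and the uniform bound on $\varphi$ (so that $\int_{t_1}^{t_2}\ddt{\varphi}\,dt=\varphi(t_2,\cdot)-\varphi(t_1,\cdot)$ is bounded), one writes $|\ddt{\varphi}|\leq (e^{-t/2}-\ddt{\varphi})+e^{-t/2}$ and concludes $\int_T^\infty\|\ddt{\varphi}\|_{L^1(X)}\,dt<\infty$; then the monotonicity of $\varphi+e^{-t/2}$ and the uniform bound give convergence of $\varphi$, hence smooth convergence of $\ddt{\varphi}$ to some $F$ on $X^{\circ}$, and the time-integrability forces $F=0$. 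Your fallback suggestion (Mabuchi-type energy monotonicity plus Cauchy--Schwarz) is closer in spirit to this, but as stated it is only a sketch: you would still need to justify the energy inequality in the degenerate setting and to pass from $\int_{t_0}^\infty\int_X(\ddt{\varphi})^2\,dt<\infty$ to $\lim_{t\to\infty}\|\ddt{\varphi}\|_{L^1(X)}=0$, which again requires some form of the convergence argument the paper actually uses. As it stands, the proposal has a genuine gap at its central step.
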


\begin{proof} There exists $T>0$, such that $\ddt{\varphi} \leq e^{-t/2}$ for all $t\geq T$. Notice that $\int_{t_1}^{t_2}\ddt{\varphi} (t,z) dt = \varphi(t_2, z) - \varphi(t_1, z)$ is uniformly bounded for all $t_1, t_2 \geq 0$ and $\ddt{\varphi} - e^{-t/2} \leq 0 $ for all $t\geq T$. Then

\begin{equation}\label{l1con}
\int_T^\infty || \ddt{\varphi}||_{L^1(X)} dt < \infty.
\end{equation}

On the other hand, $\ddt{}(\varphi + e^{-t/2})\leq 0$ and so $\varphi+ e^{-t/2}$ is decreasing in time. Since $\varphi$ is uniformly bounded, there exists $\varphi_\infty \in PSH(X, \chi)\cap L^\infty(X)$ such that $\varphi$ converges to $\varphi_\infty$ in $L^1(X)$ and $C^\infty(X^{\circ})$ as $t\rightarrow \infty$. Hence $\ddt{\varphi}$ converges to a function $F\in L^\infty(X)\cap C^\infty(X^\circ)$ in $C^\infty(X^\circ)\cap L^1(X)$. Combined with (\ref{l1con}), $F=0$. Otherwise, $\int_T^\infty || \ddt{\varphi}||_{L^1(X)} dt  = \infty$. The corollary is then proved.

\qed
\end{proof}

\begin{proposition}\label{baoshan} Let $\varphi_\infty \in PSH(X, \chi)\cap L^\infty(X)$ be the unique solution of the Monge-Amp\`ere equation
\begin{equation}\label{ke1}
(\chi + \ddbar\varphi_\infty)^n = e^{\varphi_{\infty}} \Omega.
\end{equation}
Then   $\varphi$ converges to $\varphi_\infty$  in $L^1(X)\cap C^\infty(X^\circ) $ as $t\rightarrow \infty$.

\end{proposition}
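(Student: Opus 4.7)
The plan is to identify the limit $\varphi_\infty$ produced in the proof of Corollary \ref{volcon} as the unique bounded $\chi$-plurisubharmonic solution of (\ref{ke1}), so that both existence of the limit and its characterization come together.

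Recall from the proof of Corollary \ref{volcon} that $\varphi+e^{-t/2}$ is nonincreasing in $t$ and uniformly bounded, hence $\varphi(t,\cdot)$ converges (modulo the trivial term $e^{-t/2}$) pointwise almost everywhere to some $\varphi_\infty\in PSH(X,\chi)\cap L^\infty(X)$, and in fact in $L^1(X)$. Combining this with the interior higher-order estimates already established, $\varphi(t,\cdot)\to\varphi_\infty$ in $C^\infty_{loc}(X^{\circ})$, and $\ddt{\varphi}\to 0$ in $L^1(X)\cap C^\infty_{loc}(X^\circ)$.

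Next I would pass to the limit in the Monge-Amp\`ere flow (\ref{normalmaflow}) on the open set $X^\circ$. Since $\omega_t=e^{-t}\omega_0+(1-e^{-t})\chi$ tends to $\chi$ smoothly on $X^\circ$ and $\varphi\to \varphi_\infty$ there in $C^\infty_{loc}$, the right-hand side $\log\frac{(\omega_t+\ddbar\varphi)^n}{\Omega}-\varphi$ converges pointwise on $X^\circ$ to $\log\frac{(\chi+\ddbar\varphi_\infty)^n}{\Omega}-\varphi_\infty$, while the left-hand side vanishes in the limit. Exponentiating yields
\[
(\chi+\ddbar\varphi_\infty)^n \;=\; e^{\varphi_\infty}\,\Omega\qquad\text{on }X^\circ.
\]

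To promote this identity to an equality of Radon measures on $X$, I would invoke pluripotential theory: since $\varphi_\infty$ is bounded and $\chi$ has bounded local potentials, the Bedford-Taylor product $(\chi+\ddbar\varphi_\infty)^n$ is a well-defined Radon measure on $X$ that puts no mass on the pluripolar set $X\setminus X^\circ$ (after pulling back to a resolution and using that bounded $\chi$-psh functions are continuous in their Monge-Amp\`ere mass across proper analytic subsets). The measure $e^{\varphi_\infty}\Omega$ is also a Radon measure on $X$ that coincides with the Lebesgue-type density on $X^\circ$. Since the two measures agree on $X^\circ$ and the complement has Lebesgue measure zero and is pluripolar, they agree as measures on $X$, verifying (\ref{ke1}).

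Finally, uniqueness of a bounded $\chi$-psh solution of (\ref{ke1}) is the statement of Theorem \ref{canmetric}(1) for $X_{can}$ (pushed down via $\pi$) and follows in any case from a standard maximum-principle comparison applied to $\varphi_\infty-\psi$ for any competitor $\psi$: at a maximum of the difference one compares the Monge-Amp\`ere densities and concludes $\varphi_\infty\le \psi$, and symmetrically. Thus the limit $\varphi_\infty$ is uniquely determined, completing the convergence statement in $L^1(X)\cap C^\infty(X^\circ)$. The main obstacle in this program is the rigorous justification of step three, namely transferring the pointwise Monge-Amp\`ere identity on $X^\circ$ to a global identity on the singular variety $X$; this is where one must carefully invoke the continuity of the complex Monge-Amp\`ere operator on bounded psh functions together with the fact that the exceptional and singular loci are pluripolar.
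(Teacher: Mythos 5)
Your proposal is correct and follows essentially the same route as the paper: the limit $\varphi_\infty$ is extracted from the monotonicity and bounds established for Corollary \ref{volcon}, the vanishing of $\ddt{\varphi}$ lets one pass to the limit in the flow to obtain (\ref{ke1}), and uniqueness of the bounded $\chi$-psh solution identifies the limit. The only difference is that you spell out the pluripotential-theoretic step (no Monge-Amp\`ere mass on the pluripolar set $X\setminus X^\circ$) that the paper leaves implicit, which is a legitimate filling-in rather than a new argument.
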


\begin{proof} Let $\varphi_\infty$ be the limit of $\varphi$ as $t\rightarrow \infty$. By Corollary \ref{volcon}, $\ddt{\varphi}$ converges to $0$, and so $\varphi_\infty$ must satisfy equation (\ref{ke1}). The uniqueness of $\varphi_\infty$ follows from the uniqueness of the solution to the equation (\ref{ke1}) as $\varphi_\infty\in PSH(X, \chi)\cap L^\infty(X)$.

\qed\end{proof}

The K\"ahler current $\chi+\ddbar \varphi_{\infty}$ is exactly the pullback of the unique K\"ahler-Einstein metric  $\omega_{KE}$ on the canonical model $X_{can}$ of $X$ in Theorem \ref{canmetric}. The following theorem then follows from Proposition \ref{baoshan}.

\begin{theorem}\label{gtcon} Let $X$ be a minimal model of general type with log terminal singularities. For any $\mathbf{Q}$-ample divisor $H$ on $X$, the normalized weak K\"ahler-Ricci flow converges to the unique K\"ahler-Eintein metric $\omega_{KE}$ on the canonical model $X_{can}$ for any initial K\"ahler current in $\mathcal{K}_{H, p}(X)$ with $p>1$.

\end{theorem}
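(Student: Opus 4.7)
The plan is to combine Proposition \ref{baoshan} with the uniqueness of the K\"ahler-Einstein metric on the canonical model $X_{can}$ recorded in Theorem \ref{canmetric}. By Proposition \ref{baoshan}, the solution $\varphi$ of the normalized Monge-Amp\`ere flow (\ref{normalmaflow}) converges in $L^1(X)\cap C^\infty(X^\circ)$ to the unique bounded $\chi$-plurisubharmonic $\varphi_\infty$ satisfying $(\chi+\ddbar\varphi_\infty)^n = e^{\varphi_\infty}\Omega$. Since the damping factor $e^{-t}\omega_0$ in $\omega_t = e^{-t}\omega_0 + (1-e^{-t})\chi$ decays to zero, the evolving currents $\omega(t,\cdot) = \omega_t + \ddbar\varphi(t,\cdot)$ converge, in the same topologies, to $\omega_\infty := \chi+\ddbar\varphi_\infty$. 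So the entire task reduces to identifying $\omega_\infty$ with the pullback $\pi^*\omega_{KE}$ under the pluricanonical morphism $\pi: X\to X_{can}$.

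Because $X$ is a minimal model of general type, $K_X$ is big and nef and hence semi-ample by the base-point-free theorem (Theorem \ref{basepointfree}); Theorem \ref{safibration} then produces $\pi: X\to X_{can}$ together with an ample line bundle $A$ on $X_{can}$ such that $mK_X = \pi^* A$ for some $m>0$. I would choose the representative $\chi\in c_1(K_X)$ of the form $\pi^*\chi_{can}$ for a smooth semi-positive $\chi_{can}\in c_1(K_{X_{can}})$ obtained from a Fubini-Study metric on $X_{can}\subset \mathbf{CP}^{N}$, and correspondingly realize $\Omega$ as $\pi^*\Omega_{can}$ for a smooth volume form $\Omega_{can}$ on the log terminal variety $X_{can}$ (in the sense of Section \ref{4.1}, where such volume forms were defined using the $\mathbf{Q}$-Cartier structure of $K_{X_{can}}$). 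Since $[\chi]$ is trivial along the connected fibres of $\pi$ and $\varphi_\infty\in L^\infty(X)$, the bounded $\chi$-psh function $\varphi_\infty$ must be constant on each connected fibre, hence descends to a function $\hat\varphi_\infty\in PSH(X_{can},\chi_{can})\cap L^\infty(X_{can})$ solving $(\chi_{can}+\ddbar\hat\varphi_\infty)^n = e^{\hat\varphi_\infty}\Omega_{can}$ on $X_{can}$.

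By Theorem \ref{canmetric}(1) there exists a unique K\"ahler current $\omega_{KE}$ with bounded local potential on $X_{can}$ such that $\ric(\omega_{KE}) = -\omega_{KE}$; writing $\omega_{KE} = \chi_{can}+\ddbar\phi_{KE}$ rewrites the Einstein equation as the very same Monge-Amp\`ere equation solved by $\hat\varphi_\infty$. The uniqueness of bounded solutions on the log terminal variety $X_{can}$ (obtained from the comparison principle for bounded psh functions as in Theorem \ref{stability} and \cite{EGZ1}) then forces $\hat\varphi_\infty = \phi_{KE}$, so $\omega_\infty = \pi^*\omega_{KE}$, and the theorem follows from Proposition \ref{baoshan}. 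I expect the only delicate step to be the descent of $\varphi_\infty$ from $X$ to $X_{can}$: one must check that a merely bounded $\chi$-psh function, smooth only on $X^\circ$, genuinely extends through the exceptional locus of $\pi$ and is constant along its connected fibres, which requires the normality of $X_{can}$ together with the basic properties of $PSH$ on normal varieties recalled in Section \ref{2.3}. Everything else is either a direct appeal to Proposition \ref{baoshan} or a quotation from the uniqueness theory for degenerate complex Monge-Amp\`ere equations on varieties with log terminal singularities.
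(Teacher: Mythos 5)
Your proposal is correct and follows essentially the same route as the paper: Proposition \ref{baoshan} gives convergence to the bounded solution of $(\chi+\ddbar\varphi_\infty)^n=e^{\varphi_\infty}\Omega$, and the limit current $\chi+\ddbar\varphi_\infty$ is then identified with $\pi^*\omega_{KE}$ via Theorem \ref{canmetric}, which is exactly the one-line identification the paper makes. Your extra detail (choosing $\chi=\pi^*\chi_{can}$, $\Omega=\pi^*\Omega_{can}$ using $K_X=\pi^*K_{X_{can}}$, descending the bounded $\chi$-psh limit along the connected fibres, and invoking uniqueness of bounded solutions of the Monge-Amp\`ere equation on $X_{can}$) simply makes explicit what the paper leaves implicit, and is sound.
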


We have the following general theorem by combining Theorem \ref{longtime1} and Theorem \ref{gtcon} if the general type variety is not minimal.

\begin{theorem} Let $X$ be a projective $\mathbf{Q}$-factorial variety of general type with log terminal singularities. If there exists  a good initial divisor $H$ on $X$, then the normalized weak K\"ahler-Ricci flow starting with any initial K\"ahler current in $\mathcal{K}_{H, p}(X)$ with $p>1$ exists for $t\in [0,\infty)$ and   replaces $X$ by its minimal model $X_{min}$ after finitely many surgeries. Furthermore, the normalized K\"ahler-Ricci flow converges to the unique K\"ahler-Eintein metric $\omega_{KE}$ on its canonical model $X_{can}$.

\end{theorem}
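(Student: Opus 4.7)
The plan is to combine Theorem \ref{longtime1} with Theorem \ref{gtcon}, connecting the unnormalized and normalized weak K\"ahler-Ricci flows by a standard time rescaling.

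First I would establish the rescaling dictionary. If $\tilde{\omega}(\tau)$ solves $\partial_\tau \tilde{\omega}=-Ric(\tilde{\omega})$ on $[0,S)$, then setting $t=\log(1+\tau)$ and $\omega(t)=e^{-t}\tilde{\omega}(e^t-1)$ produces a solution of the normalized flow $\partial_t \omega=-Ric(\omega)-\omega$ on $[0,\log(1+S))$; conversely, any normalized solution rescales back to an unnormalized one. Under this change of variable, surgeries at unnormalized times $T_1<\cdots<T_m$ correspond to surgeries at normalized times $t_i=\log(1+T_i)$, and K\"ahler classes/potentials transform in a controlled way that preserves membership in the spaces $\mathcal{K}_{H,p}$.

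Next I would apply Theorem \ref{longtime1} to the pair $(X,H)$. Since $X$ is of general type, the MMP with scaling cannot terminate in a Mori fibre space (which is uniruled, hence not of general type), so the good initial divisor hypothesis forces the process to terminate with a minimal model $X_{min}=X_m$. Thus the unnormalized weak K\"ahler-Ricci flow replaces $X$ successively by $X_0,X_1,\ldots,X_m$ through finitely many divisorial contractions and flips, via Theorems \ref{thrudivcon} and \ref{thruflip}. At each step, the initial current on $X_{i+1}$ lies in $\mathcal{K}_{H_{i+1},p_{i+1}}(X_{i+1})$ for some $p_{i+1}>1$ by Propositions \ref{push1} and \ref{push2}, and $H_{i+1}$ remains ample by the standard MMP-with-scaling bookkeeping. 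After the last surgery at $T_m$, the unnormalized flow on $X_{min}$ exists for all $\tau\in[T_m,\infty)$ by Corollary \ref{kunpao1}.

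Translating back via the rescaling, the normalized flow starting from $\omega_0$ exists for all $t\in[0,\infty)$ and, past $t_m=\log(1+T_m)$, agrees with a normalized K\"ahler-Ricci flow on the minimal model $X_{min}$ of general type with ample initial class $H_m$ and initial current in $\mathcal{K}_{H_m,p_m}(X_{min})$ for some $p_m>1$. Then Theorem \ref{gtcon} applied to $(X_{min},H_m)$ yields convergence in $L^1(X_{min})\cap C^\infty(X_{min}^\circ)$ of the potentials to those of the pullback of the unique K\"ahler-Einstein current $\omega_{KE}$ on the canonical model $X_{can}$, which coincides with the canonical model of $X$ since $X$ and $X_{min}$ share the same canonical ring. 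The $L^1$ convergence of potentials delivers convergence of the metrics in the sense of distributions, as asserted.

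The main obstacle I expect is purely bookkeeping: ensuring that ampleness of the intermediate divisors $H_i$ and the $L^p$-integrability of the Monge-Amp\`ere mass (with some $p_i>1$ possibly decreasing with $i$) are preserved through each of the finitely many surgeries, so that the hypotheses of Theorem \ref{gtcon} are satisfied after the final surgery. This is in principle covered by Propositions \ref{push1} and \ref{push2}, but stringing them together across all $m$ steps, while keeping track of which nonsingular model one uses on each $X_i$ to measure $L^p$-mass and while verifying that the rescaling respects all the regularity/integrability notions, requires some care.
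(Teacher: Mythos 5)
Your proposal is correct and follows essentially the same route as the paper, which obtains this theorem precisely by combining Theorem \ref{longtime1} (finitely many surgeries leading to $X_{min}$, which exists and is reached rather than a Mori fibre space since $X$ is of general type) with Theorem \ref{gtcon} (convergence on the minimal model to $\omega_{KE}$ on $X_{can}$). The only difference is that you spell out the unnormalized-to-normalized time rescaling and the $\mathcal{K}_{H_i,p_i}$ bookkeeping through the surgeries, details the paper leaves implicit; these are handled exactly as you indicate by Propositions \ref{push1} and \ref{push2} and Corollary \ref{kunpao1}.
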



\section{Analytic Minimal Model Program with Ricci Flow} \label{6}

In this section, we lay out the program relating the K\"ahler-Ricci flow and the classification of projective varieties following \cite{SoT1} and \cite{T3}. The new insight is that the Ricci flow is very likely to deform a given projective variety to its minimal model and eventually to its canonical model coupled with a canonical metric of Einstein type, in the sense of Gromov-Hausdorff. We will start discussions with the case of projective surfaces. 


\subsection{Results on surfaces} \label{6.1}

  A smooth projective surface is minimal if it does not contain any $(-1)$-curve. Let $X_0$ be an projective surface of non-negative Kodaira dimension. If $X_0$ is not minimal, then the unnormalized K\"ahler-Ricci flow starting with any K\"aher metric $\omega_0$ in the class of an ample divisor  $H_0$ has a smooth solution until the first singular time $T_0 = \sup \{ t>0~|~H_0 + T_0 K_{X_0} ~\textnormal{is ~nef}\}$. The limiting semi-ample divisor $H_0 + T_0 K_{X_0}$ induces a morphism$$\pi_0: X_0 \rightarrow X_1$$ by contracting finitely many $(-1)$-curves. $X_1$ is smooth and there exists an $\mathbf{Q}$-ample divisor $H_1$ on $X_1$ such that $H_0 + T_0 K_{X_0} = \pi_0^* H_1$. Then by Theorem \ref{thrudivcon}, the unnormalized K\"ahler-Ricci flow can be continued  through the contraction $\pi_0$ at time $T_0$. Since there are finitely many $(-1)$-curves on $X$, the unnormalized K\"ahler-Ricci flow will arrive at a minimal surface $X_{min}$ or it collapses a $\mathbf{CP}^1$ fibration in finite time after repeating the same surgery for finitely many times.

It is still a largely open question if the K\"ahler-Ricci flow converges to the new surface in the sense of Gromov-Hausdorff at each surgery. The only confirmed case is the K\"ahler-Ricci flow on $\mathbf{CP}^2 $ blow-up at one point. More precisely, it is shown in \cite{SW} that the unnormalized K\"ahler-Ricci flow on $\mathbf{CP}^2 $ blow-up at one point converges to $\mathbf{CP}^2$ in the sense of Gromov-Hausdorff  if the initial K\"ahler class is appropriately chosen and the initial K\"ahler metric satisfies the Calabi symmetry. Then the flow can be continued on $\mathbf{CP}^2 $ and  eventually will be contracted to a point in finite time. This shows that the K\"ahler-Ricci flow deforms the non-minimal surface to a minimal surface in the sense of Gromov-Hausdorff. Similar behavior is also shown in \cite{SW} for higher-dimensional analgues of the Hirzebruch surfaces. This leads us to propose a conjectural program in the following section for general projective varieties.

After getting rid of all $(-1)$-curves,  we can focus on  the minimal surfaces divided into ten classes by the Enriques-Kodaira classification.

If $\kod(X_{min})=2$, $X_{min}$ is a minimal surface of general type and its canonical model $X_{can}$ is an orbifold surface achieved by contracting all the $(-2)$-curves on $X_{min}$. It is shown in \cite{TiZha} that the normalized K\"ahler-Ricci flow $\ddt{\omega}= -Ric(\omega) - \omega$ converges in the sense of distributions to the pullback of the orbiford K\"ahler-Einstein metric on the canonical model $X_{can}$.

If $\kod(X_{min})=1$, $X_{min}$ is a minimal elliptic fibration over its canonical model $X_{can}$.  It is shown in \cite{SoT1} that the normalized K\"ahler-Ricci flow $\ddt{\omega}= -Ric(\omega) - \omega$ converges in the sense of distributions to the pullback of the generalized K\"ahler-Einstein metric on the canonical model $X_{can}$.

If $\kod(X_{min})=0$, $K_X$ is numerically trivial. Yau's solution to the Calabi conjecture shows that there always exists a Ricci-flat K\"ahler metric in any given K\"ahler class on $X_{min}$. In particular, it is shown in \cite{C} that the unnormalized K\"ahler-Ricci converges in the sense of distributions to the unique Ricci-flat metric in the initial K\"ahler class.

If $X$ is Fano, then it is proved in \cite{Pe2} and \cite{TiZhu} that the normalized K\"ahler-Ricci flow $\ddt{\omega}= -Ric(\omega) + \omega$ with an appropriate initial K\"ahler metric will converge in the sense of Gromov-Hausdorff to a K\"ahler-Ricci soliton after normalization.

In general, the understanding of the K\"ahler-Ricci flow is still not completely understood for surfaces of $-\infty$ Kodaira dimension as the flow might collapse in finite time.


\subsection{Conjectures} \label{6.2}

In this section, we discuss our program in higher dimensions. Our proposal gives new understanding of the Minimal Model Program from the viewpoint of differential geometry. We refer it as the analytic Minimal Model Program.

\bigskip

\noindent{\bf Minimal Model Program with Ricci Flow}

\begin{enumerate}

\item[ {\bf  1.} ] We start with a triple $(X, H, \omega)$, where $X$ is  a $\mathbf{Q}$-factorial projective variety with log terminal singularities,   $H$ is a big semi-ample $\mathbf{Q}$-divisor on $X$ such that $H+\epsilon K_X$ is ample for sufficiently small $\epsilon>0$, and $\omega \in \mathcal{K}_{H, p}(X)$ for some $p>1$.  Let $$T_0 = \inf \{ t>0~|~  H + t K_X~is ~nef \}.$$

Let $\omega(t,\cdot)$ be the unique solution of the unnormalized weak K\"ahler-Ricci flow for $t\in [0, T_0)$. 

\begin{conjecture}  For each $t\in (0, T_0)$, the metric completion of  $X_{reg}$ by $ \omega(t, \cdot)$ is homeomorphic to $X$.
\end{conjecture}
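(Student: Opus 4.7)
The plan is to construct a canonical continuous surjection $\Psi: X \to \widehat{X_{reg}}^{\omega(t)}$ (where the right side is the metric completion) and then show it is also injective, so that by compactness it is automatically a homeomorphism. Fix $t\in(0,T_0)$ throughout.

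\textbf{Step 1 (Building $\Psi$ via path-length control).} I would combine the parabolic Schwarz lemmas of Lemma~\ref{c2smoothing} and Lemma~\ref{c2estimate1} applied to the lifted flow on a resolution $\pi:\tilde X\to X$. Those already give an upper bound $\pi^*\omega(t,\cdot)\leq C|S_{\tilde E}|_{h_{\tilde E}}^{-2\alpha}\vartheta$ on $\tilde X$, where the blow-up occurs only along the exceptional divisor $\tilde E$ and the exponent $\alpha$ may be taken so that $|S_{\tilde E}|^{-\alpha}\vartheta^{1/2}$ remains integrable along generic transverse complex slices. A path-integration argument then shows that for any $p\in X$ and any shrinking family of analytic neighborhoods $U_k\downarrow\{p\}$ the $\omega(t)$-diameter of $U_k\cap X_{reg}$ tends to zero. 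Hence any sequence in $X_{reg}$ converging in the analytic topology of $X$ is Cauchy in $d_{\omega(t)}$, and the inclusion $X_{reg}\hookrightarrow\widehat{X_{reg}}^{\omega(t)}$ extends to a continuous map $\Psi:X\to\widehat{X_{reg}}^{\omega(t)}$ whose image is dense and (by completeness of the target and compactness of $X$) is all of the completion.

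\textbf{Step 2 (Injectivity of $\Psi$).} Fix a projective embedding of $X$ via a large multiple of $H$ and let $\omega_{FS}$ denote the restricted Fubini--Study form. It suffices to exhibit a continuous nondecreasing $F$ with $F(0)=0$ such that $d_{FS}\leq F(d_{\omega(t)})$ on $X_{reg}$, since $(X,d_{FS})$ is Hausdorff. I would try to obtain this by a second parabolic Schwarz estimate producing $\mathrm{tr}_{\omega(t)}(\omega_{FS})\leq C(t)$ on $X_{reg}$, which gives $\omega_{FS}\leq C(t)\,\omega(t,\cdot)$ as $(1,1)$-forms and hence $d_{FS}\leq\sqrt{C(t)}\,d_{\omega(t)}$. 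Once both directions are in hand, $\Psi$ is a continuous bijection of compact Hausdorff spaces and is therefore a homeomorphism.

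\textbf{Main obstacle.} The hard part is clearly the lower bound in Step~2. The Schwarz lemma on a smooth K\"ahler manifold with a curvature-bounded reference metric is standard, but here $\omega_{FS}|_{X_{reg}}$ acquires unbounded holomorphic bisectional curvature as one approaches $X_{sing}$, and its pullback $\pi^*\omega_{FS}$ to $\tilde X$ is only semi-positive, degenerating precisely along the locus where one most needs control. Possible routes around this are: (i) allow a logarithmic correction $-\alpha(t)\log|S_{\tilde E}|_{h_{\tilde E}}^{2}$ in the maximum-principle test function so as to absorb the curvature singularity in a weakened Schwarz inequality; (ii) replace the single $\omega_{FS}$ by a family of locally-defined smooth K\"ahler reference metrics on a finite cover of $X$ and glue the local distance comparisons; or (iii) take a fundamentally different attack, establishing uniform diameter and non-collapsing bounds for the approximating smooth K\"ahler metrics $\omega_{s,w,r}$ from Section~\ref{3.2} --- these are genuine K\"ahler--Ricci flows on $\tilde X$ so Perelman's no-local-collapsing is available --- and passing to the Gromov--Hausdorff limit, identifying it with $X$ using that $\pi$ contracts exactly the exceptional locus together with the bounded-local-potential property of the limit.
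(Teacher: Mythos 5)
The statement you are trying to prove is not a theorem of the paper at all: it is stated there as a \emph{conjecture} (part of the proposed analytic Minimal Model Program in Section \ref{6.2}), and the paper offers no proof of it. So there is no argument of the authors to compare with, and your sketch does not close the gap either; both of its main steps rely on estimates that the paper does not provide and that are essentially equivalent to the open problem itself.

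Concretely, in Step 1 the upper bound from Lemma \ref{c2estimate1} (and its time-weighted analogue with exponent $\alpha/t$) reads $\pi^*\omega(t,\cdot)\leq C\,|S_{\tilde{E}}|_{h_{\tilde{E}}}^{-2\alpha}\vartheta$ with \emph{no} smallness control on $\alpha$; for the $\omega(t)$-length of a path hitting $\tilde{E}$ transversally to be finite one needs roughly $\alpha<1$, and without that the diameters of shrinking neighborhoods $U_k\cap X_{reg}$ need not tend to zero --- a priori the singular set could even lie at infinite distance, in which case your map $\Psi$ does not exist and the completion could be strictly larger than $X$. In Step 2 the key inequality $tr_{\omega(t)}(\omega_{FS})\leq C(t)$ on $X_{reg}$ is a uniform \emph{lower} bound on $\omega(t)$ near $X_{sing}$; nothing in the paper gives this (the paper only controls the volume form, $e^{-C/t}\Omega\leq\omega^n\leq e^{C/t}\Omega$, and upper bounds on $tr_{\vartheta}(\omega)$ with poles along $\tilde{E}$), and the Schwarz-lemma route is blocked exactly as you say because $\omega_{FS}$ has unbounded bisectional curvature near the singularities while $\pi^*\omega_{FS}$ degenerates on $\tilde{X}$. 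Your fallback (iii) also does not rescue the argument as stated: the approximants $\omega_{s,w,r}$ of Section \ref{3.2} solve perturbed Monge-Amp\`ere flows with reference data $\omega_{t,s}$, $\Omega_{w,r}$, not genuine Ricci flows on $\tilde{X}$, so Perelman's no-local-collapsing is not directly available, and even granting a Gromov--Hausdorff limit, identifying that limit with $X$ (rather than with some completion collapsing or inflating the exceptional/singular locus) is precisely the content of the conjecture. In short, what you have is a reasonable strategy outline with the decisive analytic inputs --- small diameter of neighborhoods of singular points and a two-sided metric comparison with a fixed background --- left unproven, and these are exactly the points the authors leave open.
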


We also conjecture that the Ricci curvature of $\omega(t, \cdot)$ is bounded from below. 

\item[{\bf 2.}]  If $T_0 =\infty $, then $X$ is a minimal model and the K\"ahler-Ricci flow  has long time existence. The abundance conjecture predicts that $K_X$ is semi-ample and $\kod(X) \geq 0$.

\begin{enumerate}

\item[2.1.]  $\kod(X)=\dim X$, i.e.,  $X$ is a minimal model of general type.

\begin{conjecture}
The normalized K\"ahler-Ricci flow $$\frac{\partial \tilde{\omega} } {\partial s}= - Ric(\tilde{\omega}) -\tilde{\omega}.
$$ starting with $\omega$ converges to the unique K\"ahler-Einstein metric $\omega_{KE}$ on $X_{can}$ in the sense of Gromov-Hausdorff as $s\rightarrow \infty$. \end{conjecture}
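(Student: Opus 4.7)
The plan is to upgrade the distributional convergence of Theorem~\ref{gtcon} to Gromov--Hausdorff convergence of the metric completions of $(X^\circ, \tilde\omega(s,\cdot))$ toward $(X_{can}, \omega_{KE})$, where $X^\circ = X_{reg}\setminus Exc(\pi)$ and $\pi: X \to X_{can}$ is the pluricanonical morphism. The natural starting point is to combine the potential convergence already established with Cheeger--Colding compactness, treating the collapsing locus $Exc(\pi)$ as the analytic analogue of a contraction.

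First I would record the smooth convergence on the open part: from the $C^\infty_{loc}(X^\circ)$ convergence $\varphi(t,\cdot)\to\varphi_\infty$ given by Proposition~\ref{baoshan}, the evolving metrics $\tilde\omega(s,\cdot)$ converge in $C^\infty_{loc}(X^\circ)$ to $\pi^*\omega_{KE}$, so that intrinsic distances and volumes on any $K\Subset X^\circ$ converge to their limiting values. The volume bounds $e^{-C/t}\Omega\leq \omega^n\leq e^{C/t}\Omega$ from Theorem~\ref{kunpao} and the uniform $L^\infty$ bound on $\varphi$ (with exponential decay of $\dot\varphi$ from the proof of Corollary~\ref{volcon}) should give uniform two-sided bounds $c\,\Omega\leq \tilde\omega^n\leq C\,\Omega$ globally on $X$ for $s$ large.

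Next, I would derive a uniform diameter bound. The strategy is to compare against the pullback metric: since $[\tilde\omega(s,\cdot)]\to [K_X]=\pi^*[K_{X_{can}}]$ and $\omega_{KE}$ has finite diameter by the work of Eyssidieux--Guedj--Zeriahi combined with the bounded potential theory, one needs to show that the extrinsic diameter of the fibers of $\pi$ shrinks uniformly. For this I would set up a parabolic Schwarz lemma as in Lemma~\ref{c2estimate1} comparing $\tilde\omega$ with $\pi^*\omega_{KE}$ on one hand, and with a fixed K\"ahler form $\vartheta$ on the other, using auxiliary singular potentials of the form $\epsilon\log|S_{\tilde E}|^2_{h_{\tilde E}}$ as in Section~\ref{3.2} to localize. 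Together with the smooth convergence on $X^\circ$, this should give an upper diameter bound and also a lower bound on volumes of balls in $X^\circ$ (non-collapsing on the open part).

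The decisive step is a uniform lower Ricci bound along the flow, which is the essential input for Gromov and Cheeger--Colding precompactness. Writing $-Ric(\tilde\omega)=\partial_s\tilde\omega+\tilde\omega$ and using $\partial_s\tilde\omega = \chi + \ddbar\dot\varphi - \tilde\omega$ where $\chi\in c_1(K_X)$ is smooth, one reduces the problem to controlling $\ddbar\dot\varphi$ uniformly from below on all of $X$. Standard differentiation of the Monge--Amp\`ere flow yields a heat-type equation for $\dot\varphi$ (see the calculation preceding Corollary~\ref{volcon}), and the exponential decay $|\dot\varphi|\leq Cte^{-t}$ together with a barrier argument against $\epsilon\log|S_{\tilde E}|^2$ should give $Ric(\tilde\omega)\geq -K$ uniformly on $X^\circ$; the delicate question is whether this bound extends, in an appropriate weak sense, across $Exc(\pi)\cup X_{sing}$. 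Once all these ingredients are in place, Gromov precompactness extracts a subsequential GH limit; the smooth convergence on $X^\circ$ and the density of $\pi(X^\circ)$ in $X_{can}$ identify this limit isometrically with $(X_{can},\omega_{KE})$, and uniqueness of $\omega_{KE}$ promotes subsequential to full convergence.

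The hard part will be the uniform Ricci lower bound together with non-collapsing along the exceptional locus: the fibers of $\pi$ must collapse to points in a metrically controlled manner, without bubbling off nontrivial curvature concentrations. This is precisely the phenomenon absent from the smooth minimal-model case in \cite{TiZha} and is new even for canonical models with isolated singularities; I expect it will require a Perelman-type non-collapsing estimate adapted to the degenerate K\"ahler--Ricci flow, combined with the pluripotential estimates of Section~\ref{3} to control the geometry near $Exc(\pi)$.
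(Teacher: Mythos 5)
You have not proved the statement, and neither does the paper: this is posed as a conjecture. What the paper actually establishes (Theorem \ref{gtcon}, and Theorem C.1) is convergence of the normalized flow in the sense of distributions on $X$ and in $C^\infty$ on $X^\circ = X_{reg}\setminus Exc(\pi)$, via the potential estimates of Section \ref{5.6}; the authors explicitly remark that Gromov--Hausdorff convergence is only ``desired.'' Your first step therefore reproduces what is known, but everything after it is a program rather than a proof, and you yourself flag the decisive ingredients as open. That honesty is good, but it means the proposal has genuine gaps exactly where the conjecture is hard.

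Concretely: (1) the uniform lower Ricci bound cannot be obtained the way you suggest. Writing $-Ric(\tilde\omega)=\partial_s\tilde\omega+\tilde\omega$ reduces the problem to bounding $\ddbar\dot\varphi$ from below, but the decay $|\dot\varphi|\leq Cte^{-t}$ is a $C^0$ estimate and gives no control whatsoever on the complex Hessian of $\dot\varphi$; a barrier of the form $\epsilon\log|S_{\tilde E}|^2_{h_{\tilde E}}$ produces $C^0$ or trace-type bounds in the paper's arguments (Lemmas \ref{timebound1}, \ref{c2estimate1}), never two-sided Hessian bounds on $\dot\varphi$, and indeed no uniform Ricci lower bound along the flow is known even for nonsingular minimal models of general type. (2) The diameter bound and non-collapsing near $Exc(\pi)\cup X_{sing}$ do not follow from a parabolic Schwarz lemma: that lemma bounds $tr_{\tilde\omega}(\pi^*\omega_Y)$ from above, which says nothing about the intrinsic diameter of the fibers of $\pi$ or about volume ratios of small balls meeting the exceptional set; Perelman-type non-collapsing is not available in this degenerate setting, as you note. (3) Even granting precompactness, identifying the GH limit with the metric completion of $(X^\circ,\pi^*\omega_{KE})$ requires knowing that the limit's singular set is negligible (e.g.\ via Cheeger--Colding structure theory), which again presupposes the missing Ricci bound. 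So the proposal correctly isolates the obstructions but does not overcome any of them; as it stands it proves nothing beyond the distributional and $C^\infty_{loc}(X^\circ)$ convergence already contained in Theorem \ref{gtcon}.
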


The weak convergence in distribution and smooth convergence outside the exceptional locus is obtained in \cite{Ts} and \cite{TiZha}  if $X$ is nonsingular. If $X$ is nonsingular and $K_X$ is ample, it is the classical result in \cite{C} that the flow converges in $C^\infty(X)$-topology.

\item[2.2.] $0<\kod(X)<\dim X$.  
\begin{conjecture} The normalized K\"ahler-Ricci flow $$\frac{\partial \tilde{\omega} } {\partial s}= - Ric(\tilde{\omega}) -\tilde{\omega}.
$$ starting with $\omega$ converges to the unique generalized K\"ahler-Einstein metric $\omega_{can}$ on $X_{can}$ (as in Theorem \ref{canmetric}) in the sense of Gromov-Hausdorff  as $s\rightarrow \infty$. \end{conjecture}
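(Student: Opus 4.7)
The plan is to combine the Song--Tian framework of \cite{SoT1,SoT2}, which treats the smooth minimal case, with the singular-initial-data machinery developed in this paper, and then upgrade the resulting weak/locally smooth convergence to Gromov--Hausdorff convergence. Writing $\omega_{0}\in\mathcal{K}_{H,p}(X)$ and $\chi=\ddbar\log\Omega\in c_{1}(K_{X})$, the normalized flow is equivalent on the smooth locus to the parabolic Monge--Amp\`ere equation
\begin{equation*}
\partial_{s}\varphi=\log\frac{(e^{-s}\omega_{0}+(1-e^{-s})\chi+\ddbar\varphi)^{n}}{\Omega}-\varphi .
\end{equation*}
Since $X$ is minimal, $K_{X}$ is nef, so Corollary \ref{kunpao1} (after the standard rescaling from the unnormalized to the normalized flow) yields long-time existence on $[0,\infty)\times X$, and the entire problem is asymptotic in $s$.

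First I would establish a uniform $L^{\infty}$ bound for $\varphi$ together with a decay estimate $|\partial_{s}\varphi|\leq Ce^{-\mu s}$ on relatively compact subsets of $X\setminus E$, where $E\supset X_{sing}\cup \pi^{-1}(X_{can,sing}\cup\Delta)$ contains both the singular locus of $X$ and the pullback of the discriminant $\Delta$ of the Iitaka fibration $\pi:X\to X_{can}$. This can be done by lifting to a log resolution, running the perturbation scheme of Section \ref{3.3}, and adapting the maximum-principle arguments of Section \ref{5.6} with the classical right-hand side $e^{\varphi_{\infty}}\Omega$ replaced by the generalized K\"ahler--Einstein right-hand side $e^{\varphi_{\infty}}\mathcal{F}$, where $\mathcal{F}$ is the Weil--Petersson measure on $X_{can}$ provided by Theorem \ref{canmetric}. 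The limit $\varphi_{\infty}$ descends to $X_{can}$ and, by the stability Theorem \ref{stability}, must coincide with the potential of $\omega_{can}$; a parabolic Schwarz lemma plus higher-order bootstrapping then gives $C^{\infty}_{loc}$-convergence of $\tilde\omega(s,\cdot)$ to $\pi^{\ast}\omega_{can}$ on $X_{reg}\setminus E$.

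The main obstacle is the passage from this weak-plus-locally-smooth convergence to Gromov--Hausdorff convergence of $(X,\tilde\omega(s,\cdot))$ to $(X_{can},\omega_{can})$: the potential-theoretic estimates above are blind to the metric collapse of the Calabi--Yau fibers. Concretely, one needs three metric ingredients that do not follow from the Monge--Amp\`ere estimates: (i) a uniform diameter bound for $(X,\tilde\omega(s))$; (ii) a fiber-collapsing estimate $\mathrm{diam}_{\tilde\omega(s)}(\pi^{-1}(z))\to 0$ uniformly for $z$ outside a shrinking neighborhood of $X_{can,sing}\cup\Delta$; and (iii) a horizontal comparison $d_{\tilde\omega(s)}(x,y)\to d_{\omega_{can}}(\pi(x),\pi(y))$. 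For (ii) one would construct a semi-flat approximation built from the Ricci-flat K\"ahler metrics on the smooth Calabi--Yau fibers, generalizing the elliptic case of \cite{SoT1} and showing that fiber diameters contract like $e^{-s/2}$; (i) and (iii) would then be extracted from a parabolic version of Gross--Tosatti--Zhang type collapsing estimates. The deep difficulty, and the reason we state the result only as a conjecture, is controlling this collapse simultaneously across the log terminal singularities of $X$, the log terminal singularities of $X_{can}$, and the singular fibers of $\pi$; even a uniform Ricci or scalar-curvature lower bound along the singular weak flow that persists across these loci is currently out of reach.
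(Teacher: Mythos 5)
The statement you are asked about is not a theorem of the paper at all: it is Conjecture 2.2 of the analytic Minimal Model Program in Section \ref{6.2}, and the paper offers no proof, only the remark that weak convergence in the sense of distributions is known from \cite{SoT1}, \cite{SoT2} when $X$ is nonsingular and $K_X$ is semi-ample. Your write-up is therefore correctly calibrated in its conclusion: you do not claim a proof, you identify the Gromov--Hausdorff convergence (diameter bound, fiber collapse, horizontal distance comparison) as the genuinely open core, and this matches exactly why the paper states the result only as a conjecture. As a comparison with "the paper's approach," there is nothing to compare beyond the citation of \cite{SoT1}, \cite{SoT2}; your proposed route through the machinery of Sections \ref{3.3} and \ref{5.6} is the natural one and is consistent with the paper's philosophy.

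One caution about your ``first step,'' which you present as routine: it overclaims relative to what is actually known even in the smooth case. For $0<\kod(X)<\dim X$ the results of \cite{SoT1}, \cite{SoT2} give uniform bounds and convergence of potentials (hence weak convergence of the currents), not an exponential decay $|\partial_s\varphi|\leq Ce^{-\mu s}$ nor $C^\infty_{loc}$-convergence of $\tilde\omega(s,\cdot)$ to $\pi^*\omega_{can}$ away from a bad set; the collapsing of the Calabi--Yau fibers obstructs smooth convergence of the metrics, and only much weaker local estimates are available. Moreover, identifying the limit via Theorem \ref{stability} is not a direct application: the limiting equation on $X_{can}$ is the twisted equation $Ric(\omega_{can})=-\omega_{can}+\omega_{WP}$ of Theorem \ref{canmetric}, whose right-hand side arises only after pushing forward by $\pi$ and integrating over the fibers, so the reduction of the flow on $X$ to a Monge--Amp\`ere problem on $X_{can}$ with an $e^{\varphi}\mathcal{F}$ right-hand side is itself a nontrivial step (carried out in \cite{SoT1}, \cite{SoT2} for nonsingular $X$, and not yet for log terminal $X$). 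So even the part of your program that you treat as established would need substantial work before the metric-collapse issues you rightly single out can be addressed.
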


If $K_X$ is semi-ample, $X$ admits a Calabi-Yau fiberation over its canonical model $X_{can}$.
The weak convergence in distribution is obtained in \cite{SoT1} and \cite{SoT2} if $X$ is nonsingular and $K_X$ is semi-ample.

\item[2.3.] $\kod(X)=0$. $K_X$ is numerically trivial.  \begin{conjecture} The unnormalized K\"ahler-Ricci flow $$\ddt{\omega} = - Ric(\omega) $$ converges to the unique Ricci-flat K\"ahler metric in $[H]$ in the sense of Gromov-Hausdorff as $t\rightarrow \infty$.
\end{conjecture}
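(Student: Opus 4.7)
The plan is to reduce the flow to a degenerate parabolic complex Monge-Amp\`ere equation in which the K\"ahler class is preserved. Since $K_X$ is numerically trivial and $X$ has log terminal singularities, some positive multiple $mK_X$ is trivial in the Picard group (by the Beauville-Bogomolov type decomposition in the singular Calabi-Yau setting), so a nowhere-vanishing section $\eta \in H^0(X, mK_X)$ yields a smooth volume form $\Omega = (\eta \wedge \bar{\eta})^{1/m}$ with $\chi := \ddbar \log \Omega = 0$ on $X$. Fixing a smooth K\"ahler reference $\omega_0 \in [H]$ and writing $\omega = \omega_0 + \ddbar \varphi$, the unnormalized K\"ahler-Ricci flow becomes
\begin{equation*}
\ddt{\varphi} = \log \frac{(\omega_0 + \ddbar\varphi)^n}{\Omega}, \qquad \varphi(0,\cdot) = \varphi_0 \in PSH_p(X,\omega_0,\Omega),
\end{equation*}
and the class $[\omega(t,\cdot)] = [H]$ is preserved in time. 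Long-time existence of a unique solution $\omega(t,\cdot) \in \mathcal{K}_{H,p}(X)$, smooth on $X_{reg}$ for $t>0$, is already supplied by Corollary \ref{kunpao1}.

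Next I would identify the candidate limit and prove weak/smooth convergence. By the singular Calabi-Yau theorem of \cite{EGZ1} (cf.\ Theorem \ref{canmetric} and the discussion after it), there is a unique $\varphi_\infty \in PSH(X,\omega_0)\cap L^\infty(X)$, normalized appropriately, solving $(\omega_0 + \ddbar\varphi_\infty)^n = c\,\Omega$ with $c = [H]^n/\int_X \Omega$; set $\omega_\infty = \omega_0 + \ddbar\varphi_\infty$. After subtracting $t\log c$ to absorb the constant volume drift, $u(t,\cdot) := \varphi(t,\cdot) - t\log c - \varphi_\infty$ satisfies $\partial_t u = \log((\omega_\infty + \ddbar u)^n / \omega_\infty^n)$. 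Working on a log resolution via the smooth approximation of Sections \ref{3.2}--\ref{3.3}, the maximum principle gives that $\sup_X u(t,\cdot)$ is non-increasing and $\inf_X u(t,\cdot)$ is non-decreasing, so both converge; a parabolic energy/entropy monotonicity adapted from Cao's argument \cite{C}, carried out on the approximate flow with the uniform $L^p$ control of the volume density built into $\mathcal{K}_{H,p}(X)$, then yields $||\partial_t\varphi - \log c||_{L^2(X,\Omega)} \to 0$ at an exponential rate. Combined with the stability Theorem \ref{stability}, this upgrades to $||\varphi(t,\cdot) - t\log c - \varphi_\infty||_{L^\infty(X)} \to 0$, and the smoothing estimates of Section \ref{3} then give smooth convergence of $\omega(t,\cdot)$ to $\omega_\infty$ on $X_{reg}$.

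The hardest step is the Gromov-Hausdorff upgrade. Uniform $L^\infty$ and smooth-on-the-regular-locus convergence of potentials does not control metric geometry near $X_{sing}$ or along the exceptional set of a resolution. To finish, I would combine (i) a uniform diameter bound for $(X_{reg},\omega(t,\cdot))$, expected from preservation of $[H]$ and a uniform non-degenerate volume lower bound; (ii) a uniform Sobolev/non-collapsing inequality along the flow, deduced from a uniform scalar curvature bound, available away from the singular set by Theorem A.2 and via $-S(\omega) = \ddt{}\log(\omega^n/\Omega)$ together with the exponential decay of $\dot\varphi$, with Corollary A.3 providing the fully uniform bound in the orbifold or crepant case; and (iii) an identification of the Gromov-Hausdorff limit with the metric completion of $(X_{reg},\omega_\infty)$, via a partial $C^0$-estimate for pluri-canonical embeddings. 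The main obstacle is precisely this last point: one must rule out distance distortion and bubbling near $X_{sing}$, showing that the $\omega(t,\cdot)$-distances extend continuously across the singular set uniformly in $t$. In the orbifold or crepant case this is tractable via Corollary A.3 and the orbifold smoothness of the flow, but the general log terminal case requires a substitute for Perelman's pseudolocality valid across log terminal singularities and a regularity theory for singular Ricci-flat K\"ahler metrics that is not yet fully developed.
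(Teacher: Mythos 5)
There is a genuine gap, and it is worth being explicit that the statement you were asked to prove is one the paper itself does not prove: it is Conjecture 2.3 of the analytic Minimal Model Program in Section \ref{6.2}, and the paper only records what is known around it, namely Cao's $C^\infty(X)$-convergence when $X$ is smooth \cite{C} and a weak convergence result for log terminal $X$ attributed to \cite{SY} (in preparation). Your first two paragraphs are a reasonable sketch of that known/expected part: with $mK_X$ trivial one gets a smooth volume form $\Omega$ with $\ddbar\log\Omega=0$, the class $[H]$ is preserved, long time existence is Corollary \ref{kunpao1}, the candidate limit is the singular Ricci-flat current of \cite{EGZ1} (cf.\ Theorem \ref{canmetric}), and a Cao-type argument run through the approximations of Sections \ref{3.2}--\ref{3.3}, together with the stability Theorem \ref{stability}, plausibly gives $L^\infty$-convergence of potentials and smooth convergence on $X_{reg}$. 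Even there, the asserted exponential $L^2$-decay of $\dot\varphi-\log c$ needs a Poincar\'e/Sobolev inequality uniform along the flow on the singular variety, which you do not supply; but this is a repairable issue compared with the main one.

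The main gap is that the conjecture asserts Gromov--Hausdorff convergence, and your third paragraph does not prove it -- it lists the missing ingredients and concedes they are missing. Concretely: (i) no uniform (in $t$) diameter bound or non-collapsing estimate for $(X_{reg},\omega(t,\cdot))$ is established; (ii) Theorem A.2 and Corollary A.3 give scalar curvature bounds $C(t)$ that are neither uniform as $t\to\infty$ nor valid near the exceptional/singular locus in the general log terminal case, so they cannot feed a uniform Sobolev inequality; (iii) the identification of the GH limit with the metric completion of $(X_{reg},\omega_\infty)$ requires a partial $C^0$-estimate and a pseudolocality-type statement across log terminal singularities, neither of which exists in the paper or in your argument. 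Potential-theoretic convergence ($L^\infty$ of potentials plus $C^\infty_{loc}$ on $X_{reg}$) does not control distances across $X_{sing}$, so it does not imply GH convergence. This is precisely why the statement is left as a conjecture in the paper; your proposal should be presented as a proof of the weak convergence statement, with the GH upgrade clearly flagged as open rather than as a step in the proof.
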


It is shown in \cite{C} that the flow converges in $C^\infty(X)$-topology if $X$ is smooth. The weak convergence is obtained in \cite{SY} if $X$ has log terminal singularities.

\end{enumerate}

\item[{\bf  3.}] If $T_0<\infty$,  then the semi-ample divisor  $ H + T_0 K_X $ induces a contraction $$\pi: X \rightarrow Y. $$

\begin{enumerate}

\item[3.1] $\dim Y=\dim X$.

\begin{conjecture}   As $t\rightarrow T_0$, $ (X, \omega(t, \cdot))$ converges to a metric space $(X^+, \omega_{X^+})$ along the unnormalized K\"ahler-Ricci flow $$\ddt{\omega} = - Ric(\omega) $$ in the sense of 
Gromov-Hausdorff. Furthermore, $\omega(t, \cdot)$ converges in $C^\infty$ outside a subvariety $S$ of $X$, and $(X^+, \omega_{X^+})$ is the metric completion of the smooth limit of $(X\setminus S, \omega(T_0, \cdot))$ .  Here $X^+$ is a normal projective variety satisfying the following diagram
 \begin{equation}
\begin{diagram}\label{diag4}
\node{X} \arrow{se,b,}{\pi}  \arrow[2]{e,t,..}{(\pi^+)^{-1}\circ\pi }     \node[2]{X^+} \arrow{sw,r}{\pi^+} \\
\node[2]{Y}
\end{diagram}
\end{equation}
and  $X^+$ is $\pi^+$-ample. $ \pi^+: X^+ \rightarrow Y$ is a general flip of $X$.

Let $H_{X^+}$ be the strict transformation of $H+T_0K_X$ by the general flip. Then $K_{X^+}$ and $H_{X^+}$ are both $\mathbf{Q}$-Cartier with $H_{X^+}+\epsilon K_{X^+}$ being ample for sufficiently small $\epsilon>0$, and $\omega_{X^+}\in \mathcal{K}_{H_{X^+}, p}(X^+)$ for some $p'>1$.

\end{conjecture}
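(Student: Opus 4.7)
The starting point is Theorem B.1(2): under our hypotheses the algebraic flip $(X^+,\pi^+)$ exists by \cite{BCHM}, $\mathbf{Q}$-factoriality is preserved so that $H_{X^+}$ and $K_{X^+}$ are both $\mathbf{Q}$-Cartier, and that theorem already produces a K\"ahler current $\omega_{X^+}\in\mathcal{K}_{H_{X^+},p'}(X^+)$ with bounded local potentials to which $\omega(t,\cdot)$ converges in $C^\infty(X_{\mathrm{reg}}\setminus\mathrm{Exc}(\pi))$ via $(\check\pi^{-1})^*$. The algebraic assertions that $H_{X^+}+\epsilon K_{X^+}$ is ample for small $\epsilon>0$ and that $K_{X^+}$ is $\pi^+$-ample are standard consequences of the flip construction and the negativity of $K_X$ on the contracted ray (see \cite{KMM}). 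What remains, and is the whole analytic novelty, is to upgrade this convergence to a Gromov--Hausdorff statement and to realize the GH limit, as a metric space, as the metric completion of $(X\setminus S,\omega(T_0,\cdot))$ and identify it with $(X^+,\omega_{X^+})$.

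The plan proceeds in three steps. First, I would derive a uniform diameter bound $\mathrm{diam}(X,\omega(t,\cdot))\le C$ for $t\in[T_0/2,T_0)$, by combining the $L^\infty$-estimate on the K\"ahler potential from Theorem \ref{tianzhang1} with a parabolic Schwarz-lemma estimate bounding $\omega(t,\cdot)$ above by a fixed multiple of $\pi^*\omega_Y$ outside a shrinking neighbourhood of $\mathrm{Exc}(\pi)$; boundedness of the limiting potential is exactly what makes a Demailly-type comparison applicable. Second, I would show that the $\pi$-fibres collapse, i.e.\ $\mathrm{diam}_{\omega(t,\cdot)}(\pi^{-1}(y))\to 0$ uniformly in $y$, by exploiting that $[\omega_t]$ restricts trivially to the fibres in the limit $t\to T_0$ and that $-K_X$ is $\pi$-ample on $\mathrm{Exc}(\pi)$; a volume estimate of the form $\int_{\pi^{-1}(y)}\omega(t,\cdot)^k\to 0$ at a rate dictated by the rationality threshold of Theorem \ref{rationality}, together with a Bishop--Gromov type comparison applied on the resolution, should force the desired collapse on fibres. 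Third, having a candidate GH limit that coincides set-theoretically with $Y$ and carries the descended current $\omega_Y$, I would run a Donaldson--Sun style peak-section argument, applied to sections of $mH_{X^+}$ on $X^+$ rather than $mH_Y$ on $Y$, together with smoothness of $\omega_{X^+}$ off a subvariety of codimension $\ge 2$ of $X^+$, to reconstruct the projective structure of $X^+$ from the metric data; this promotes the set-theoretic identification to a biholomorphism with $X^+$ and exhibits $X^+$, not $Y$, as the object recorded by the flow.

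The hardest step will be the second one, the fibre-collapsing estimate, because small contractions contract loci of codimension $\ge 2$ on which no induced volume form is available and the parabolic Schwarz lemma degenerates along the fibres. A natural route is to work on the smooth model $\tilde X$ of Section \ref{5.2}, apply the uniform estimates of Theorem \ref{tianzhang1} on compact subsets $K\subset\subset\tilde X\setminus E_Y$, and then pass to the limit using the weight $|S_{E_Y}|^{2\epsilon}_{h_{E_Y}}$ to absorb the logarithmic blow-up of curvature near $E_Y$. A secondary obstacle is establishing a uniform Ricci lower bound on $\omega(t,\cdot)$ for $t\in[T_0/2,T_0)$, needed to invoke Gromov precompactness; in parallel with the first conjecture of the analytic MMP program stated earlier in this section, one expects $\mathrm{Ric}(\omega(t,\cdot))\ge -C$ and a reduction via the maximum principle applied to $\mathrm{tr}_{\omega(t,\cdot)}\omega_Y$ together with the $C^\infty$ convergence off $\mathrm{Exc}(\pi)$ already furnished by Theorem B.1(2).
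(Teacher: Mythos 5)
The statement you are addressing is a \emph{conjecture} (Section \ref{6.2}, case 3.1 of the analytic MMP program); the paper offers no proof of it, and indeed says explicitly in Section \ref{6.1} that Gromov--Hausdorff convergence through surgery is ``still a largely open question,'' confirmed only for $\mathbf{CP}^2$ blown up at one point with Calabi symmetry \cite{SW}. What the paper actually proves is strictly weaker: Theorem \ref{thruflip} (Theorem B.1(2)), via Theorem \ref{tianzhang1} and Proposition \ref{push2}, gives existence of the limit current $\omega_{X^+,0}\in\mathcal{K}_{H_{X^+},p'}(X^+)$, the $\mathbf{Q}$-Cartier and ampleness statements, and convergence in $C^\infty(X_{reg}\setminus Exc(\pi))$. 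Your first paragraph correctly identifies that part as already established, but the remaining three-step plan does not close the gap between that and the conjecture; it is a research outline whose pivotal steps are precisely the open content.

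Concretely: (i) Gromov precompactness needs a uniform lower Ricci bound on $[T_0/2,T_0)$, which the paper itself only conjectures (the remark after the first conjecture in Section \ref{6.2}), and your proposed derivation from the maximum principle for $\mathrm{tr}_{\omega}\,\omega_Y$ plus $C^\infty$ convergence off $Exc(\pi)$ cannot work, since it gives no control of curvature along the contracted fibres, exactly where blow-up occurs (scalar curvature must blow up at a finite-time singularity, cf.\ \cite{Z3}, cited in Section \ref{4.3}). (ii) The fibre-collapse step leans on a Bishop--Gromov comparison, which presupposes the missing Ricci bound; the cohomological vanishing of $[\omega_t]|_{\pi^{-1}(y)}$ as $t\to T_0$ does not by itself yield a diameter estimate, and for a small contraction there is no divisorial volume to integrate against. (iii) The identification of the metric limit with $X^+$ rather than $Y$ is not reachable by a peak-section argument for $mH_{X^+}$ alone: since $\pi$ and $\pi^+$ are small, $X^+$ and $Y$ agree outside codimension~$\geq 2$, and sections of $mH_{X^+}$ correspond to sections of $m(H+T_0K_X)$ on $X$, i.e.\ they only reconstruct $Y$. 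Distinguishing the flip requires input from the continued flow at times $t>T_0$, where $K_{X^+}$ enters the evolving class; your plan never uses the post-surgery flow, so even granting steps (i)--(ii) it would at best identify the completion with $Y$, not $X^+$. In short, the algebraic and weak-analytic assertions you quote are the paper's theorems, but the Gromov--Hausdorff statement remains open and your proposal does not supply the missing estimates.
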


We then repeat {\bf Step 1} by replacing $(X,H,\omega)$ with $(X^+, H_{X^+}, \omega_{X^+})$ even though $X^+$ is not necessarily $\mathbf{Q}$-factorial. Note that a divisorial contraction is also a general flip if we choose $\pi^+$ to be the identity map.

\item[3.2] $0<\dim Y < \dim X$. $X$ then admits a Fano fibration over $Y$.
\begin{conjecture} As $t\rightarrow T_0$, $(X, \omega(t, \cdot))$ converges to a metric space $(Y', \omega_Y')$ along the unnormalized K\"ahler-Ricci flow $$\ddt{\omega} = - Ric(\omega) $$  in the sense of  Gromov-Hausdorff. Let $H_{Y'}$ be the divisor where $\omega_{Y'}$ lies. Then both $K_{Y'}$ and $H_{Y'}$ are $\mathbf{Q}$-Cartier, and $\omega_{Y'} \in \mathcal{K}_{H_{Y'}, p'} (Y')$ for some $p'>1$.

\end{conjecture}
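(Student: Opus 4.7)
The plan is to combine the parabolic Monge--Ampère machinery of Section \ref{3} with the collapsing-fibration techniques developed in \cite{SoT1}, \cite{SoT2}. Fix a resolution $\mu : \tilde X \to X$ and pull the flow back to $\tilde X$. Writing $\omega = \omega_t + \ddbar\varphi$ with $\omega_t = \omega_0 + t\chi$, the limiting class $[\omega_{T_0}] = [H+T_0K_X]$ is semi-ample but not big (since $\dim Y < \dim X$), and equals $\pi^* H_Y$ for some ample class $H_Y$ on $Y$. Kodaira's lemma on $\tilde X$ no longer applies in the same way, so one must first reprove a weaker $C^0$ bound: after subtracting $\pi^* \psi_Y$ for an appropriately normalized potential on $Y$, the relative potential should stay uniformly bounded in $L^\infty$ by a variant of Kolodziej's estimate adapted to the fibered Monge--Ampère equation of Weil--Petersson type.

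Next I would establish partial $C^\infty$ estimates away from the singular locus $\tilde E$ of the resolution together with the exceptional locus of $\pi$, by the parabolic Schwarz and Calabi-type third-order arguments already used in Lemma \ref{c2estimate1} and its descendants. A crucial input is the collapsing volume estimate $\omega(t,\cdot)^n / \pi^* \omega_Y^{\dim Y} \to c_f \cdot \Omega_{X/Y}^n$ along each smooth Fano fibre $f$, where $\Omega_{X/Y}^n$ is (essentially) the relative Kähler--Einstein volume form. This forces the potentials to descend: $\varphi(t,\cdot)$ should become constant along generic fibres in the limit, so $\varphi(t,\cdot) \to \pi^* \varphi_\infty$ for some $\varphi_\infty \in \mathrm{PSH}(Y,\omega_Y) \cap L^\infty$ solving a generalized Monge--Ampère equation on $Y$ of the form
\begin{equation}
(\omega_Y + \ddbar\varphi_\infty)^{\dim Y} = F \, \Omega_Y,
\end{equation}
where $F$ is determined by the fibrewise Fano geometry and $\Omega_Y$ is the natural pushed-forward measure. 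This identifies $Y'$ as (a minor normal modification of) $Y$ with $H_{Y'} = H_Y$ and $\omega_{Y'} = \omega_Y + \ddbar\varphi_\infty \in \mathcal{K}_{H_{Y'}, p'}(Y')$ for some $p'>1$ via a H\"older inequality on the Jacobian of $\pi$ as in Proposition \ref{push1}.

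The main obstacle — and the reason this remains a conjecture — is the promotion of weak convergence of currents and smooth convergence away from subvarieties to \emph{Gromov--Hausdorff} convergence of $(X, \omega(t,\cdot))$ to $(Y', \omega_{Y'})$. Even for the model case of $\mathbf{CP}^1$-bundles treated by Song--Weinkove \cite{SW}, Gromov--Hausdorff convergence required strong ansatz symmetries and quite delicate diameter and injectivity-radius estimates. For a general Fano fibration one must rule out bubbling of Kähler--Ricci solitons on the fibres, control the diameter of collapsing fibres (uniformly to zero), and extract a uniform lower bound on the Ricci curvature of the base limit — none of which follow from the pluripotential estimates alone. A plausible strategy is to use Perelman's pseudo-locality and the recent scalar-curvature boundedness results of Zhang to argue that the fibres shrink uniformly, then combine with the Cheeger--Colding structure theory for Ricci limits with collapsing; this last step is where genuinely new ideas seem required.
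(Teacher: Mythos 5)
The statement you are addressing is not proved in the paper at all: it is one of the conjectures formulated in Section \ref{6.2} as part of the proposed analytic Minimal Model Program, and the authors offer no argument for it beyond the heuristic discussion and the surface/Hirzebruch evidence of \cite{SW}. So there is no ``paper proof'' for your proposal to match, and your submission is not a proof either --- it is a strategy outline, and by your own admission the decisive step is missing. Concretely, the gap is twofold. First, everything the paper's machinery actually delivers (Theorem \ref{tianzhang1}, Corollary \ref{descend}, Theorems \ref{thrudivcon} and \ref{thruflip}) is conditioned on $H+T_0K_X$ being \emph{big}: Kodaira's lemma and the divisor $E_Y$ with $\mu^*[H+T_0K_X]-\epsilon[E_Y]$ ample are what drive the $L^\infty$, volume and local $C^\infty$ estimates. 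In the case at hand the limiting class is semi-ample but not big, so none of those estimates transfer; your ``variant of Kolodziej's estimate adapted to the fibered Monge--Amp\`ere equation'' and the claimed uniform bound on the relative potential after subtracting $\pi^*\psi_Y$ are themselves substantial open problems in this collapsing setting, not routine adaptations of Lemma \ref{c0estimate1} or Lemma \ref{c2estimate1}. Moreover, your asserted fibrewise volume asymptotics presuppose a canonical K\"ahler--Einstein (or soliton) reference volume on the Fano fibres, which need not exist, so even the limit equation on $Y$ that is supposed to identify $\omega_{Y'}$ is not well defined in general; by contrast, the collapsing results of \cite{SoT1}, \cite{SoT2} that you invoke concern Calabi--Yau fibres, where such a fibrewise measure does exist.

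Second, even granting the pluripotential part, the conjecture's actual content is Gromov--Hausdorff convergence of $(X,\omega(t,\cdot))$ to $(Y',\omega_{Y'})$ together with the $\mathbf{Q}$-Cartier and $\mathcal{K}_{H_{Y'},p'}(Y')$ statements, and you correctly observe that weak convergence of currents plus $C^\infty$ convergence off a subvariety does not yield this: one needs uniform diameter collapse of the fibres, exclusion of bubbling, and control sufficient to apply a Cheeger--Colding type structure theory, none of which follow from the maximum-principle estimates of Sections \ref{3}--\ref{5}. Since the paper leaves exactly these points open (this is why the statement is a conjecture), your proposal should be presented as a programme consistent with the paper's philosophy --- the descent of potentials and the $L^{p'}$ bound via the H\"older/Jacobian argument do mirror Proposition \ref{push1} --- rather than as a proof; as written it establishes neither the existence and identification of $(Y',\omega_{Y'})$ nor the Gromov--Hausdorff convergence.
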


We then repeat {\bf Step 1} by replacing $(X,H,\omega)$ by $(Y', H_{Y'}, \omega_{Y'})$.

\item [3.3] If $\dim Y=0$,  $X$ is Fano and $\omega \in -T_0 [K_X]$.  %

Then we have the following generalized Hamilton-Tian conjecture.
\begin{conjecture}\label{htcon} Then the normalized K\"ahler-Ricci flow $$\frac{\partial \tilde{\omega} } {\partial s}= - Ric(\tilde{\omega}) + \frac{1}{T_0} \tilde{\omega}.
$$ starting with $\omega$ converges to a K\"ahler-Ricci soliton $(X_\infty, \omega_{KR})$ in the sense of Gromov-Hausdorff as $s\rightarrow \infty$.

\end{conjecture}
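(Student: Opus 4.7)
\medskip

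\noindent\textbf{Proof Proposal for Conjecture \ref{htcon}.}
The plan is to adapt the Perelman–Tian–Zhu strategy for the smooth Fano case to the present singular setting, using the smoothing and uniform estimates proved in Section \ref{3} as a substitute for smoothness of the initial data. Write the normalized flow as $\tilde\omega(s)=T_0^{-1}\omega(T_0(1-e^{-s/T_0}))$ with $\tilde\omega\in -[K_X]$, so that $\tilde\varphi$ satisfies a Monge--Ampère equation of the form $\partial_s\tilde\varphi=\log\frac{(-\mathrm{Ric}(\Omega)+\ddbar\tilde\varphi)^n}{\Omega}+\tilde\varphi+c$ on $X_{reg}$, where $\Omega$ is a smooth volume form representing $-K_X$. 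The $C^0$ and volume estimates from Theorem \ref{allinone} and Theorem \ref{kunpao} show that for each $s>0$ the evolving current $\tilde\omega(s)$ has bounded local potential and a volume form comparable to $\Omega$ away from $X_{sing}$, so the flow is smooth on $X_{reg}$ for $s>0$.

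The first step is to establish Perelman-type estimates on the regular part: bounded scalar curvature, bounded diameter, and non-collapsing. I would run Perelman's argument on the approximating smooth Monge--Ampère flows $\tilde\varphi_{s,w,r}^{(\delta)}$ from Section \ref{3.2}, apply the monotonicity of the $\mathcal W$- and $\mu$-functionals (which make sense on the smooth approximants), and pass these bounds to the limit using the uniform estimates of Section \ref{3.3}. The Q-factoriality together with the log terminal condition ensures $\int_X\Omega<\infty$ and that $-K_X$ induces a genuine polarization, which is what makes the functionals bounded below. Combined with Theorem \ref{smoothman}-style scalar curvature estimates, one obtains that on $(X_{reg},\tilde\omega(s))$ one has $|R|\le C$, $\mathrm{diam}\le C$, and a uniform non-collapsing lower bound for balls centred in $X_{reg}$.

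Next I would extract a Gromov--Hausdorff subsequential limit $(X_\infty,\omega_{KR})$. By the non-collapsing and the $L^p$-bound on $\mathrm{Ric}$ coming from the $\mathcal W$-monotonicity (or from Bamler's more recent theory), Cheeger--Colding style compactness gives a metric space $X_\infty$ which is smooth off a codimension-four singular set and carries a shrinking gradient Kähler--Ricci soliton structure on its regular part. Algebraicity of $X_\infty$ would then be obtained by partial $C^0$-estimates for pluri-anticanonical embeddings: the uniform bounds on the potentials from Section \ref{3} yield uniform Bergman-kernel lower bounds for $|{-mK_X}|$ with $m$ large, so the images $\Phi_m(X,\tilde\omega(s_j))\subset\mathbb{CP}^{N_m}$ converge, after suitable projective automorphisms, to a normal $\mathbf Q$-Fano variety $X_\infty$ with log terminal singularities, and the GH-limit metric is the pullback of the Fubini--Study metric twisted by the soliton potential.

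The main obstacle, by a wide margin, is the last step: identifying $X_\infty$ as a \emph{single} limit (i.e.\ proving full, not just subsequential, convergence) and showing that the soliton produced is unique up to automorphisms of $X_\infty$. Even in the smooth case this required the deep work of Tian--Zhu on uniqueness of Kähler--Ricci solitons and the partial $C^0$-estimate, and in the singular setting one additionally has to control the behaviour of the flow across $X_{sing}$, ensure that the degeneration to $X_\infty$ is algebraic (a K-semistable degeneration of the Q-Fano $X$), and match the analytic and algebro-geometric notions of stability. In the generality stated here, where $X$ is only assumed $\mathbf Q$-factorial log terminal, a complete proof will require substantial new input beyond the methods of the present paper; I would therefore view the proposal above as a framework rather than a complete argument, with the expectation that the conjecture follows once a suitable singular version of the Hamilton--Tian/Bamler compactness theory and of K-stability for log terminal Fanos is available.
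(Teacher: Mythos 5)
You should note first that the statement you were asked to prove is stated in the paper as Conjecture \ref{htcon} (the generalized Hamilton--Tian conjecture): the paper offers no proof of it, only the remarks that Perelman announced a proof in the K\"ahler--Einstein case and that Tian--Zhu treated Fano manifolds admitting a K\"ahler--Ricci soliton. So there is no argument in the paper against which your proposal can be matched, and your own closing paragraph correctly concedes that what you have written is a framework rather than a proof. Judged as a proof attempt it therefore has a genuine gap --- in fact several, and they are exactly the points you flag: full (not merely subsequential) Gromov--Hausdorff convergence, algebraicity and uniqueness of the limit soliton, and a partial $C^0$-estimate along the flow, none of which were available at the time even in the smooth Fano case, let alone for $\mathbf{Q}$-factorial log terminal $X$.

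Beyond the admitted gaps, one step in your sketch would not go through as written. You propose to run Perelman's $\mathcal W$/$\mu$-monotonicity and non-collapsing arguments on the approximating flows $\varphi^{(\delta)}_{s,w,r}$ of Sections \ref{3.2}--\ref{3.3} and then pass the bounds to the limit. But those approximants live on a resolution $\tilde X$, where the reference class $\pi^*\omega_0$ is only big and semi-ample and the reference volume form $\Omega_{w,r}$ degenerates (zeros and poles along the exceptional divisor as $w,r\to 0$); the estimates of Section \ref{3} are precisely of the form $C\,|S_{\tilde E}|^{-2\alpha}_{h_{\tilde E}}$ and blow up along $\tilde E$. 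Perelman's scalar curvature, diameter and $\kappa$-non-collapsing bounds are \emph{global} Riemannian estimates, and there is no mechanism in the paper's approximation scheme to make the corresponding functionals or injectivity-radius-type quantities uniform in $(s,w,r,\delta)$ near the exceptional locus; the scalar curvature control the paper does obtain (Theorem \ref{smoothman}) is only for crepant or orbifold singularities and is time-dependent. So the first step of your plan already requires new ideas (a singular analogue of Perelman's estimates and of the Hamilton--Tian/Bamler compactness theory), which is consistent with the paper's decision to leave the statement as a conjecture.
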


 Perelman \cite{Pe2} announced a proof for this conjecture for K\"ahler-Einstein manifolds. A proof is given for Fano manifolds with  a K\"ahler-Ricci soliton by Tian-Zhu \cite{TiZhu}.

\medskip

It is conjectured by Yau \cite{Y2} that the existence of a K\"ahler-Einstein metric on a Fano manifold is equivalent to suitable stability in the sense of geometric invariant theory. The condition of $K$-stability is proposed by Tian \cite{T1} and is refined by Donaldson \cite{Do}. The Yau-Tian-Donaldson conjecture claims that the existence of K\"ahler metrics with constant scalar curvature is equivalent to the $K$-stability (possibly with some additional milder conditions on holomorphic vector fields). Since the K\"ahler-Ricci flow provides an approach to such a conjecture for K\"ahler-Einstein metrics and it has attracted considerable current interest. We refer the readers to an incomplete list of literatures \cite{PS1}, \cite{PS2}, \cite{TiZhu}, \cite{PSSW1}, \cite{PSSW2}, \cite{Sz} and \cite{To}  for some recent development.  
\end{enumerate}

\end{enumerate}

\bigskip
\noindent
{\bf Acknowledgements.} \  The authors would particularly like to thank Chenyang Xu for many inspiring discussions and for bringing MMP with scaling to the authors' attention. They would also like to thank Valentino Tosatti for a number of  helpful suggestions on a previous draft of the paper. The first named author is  grateful to Professor D.H. Phong for his advice, encouragement and support. He also wants to thank Yuan Yuan for some helpful discussions and comments.

\bigskip
\bigskip
\bigskip
\bigskip

\footnotesize



\begin{thebibliography}{99}

\bibitem[A]{Au} Aubin, T. {\em Equations du type Monge-Amp\`ere sur les vari\'et\'es K\"ahleriennes compacts}, Bull. Sc. Math. 102 (1976),
119-121

\bibitem[BM]{BM} Bando, S. and Mabuchi, T.  {\em Uniqueness of Einstein
K\"ahler metrics modulo connected group actions}, Algebraic
geometry, Sendai, 1985, 11-40, Adv. Stud. Pure Math., 10,
North-Holland, Amsterdam, 1987

\bibitem[BCHM]{BCHM} Birkar, C., Cascini, P.,  Hacon C. and McKernan, J. {\em  Existence of minimal models for varieties of log general type  }, preprint, arXiv:math/0610203

\bibitem[C]{C}
Cao, H. {\em Deformation of K\"ahler metrics to K\"ahler-Einstein
metrics on compact K\"ahler manifolds}, Invent. Math. 81 (1985),
no. 2, 359-372.



\bibitem[CD]{CD} Chen, X.X, Ding, W., {\em Ricci flow on surfaces with degenerate initial metrics}, J. Partial Diff. Eqs. 20 (2007), 193-202

\bibitem[CT]{CT} Chen, X.X., Tian, G.; {\em Geometry of K\"ahler metrics and foliations by holomorphic discs}, Publ. Math. Inst. Hautes \'Etudes Sci. No. 107 (2008), 1--107

\bibitem[CTZ]{CTZ} Chen, X.X., Tian, G. and Zhang, Z. {\em On the weak K\"ahler-Ricci flow}, preprint, arXiv:0802.0809

\bibitem[ChY]{ChY} Cheng, S. Y. and Yau, S. T. 
{\em Differential equations on Riemannian manifolds and their
geometric applications}, Comm. Pure Appl. Math. 28 (1975), no. 3,
333-354 

\bibitem[Ch]{Ch} Chow, B. {\em The Ricci flow on the $2$-sphere},
 J. Differential
Geom. 33 (1991), no. 2, 325-334


\bibitem[CKM]{CKM} Clemens, H,, Kollar, J. and Mori, S. {\em
Higher-dimensional complex geometry}, Ast\'eisque No. 166 (1988),
144 pp. (1989)

\bibitem[D]{D} Debarre, O. {\em Higher-dimensional algebraic geometry}, Universitext. Springer-Verlag, New York, 2001. xiv+233 pp

\bibitem[DP]{DP} Demailly, J-P. and Pali, N. {\em Degenerate complex Monge-Amp\`ere equations over compact K\"ahler manifolds}, to appear in International Journal of Mathematics, 

\bibitem[DZ]{DZ} Dinew, S. and Zhang, Z. {\em Stability of Bounded Solutions for Degenerate Complex Monge-Amp\`ere equations}, preprint, arXiv:0711.3643

\bibitem[Do]{Do} Donaldson, S.K.  {\em Scalar curvature and stability of toric
varieties}, J. Differential Geom. {\bf 62} (2002), no. 2, 289--349


\bibitem[EGZ1]{EGZ1} Eyssidieux, P., Guedj, V. and Zeriahi, A.
{\em Singular K\"ahler-Einstein metrics}, J. Amer. Math. Soc. 22 (2009), no. 3, 607--639

\bibitem[EGZ2]{EGZ2} Eyssidieux, P., Guedj, V. and Zeriahi, A.
{\em A priori $L^{\infty}$-estimates for degenerate complex Monge-Amp\`ere equations}, Int. Math. Res. Not. IMRN 2008, Art. ID rnn 070, 8 pp

\bibitem[FN]{FN} Fornaess, J. E. and Narasimhan, R. {\em The Levi problem on complex
spaces with singularities}, Math. Ann. 248 (1980), no. 1, 47--72


\bibitem[HM]{HM} Hacon, C. and McKernen, J. {\em Existence of minimal models for varieties of log general type II},  preprint, arXiv:0808.1929


\bibitem[Ha]{Ha}
Hamilton, R. {\em Three-manifolds with positive Ricci curvature},
J. Differential Geom. 17 (1982), no. 2, 255-306


\bibitem[KMM]{KMM} Kawamata, Y.,  Matsuda, K. and K. Matsuki, K. {\em Introduction to the minimal model problem},
Algebraic geometry, Sendai, 1985, Adv. Stud. Pure Math., vol. 10, North-Holland, Amsterdam, 1987,
pp. 283Ð360



\bibitem[Ko1]{Ko1} Kolodziej, S. {\em The complex Monge-Amp\`ere
equation}, Acta Math. 180 (1998), no. 1, 69-117


\bibitem[Ko2]{Ko2} Kolodziej, S. {\em The Monge-Amp\`re equation on compact K\"ahler manifolds}, Indiana Univ. Math. J. 52 (2003), no. 3, 667--686


\bibitem[LiYa]{LiYa} Li, P. and Yau, S.T. {\em Estimates of eigenvalues of a compact Riemannian manifold}, Geometry of the Laplace operator
(Proc. Sympos. Pure Math., Univ. Hawaii, Honolulu, Hawaii, 1979),
pp. 205--239, Proc. Sympos. Pure Math., XXXVI, Amer. Math. Soc.,
Providence, R.I., 1980

\bibitem[La]{La} Lazarafeld, J. {\em Positivity in algebraic geometry. I. Classical setting: line bundles and linear series},  A Series of Modern Surveys in Mathematics, 48. Springer-Verlag, Berlin, 2004. xviii+387 pp

\bibitem[Pe1]{Pe1} Perelman, P. {\em The entropy formula for the Ricci flow
and its geometric applications}, preprint, math.DG/0211159


\bibitem[Pe2]{Pe2} Perelman, G.,   announced work on the K\"ahler-Ricci flow


\bibitem[PS1]{PS1} Phong, D. H. and Sturm, J. {\em On the K\"ahler-Ricci flow on complex surfaces},  Pure Appl. Math. Q.  {\bf 1}  (2005),  no. 2, part 1, 405--413

\bibitem[PS2]{PS2} Phong, D.H. and Sturm, J.  {\em On stability and the convergence of the K\"ahler-Ricci flow},  J. Differential Geometry  {\bf 72} (2006),  no. 1, 149--168

\bibitem[PSS]{PSS} Phong, D. H., Sesum, N. and Sturm, J. {\em  Multiplier ideal sheaves and the K\"ahler-Ricci flow},  Comm. Anal. Geom.  15  (2007),  no. 3, 613--632


\bibitem[PSSW1]{PSSW1} Phong, D.H., Song, J., Sturm, J. and Weinkove, B. {\em The K\"ahler-Ricci flow and the $\bar\partial$ operator on vector fields}, J. Differential Geometry {\bf 81} (2009), no. 3, 631--647

\bibitem[PSSW2]{PSSW2} Phong, D.H., Song, J., Sturm, J. and Weinkove, B. {\em The modified K\"ahler-Ricci flow and solitons}, preprint, arXiv: 0809.0941

\bibitem[SeT]{SeT} Sesum, N. and Tian, G. {\em Bounding scalar curvature and diameter along the K\"ahler Ricci flow (after Perelman)},  J. Inst. Math. Jussieu  {\bf 7}  (2008),  no. 3, 575--587


\bibitem[Si]{Si} Siu, Y-T. {\em  A General Non-Vanishing Theorem and an Analytic Proof of the Finite Generation of the Canonical Ring}, preprint, arXiv:math/0610740

\bibitem[So]{So} Song, J. {\em Finite time extinction of the K\"ahler-Ricci flow}, preprint, arXiv:0905.0939

\bibitem[SoT1]{SoT1} Song, J. and Tian, G. {\em The K\"ahler-Ricci flow on surfaces of positive Kodaira dimension},
Invent. Math. {\bf 170} (2007), no. 3, 609--653

\bibitem[SoT2]{SoT2} Song, J. and Tian, G. {\em Canonical measures and K\"ahler-Ricci flow},  preprint, arXiv: 0802.2570


\bibitem[SW]{SW} Song, J. and Weinkove, B. {\em The K\"ahler-Ricci flow on Hirzebruch surfaces},  preprint, arXiv:0903.1900

\bibitem[SY]{SY} Song, J. and Yuan, Y. {\em Convergence of the K\"ahler-Ricci flow on singular Calabi-Yau varietie}, in preparation

\bibitem[Sz]{Sz} Szekelyhidi, G. {\em The K\"ahler-Ricci flow and K-stability}, preprint, arXiv: 0803.1613




\bibitem[T1]{T1} Tian, G. {\em On Calabi's conjecture for complex surfaces
with positive first Chern class}, Invent. Math. 101, no. 1 (1990),
101-172


\bibitem[T2]{T2} Tian, G. {\em K\"ahler-Einstein metrics with positive scalar curvature}, Invent. Math. {\bf 130} (1997), no. 1, 1--37


\bibitem[T3]{T3}  Tian, G. {\em New progress and problems on K\"ahler-Ricci flow}, to
appear in Asterique
    

\bibitem[TiZha]{TiZha} Tian, G. and Zhang, Z. {\em On the K\"ahler-Ricci flow on projective manifolds of general type},  Chinese Ann. Math. Ser. B  27  (2006),  no. 2, 179--192

\bibitem[TiZhu]{TiZhu} Tian, G. and Zhu, X. {\em Convergence of
K\"ahler Ricci flow},  J. Amer. Math. Soc.  20  (2007),  no. 3, 675-699

\bibitem[To]{To} Tosatti, V. {\em K\"ahler-Ricci flow on stable Fano manifolds}, preprint,  arXiv:0810.1895 

\bibitem[Ts]{Ts} Tsuji, H. {\em Existence and degeneration of K\"ahler-Einstein metrics on minimal algebraic varieties of general type}, Math.
Ann. 281 (1988), 123-133


\bibitem[U]{U}  Ueno, K. {\em Classification theory of algebraic varieties and compact complex spaces}, notes written in collaboration with P. Cherenack,  Lecture Notes in Mathematics, Vol. 439, Springer-Verlag, Berlin-New York, 1975. xix+278 pp

\bibitem[Y1]{Y1}Yau, S.-T. {\em A general Schwarz lemma for
K\"ahler manifolds}, Amer. J. Math. 100 (1978), no. 1, 197-203

\bibitem[Y2]{Y2} Yau, S.-T. {\em On the Ricci curvature of a compact K\"ahler manifold and the complex Monge-Amp\`ere equation, I}, Comm. Pure Appl. Math. 31 (1978), 339-411

\bibitem[Y3]{Y3} Yau, S.-T. {\em Open problems in geometry}, Proc. Symposia Pure
Math. {\bf 54} (1993), 1--28 (problem 65)


\bibitem[Z1]{Z1} Zhang, Z. {\em On degenerate Monge-Amp\`ere equations over closed K\"ahler manifolds},  Int. Math. Res. Not.  2006, Art. ID 63640, 18 pp

\bibitem[Z2]{Z2} Zhang, Z. {\em Scalar curvature bound for K\"ahler-Ricci flows over minimal manifolds of general type}, Int. Math. Res. Not. 2009; doi: 1093/imrn/rnp073,  arXiv:0801.3248

\bibitem[Z3]{Z3} Zhang, Z. {\em Scalar curvature behavior for finite time singulairty of K\"ahler-Ricci flow}, to appear at the Michigan Mathematical Journal,  arXiv:0901.1474

\bigskip
\bigskip \noindent $^*$ Department of Mathematics \\ Rutgers University, Piscataway, NJ 08904

\bigskip\noindent  $^{\dagger}$ Department of Mathematics\\ Peking University, Beijing, China
\\
\\
Department of Mathematics \\ Princeton University, Princeton NJ 08544

\end{thebibliography}
\end{document}